\newtheorem{theorem}{Theorem}
\newtheorem{lemma}[theorem]{Lemma}
\newtheorem{proposition}[theorem]{Proposition}
\newtheorem{corollary}[theorem]{Corollary}
\newtheorem*{assumption*}{A\!\!}
\newtheorem{definition}{Definition}
\newtheorem*{definition*}{Definition}
\newenvironment{definitionp}[1]{
  
  \definitionalt
}{\enddefinitionalt}
\newtheorem{example}{Example}
\newcommand{\indep}{\rotatebox[origin=c]{90}{$\models$}}
\DeclareMathOperator*{\argmin}{argmin}
\DeclareMathOperator*{\argmax}{argmax}
\newcommand{\IN}{\mbox{$\mathbb{N}$}}
\newcommand{\IR}{\mbox{$\mathbb{R}$}}
\newcommand{\IE}{\mbox{$\mathbb{E}$}}
\newcommand{\IP}{\mbox{$\mathbb{P}$}}
\begin{document}

\begin{frontmatter}

\title{Nonparametric classification with missing data}
\runtitle{Classification with missing data}

\begin{aug}
\author[A]{\fnms{Torben} \snm{Sell}\ead[label=e1]{torben.sell@ed.ac.uk}},  
\author[B]{\fnms{Thomas} B. \snm{Berrett}\ead[label=e2]{tom.berrett@warwick.ac.uk}}\\ 
\and 
\author[A]{\fnms{Timothy} I. \snm{Cannings}\ead[label=e3]{timothy.cannings@ed.ac.uk}}

\runauthor{Sell, Berrett and Cannings}

\address[A]{School of Mathematics, University of Edinburgh\printead[presep={,\ }]{e1,e3},} 

  \address[B]{Department of Statistics, University of Warwick\printead[presep={,\ }]{e2}} 
    
\end{aug}

\begin{abstract}
We introduce a new nonparametric framework for classification problems in the presence of missing data. The key aspect of our framework is that the regression function decomposes into an anova-type sum of orthogonal functions, of which some (or even many) may be zero. Working under a general missingness setting, which allows features to be missing not at random, our main goal is to derive the minimax rate for the excess risk in this problem. In addition to the decomposition property, the rate depends on parameters that control the tail behaviour of the marginal feature distributions, the smoothness of the regression function and a margin condition. The ambient data dimension does not appear in the minimax rate, which can therefore be faster than in the classical nonparametric setting. We further propose a new method, called the \emph{Hard-thresholding Anova Missing data (HAM)} classifier, based on a careful combination of a $k$-nearest neighbour algorithm and a thresholding step. The HAM classifier attains the minimax rate up to polylogarithmic factors and numerical experiments further illustrate its utility. 
\end{abstract}

\begin{keyword}[class=MSC]
\kwd[Primary ]{62D10}
\kwd{62G05}
\end{keyword}

\begin{keyword}
\kwd{missing data}
\kwd{classification}
\kwd{minimax}
\end{keyword}

\end{frontmatter}

\date{}

\maketitle

\section{Introduction} 

Missing data are ubiquitous in modern statistics, posing a major challenge in a plethora of applications.  For instance, missingness may arise when our data are collated from sources that have measured different variables, e.g.~in healthcare the data routinely collected on patients may differ across clinics or hospitals. Other reasons include sensor failure, data censoring and privacy concerns, among many others.  The topic has been widely studied in many statistical problems; see \citet{josse2018introductionm} and \citet{little2019statistical} for a thorough overview.  Recent works study missing data in several different contexts, including high-dimensional regression \citep{loh2011high,chandrasekher2020imputation}, covariance and precision matrix estimation \citep{cai2016minimax,loh2018high}, remote sensing \citep{zhang2018missing}, principal component analysis \citep{elsener2019sparse,zhu2019high}, $U$-statistics \citep{cannings2022correlation},  changepoint detection \citep{follain2022high} and testing whether the missingness is independent of the data \citep{berrett2022optimal,bordino2024tests}. 

We focus here on missing data in binary classification problems, where the practitioner is presented with the task of assigning a new observation to one of two classes, based on a training set of labelled data.  There is a large literature on classification and many algorithms are available in situations when the data is fully observed; see, for example, \citet{devroye2013probabilistic} and \citet{boucheron2005theory}. In settings with missing data, however, there are very few specifically designed classification algorithms, and in many applications practitioners often use ad-hoc approaches to deal with missingness, either resorting to complete-case analysis (i.e.~using only the fully observed data points) or removing any covariates for which missing data is present.  An alternative option, sometimes used in combination with those mentioned previously, is to impute any missing values to obtain a `full' data set before employing an existing algorithm. However, the majority of existing classifiers are designed to work in idealised settings, where the training and test data arise as fully-observed, independent realisations from the same underlying population; it is often unclear how they perform in combination with these ad-hoc approaches.

The main aim of this paper is to establish a new nonparametric framework for classification problems with missing data.  To model the missingness, we will view our training data as independent copies of triples $(X,Y,O)$ arising from a distribution $Q$ on $\mathbb{R}^d \times \{0,1\} \times \{0,1\}^d$, where $X$ denotes a $d$-dimensional feature vector, $Y$ is its class label and $O$ is the concomitant \emph{observation indicator}. The interpretation here will be that we only observe the components of $X$ for which the corresponding entry of $O$ is $1$. The label $Y$ is always observed. For $\omega \in \{0,1\}^d$, we say that $(X,Y,O)$ is an \emph{available case} for $\omega$ if the entries of $O$ are $1$ whenever the corresponding entry of $\omega$ is $1$.  We write $\eta(x) := \mathbb{P}(Y=1 | X=x)$ for the \emph{regression function} of $Y$ given $X$ at $x \in \mathbb{R}^d$. 

Our framework facilitates the use of all available cases for each observation pattern $\omega \in \{0,1\}^d$. The key observation is that any regression function $\eta$ on $\mathbb{R}^d$ may be decomposed into an orthogonal sum. That is, we can write 
\begin{equation}
\label{eq:ANOVAdecomp}
\eta(x) = \frac{1}{2} + \sum_{\omega \in \{0,1\}^d} f_{\omega}(x),
\end{equation}
where, for each $\omega \in \{0,1\}^d$, the function $f_{\omega}$ in fact only depends on those components of $x$ for which the corresponding entries of $\omega$ are $1$.  We can then estimate the regression function $\eta$ by recursively estimating each of the $f_{\omega}$ functions starting with $f_{(0,\ldots, 0)}$, based on the available cases for $\omega$.  One interpretation of~\eqref{eq:ANOVAdecomp} is that of an anova decomposition \citep{efron1981jackknife}, and the functions $f_\omega$ may therefore be viewed as capturing the interactions between the variables. Our framework (see Definition~\ref{def:effectivespace}) specifies a subset (which may be empty) of observation patterns in $\{0,1\}^d$ for which the corresponding interaction terms are identically~$0$.  The idea here is that the difficulty (in a sense that we make precise later) of the classification problem is determined by the non-zero $f_{\omega}$, as opposed to the properties of the full regression function in the ambient $d$-dimensional space.

The primary contribution of the paper is to establish the minimax rate (up to polylogarithmic factors) for the excess risk in this problem (Theorem~\ref{thm:minmax_bounds}).  The rate depends on various parameters that index the class of distributions considered in our framework.  Throughout the paper, we will place only mild restrictions on the distribution of the missingness indicator $O$. Indeed, we neither ask that the components of $O$ are independent of each other, nor that $O$ is necessarily independent of $(X,Y)$, and we even allow for some \emph{missing not at random} settings. See Definition~\ref{def:missingness} in Section~\ref{sec:statistical_setting} for a precise statement of our conditions on the missingness mechanism. Then, in addition to those imposed by Definitions~\ref{def:missingness} and~\ref{def:effectivespace}, we place restrictions on the tail properties of the marginal distribution of the feature vectors (Definition~\ref{def:lowerdensity}), make use of smoothness assumptions on the non-zero $f_{\omega}$ (Definition~\ref{def:smoothness}) and employ a standard margin assumption (Definition~\ref{def:margin}). 

Theorem~\ref{thm:minmax_bounds} reveals some striking features of our framework. Most notably, we see that consistent (and rate optimal) classification is possible even in situations where we have no complete cases in our training dataset.  In contrast, in problems where there is in fact no missing data, we may obtain a faster rate than one could typically expect in a $d$-dimensional nonparametric classification problem.  Overall there is a delicate interplay between the number of available cases for each observation pattern $\omega$ and the difficulty of estimating the corresponding function $f_{\omega}$ in our decomposition in~\eqref{eq:ANOVAdecomp}.

Our next contribution is to propose a new classification algorithm for missing data, introduced formally in Algorithm~\ref{alg:nonadaptive} in Section~\ref{sec:algorithmUB}, which we call the \emph{Hard-thresholding Anova Missing data (HAM)} classifier.  Our algorithm first estimates the functions $f_\omega$ for each $\omega \in \{0,1\}^d$ for which data is available using a nearest neighbour type approach.  We then apply hard-thresholding, setting our final estimator of $f_{\omega}$ to be $0$ unless there is sufficient evidence that it is nonzero on the population level.  We prove in Theorem~\ref{thm:nonadaptiveUBnew} that our procedure attains the minimax rate in Theorem~\ref{thm:minmax_bounds}  up to polylogarithmic factors, subject to a mild sample size condition.  We demonstrate the excellent practical performance of our proposal in a numerical study using simulated and real data in Section~\ref{sec:numericalresults}.  The key ideas behind the proofs of the upper and lower bounds of our main results are outlined in Section~\ref{sec:proofs}.  Finally, additional theoretical and numerical results, as well as the full details of the proofs, are presented in the supplementary material.

Existing work on classification with missing values in the feature vectors includes \citet{josse2019consistency}, who show that in certain settings imputing the missing values with a constant can lead to consistent classification. \citet{cai2019high} consider missing data in the setting of high-dimensional linear discriminant analysis; they propose using the available cases to estimate the corresponding class means and variances under a missing completely at random assumption.  In regression, \citet{le2020neumiss} use implicit imputation methods in non-trivial missingness settings for prediction problems, \citet{le2021sa} study a wide class of impute-then-regress procedures, and \cite{ayme2022near} derive minimax rates for prediction using linear models with missing data. 

In the related problem of \emph{semi-supervised learning}, the feature vectors are fully observed but some of the class labels in the training data set are missing. This topic has received far more attention, see \citet{ahfock2022semi} and \citet{sportisse2023labels} for very recent contributions, and the earlier book \citet{chapelle2006semi}. Similarly, \emph{label noise}, which refers to scenarios where some of the observations in the training dataset have been attributed to the wrong class, has also been widely studied; see the review articles \citet{frenay2013classification,frenay2014comprehensive}, as well as the recent papers \citet{cannings2020classification} and \citet{lee2022binary}.  Finally, in \emph{transfer learning} \citep{weiss2016survey} the practitioner has access to a small labelled training data set from the target population of interest alongside a larger, noisy source dataset (which may be unlabelled). Here the main interest lies in exploiting the large source sample to improve inferences on the target population; see, for example, \citet{cai2021transfer} and \citet{reeve2021adaptive} for recent works in the context of nonparametric classification problems.

It is convenient to now fix some notation that we will use throughout the paper. For $m \in \mathbb{N}$, let $[m] = \{1,\ldots, m\}$. Let $\mathbf{0}_d := (0,\ldots,0)^T \in \{0,1\}^d$ and $\mathbf{1}_d := (1,\ldots,1)^T \in \{0,1\}^d$ be the $d$-dimensional vectors of zeros and ones, respectively. For $j\in[d]$, we write $e_j$ for the $j$th canonical Euclidean basis vector in $\mathbb{R}^d$, which contains a $1$ in the $j$th component, and $0$s elsewhere. For $\omega \in \{0,1\}^d$, we write $d_{\omega} := \|\omega\|_1$ for the number of $1$s in $\omega$, which will refer to as the \emph{dimension} of $\omega$.  For $\omega = (\omega_1, \ldots, \omega_d)^T, \omega' = (\omega'_1,\ldots, \omega'_d)^T \in \{0,1\}^d$, we write $\omega \preceq \omega'$ if $\omega_j\leq \omega'_j$ for all $j\in[d]$, $\omega \prec \omega'$ if $\omega_j \leq \omega'_j$ for all $j\in[d]$ and $\omega\neq \omega'$,  $\omega \wedge \omega' := (\omega_1 \wedge \omega'_1, \ldots, \omega_d \wedge \omega'_d)^T$, and $\omega \vee \omega' := (\omega_1 \vee \omega'_1, \ldots, \omega_d \vee \omega'_d)^T$.  For $x \in \mathbb{R}^d$ and $\omega \in \{0,1\}^d$, we write $x^\omega := x \odot \omega + \mathbf{0}_d \odot (\mathbf{1}_d-\omega) \in \mathbb{R}^d$, and $\odot$ denotes the entrywise product.  Further, let $\|\cdot\|$ denote the Euclidean norm and $B_r(x) := \{z \in \mathbb{R}^d : \|z-x\| < r\}$ be the open Euclidean ball of radius $r > 0$ at $x \in \mathbb{R}^d$.  For $s \in [d]$, we write $\mathcal{L}_s$ for the Lebesgue measure on $\mathbb{R}^s$. We write $\mathbb{S}^{d\times d}$ for the set of symmetric positive definite matrices. For random variables $X$, $Y$ and $Z$ we write $X\indep Y$ to denote the independence of $X$ and $Y$, and $X\indep Y\mid Z$ to denote that $X$ and $Y$ are conditionally independent given $Z$.  Finally, for $x\in \mathbb{R}$, we write $\log_{+}(x):= \log(x \vee e )$.

\section{Statistical setting and minimax results\label{sec:statistical_setting}}

Let $\mathcal{Q}$ denote the class of all distributions on $\mathbb{R}^d \times \{0,1\} \times \{0,1\}^d$. Then, for $Q \in \mathcal{Q}$, suppose that $(X,Y,O) \sim Q$, and recall that $X$ denotes the feature vector taking values in $\mathbb{R}^d$, $Y$ denotes its class label (either $0$ or $1$) and $O$ denotes an observation indicator in $\{0,1\}^d$.  We write $P\equiv P_Q$ for the marginal distribution of $(X,Y)$, write $\eta: \mathbb{R}^d \rightarrow [0,1]$ for the regression function given by $\eta(x) := \mathbb{P}_P(Y = 1 | X = x)$, and let $\mu$ denote the marginal distribution of $X$ on $\mathbb{R}^d$.   

For $n \in \mathbb{N}$ and $Q \in \mathcal{Q}$, let $(X_1, Y_1, O_1), \ldots, (X_n, Y_n, O_n)$ be independent and identically distributed triples from $Q$.  In our missing data problem, we only observe those entries of $X_i$ for which the corresponding entry of $O_i$ is $1$. In other words, our training data is $D_n := \bigl((X_1^{O_1}, Y_1, O_1), \ldots, (X_n^{O_n}, Y_n, O_n)\bigr)$ and we are presented with the task of assigning a fully observed\footnote{If the test point is not fully observed, then the problem essentially reduces to a lower dimensional classification problem. There will be distributions in our model class where data for the unobserved features in the test point provides no information on the distribution of the observed features.} test point $X$ to either class $0$ or class $1$.  Despite replacing the unobserved entries in $X_1^{O_1}, \ldots, X_n^{O_n}$ with zeros, we are still able to distinguish between true (observed) zeros and the zeros that have arisen due to missing values, since $O_1, \ldots, O_n$ are also given in the training data $D_n$.  Recall that, for a pattern $\omega \in \{0,1\}^d$, we will say that the $i$th observation is an available case for $\omega$ when $\omega \preceq O_i$.

A \emph{data-dependent classifier} $\hat{C}$ is a measurable function from $\mathbb{R}^d \times (\mathbb{R}^d \times \{0,1\} \times \{0,1\}^d)^n$ to $\{0,1\}$, and we let $\mathcal{C}_n$ denote the set of all such data-dependent classifiers. Throughout this paper the latter $n$ arguments of $\hat{C}(x,D_n)$ will always be $D_n$ and thus will often be omitted; we will simply write $\hat{C}(x)$.   The performance of $\hat{C} \in \mathcal{C}_n$ will be measured by its \emph{test error}
\begin{equation*}
\label{eq:testerror}
L_P(\hat{C}) := \mathbb{P}_P\bigl\{\hat{C}(X) \neq Y | D_n\bigr\} = \int_{\mathbb{R}^d \times \{0,1\}} \mathbbm{1}_{\{ \hat{C}(x) \neq y\}}  \, dP(x,y),
\end{equation*}
which is minimised by the Bayes classifier $C^{\mathrm{Bayes}}(x) =\mathbbm{1}_{\{\eta(x) \geq 1/2\}}$. Therefore our main results will concern the (nonnegative) excess test error
\begin{equation}
\label{eq:excesserror}
   \mathcal{E}_P(\hat{C}) := L_P(\hat{C})- L_P(C^{\mathrm{Bayes}}) = \int_{\mathbb{R}^d} \mathbbm{1}_{\{ \hat{C}(x) \neq C^{\mathrm{Bayes}}(x)\}} |2 \eta(x) - 1| \, d\mu(x). 
\end{equation}

We now introduce our new framework for classification problems with missing data. First, in our main results, we will condition on the values of the observation indicators $O_1, \ldots, O_n$ in our training data. We will suppose these belong to a subset $\mathcal{O} \subseteq \{0,1\}^d$ and one consequence of this is that we only need to consider distributions $Q$ that assign positive mass to each element of $\mathcal{O}$. More formally, we employ the following general assumption on the missingness mechanism. 
\begin{definition}\label{def:missingness}
For $\mathcal{O} \subseteq \{0,1\}^d$, let $\mathcal{Q}_{\mathrm{Miss}}(\mathcal{O}) \subseteq \mathcal{Q}$ denote the class of distributions for which $\mathbb{P}_{Q}(O = o) > 0$ for all $o \in \mathcal{O}$, and 
\begin{equation}
\label{eq:missingness}
\mathbb{P}_Q(Y = 1 | X^{\omega} = x^{\omega}) = \mathbb{P}_Q(Y = 1 | X^{\omega} = x^{\omega}, O = o) 
\end{equation}
for all $x \in \mathrm{supp}(\mu)$, $\omega \in \{0,1\}^d$ and $o \in \mathcal{O}$ with $\omega \preceq o$.
\end{definition}

To understand the generality afforded by Definition~\ref{def:missingness}, first note that \eqref{eq:missingness} holds trivially if $O$ is independent of $(X,Y)$ (i.e.~the data is \emph{missing completely at random}) and $Q$ assigns positive probability to $O = o$, for all $o \in \mathcal{O}$, without any additional assumption on the distribution of $O$ nor the distribution $P$ of $(X,Y)$. However, the class $\mathcal{Q}_{\mathrm{Miss}}(\mathcal{O})$ also includes distributions where $O$ and $(X,Y)$ are dependent, and may even allow the data to be \emph{missing not at random}; see the examples below (the proofs of the claims are presented in Section~\ref{sec:intuitionfor1})
.  Proposition~\ref{prop:missingness}
in Section~\ref{sec:intuitionfor1} 
provides further understanding of Definition~\ref{def:missingness} by relating it to conditional independence.

\begin{example}[$(1-p)$-homogeneous missingness]
\label{ex:missingness1}
Fix $p \in (0,1)$.  Suppose $O \indep (X,Y)$ and the $\mathbb{P}(O = \omega) = p^{d_{\omega}}(1-p)^{d-d_{\omega}}$, for $\omega \in \{0,1\}^d$, in other words each component of $X$ is observed (independently) with probability $p$. Then $Q \in \mathcal{Q}_{\mathrm{Miss}}(\{0,1\}^d)$.
\end{example}

\begin{example}[MAR]
\label{ex:missingness2}
Suppose that $\mathrm{supp}(\mu) = [0,1]^4$, and that $O$ is conditionally independent of $Y$ given $X^{\omega}$ for all $\omega \in \{0,1\}^4$. For $x=(x_1,\ldots,x_4)^T\in[0,1]^4$, let $\IP\{O=(1,1,1,0)^T  \mid X=x\} = x_1$ and $\IP\{O=(1,0,0,1)^T \mid X=x\} = 1-x_1$.  Then $Q\in\mathcal{Q}_{\mathrm{Miss}}(\{(1,1,1,0)^T,(1,0,0,1)^T\})$. 
\end{example}

\begin{example}[MNAR]\label{ex:missingness3} Let $X \sim U([0,1]^2)$, and for $x = (x_1,x_2)^T \in [0,1]^2$, let $\eta(x) = 1/4 + x_1/2 + 1/4\cdot\cos(4\pi x_2)$. Suppose also that for $x = (x_1,x_2)^T \in [0,1]^2$ and $r \in \{0,1\}$, we have  
\[
\mathbb{P}_Q(O = o \mid X = x, Y = r) = \left\{ \begin{array}{c c}
1/4+1/4\cdot\mathbbm{1}_{\{x_2\leq1/2\}} & \text{ if } o = (1,1)^T \\ 
1/6+1/12\cdot\mathbbm{1}_{\{x_2>1/2\}} & \text{ otherwise.}\\
\end{array} \right.
\]
Then $Q \in \mathcal{Q}_{\mathrm{Miss}}(\{0,1\}^2)$. 
\end{example}

The aim of these examples is to provide straightforward understanding of how Definition~\ref{def:missingness} relates to some widely used assumptions in the missing data literature.  First, the homogeneous missingness probabilities in Example~\ref{ex:missingness1} provide a simple model amenable to theoretical understanding. Though this is at times too restrictive in practical applications, the simplicity allows us to quantify how much the amount of missingness affects the performance in different settings (see, e.g., \citet{loh2011high}  and \citet{cai2019high}). In Example~\ref{ex:missingness2}, we will never observe complete cases in our training data set, for instance this situation may arise if different data sources have been merged. As we will see later, this may cause serious problems when trying to apply some of the ad-hoc approaches mentioned in the introduction.  Moreover, the data in this example is \emph{Missing At Random} (MAR), in the sense used in, e.g., \citet[Assumption~2]{josse2019consistency}.    Example~\ref{ex:missingness3} covers the challenging setting where data is \emph{Missing Not At Random} (MNAR); the probability that the second component of the $2$-dimensional feature vector $(X_1,X_2)^T$ is missing depends on whether or not~$X_2 \leq 1/2$. 

Returning to more general settings, Lemma~\ref{lem:missingness}
in the supplementary material shows that~\eqref{eq:missingness} holds whenever the distribution of the observation indicator $O$ depends only on a subset of the features and the regression function depends only on another, disjoint subset of the features and these two subsets of features are independent.  More precisely, for $S_1, S_2 \subseteq [d]$ with $S_1\cap S_2 = \emptyset$, suppose the regression function $\eta(x)$ depends on $x = (x_1,\ldots,x_d) \in \mathbb{R}^d$ only via $x_{S_1} := \{x_j : j \in S_1\}$ and the conditional distribution of the observation indicator $O | \{X = x, Y = y\}$ depends only on $(x,y) \in \mathbb{R}^d \times \{0,1\}$ via $x_{S_2} := \{x_j : j \in S_2\}$. If $X_{S_1}$ and $X_{S_2}$ are independent then~\eqref{eq:missingness} holds.   

As outlined in the introduction, a key aspect of our framework is that any regression function $\eta :\mathbb{R}^d \rightarrow [0,1]$ may be decomposed into a sum of orthogonal functions $f_{\omega} : \mathbb{R}^d \rightarrow \mathbb{R}$ for $\omega \in \{0,1\}^d$. More precisely, these functions may be defined recursively for $x \in \mathbb{R}^d$ as follows: 
\begin{align}
    f_{\mathbf{0}_d}(x) & := \mathbb{E}_P\{\eta(X)\} -\frac{1}{2} \equiv \mathbb{P}_P(Y = 1)  -\frac{1}{2} \label{eq:def_f_zero} 
\\ f_{\omega}(x) & := \mathbb{E}_P\Bigl\{\eta(X)  - \frac{1}{2} - \sum_{\omega'\prec\omega}f_{\omega'}(X) \Bigm| X^{\omega} = x^{\omega}\Bigr\}  \text{ for } \omega \in \{0,1\}^d \text{ with } d_\omega \geq 1. \label{eq:def_f_omega}
\end{align} 
This is known as an \emph{anova decomposition} \citep{efron1981jackknife}, and indeed Proposition~\ref{prop:eta_decomposition} 
in Section~\ref{sec:propertiesofeta} 
confirms that 
\begin{equation}\label{eq:etadecomposition}
\eta(\cdot) = \frac{1}{2} + \sum_{\omega \in \{0,1\}^d} f_{\omega}(\cdot).
\end{equation}
We will seek to exploit this decomposition to estimate $\eta$, by estimating each of the functions $(f_\omega)_{\omega\in \{0,1\}^d}$ in turn. The main idea is that $f_{\omega}(x)$ only depends on $x$ via $x^{\omega}$, the entries of $x$ for which the corresponding entries of $\omega$ are $1$, and thus we may make use of all the available cases for $\omega$ when estimating $f_{\omega}$. Furthermore, if $f_\omega \equiv 0$ for some $\omega$, then we can hope that detecting this will be relatively easy. Indeed, our next definition specifies which of the $f_\omega$ functions may contribute to the regression decomposition and which are identically zero. 

Before stating Definition~\ref{def:effectivespace}, it is convenient to introduce some additional notation. For $Q\in\mathcal{Q}_{\mathrm{Miss}}(\mathcal{O})$ and $\omega \in \{0,1\}^d$, we write $\mu_{\omega}$ to denote the distribution of $X^{\omega}$ when $(X,Y,O) \sim Q$, i.e.~the distribution on $\mathbb{R}^{d}$ given by $\mu_{\omega}(A) = \mu\bigl(\{x \in \mathbb{R}^d : x^{\omega} \in A\} \bigr)$ for measurable $A \subseteq \mathbb{R}^d$.  Moreover, for $o \in \mathcal{O}$ and $\omega \preceq o$, we write $\mu_{\omega \mid o}$ to denote the distribution of $X^{\omega} \mid \{O = o\}$ when $(X,Y,O) \sim Q$, i.e.~the distribution on $\mathbb{R}^{d}$ given by $\mu_{\omega|o}(A) = \mathbb{P}_{Q}(X^\omega \in A \mid O = o)$, for measurable $A \subseteq \mathbb{R}^d$. 
For $Q \in \mathcal{Q}_{\mathrm{Miss}}(\mathcal{O})$ and $\omega \in \{0,1\}^d$, define  
\begin{equation}
\label{def:sigmaomega}
\sigma_{\omega}^2  := \min_{o \in \mathcal{O} : o \succeq \omega}  \mathbb{E}_{Q}\bigl\{f_\omega^2(X) \bigm| O= o \bigr\}.
\end{equation}
To make a precise statement of our main condition, which asks that $f_{\omega}$ is zero for certain values of $\omega$, we will also make use of  some basic notation concerning orderings of subsets of $\{0,1\}^d$.  Let $\mathcal{I}(\{0,1\}^d) := \{\Omega \subset \{0,1\}^d :\text{if } \omega,\omega' \in \Omega \text{ and } \omega \preceq \omega', \text{ then } \omega = \omega'\}$ be the set of \emph{antichains} in $\{0,1\}^d$, i.e.~the set of subsets $\Omega \subseteq \{0,1\}^d$ for which any two distinct elements of $\Omega$ are \emph{incomparable}.  Further, for $\Omega \in \mathcal{I}(\{0,1\}^d)$, let $L(\Omega) := \{\omega \in \{0,1\}^d : \text{ there exists } \omega' \in \Omega \text{ such that } \omega \prec \omega'\}$ denote the elements of $\{0,1\}^d$ that precede the elements of $\Omega$, and write $U(\Omega) := \{0,1\}^d \setminus \{\Omega \cup L(\Omega)\}$.

\begin{figure}[ht]
\centering
\includegraphics[width=0.25\linewidth]{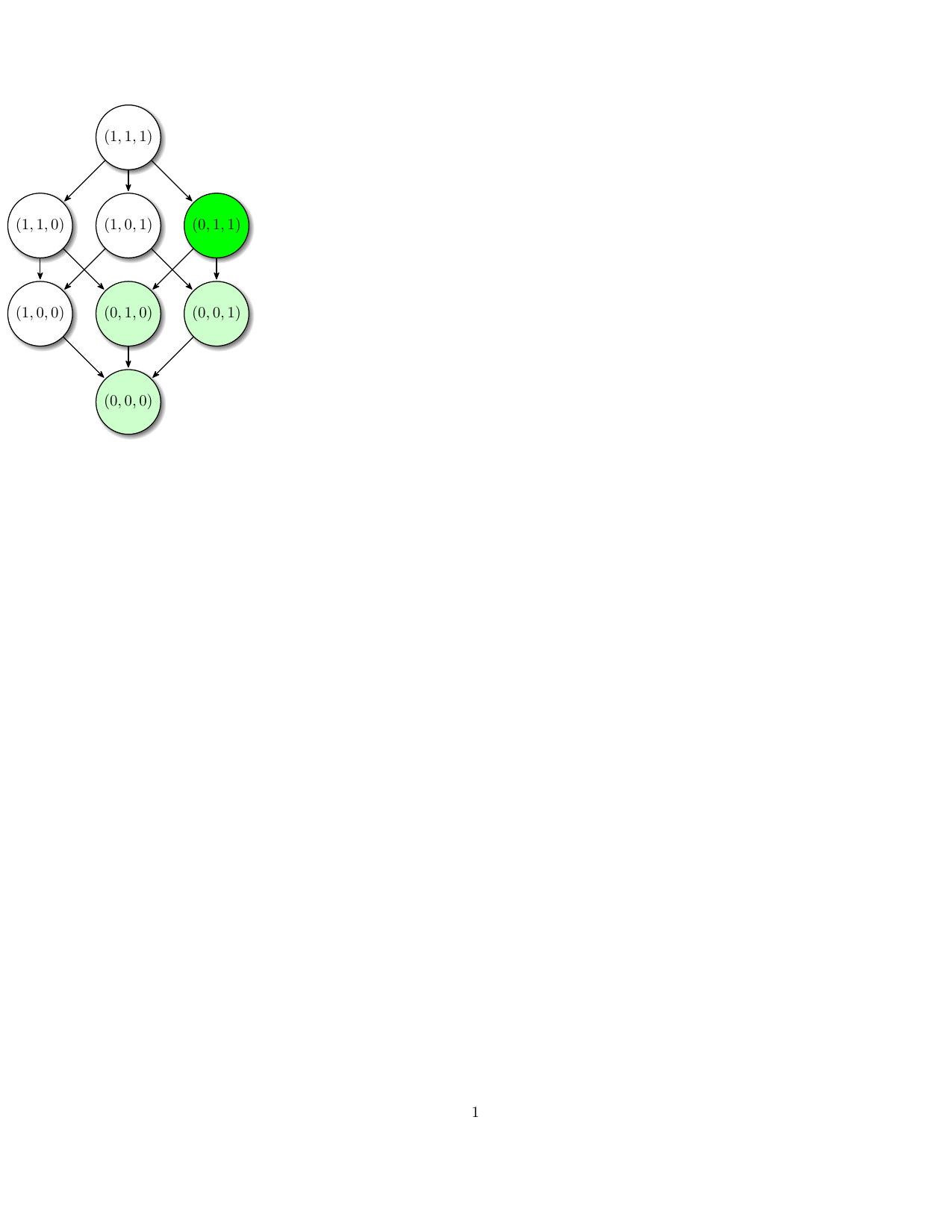}
\caption{A Hasse diagram representing the role of $\Omega_{\star}$ used in Definition~\ref{def:effectivespace}. In this example, $d = 3$ and $\Omega_{\star} = \{(0,1,1)^T\}$ (in dark green), we present $L(\Omega_{\star})=\{(0,1,0)^T,(0,0,1)^T, \mathbf{0}_d\}$ in light green and $U(\Omega_{\star})$ in white.}
\end{figure}

\begin{definition}
\label{def:effectivespace}
For $\Omega_{\star} \in \mathcal{I}(\{0,1\}^d)$,  $c_{\mathrm{E}}\in[0,1/4]$ and $\mathcal{O} \subseteq \{0,1\}^d$, let
\[
\mathcal{Q}_{\mathrm{E}}(\Omega_{\star},c_{\mathrm{E}},\mathcal{O}) := \Bigl\{Q \in \mathcal{Q}_{\mathrm{Miss}}(\mathcal{O}) : \sigma_\omega^2 \geq c_{\mathrm{E}}  \ \ \text{for all} \ \ \omega \in \Omega_{\star},\ f_{\omega} \equiv 0  \ \ \text{for all} \ \ \omega \in U(\Omega_{\star})\Bigr\}.
\]
\end{definition}
Definition~\ref{def:effectivespace} is very flexible. For example, if $\Omega_{\star} = \bigl\{\omega \in \{0,1\}^d : d_\omega = 1\bigr\}$, then it asks that $\eta$ is an \emph{additive} function of the components of $x$ (see, for example, \citet{hastie1986generalized}). As another example, suppose $S \subseteq [d]$ and $\Omega_{\star} = \{(\omega_1, \ldots, \omega_d)^T \in \{0,1\}^d : \omega_{j} = 0 \ \text{for}\  j \in S^c\}$, then $\eta$ only depends on the components of $x$ that are contained in $S$. Other, trivial, cases include $\Omega_{\star} = \{\mathbf{1}_d\}$, which (when $c_{\mathrm{E}} = 0$) corresponds to no assumption,  $\Omega_{\star} = \{\mathbf{0}_d\}$ asks that $\eta$ is constant,  and $\Omega_{\star} = \emptyset$ forces $\eta \equiv 1/2$.  Further examples are given in Section~\ref{sec:numericalresults}, based on specific instances of Examples~\ref{ex:missingness1} to~\ref{ex:missingness3}.  

The condition that $\sigma^2_\omega \geq c_{\mathrm{E}}$ for $\omega \in \Omega_{\star}$ is redundant if $c_{\mathrm{E}}$ is zero, and we may indeed take $c_{\mathrm{E}} = 0$ in  Theorem~\ref{thm:minmax_bounds} below.  The main role of the constant $c_{\mathrm{E}}$ is in Theorem~\ref{thm:nonadaptiveUBnew}, where it is assumed to be strictly positive.  The intuition is that this provides a natural measure of the signal strength, and that, with enough data, we are able to detect which patterns $\omega$ belong to $\Omega_{\star}$.  The minimum in the definition of $\sigma_{\omega}^2$ accounts for the fact that we will ultimately use the available cases in our training data to estimate the signal strength from each $\omega$, and that our results will be presented conditional on $O_1, \ldots, O_n$.

Our next condition concerns the regularity of the marginal feature distributions. For a probability distribution $\nu$ on $\mathbb{R}^d$ and $s \in [0,d]$ let $\rho_{\nu, s} : \mathbb{R}^d \rightarrow [0,1]$ denote the \emph{lower density} \citep{reeve2021adaptive} given by 
\begin{equation}\label{def:lower_density}
\rho_{\nu, s}(x) := \inf_{r \in (0,1)} \frac{\nu\bigl(B_{r}(x)\bigr)}{r^{s}}.
\end{equation}
The term `lower density' is used to highlight the relationship between $\rho_{\nu, d}$ and the density (if it exists) of $\nu$ with respect to Lebesgue measure on $\mathbb{R}^d$. Note, however, that our restriction on the marginal feature distributions in Definition~\ref{def:lowerdensity} below does not require that these marginal distributions have a density, nor that the lower density is uniformly bounded below by a positive constant.

\begin{definition}
\label{def:lowerdensity}
Fix $\boldsymbol{\gamma} = (\gamma_1, \ldots, \gamma_d)^T \in [0, \infty)^{d}$,  $C_{\mathrm{L}} \geq 1$ and $\mathcal{O} \subseteq \{0,1\}^d$. For $\omega = (\omega_1, \ldots, \omega_d)^T \in \{0,1\}^d \setminus \{\mathbf{0}_d\}$, let $\gamma_{\omega} := \min\{\gamma_j : \omega_j = 1\}$. Let $\mathcal{Q}_{\mathrm{L}}(\boldsymbol{\gamma}, C_{\mathrm{L}},\mathcal{O})$ denote the subclass of $\mathcal{Q}_{\mathrm{Miss}}(\mathcal{O})$ for which
\begin{equation}\label{eq:lowerdensity}
\mu_\omega\Bigl( \Bigl\{x \in \IR^d : \min_{o \in \mathcal{O} : \omega \preceq o} \rho_{\mu_{\omega \mid o}, d_{\omega}} (x) < \xi \Bigr\} \Bigr)\leq C_{\mathrm{L}}\cdot \xi^{\gamma_\omega}
\end{equation}
for all $\xi>0$ and all $\omega \in \{0,1\}^d \setminus \{\mathbf{0}_d\}$.
\end{definition}

To put Definition~\ref{def:lowerdensity} into context, first suppose that $O$ is independent of $(X,Y)$, so that $\mu_{\omega \mid o} = \mu_{\omega}$ for all $o \in \mathcal{O}$ and thus $\min_{o\in \mathcal{O} : \omega \preceq o} \rho_{\mu_{\omega \mid o}, d_{\omega}}(\cdot) =  \rho_{\mu_{\omega}, d_{\omega}}(\cdot)$. Then the parameters $\gamma_j$ for $j\in [d]$ control the tail properties of the univariate marginal distributions of $\mu$, where in particular a smaller value of $\gamma_j$ allows the tails to be heavier. Moreover, the definition of $\gamma_{\omega}$  ensures that $\gamma_{\omega} \geq \gamma_{\omega'}$ whenever $\omega \preceq \omega'$. Thus for $Q \in \mathcal{Q}_{\mathrm{L}}(\gamma,C_{\mathrm{L}}, \mathcal{O})$, the tails of the corresponding marginal distribution $\mu_{\omega'}$ are heavier compared with the $\mu_{\omega}$ one.  This is a natural restriction, concatenating a heavy tailed variable with a light tailed one shouldn't result in a light tailed joint distribution. Indeed, \citet[Proposition S8 and Lemma S9]{reeve2021adaptive} show that this property holds for product measures; in fact that paper provides many additional appealing aspects of lower densities. As an illustration, suppose that $(X,Y,O) \sim Q\in\mathcal{Q}_{\mathrm{Miss}}(\mathcal{O})$, where $X \sim U([0,1]) \otimes N(0,1)$, and $O$ is independent of $(X,Y)$. Then we may take any $\gamma_1 \in [0,\infty)$ and $\gamma_2\in[0,1]$ for the univariate marginals (see Section~\ref{sec:intuitionfor1}
). Then, letting $\boldsymbol{\gamma}=(\gamma_1,\gamma_2)$, \citet[Proposition S8]{reeve2021adaptive} confirms that $Q\in\mathcal{Q}_{\mathrm{L}}(\boldsymbol{\gamma}, C_{\mathrm{L}},\mathcal{O})$, for sufficiently large $C_{\mathrm{L}}$  (depending on $\boldsymbol{\gamma}$). 

In settings where the missingness $O$ depends on $(X, Y)$, the minimum over $o \in \mathcal{O}$ in~\eqref{eq:lowerdensity} provides control of the measure of the distributions of $X^{\omega} | \{O = o\}$ from which we observe training data. Note that the measure on the outside in~\eqref{eq:lowerdensity} is $\mu_{\omega}$, the marginal distribution of $X^{\omega}$ of our fully observed test point $X$.  Intuitively, \eqref{eq:lowerdensity} controls the relationship between the distributions of $X^{\omega}$ and $X^{\omega} | \{O=o\}$ for $o \in \mathcal{O}$.  The case $\omega=\mathbf{0}_d$ is excluded from Definition~\ref{def:lowerdensity}, since each one of the measures $\mu_{\mathbf{0}_{d}}$ and $\mu_{\mathbf{0}_{d}|o}$ for $o\in \{0,1\}^d$ are the point mass at zero, and thus \eqref{eq:lowerdensity} holds with any $\gamma_{\mathbf{0}_d} \geq 0$ and $C_{\mathrm{L}} = 1$. 

In Section~\ref{sec:intuitionfor1}
, we study specific instances of our examples above and in particular characterise which tail parameters we can choose in the class $\mathcal{Q}_{\mathrm{L}}(\boldsymbol{\gamma},C_{\mathrm{L}},\mathcal{O})$. In general, for compactly supported variables with lower densities bounded away from zero the tail parameter can be taken arbitrarily large, and for variables supported on all of $\mathbb{R}$ with infinitely many moments the tail parameter may be taken to be any value smaller than one. 

The remaining conditions on our class of distributions concern the smoothness of the functions $f_{\omega}$, which contribute to the decomposition in~\eqref{eq:etadecomposition}, and a standard margin assumption on the regression function $\eta$.  These two conditions only concern the marginal distribution of the pair $(X,Y)$, and are therefore presented as subsets of the class $\mathcal{P}$ of all distributions on $\mathbb{R}^d \times\{0,1\}$.  

\begin{definition}
\label{def:smoothness}
Fix $\boldsymbol{\beta} = (\beta_1, \ldots, \beta_d)^T \in (0,1]^{d}$ and $C_{\mathrm{S}} \geq 1$. For $\omega = (\omega_1, \ldots, \omega_d)^T \in \{0,1\}^d \setminus \{\mathbf{0}_d\}$, let $\beta_{\omega} := \min\{\beta_j : \omega_j = 1\}$.  Let $\mathcal{P}_{\mathrm{S}}(\boldsymbol{\beta}, C_{\mathrm{S}})$ denote the class of distributions for which 
\begin{align}\label{def:smoothness_eq}
|f_\omega(x_1)-f_\omega(x_2)|\leq C_{\mathrm{S}} \cdot \|x_1^{\omega}-x_2^{\omega}\|^{\beta_\omega}_2
\end{align}
for all $x_1, x_2 \in \mathrm{supp}(\mu)$ and all $\omega \in \{0,1\}^d\setminus \{\mathbf{0}_d\}$. 
\end{definition}

The form of $\beta_\omega$ in Definition~\ref{def:smoothness} imposes a mild natural ordering on the smoothness, namely that $f_{\omega'}$ is at least as smooth as $f_{\omega}$ when $\omega' \prec \omega$. This only rules out situations where higher-order interactions in the anova decomposition of $\eta$ in \eqref{eq:etadecomposition} are allowed to be smoother than the corresponding main univariate contributions.

In our main results below, we will consider distributions $Q$ in the intersection of $\mathcal{Q}_{\mathrm{E}}(\Omega_{\star}, c_{\mathrm{E}}, \mathcal{O})$ from Definition~\ref{def:effectivespace} and for which the corresponding $P$ belongs to $\mathcal{P}_{\mathrm{S}}(\boldsymbol{\beta}, C_{\mathrm{S}})$, thus~\eqref{def:smoothness_eq} is in fact only imposing an additional constraint for $\omega \in \Omega_{\star}\cup L(\Omega_\star)$.  A straightforward calculation shows that if $Q\in\mathcal{Q}_{\mathrm{E}}(\Omega_{\star},c_{\mathrm{E}},\mathcal{O})$ and $P\equiv P_Q \in \mathcal{P}_{\mathrm{S}}(\boldsymbol{\beta}, C_{\mathrm{S}})$, then the corresponding regression function satisfies
\[
|\eta(x_1)-\eta(x_2)|\leq 2^d \cdot C_{\mathrm{S}} \cdot \|x_1-x_2\|^{\min_{\omega\in\Omega_\star}\{\beta_\omega\}}_2
\]
for all $x_1, x_2 \in \mathrm{supp}(\mu)$.

Our final definition is the standard margin condition \citep[e.g.][]{polonik1995measuring,mammen1999smooth}. 
\begin{definition}
\label{def:margin}
Fix $\alpha\in[0,\infty)$ and $C_{\mathrm{M}} \geq 1$. Let $\mathcal P_\mathrm{M}(\alpha, C_\mathrm{M})$ denote the class of distributions for which
\begin{align*}
    \mu\Bigl(\bigl\{x \in \IR^d : \bigl|\eta(x)-1/2\bigr| < t \bigr\} \Bigr) \leq C_{\mathrm{M}} \cdot t^{\alpha}
\end{align*}
for all $t>0$.
\end{definition}
  
In Theorem~\ref{thm:minmax_bounds} below we provide our main minimax result, in which the class of distributions of interest is the intersection of the classes given in Definitions~\ref{def:missingness} to~\ref{def:margin}. More precisely, for $\Omega_{\star} \in \mathcal{I}(\{0,1\}^d)\setminus\{ \{\mathbf{0}_d\}, \emptyset \}$, $c_{\mathrm{E}} \in [0,1/4]$, $\boldsymbol{\gamma} \in [0, \infty)^d$, $C_{\mathrm{L}}>1$, $\boldsymbol{\beta} \in (0,1]^{d}$, $C_{\mathrm{S}} \geq 1$, $\alpha \in [0, \infty)$,  $C_{\mathrm{M}} \geq 1$ and $\mathcal{O} \subseteq \{0,1\}^d$, we write 
\begin{align}
\mathcal{Q}'_{\mathrm{Miss}} &\phantom{:}\equiv \mathcal{Q}'_{\mathrm{Miss}}( \Omega_{\star}, c_{\mathrm{E}}, \boldsymbol{\gamma}, C_{\mathrm{L}}, \boldsymbol{\beta}, C_{\mathrm{S}} , \alpha, C_{\mathrm{M}},\mathcal{O}) \label{def:Q_miss'}
\\ & := \mathcal{Q}_{\mathrm{E}}(\Omega_{\star},c_{\mathrm{E}},\mathcal{O}) \cap \mathcal{Q}_{\mathrm{L}}(\boldsymbol{\gamma},C_{\mathrm{L}}, \mathcal{O}) \cap \{Q \in \mathcal{Q} : P \equiv P_Q \in \mathcal P_{\mathrm{S}}(\boldsymbol{\beta},C_{\mathrm{S}})\cap\mathcal P_{\mathrm{M}}(\alpha,C_{\mathrm{M}})\}.\nonumber
\end{align}
As mentioned above, in the theorem we consider the excess risk conditional on the values of the observation indicators $O_1, \ldots, O_n$. To that end, for fixed $o_1, \ldots, o_n \in \mathcal{O}$, let $n_\omega  := \sum_{i=1}^n \mathbbm{1}_{\{\omega \preceq o_i\}}$ denote the number of available cases for the observation pattern $\omega \in \{0,1\}^d$, and define $\mathcal{N} := \{\omega \in \{0,1\}^d : n_{\omega} > 0\}$ to be the set of observation patterns for which we have training data. 

\begin{theorem}\label{thm:minmax_bounds}
Fix $d, n \in\mathbb{N}$, $\mathcal{O} \subseteq \{0,1\}^d$, $o_1, \ldots, o_n \in \mathcal{O}$, $\Omega_{\star} \in \mathcal{I}(\{0,1\}^d)\setminus\{ \{\mathbf{0}_d\} ,\emptyset\}$, $c_{\mathrm{E}} \in [0,1/4]$, $\boldsymbol{\gamma} \in [0, \infty)^d$, $C_{\mathrm{L}}>1$, $\boldsymbol{\beta}\in(0,1]^d$, $C_{\mathrm{S}} \geq 1$, $\alpha \in [0, \infty)$ and $C_{\mathrm{M}} \geq 1$.  Suppose that $\min_{\omega \in \Omega_{\star}} \gamma_{\omega} > \max_{\omega\in\Omega_{\star}} \beta_\omega/d_\omega$, $\max_{\omega\in\Omega_\star}\alpha\beta_\omega \leq \min_{\omega \in \Omega_{\star}} d_\omega$, $c_{\mathrm{E}}\leq (16|\Omega_\star|)^{-1}$, 
$C_{\mathrm{L}}\geq   (8d)^{(1+d) (1+\|\boldsymbol{\gamma}\|_{\infty})}$ and $C_{\mathrm{M}} \geq \max_{\omega\in\Omega_\star}\bigl(1 + 6\cdot 4^{d/\beta_\omega} \bigr)$.  
Let
\begin{equation}
\label{eq:minimaxrate}
R := \max_{\omega \in \Omega_{\star} \cap \mathcal{N}} \Bigl\{ n_\omega^{-\frac{\beta_\omega\gamma_\omega(1+\alpha)}{\gamma_\omega(2\beta_\omega+d_\omega)+\alpha\beta_\omega}} \Bigr
\}. 
\end{equation}
Then, there exist constants $0 < c < C$ (neither of which depend on $n$ nor $o_1, \ldots, o_n$) such that
\begin{align}
    c\cdot \Bigl(R
+ \mathbbm{1}_{\{\Omega_\star \cap \mathcal{N}^c \neq \emptyset\}} \Bigr) &\leq \inf_{\hat C\in\mathcal C_n} \sup_{Q \in \mathcal{Q}_{\mathrm{Miss}}'} \mathbb{E}_{Q}\Bigl\{\mathcal{E}_{P_Q}(\hat C) \Bigm| O_1 = o_1, \ldots, O_n = o_n \Bigr\}  \nonumber
\\ & \leq C \cdot \log_+^{\frac{1+\alpha}{2}}\Bigl( \min_{\omega \in \Omega_{\star}\cap \mathcal{N}} n_{\omega} \Bigr) \cdot R + \mathbbm{1}_{\{\Omega_\star \cap \mathcal{N}^c \neq \emptyset\}}. \label{eq:minmax_bound}
\end{align}
\end{theorem}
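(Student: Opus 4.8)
The plan is to prove the lower bound and the upper bound separately, since they require completely different machinery. For the lower bound, the term $\mathbbm{1}_{\{\Omega_\star\cap\mathcal{N}^c\neq\emptyset\}}$ should be handled first and in isolation: if there exists $\omega\in\Omega_\star$ with $n_\omega=0$, then no available case ever sees the coordinates indexed by $\omega$, so one can construct two distributions in $\mathcal{Q}'_{\mathrm{Miss}}$ agreeing on all $f_{\omega'}$ with $\omega'\neq\omega$ but with $f_\omega$ flipped in sign; these are indistinguishable from the training data yet incur a constant excess risk, giving the $\Omega(1)$ lower bound. For the $R$ part, I would fix the maximising $\omega^\dagger\in\Omega_\star\cap\mathcal{N}$ and build a finite family of distributions indexed by sign vectors (an Assouad-type or Varshamov–Gilbert construction) that differ only through bump perturbations of $f_{\omega^\dagger}$, placed on a grid in the $d_\omega$-dimensional space of the relevant coordinates; the bump width $h$ is tuned so that the smoothness constraint $\beta$ (Definition~\ref{def:smoothness}), the margin constraint $\alpha$ (Definition~\ref{def:margin}), the lower-density constraint $\gamma_\omega$ (Definition~\ref{def:lowerdensity}), and the signal-strength constraint $c_{\mathrm{E}}$ (Definition~\ref{def:effectivespace}) are all respected, and the number of grid points, the per-bump $\chi^2$ (or KL) contribution to the $n_{\omega}$ available cases, and the per-bump excess-risk contribution balance to give exactly the exponent in~\eqref{eq:minimaxrate}. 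The technical care here is in checking that these perturbed $\eta$'s admit a valid anova decomposition with all the other $\sigma^2_{\omega'}$ unchanged (in particular that $\sigma^2_{\omega'}=0$ is preserved for $\omega'\in U(\Omega_\star)$ and that the perturbation of $f_{\omega^\dagger}$ does not leak into ancestor terms), and in handling the conditioning on $O_1=o_1,\dots,O_n=o_n$ so that the effective sample size for $\omega^\dagger$ really is $n_{\omega^\dagger}$; the missing-not-at-random flexibility in Definition~\ref{def:missingness} means one must specify the full joint law of $(X,Y,O)$ for each member of the family, not just $P$.

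For the upper bound, the natural route is to analyse (a simplified version of) the HAM classifier from Algorithm~\ref{alg:nonadaptive}: estimate each $f_\omega$ for $\omega\in\mathcal{N}$ by a $k_\omega$-nearest-neighbour regression on the $n_\omega$ available cases (after subtracting the already-estimated lower-order terms), hard-threshold to kill the ones with no detectable signal, sum to get $\hat\eta$, and output $\mathbbm{1}_{\{\hat\eta\geq 1/2\}}$. One first shows that if $\Omega_\star\cap\mathcal{N}^c\neq\emptyset$ the claimed bound is trivial because the indicator term is $1$ and excess risk is always $\le1$; so assume $\Omega_\star\subseteq\mathcal{N}$. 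The key pointwise bound is the standard bias–variance decomposition for $k$-NN: bias $\lesssim C_{\mathrm{S}} (k_\omega/(n_\omega\rho))^{\beta/d_\omega}$ controlled via the lower density $\rho_{\mu_{\omega|o},d_\omega}$, and stochastic error $\lesssim \sqrt{\log/k_\omega}$, uniformly over $x$ outside a set of small $\mu_\omega$-measure (this is where the $\gamma_\omega$ tail condition enters — the bad set where the lower density is too small has measure $\le C_{\mathrm L}\xi^{\gamma_\omega}$). Choosing $k_\omega\asymp n_\omega^{2\beta/(2\beta+d_\omega)}$ up to the right $\gamma_\omega$-dependent correction and a $\log$ factor, summing the errors over the (finitely many) $\omega\in\Omega_\star\cup L(\Omega_\star)$ via a union bound, and then converting the sup-norm-type control on $|\hat\eta-\eta|$ into excess risk through the margin condition (the standard argument: $\mathcal{E}_P(\hat C)\lesssim \int |2\eta-1|\mathbbm{1}_{\{|\hat\eta-\eta|\ge|\eta-1/2|\}} \lesssim \epsilon^{1+\alpha}$ where $\epsilon$ is the uniform error level) yields $R$ up to the $\log_+^{(1+\alpha)/2}$ factor.

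The main obstacle I expect is the recursive/sequential nature of the estimator: because $\hat f_\omega$ is built from residuals after plugging in $\hat f_{\omega'}$ for $\omega'\prec\omega$, the errors propagate down the lattice, and one needs a careful induction over the partial order $\preceq$ showing that the accumulated error at level $\omega$ is still dominated by the single worst term $n_{\omega^\dagger}^{-\dots}$ rather than blowing up by a factor depending on $d$ or $|\Omega_\star\cup L(\Omega_\star)|$ in a way that would spoil the constant. Coupled with this is the thresholding analysis: one must show that with high probability the threshold correctly identifies $\{\omega:\sigma^2_\omega>0\}$ — using $c_{\mathrm{E}}>0$ is not needed for Theorem~\ref{thm:minmax_bounds} (where $c_{\mathrm E}$ may be $0$), so in fact for this theorem the thresholding step can be dispensed with or its failure absorbed, whereas the genuinely delicate version is deferred to Theorem~\ref{thm:nonadaptiveUBnew}. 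A secondary subtlety is that the lower density governing the $k$-NN bias is $\rho_{\mu_{\omega|o},d_\omega}$ (the law of the available cases) while the excess risk integrates against $\mu_\omega$ (the law of the test point), so the bias bound must be transported from one measure to the other — this is exactly what the $\min_{o}$ inside Definition~\ref{def:lowerdensity} is engineered to permit, and invoking it correctly is the crux of the argument under missingness.
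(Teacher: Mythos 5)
Your proposal follows essentially the same route as the paper's proof: a modified Assouad construction for the lower bound (the paper's Lemma~\ref{lem:assouad}) perturbing $f_{\omega^\dagger}$ on a grid, plus an $O\indep(X,Y)$ choice to make the conditioning on $(o_1,\dots,o_n)$ tractable; and for the upper bound, pointwise $k$-NN bias--variance bounds on high-probability events, induction over $\preceq$ to propagate errors through the recursive estimator (the paper's Lemma~\ref{lem:f_is_not_zero} with the $\kappa_\omega$ recursion), a peeling argument over levels of the lower density, and the standard margin-condition conversion to excess risk (Lemma~\ref{lem:hatC=BayesC}, Proposition~\ref{prop:rate_if_Omega_correct}). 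Your observations about the $\min_o$ in Definition~\ref{def:lowerdensity} being exactly what transports the bias bound from $\mu_{\omega|o}$ to $\mu_\omega$, and about the thresholding step being unnecessary for this theorem (since an oracle $\hat C_{\Omega_\star}$ suffices), are both correct and are precisely how the paper proceeds.

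Two technical points that your sketch passes over and that turn out to require real construction work in the paper. First, when $c_{\mathrm{E}}>0$ and $|\Omega_\star|\geq 2$, perturbing only $f_{\omega^\dagger}$ with all other $f_\omega$ set to zero leaves $\sigma^2_\omega=0$ for $\omega\in\Omega_\star\setminus\{\omega^\dagger\}$, which violates membership in $\mathcal{Q}_{\mathrm{E}}(\Omega_\star,c_{\mathrm{E}},\mathcal{O})$; the paper handles this by superimposing point masses at $(1+r)\cdot(s\odot\omega)$ for $\omega\in\Omega_\star\setminus\{\omega^\dagger\}$ (the mixture construction in~\eqref{def:mu_mixture_LB} and~\eqref{def:eta_mixture_LB}), carrying fixed nonzero $f_\omega$, and then re-verifying $\mathcal{Q}_{\mathrm{L}}$, $\mathcal{P}_{\mathrm{S}}$, and $\mathcal{P}_{\mathrm{M}}$ for the mixture. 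Second, making the lower-density condition in Definition~\ref{def:lowerdensity} exactly saturate at the desired bump scale requires \emph{two} entirely distinct marginal constructions according to whether $\gamma_{\omega^\dagger}\geq 1$ or $\gamma_{\omega^\dagger}<1$ (Sections~\ref{subsubsec:LBlight} and~\ref{subsubsec:LBheavy}): in the heavy-tailed case one cannot simply place mass on a compact grid near the origin without over-concentrating, so the grid is stretched to width $r\gg 1$ and the ancillary mass moved to the far tail. Neither of these is a flaw in your strategy --- they are exactly the places where ``tuning $h$ so that all the constraints are respected'' becomes nontrivial --- but a proof written from your sketch would have to discover both before it closes.
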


The proof of Theorem~\ref{thm:minmax_bounds} is given in Section~\ref{sec:proofs}. The upper bound in~\eqref{eq:minmax_bound} is attained by an \emph{oracle} version of our algorithm presented in Section~\ref{sec:algorithmUB}, which is allowed access to the set $\Omega_{\star}$. Note, however, that our HAM classifier in Algorithm~\ref{alg:nonadaptive}, which doesn't have access to $\Omega_{\star}$ also attains the same upper bound (up to poly-logarithmic factors), subject to a sample size condition; see Theorem~\ref{thm:nonadaptiveUBnew} for a precise statement. 

Theorem~\ref{thm:minmax_bounds} establishes the minimax rate of convergence (up to polylogarithmic factors) under our framework. In settings where $\Omega_{\star} \subseteq \mathcal{N}$, the rate is given by the quantity $R$ in~\eqref{eq:minimaxrate}, which elucidates the delicate interplay between the parameters that index our class $\mathcal{Q}_{\mathrm{Miss}}'$, as well as the number of available cases for each of the observation patterns in $\Omega_{\star}$. To provide some further intuition, the quantity $n_\omega^{-\frac{\beta_\omega\gamma_\omega(1+\alpha)}{\gamma_\omega(2\beta_\omega+d_\omega)+\alpha\beta_\omega}}$
reflects the contribution to the rate arising from the difficulty of estimating a non-zero $\omega$-way interaction term $f_{\omega}$ in the anova decomposition in \eqref{eq:ANOVAdecomp}.  Crucially, only the $n_{\omega}$ available cases in the training data set may be used to estimate this interaction term. Then the rate $R$ we obtain is given by the worst case over those $\omega$ for which the corresponding $f_{\omega}$ is non-zero as long as we have data available with which to estimate these, i.e. $\Omega_{\star} \subseteq \mathcal{N}$.  Given two observation patterns $\omega\succ\omega'\in\{0,1\}^d$, we have $n_\omega \leq n_{\omega'}$, $\beta_{\omega}\leq \beta_{\omega'}$, $\gamma_{\omega}\leq \gamma_{\omega'}$, and $d_{\omega} > d_{\omega'}$. Therefore we have
\[ 
n_\omega^{-\frac{\beta_\omega\gamma_\omega(1+\alpha)}{\gamma_\omega(2\beta_\omega+d_\omega)+\alpha\beta_\omega}}
>
n_{\omega'}^{-\frac{\beta_{\omega'}\gamma_{\omega'}(1+\alpha)}{\gamma_{\omega'}(2\beta_{\omega'}+d_{\omega'})+\alpha\beta_{\omega'}}}.
\]
The consequence of this is that if $f_{\omega}$ is non-zero, then estimating $f_{\omega'}$ will be easier than estimating $f_{\omega}$. This is why the maximum in $R$ is taken only over the observation patterns in $\Omega_\star$.  When $|\Omega_{\star}| \geq 2$, the maximum in $R$ may be attained by a pattern $\omega$ with large values of $\gamma_{\omega}$ and $\beta_{\omega}$, and low dimension $d_{\omega}$, but for which we only have few available cases. Alternatively, it may be that the rate is driven by a pattern $\omega$ for which a large quantity of data is available, but the corresponding $f_{\omega}$ is difficult to estimate (i.e.~with unfavourable $\gamma_{\omega}$, $\beta_{\omega}$, and $d_{\omega}$).  We provide explicit examples of the rate $R$ that we encounter in three different settings in Section~\ref{sec:numericalresults}.   

The indicator function on the left-hand side in~\eqref{eq:minmax_bound} shows that if there are no available cases for one of the patterns in $\Omega_{\star}$, then the worst case excess risk for any algorithm is bounded below by a constant $c >0$. The intuition is that, for the distributions in our class where the $\omega$-way interaction term $f_{\omega}$ is non-zero, this interaction may only be accurately estimated using observations that are available cases for $\omega$. The corresponding upper bound in this case follows from the fact that the excess risk is always bounded by $1$.    

Multiple strengths of our framework are highlighted by Theorem~\ref{thm:minmax_bounds}. We see that if $\mathbf{1}_d \notin \Omega_{\star}$, then consistent (and rate optimal) classification is still possible even when we have no complete cases in our training data set.  Moreover, when there is actually no missing data (so $n_{\omega} = n$ for every $\omega$), the rate under our framework is $n^{-\min_{\omega \in \Omega_{\star}}\frac{\beta_\omega\gamma_{\omega}(1+\alpha)}{\gamma_{\omega}(2\beta_\omega+d_{\omega})+\alpha\beta_\omega}}$. Comparing this rate to what one may expect when $\Omega_{\star} = \{\mathbf{1}_{d}\}$ and $c_{\mathrm{E}} = 0$ (thus placing no restriction in Definition~\ref{def:effectivespace}), we see that we obtain a faster rate of convergence than in the corresponding $d$-dimensional classification problem under just tail, smoothness and margin conditions, since $\min_{\omega \in \Omega_{\star}}\frac{\beta_\omega\gamma_{\omega}(1+\alpha)}{\gamma_{\omega}(2\beta_\omega+d_{\omega})+\alpha\beta_\omega} \geq \frac{\beta_{\min}\gamma_{\min}(1+\alpha)}{\gamma_{\min}(2\beta_{\min}+d)+\alpha\beta_{\min}}$, where $\beta_{\min} := \beta_{\mathbf{1}_d}$.  Here the inequality is strict if $\mathbf{1}_d \notin \Omega_{\star}$.  Finally, in situations where all of the variables are important but there are limited interactions between the covariates, for instance in an additive model with $\Omega_{\star} = \{\omega \in \{0,1\}^d : d_\omega = 1\}$, our rate $R$ does not depend on the ambient dimension $d$.  

In the theorem, asking that $\Omega_{\star}\not\in \{\{\mathbf{0}_d\},\emptyset\}$ is only ruling out trivial cases. When $\Omega_{\star} = \{\mathbf{0}_d\} $, the regression functions are constant and the classification problem reduces to a binomial testing problem, where based on just the labels $Y_1, \ldots, Y_n \sim \mathrm{Bernoulli}(1/2 + f_{\mathbf{0}_d})$ we would like to test whether $f_{\mathbf{0}_d}$ is greater or less than $0$.  If $\Omega_\star=\emptyset$, the regression function is the constant $1/2$, and thus we can obtain an excess risk of $0$ with a random guess (in fact, the excess risk is $0$ for any classifier). The restrictions on the relationships between the parameters are also mild. The condition that $\min_{\omega \in \Omega_{\star}} \gamma_{\omega} > \max_{\omega\in\Omega_{\star}} \beta_{\omega}/d_{\omega}$ (which is only needed for the upper bound) rules out distributions where the tails of the marginal distributions are so heavy that there is very little training data in regions where there is a reasonable chance that we may observe our test point.  The requirement that $\max_{\omega\in\Omega_{\star}}\alpha\beta_{\omega} \leq \min_{\omega \in \Omega_{\star}} d_{\omega}$  (which is only needed for the lower bound), is related to the condition that $\alpha \beta \leq d$ used in the standard $d$-dimensional nonparametric classification problem. This latter condition rules out \emph{super-fast rates} \citep{audibert2007fast}, and it holds under mild conditions (see \citet[Lemma~S15]{reeve2021adaptive}). The additional constraints on the constants $c_{\mathrm{E}}$, $C_{\mathrm{L}}$ and $C_{\mathrm{M}}$ are also only used in the proof of the lower bound; these restrictions may be relaxed slightly (indeed the bounds used in the proofs in Section~\ref{sec:lower_bound} are weaker), but for brevity we present the simpler bounds in the statement of the theorem. 

Finally, note that throughout this section, and in fact the whole paper, we suppose that our test point follows the marginal distribution $P \equiv P_Q$ of $(X,Y)$ when $(X,Y,O) \sim Q$. One may be interested instead in settings where the test point is arising from the conditional distribution $(X,Y) | \{O = \mathbf{1}_d\}$, say; in this case our theory may still be applied with minor changes, e.g., $\mu$ in \eqref{eq:excesserror} (and other places) should be replaced with $\mu_{\mathbf{1}_d | \mathbf{1}_d}$.

\section{The Hard-thresholding Anova Missing data classifier \label{sec:algorithmUB}}

In this section we introduce our new algorithm for classification with missing data.  As mentioned in the previous section, we do not need to know in advance which of the interaction terms in~\eqref{eq:etadecomposition} are nonzero. The full procedure, which we call the Hard-thresholding Anova Missing data (HAM) classifier, is given in Algorithm~\ref{alg:nonadaptive}.   The main result in this section (Theorem~\ref{thm:nonadaptiveUBnew}) establishes that the HAM classifier achieves the upper bound in Theorem~\ref{thm:minmax_bounds}, subject to a sample size condition, despite the fact that our algorithm does not have access to the set $\Omega_{\star}$ from Definition~\ref{def:effectivespace}.

Before presenting our theoretical results, we describe the key ideas underpinning our approach.  The first main step begins by estimating $\mathbb{P}(Y = 1)$ via the proportion of labels in the training data that belong to class $1$, which provides an estimate of $f_{\mathbf{0}_d}$.  The algorithm then proceeds to estimate each of the functions $f_\omega$ for $\omega \in \mathcal{N}$, in order of increasing dimension $d_{\omega}$.  More precisely, for each $\omega \in \mathcal{N}$, the estimator $\hat{f}_{\omega}$ of $f_{\omega}$ is based on a nearest neighbour type algorithm, which uses only the available cases for $\omega$, and where the distance metric is based on the covariates for which the corresponding entries of $\omega$ are equal to $1$. 

\begin{algorithm}[ht]
\caption{The Hard-thresholding Anova Missing data classifier $\hat{C}_{\mathrm{HAM}}$.}\label{alg:nonadaptive}
\begin{algorithmic}[1]
\Input:~Data $D_n= ((X_1^{o_1}, Y_1, o_1), \ldots, (X_n^{o_n}, Y_n,o_n)) \in (\mathbb{R}^d \times \{0,1\} \times \{0,1\}^d)^n$, $\boldsymbol{\gamma} \in [0,\infty)^d$, $\boldsymbol{\beta} \in (0,1]^d$, $\alpha \in [0, \infty)$, and a test point $x_0 \in \mathbb{R}^d$
\EndInput 
\State $\hat{f}_{\mathbf{0}}(\cdot) := \frac{1}{n}\sum_{i=1}^n Y_i -\frac{1}{2}$
\For{$d' = 1, \dots, d$} 
\For{$\omega \in \mathcal{N} $ such that $d_\omega=d'$}
\State $k_{\omega} := 1 + \lfloor n_{\omega}^{\frac{2\beta_{\omega}\gamma_{\omega}}{\gamma_{\omega}(2\beta_{\omega}+d_{\omega}) + \alpha \beta_{\omega}}}\rfloor$;   $\tau_\omega:= 2^{-4} \cdot n_{\omega}^{-\frac{\beta_{\omega}\gamma_{\omega}}{2[\gamma_{\omega}(2\beta_{\omega}+d_{\omega}) + \alpha \beta_{\omega}]}}$;
$N_{\omega} := \{i \in [n] : \omega \preceq o_i\}$
\For{$x \in \{X_i^{o_i} : i \in N_{\omega}\} \cup \{x_0\}$}
\State Let $(X_{(1)_{\omega}}^{\omega}(x),Y_{(1)_{\omega}}(x)),\ldots, (X_{(n_{\omega})}^{\omega}(x),Y_{(n_{\omega})_{\omega}}(x))$ be a reordering of the pairs $ \{(X_i^{o_i}, Y_i) : i \in N_{\omega}\}$ such that $\|X_{(1)_{\omega}}^{\omega}(x) - x^{\omega}\|\leq \cdots \leq \|X_{(n_{\omega})_{\omega}}^{\omega}(x) - x^{\omega}\|$
\State $\hat f_\omega(x): = \frac{1}{k_{\omega}}\sum_{j=1}^{k_{\omega}} Y_{(j)_\omega}(x) -\frac{1}{2} - \sum_{\omega' \prec \omega} \hat{f}_{\omega'}(x)$
\EndFor
\State $\hat\sigma^2_\omega := \frac{1}{n_\omega} \sum_{i \in N_{\omega}} \hat {f}^2_\omega(X_i^{\omega})$
\EndFor
\EndFor 
\State $\hat{\Omega} = \emptyset$
\For{$d' = d, d-1, \ldots,1$}
\For{$\omega \in \mathcal{N}$ with $d_\omega=d'$, $U(\{\omega\}) \cap \hat{\Omega} = \emptyset$}
\If{$\hat\sigma^2_\omega \geq \tau_{\omega}$} {$\hat{\Omega} = \hat{\Omega} \cup \{\omega\}$}
\EndIf
\EndFor
\EndFor
\State $\hat{\eta}(x_0):=\frac{1}{2} + \sum_{\omega\in\hat\Omega \cup L(\hat{\Omega})}\hat f_{\omega}(x_0)$
\Output:~$\hat{C}_{\mathrm{HAM}}(x_0) := \mathbbm{1}_{\{\hat{\eta}(x_0) \geq 1/2\}}$  
\EndOutput
\end{algorithmic}
\end{algorithm}

The second main step of the algorithm aims to determine which of the functions $f_{\omega}$ should be set to zero.  We first calculate a proxy for $\sigma^2_{\omega}$ given in~\eqref{def:sigmaomega} by taking the average after evaluating the estimator $\hat{f}_{\omega}^2$ at each of the available cases for $\omega$ in the training dataset.  We then seek to estimate $\Omega_{\star}$.  Starting with $\omega = \mathbf{1}_d$, and now working in order of decreasing dimension $d_{\omega}$, we do not include a pattern $\omega$ in our estimated set $\hat{\Omega}$ whenever $\hat{\sigma}^2_{\omega}$ is sufficiently small.  If, on the other hand, $\hat{\sigma}^2_{\omega}$ is large then $\omega$ is added to the set $\hat{\Omega}$ and we disregard all $\omega'$ for which $\omega' \preceq \omega$.  This approach ensures that $\hat{\Omega} \in \mathcal{I}(\{0,1\}^d)$.  Our final estimate of the regression function $\eta$ is then based on those $\hat{f}_{\omega}$ for which the pattern $\omega$ is contained in $\hat{\Omega} \cup L(\hat{\Omega})$, that is
\[
\hat{\eta}(\cdot):=\frac{1}{2} + \sum_{\omega\in\hat\Omega \cup L(\hat{\Omega})}\hat f_{\omega}(\cdot)
\] 
We classify the test point $x_0$ according to whether $\hat{\eta}(x_0) \geq 1/2$, or otherwise. 

An appealing feature of our algorithm is that we avoid the need for sample-splitting. However, since our proxy for $\sigma_{\omega}^2$ is calculated by evaluating $\hat{f}_{\omega}$ at the training data and we condition on the observation indicators in our results, we will require a slightly stronger assumption on the properties of the marginal feature distributions in our class, namely we ask for control of the $\mu_{\omega\mid o}$ tails. This amounts to a slightly extended version of Definition~\ref{def:lowerdensity}.

\begin{definitionp}{\ref*{def:lowerdensity}$^{\boldsymbol{+}}$}
\label{def:lowerdensity+}
Fix $\boldsymbol{\gamma} = (\gamma_1, \ldots, \gamma_d)^T \in [0, \infty)^{d}$, $C_{\mathrm{L}} \geq 1$ and $\mathcal{O} \subseteq \{0,1\}^d$. Then let $\mathcal{Q}^{+}_{\mathrm{L}}(\boldsymbol{\gamma}, C_{\mathrm{L}},\mathcal{O})$ denote the class of distributions for which \eqref{eq:lowerdensity} holds and
\begin{align*}
\max_{\tilde{o} \in \mathcal{O} : \omega \preceq \tilde{o}} \mu_{\omega|\tilde{o}}\Bigl( \Bigl\{x \in \IR^d : \min_{o \in \mathcal{O} : \omega \preceq o} \rho_{\mu_{\omega \mid o}, d_{\omega}} (x) < \xi \Bigr\} \Bigr)\leq C_{\mathrm{L}}\cdot \xi^{\gamma_\omega}
\end{align*}
for all $\xi>0$ and all $\omega \in \{0,1\}^d \setminus \{\mathbf{0}_d\}$.
\end{definitionp}

Now, to state our main theoretical result about Algorithm~\ref{alg:nonadaptive}, we will write $
\mathcal{Q}^+_{\mathrm{Miss}} \equiv \mathcal{Q}^+_{\mathrm{Miss}}( \Omega_{\star}, c_{\mathrm{E}}, \boldsymbol{\gamma}, C_{\mathrm{L}}, \boldsymbol{\beta}, C_{\mathrm{S}} , \alpha, C_{\mathrm{M}},\mathcal{O})$ for the class of distributions that arises when we replace 
$\mathcal{Q}_{\mathrm{L}}(\boldsymbol{\gamma},C_{\mathrm{L}}, \mathcal{O})$ in the class $\mathcal{Q}'_{\mathrm{Miss}}$ in~\eqref{def:Q_miss'} with $\mathcal{Q}^+_{\mathrm{L}}(\boldsymbol{\gamma},C_{\mathrm{L}}, \mathcal{O})$. 

\begin{theorem}\label{thm:nonadaptiveUBnew}
Fix $d,n \in\mathbb{N}$, $\mathcal{O} \subseteq \{0,1\}^d$, $o_1, \ldots, o_n \in \mathcal{O}$, $\Omega_{\star} \in \mathcal{I}(\{0,1\}^d)\setminus\{\{\mathbf{0}_d\},\emptyset\}$, $c_{\mathrm{E}} \in (0,1/4]$, $\boldsymbol{\gamma} \in [0, \infty)^d$, $C_{\mathrm{L}} \geq 1$,  $\boldsymbol{\beta}\in(0,1]^d$, $C_{\mathrm{S}} \geq 1$, $\alpha \in [0, \infty)$ and $C_{\mathrm{M}} \geq 1$.  Suppose that $\Omega_{\star} \subseteq \mathcal{N}$ and we have $\min_{\omega \in \Omega_\star} \gamma_{\omega} > \max_{\omega\in\Omega_\star} \beta_{\omega}/d_\omega$. 
Suppose that $n_\omega \geq \log^{\frac{4(\gamma_{\omega}(2\beta_{\omega} + d_{\omega}) + \alpha\beta_{\omega}) }{\beta_{\omega}\gamma_\omega}} \bigl(2|\mathcal{N}| \min_{\omega' \in \Omega_{\star}} n_{\omega'}\bigr)$,
for $\omega \in U(\Omega_{\star}) \cap \mathcal{N}$, then there exists a constant $C_{\mathrm{U}} \geq 1$ (not depending on $n$ nor $o_1, \ldots, o_n$) such that
\begin{align}\label{eq:nonadaptiveUBrate}
&  \sup_{Q \in \mathcal{Q}^+_{\mathrm{Miss}}}  \mathbb{E}_{Q}\Bigl\{\mathcal{E}_{P_Q}(\hat{C}_{\mathrm{HAM}}) \Bigm| O_1 = o_1,\ldots, O_n = o_n \Bigr\} \nonumber
 \\ & \hspace{130pt}  \leq  C_{\mathrm{U}} \cdot \log_+^{\frac{1+\alpha}{2}}\Bigl( \min_{\omega \in \Omega_{\star}} n_{\omega} \Bigr) \cdot \max_{\omega \in \Omega_{\star}} \Bigl\{ n_\omega^{-\frac{\beta_{\omega}\gamma_\omega(1+\alpha)}{\gamma_\omega(2\beta_{\omega}+d_\omega)+\alpha\beta_{\omega}}} \Bigr
\}.
\end{align}
\end{theorem}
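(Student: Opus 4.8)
The plan is to follow, term by term of the anova decomposition, the now-standard analysis of $k$-nearest neighbour classification under a lower-density/tail condition (as in \citet{reeve2021adaptive}), wrapped inside an induction on the dimension $d_\omega$ and supplemented by a new analysis of the hard-thresholding step. One argues throughout conditionally on $O_1 = o_1, \dots, O_n = o_n$. The identity that legitimises the use of available cases is that, for every $\omega \in \mathcal{N}$ and $i \in N_\omega$, $\mathbb{E}_Q(Y_i \mid X_i^\omega, O_i = o_i) = \mathbb{E}_Q(Y_i \mid X_i^\omega) = \tfrac12 + g_\omega(X_i^\omega)$, where $g_\omega := \sum_{\omega' \preceq \omega} f_{\omega'}$: the first equality is exactly~\eqref{eq:missingness}, the second is the orthogonality of the decomposition~\eqref{eq:etadecomposition} (Proposition~\ref{prop:eta_decomposition} and its companions), and $g_\omega$ is Hölder-$\beta$ with constant $2^{d_\omega}C_{\mathrm{S}}$ in $\|x_1^\omega - x_2^\omega\|$ by~\eqref{def:smoothness_eq}. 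I would first dispose of the easy regime: if $\min_{\omega \in \Omega_\star} n_\omega$ is below a constant $N_0$ depending only on the fixed parameters, then $R$ is bounded below and the claim holds by enlarging $C_{\mathrm{U}}$, since $\mathcal{E}_{P_Q} \le 1$. So assume $n_\omega \ge N_0$ for all $\omega \in \Omega_\star$; since $n_{\omega'} \ge n_{\omega''}$ whenever $\omega' \preceq \omega''$ and every element of $L(\Omega_\star)$ lies below some element of $\Omega_\star$, also $n_{\omega'} \ge N_0$ for $\omega' \in L(\Omega_\star)$.

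Second, for each $\omega \in \mathcal{N}$ I would bound the error of the nearest-neighbour average $\bar Y_\omega(x) := \frac1{k_\omega}\sum_{j\le k_\omega} Y_{(j)_\omega}(x)$ as an estimator of $\tfrac12 + g_\omega(x)$, and propagate it through the recursion $\hat f_\omega(x) = \bar Y_\omega(x) - \tfrac12 - \sum_{\omega' \prec \omega}\hat f_{\omega'}(x)$: by induction on $d_\omega$ (base case $\omega = \mathbf{0}_d$ being Hoeffding for $\tfrac1n\sum_i Y_i$), $\hat f_\omega(x) - f_\omega(x)$ is a signed sum over $\omega' \preceq \omega$ of nearest-neighbour errors, hence controlled up to a factor $2^{d}$ by the worst such error. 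Each nearest-neighbour error splits into a bias term, handled via the Hölder property of $g_\omega$ once the distance from $x^\omega$ to its $k_\omega$-th available-case neighbour is controlled, and a stochastic term, handled by Hoeffding's inequality conditional on the covariates (so the neighbour identities are frozen and the labels are independent Bernoulli with the correct means). The neighbour distance is controlled through the lower density: on $\{\min_{o\succeq\omega}\rho_{\mu_{\omega\mid o},d_\omega}(x) \ge \xi\}$ a Chernoff bound gives neighbour distance $\lesssim (k_\omega/(n_\omega\xi))^{1/d_\omega}$ except with probability $\exp(-c\,n_\omega^{a_\omega})$ (with $a_\omega$ the exponent defining $k_\omega$), while the complementary low-density region is treated by the dyadic-annulus device of \citet{reeve2021adaptive}, whose resulting geometric sum converges precisely because $\gamma_\omega > \beta/d_\omega$ (which holds for $\omega \in \Omega_\star$ by hypothesis, and propagates to $L(\Omega_\star)$ via monotonicity of $\gamma_\omega$ and $d_\omega$ along the order) and which, combined with~\eqref{eq:lowerdensity} and the margin condition, contributes a term independent of $\xi$ of the same order as the variance. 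The algorithm's choice $k_\omega \asymp n_\omega^{2\beta\gamma_\omega/(\gamma_\omega(2\beta+d_\omega)+\alpha\beta)}$ is exactly the balance point, yielding a per-pattern estimation error $\varepsilon_\omega \asymp \log_+^{1/2}(\cdot)\,n_\omega^{-\beta\gamma_\omega/(\gamma_\omega(2\beta+d_\omega)+\alpha\beta)}$ off a set of small measure, hence an excess-risk contribution of order $\log_+^{(1+\alpha)/2}(\cdot)\,R_\omega$ with $R_\omega := n_\omega^{-\beta\gamma_\omega(1+\alpha)/(\gamma_\omega(2\beta+d_\omega)+\alpha\beta)}$. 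Crucially these bounds must hold simultaneously at the test point $x_0$ and at every training point $X_i^\omega$ that feeds $\hat\sigma^2_\omega$: this is where the absence of sample-splitting forces the stronger tail control of Definition~\ref{def:lowerdensity+} over every $\mu_{\omega\mid o}$, not merely $\mu_\omega$.

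Third, I would show that on a favourable event of probability $1 - O(R)$ the thresholding step recovers $\Omega_\star$ exactly. For $\omega \in \Omega_\star$: since $\hat f_\omega^2 \ge 0$ one lower-bounds $\hat\sigma^2_\omega$ by averaging $\hat f_\omega^2$ only over the $(1-o(1))n_\omega$ training points outside the low-density set, where $\hat f_\omega^2 \ge (|f_\omega| - \varepsilon_\omega)_+^2$; since $\mathbb{E}_{\mu_{\omega\mid o_i}}(f_\omega^2) \ge \sigma^2_\omega \ge c_{\mathrm{E}}$, a concentration bound gives $\hat\sigma^2_\omega \ge c_{\mathrm{E}} - O(\varepsilon_\omega) - O(\sqrt{\log_+(\cdot)/n_\omega}) \ge \tau_\omega$ once $N_0$ is large, so $\omega$ is retained. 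For $\omega \in U(\Omega_\star)\cap\mathcal{N}$: here $\sigma^2_\omega = 0$, which with continuity of $f_\omega$ and Definition~\ref{def:lowerdensity+} (through which $\mathrm{supp}(\mu_\omega)$ lies in the support of the minimising $\mu_{\omega\mid o}$) forces $f_\omega \equiv 0$ on $\mathrm{supp}(\mu)$; hence $\hat f_\omega$ is pure estimation error, $\hat\sigma^2_\omega \lesssim \varepsilon_\omega^2 + \text{fluctuations}$, and the hypothesised lower bound $n_\omega \ge \log_+^{4(\gamma_\omega(2\beta+d_\omega)+\alpha\beta)/(\beta\gamma_\omega)}\bigl(2|\mathcal{N}|\min_{\omega'\in\Omega_\star}n_{\omega'}\bigr)$ is precisely what makes this strictly below $\tau_\omega$, so $\omega$ is rejected. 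The top-down processing then yields $\hat\Omega \in \mathcal{I}(\{0,1\}^d)$ with $\hat\Omega = \Omega_\star$, hence $\hat\Omega \cup L(\hat\Omega) = \Omega_\star \cup L(\Omega_\star)$.

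Finally, on the favourable event $\hat\eta(x_0) - \eta(x_0) = \sum_{\omega\in\Omega_\star\cup L(\Omega_\star)}(\hat f_\omega(x_0) - f_\omega(x_0))$ (using $f_\omega \equiv 0$ on $\mathrm{supp}(\mu)$ for $\omega \in U(\Omega_\star)$ and~\eqref{eq:etadecomposition}); combining the pointwise bounds with $|2\eta - 1| \le 2|\hat\eta - \eta|$ on $\{\hat C_{\mathrm{HAM}} \ne C^{\mathrm{Bayes}}\}$ and the margin condition (again via the annulus device, now applied to the summed errors) bounds the conditional expected excess risk on this event by a sum of the per-pattern contributions, while its complement contributes at most $O(R)$; since $\gamma_{\omega'} \ge \gamma_\omega$ and $n_{\omega'} \ge n_\omega$ whenever $\omega' \preceq \omega$, the contributions of $L(\Omega_\star)$ are dominated by those of $\Omega_\star$, giving $C_{\mathrm{U}}\log_+^{(1+\alpha)/2}(\min_{\omega\in\Omega_\star}n_\omega)\max_{\omega\in\Omega_\star}R_\omega$. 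The main obstacle, and the most delicate part, is the interaction between the recursion and the lack of sample-splitting: $\hat f_\omega$ must be controlled uniformly over $x_0$ together with all $O(n)$ training points entering $\hat\sigma^2_\omega$, even though at a training point $X_j$ the value $\hat f_\omega(X_j^\omega)$ re-uses $Y_j$ and the lower-dimensional estimates; arranging the conditioning so the ``neighbours frozen, labels independent'' structure survives, and propagating the low-density exceptional sets through the recursion, is exactly where Definition~\ref{def:lowerdensity+} is needed and where the polylogarithmic factor (from requiring the favourable event at level $1-O(R)$) arises.
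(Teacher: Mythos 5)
Your proposal is correct and tracks the paper's own argument closely: establish the oracle bound for $\hat C_{\Omega_\star}$ via high-probability events controlling nearest-neighbour distances and label averages at both the test point and the training points (Lemmas~\ref{lem:E_1}--\ref{lem:f_bound_at_training_points}), show that $\hat\Omega=\Omega_\star$ on an event of probability $1-O(R)$ by concentration of $\hat\sigma_\omega^2$ around $\sigma_\omega^2$ (Proposition~\ref{lem:correct_thresholding}), and combine via $\mathcal E_P(\hat C_{\mathrm{HAM}})\le\mathcal E_P(\hat C_{\Omega_\star})+\mathbbm 1_{\{\hat\Omega\neq\Omega_\star\}}$, with the small-sample regime absorbed into $C_{\mathrm U}$. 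One point where you are more careful than the paper: the step from $\sigma_\omega^2=0$ to $f_\omega\equiv 0$ on $\mathrm{supp}(\mu)$ (used in Lemma~\ref{lem:f_bound_at_training_points}(ii) and implicitly in Lemma~\ref{lem:hatC=BayesC}) is stated without justification there, whereas you correctly observe that it follows from the H\"older continuity of $f_\omega$ together with the $\mathcal Q_{\mathrm L}^+$ condition forcing the supports of the $\mu_{\omega\mid o}$ to coincide $\mu_\omega$-a.e.
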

The proof of Theorem~\ref{thm:nonadaptiveUBnew} is presented in Sections~\ref{subsec:proof2} and~\ref{subsec:proofnonadaptiveUBnew}
. The key to the proof is to establish in Proposition~\ref{lem:correct_thresholding} that $\hat{\Omega} = \Omega_{\star}$ with high probability, under a sample size condition. The result then follows by combining this with the proof of the upper bound in Theorem~\ref{thm:minmax_bounds}.

The main conclusion of Theorem~\ref{thm:nonadaptiveUBnew} is that our classifier $\hat{C}_{\mathrm{HAM}}$ attains the upper bound in Theorem~\ref{thm:minmax_bounds}, subject to a sample size condition.   Note that we do not impose any restrictions on the minimum number of available cases observed for $\omega \in \Omega_{\star} \cup L(\Omega_{\star})$ and that the condition on $n_\omega$ for $\omega \in U(\Omega_{\star}) \cap \mathcal{N}$ only rules out an extreme imbalance between the number of available cases for the different observation patterns. For instance, under our condition on the relationship between the parameters, if we have at least a polylogarithmic proportion of the data available for the patterns in $U(\Omega_{\star}) \cap \mathcal{N}$, e.g. $n_\omega \geq \log^{8 + 4(d+\alpha)/\beta_{\omega}} (2^{d+1} n)$, then the sample size condition holds. The intuition is that we only need a relatively small amount of data to detect that $f_\omega \equiv 0$ for $\omega \in U(\Omega_{\star})$.
 
A thorough investigation of the proof of Theorem~\ref{thm:nonadaptiveUBnew} in fact allows us to make nontrivial statements about the excess risk of $\hat{C}_{\mathrm{HAM}}$ even when our sample size condition is not satisfied.  In particular, we could elucidate the precise consequence of $\hat\Omega$ in Algorithm~\ref{alg:nonadaptive} not coinciding with $\Omega_{\star}$ in small sample settings.  First, whenever $\Omega_{\star} \subseteq \hat{\Omega} \cup L(\hat{\Omega})$, we may seek to control the excess risk in a similar way to \eqref{eq:nonadaptiveUBrate}, with the $\Omega_{\star}$ in the maximum replaced by $\hat{\Omega}$.  On the other hand, if $\Omega_{\star}  \nsubseteq \hat{\Omega} \cup L(\hat{\Omega})$ (or if $\Omega_{\star} \cap \mathcal{N}^c \neq \emptyset$), then $\hat{C}_{\mathrm{HAM}}$ will estimate a non-zero $f_{\omega}$ by $0$, and we will only have a trivial bound on the excess risk.

\section{Numerical results\label{sec:numericalresults}}
In this section we provide empirical evidence of the strength of the proposed methodology to complement the theoretical results from the previous section.  In particular, we will compare the performance of our $\hat{C}_{\mathrm{HAM}}$ classifier applied with canonical values of the class indices $\alpha$, $\boldsymbol{\beta}$, $\boldsymbol{\gamma}$, as well as a cross-validation version of our HAM algorithm (see Section~\ref{subsec:cvHAM}
) which only requires a set of possible values that $\alpha$, $\boldsymbol{\beta}$, $\boldsymbol{\gamma}$ may take. We also investigate the performance of an oracle version of the HAM classifier which has access to the unknown set $\Omega_{\star}$. The performance is compared with standard approaches used to deal with missing data in classification problems, namely complete case analysis and several different imputation methods.  We also include the comparison with an oracle imputation approach, which has access to the full dataset.

Our experiments\footnote{The code for the simulations is available at \href{https://github.com/TorbenSell/missing_data/}{\url{https://github.com/TorbenSell/missing\_data/}.}} will consider the following simulation settings: 

\medskip

\noindent \textbf{Setting 1}: We adopt the setting in Example~\ref{ex:missingness1}.  Fix $d=2$, $u=(2^{1/2},0)^T$ and $\Sigma = I$ the $2$-dimensional identity matrix.  Suppose that $\mathbb{P}(Y = 1) = 1/2$, and $X \mid \{Y = r\} \sim N_{d}((-1)^r u, \Sigma)$, for $r \in \{0,1\}$ and that $O \indep (X,Y)$. Let $\mathbb{P}(O = \omega) = p^{d_{\omega}}(1-p)^{d-d_{\omega}}$, for $\omega \in \{0,1\}^d$ and homogeneous observation probability $p=0.7$.  In this case, for $x = (x_1,x_2)^T \in \mathbb{R}^2$, the regression function is 
\[
\eta(x) := \frac{1}{2} + \frac{1}{2}\tanh(-2^{1/2} x_1).
\]
It follows that $f_{e_1}(x_1,x_2) = \frac{1}{2} \tanh(-2^{1/2}x_1)$ and that all other $f_{\omega}$ are $0$. The corresponding distribution $Q$ of $(X, Y, O)$ belongs to $\mathcal{Q}_{\mathrm{Miss}}^{+}$, with $\Omega_{\star} = \{(1,0)^T\}$, any $\boldsymbol{\gamma} = (\gamma_1,1)\in [0,1)\times \{1\}$, $\alpha = 1$, $\boldsymbol{\beta}=\mathbf{1}_{2}$, and $\mathcal{O} = \{0,1\}^2$,  see Section~\ref{sec:intuitionfor1} 
for full details.  Therefore, we have 
\[
R = n_{(1,0)}^{-\frac{2\gamma_1}{3\gamma_1 + 1}},
\] 
which can be arbitrarily close to $n_{(1,0)}^{-1/2}$ as $\gamma_1 \rightarrow 1$.

\noindent \textbf{Setting 2}: In Example~\ref{ex:missingness2}, take $\mu = U([0,1]^4)$ to be the uniform distribution on the $4$-dimensional unit cube. For $x=(x_1,x_2,x_3,x_4)^T\in[0,1]^4$, we let 
\[
\eta(x):= \frac{1}{2} + \frac{(x_2-1/2) \cdot x_3^2}{2} + \frac{x_4 - 1/2}{2}
\]
and suppose that $Y \indep O \mid X$.
Here the nonzero $f_{\omega}$ functions are 
\[
f_{e_2}(x) = \frac{(x_2-1/2)}{6}; \quad f_{e_2+e_3}(x)=\frac{(x_2-1/2)(x_3^2-1/3)}{2}; \quad f_{e_4}(x)=\frac{(x_4-1/2)}{2}.
\]
Here we have $Q \in \mathcal{Q}_{\mathrm{Miss}}^{+}$, with $\Omega_{\star} = \{(0,1,1,0)^T, (0,0,0,1)^T\}$, $\boldsymbol{\gamma} =(1,\gamma_2,\gamma_3, \gamma_4)^T$ for any $\gamma_2,\gamma_3,\gamma_4\in[0,\infty)$, $\alpha = 1$, $\boldsymbol{\beta}=\mathbf{1}_4$, and $\mathcal{O} = \{(1,1,1,0)^T, (1,0,0,1)^T\}$, see Section~\ref{sec:intuitionfor1}
. We thus have 
\[
R = n_{(0,1,1,0)}^{-\frac{2(\gamma_2 \wedge \gamma_3)}{4(\gamma_2 \wedge \gamma_3)+1}} \vee n_{(0,0,0,1)}^{-\frac{2\gamma_4}{3\gamma_4+1}},
\]
which, since we can take any arbitrarily large $\boldsymbol{\gamma}$, could be as fast as $R = n_{(0,1,1,0)}^{-1/2} \vee n_{(0,0,0,1)}^{-2/3}$.

\noindent \textbf{Setting 3}: We use Example~\ref{ex:missingness3}, where for $x = (x_1,x_2)^T \in [0,1]^2$ we have 
\[
\eta(x):= \frac{1}{4} + \frac{x_1}{4} + \frac{1}{4} \cos(4\pi x_2).
\]
Here the nonzero $f_{\omega}$ are
\[
f_{e_1}(x)= \frac{x_1-1/2}{4}; \quad f_{e_2}(x)=\frac{1}{4} \cos(4\pi x_2).
\]
We have $Q \in \mathcal{Q}_{\mathrm{Miss}}^{+}$, with $\Omega_{\star} = \{e_1,e_2\}$, any $\boldsymbol{\gamma}=(\gamma_1,\gamma_2) \in [0,\infty)^2$, $\alpha = 1$, $\boldsymbol{\beta}=\mathbf{1}_2$ and $\mathcal{O} = \{0,1\}^2$, see Section~\ref{sec:intuitionfor1}
. Here, we have \[
R = n_{(1,0)}^{-\frac{2\gamma_1}{3\gamma_1+1}} \vee n_{(0,1)}^{-\frac{2\gamma_2}{3\gamma_2+1}},
\] 
which approaches $(n_{(1,0)}\wedge n_{(0,1)})^{-2/3}$ as $\gamma_1,\gamma_2 \rightarrow \infty$. 

In each of these settings, we conduct an experiment by generating a training dataset of size $n$, for $n \in \{500,1000\}$.  In particular, let $((X_1,Y_1,O_1), \ldots, (X_n,Y_n,O_n)) \sim Q^n$, and a test dataset consisting of $m = 1000$ independent copies of $(X,Y)$, when $(X, Y, O) \sim Q$.  For each algorithm, which will only have access to $(X_1^{O_1},Y_1,O_1), \ldots, (X_n^{O_n},Y_n,O_n)$, we will present a boxplot of the empirical test error for 100 repeats of the experiment.  Here the empirical test error is the proportion of incorrect classifications made on the test dataset.  We will compare several different classification algorithms, the first three being our methods:

\smallskip

\noindent \textbf{HAM}: Our Algorithm~\ref{alg:nonadaptive} applied with $\boldsymbol{\beta} = \boldsymbol{\gamma} = \mathbf{1}_d$,  $\alpha  = 1$, and training data $(X_1^{O_1},Y_1,O_1)$, $\ldots, (X_n^{O_n},Y_n,O_n)$.

\noindent \textbf{cvHAM}: Our Algorithm~\ref{alg:nonadaptive} applied with $\boldsymbol{\beta} = \hat{\boldsymbol{\beta}} $, $\boldsymbol{\gamma} = \hat{\boldsymbol{\gamma}}$, and $\alpha = \hat{\alpha}$, and training data $(X_1^{O_1},Y_1,O_1), \ldots, (X_n^{O_n},Y_n,O_n)$. Here $\hat{\boldsymbol{\beta}}$, $\hat{\boldsymbol{\gamma}}$,  and $\hat{\alpha}$ are the outputs of our $5$-fold cross-validation procedure in Algorithm~\ref{alg:cv} 
applied with $A = \{0,1/2,1/d\}$, $B=\{1/4,2/4,3/4,1\}\cdot \mathbf{1}_d$ and $G=\{1/2,1,2,\infty\}\cdot \mathbf{1}_d$.

\noindent \textbf{Oracle HAM}: Our Algorithm~\ref{alg:nonadaptive} with $\boldsymbol{\beta} = \boldsymbol{\gamma} = \mathbf{1}_d$ and  $\alpha=1$, which has access to the true $\Omega_\star$ and thus can omit the estimation of it. 

The other methods apply a standard $k$-nearest neighbour ($k$nn) algorithm \citep{fix1952discriminatory,fix1989discriminatory} to different modifications of the training data, namely 
\smallskip

\noindent \textbf{Complete Case (CC)}: We use only the observations belonging to $N_{\mathbf{1}_d} = \{i\in [n] : O_i = \mathbf{1}_d\}$, and apply the $k$nn algorithm with $k = 1/3\cdot n_{\mathbf{1}_d}^{-1/(3+d)}$; in other words we apply the $k$nn algorithm directly to $\{(X_i,Y_i) : i \in N_{\mathbf{1}_d}\}$.
  
\noindent  \textbf{Zero Imputation (ZI)}: All missing values are imputed with the value $0$ and we then apply the $k$nn algorithm with $k = 1/3\cdot n^{-1/(3+d)}$; in other words the $k$nn algorithm is applied directly to $(X_1^{O_1},Y_1), \ldots, (X_n^{O_n},Y_n)$. 

\noindent  \textbf{Mean Imputation (MI)}: For each $j \in [d]$, let $\hat{\nu}_j := \frac{1}{n_{e_j}} \sum_{i \in N_{e_j}} X_i^{e_j}$ be the mean of the observations in the training data for which the $j$th variable is not missing and define $\hat{\nu} := \sum_{j\in [d]} \hat{\nu}_j$.  Then for each observation $i \in [n]$, let $\tilde{X}_i = X_{i}^{o_i} + (\mathbf{1}_d - o_i) \odot \hat{\nu}$. We then apply the $k$nn algorithm with $k = 1/3\cdot n^{-1/(3+d)}$ to $(\tilde{X}_1,Y_1), \ldots, (\tilde{X}_n,Y_n)$. 

\noindent  \textbf{Multiple Imputation by Chained Equations (MICE)}: All missing values in the training set are imputed using the `miceforest' python package to give $5$ different datasets. We then apply the $k$nn algorithm with $k = 1/3\cdot n^{-1/(3+d)}$ to each of these datasets and classify a test point based on a majority vote. 

\noindent  \textbf{MissForest (MF)}: All missing values are imputed using the missforest algorithm \citep{stekhoven2012missforest} which is implemented in the python package `missingpy'. A $k$nn algorithm with $k = 1/3\cdot n^{-1/(3+d)}$ is then employed using the imputed dataset as training data.

\noindent \textbf{No missing data (no MD)}: We apply a $k$nn algorithm with $k = 1/3\cdot n^{-1/(3+d)}$ to the full training dataset. This can be viewed as an oracle imputation method, which can impute the missing values with the unobserved truth. 

\begin{figure}[ht]
\centering
\includegraphics[width=0.98\linewidth]{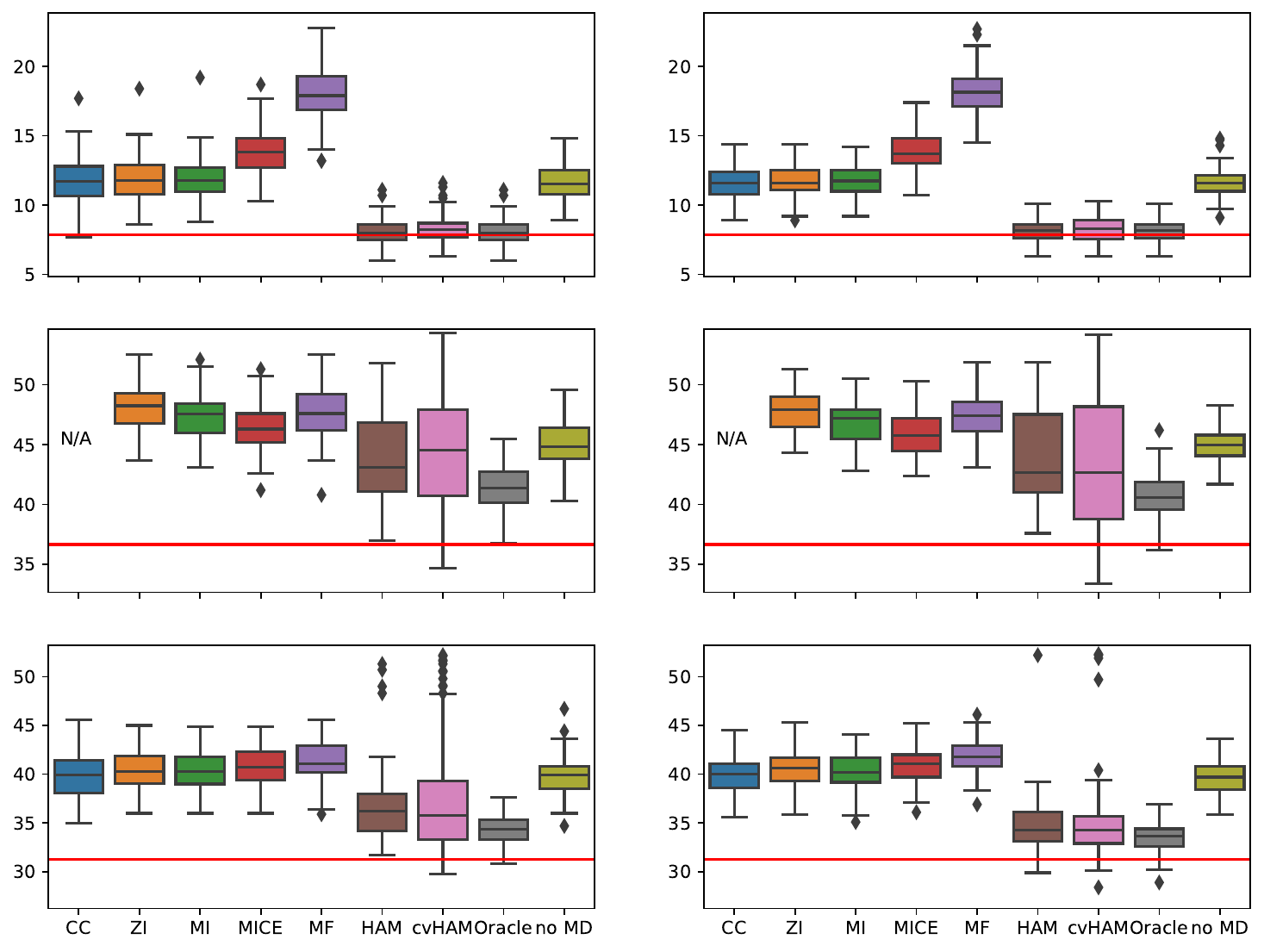}
\caption{Box plots of the empirical percentage test errors for the Complete Case (CC), Zero Imputation (ZI), Mean Imputation (MI), MICE, and MissForest (MF) approaches and our classifier with known parameters (HAM) and our classifier with parameters estimated by a cross-validation procedure (cvHAM).  For reference, we also include our Oracle HAM (Oracle) classifier, and a classifier that has access to all data, with no missing values (no MD).  The Bayes risk is shown as the red horizontal line. We present the results for Setting~1 (top row), Setting~2 (middle row) and Setting~3 (bottom row), with $n=500$ (left column) and $n=1000$ (right column). \label{fig:results}}
\end{figure}

The results of our experiments are presented in Figure~\ref{fig:results}.  We see that across our three settings, the HAM classifier performs well, and typically outperforms the widely-used competing methods.  In Setting~2, there are no fully observed data points available, so a complete case analysis is not possible. Further we see, especially in Setting~1, that the HAM classifier performs almost as well as its oracle version.  In Settings~2 and~3, the slight difference between the performance of HAM and its oracle is due in part to the fact that we have relatively small number of available cases for some observation patterns. For instance, in Setting~3 we need to observe both variables to detect that $f_{\mathbf{1}_2} = 0$ in this example, but on average we only have $188$ (when $n=500$) and $375$ (when $n=1000$) complete cases. Indeed, by inspecting the results, we see that we are slightly conservative in our thresholding step for this setting, nevertheless HAM still performs well.  Some further practical improvement may be achievable by tuning the thresholding cutoff depending on the data (i.e.~via cross validation). 

The results of an additional numerical study with no missing data are presented in Section~\ref{sec:simulationsappendix} 
in the supplementary material, we see that our HAM classifier remains effective in this case.

\subsection{Real data example\label{sec:real_data}}
In this section, we illustrate the performance of our HAM algorithm in a real data setting using the `Mammographic Mass' data set \citep{misc_mammographic_mass_161} and consider the task of classifying breast cancer tumours as either benign (class $0$) or malignant (class $1$) based on the shape, margin, and density of the tumour mass, as well as the patient's age.  There are $961$ observations in the dataset, of which $5$ have missing entries for age, $31$ have no entry for mass shape, $48$ are missing entries for mass margin, and $76$ have no entry for mass density.  There are 516 benign tumours (53.7\%) and 445 malignant tumours (46.3\%).

We compare our HAM and cvHAM algorithms to the complete case classifier (CC), as well as Zero Imputation (ZI), Mean Imputation (MI), MICE, and MissForest (MF), all implemented as described in the previous section.  We randomly split the data into a training set and test set by sampling $161$ of the complete cases to form our test set, leaving $n = 800$ observations to form the training set which contains missing values.  We record the test error of each algorithm over $100$ different train-test splits.  The results of this experiment are shown in Figure~\ref{fig:real_data}, where we see that our HAM and cvHAM approaches are outperforming the existing methods. 

\begin{figure}[ht]
\centering
\includegraphics[width=0.65\linewidth]{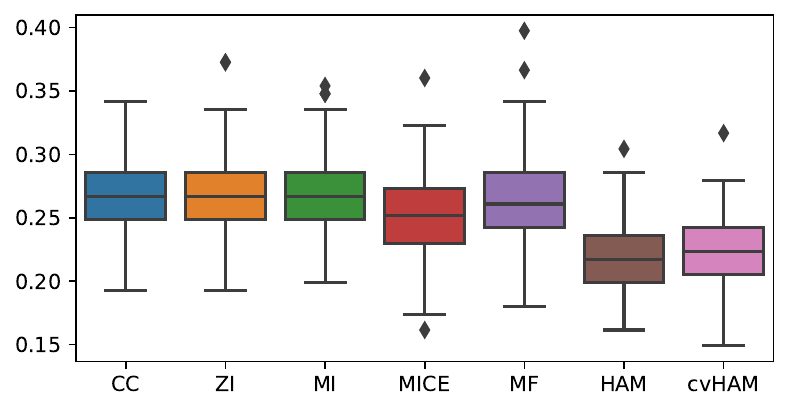}
\caption{Box plots of the empirical percentage test errors for the Complete Case (CC), Zero Imputation (ZI), Mean Imputation (MI), MICE, and MissForest (MF) approaches and our classifier with known parameters (HAM) and our classifier with parameters estimated by a cross-validation procedure (cvHAM). \label{fig:real_data}}
\end{figure}

\section{Proofs\label{sec:proofs}}

\subsection{Constructions used in the lower bound proof of Theorem~\ref{thm:minmax_bounds}\label{sec:lower_bound}}
Our proof of the lower bound in Theorem \ref{thm:minmax_bounds} makes use of a new version of Assouad's Lemma, modified to our classification with missing data setting (see Lemma~\ref{lem:assouad} 
in Section~\ref{sec:appendixLBproofs}
). To illustrate the key ideas of our proof, we initially consider the case that $\Omega_\star = \{\omega\}$ contains only one element. We construct a finite set of distributions $Q$ of $(X, Y, O)$ belonging to $\mathcal{Q}'_{\mathrm{Miss}}$.  The full constructions for our proofs are somewhat complicated and many of the details, as well as the corresponding theory and ultimately the full proof of the lower bound in Theorem~\ref{thm:minmax_bounds} for a general $\Omega_{\star}$, are presented in Section~\ref{sec:appendixLBproofs}
.  

Here we present some of the key aspects of our proof.  First, for each distribution $Q$ that we construct, we take $(X,Y)$ and $O$ to be independent and take the distribution of $O$ to be uniform on $\mathcal{O}$, so $Q \in \mathcal{Q}_{\mathrm{Miss}}(\mathcal{O})$.  In order to ensure that our regression functions decompose as  in~\eqref{eq:etadecomposition}, we require a carefully designed marginal feature distribution, which in particular satisfies many symmetries about the origin.  The full definition of these, which differ slightly in the \emph{light tailed} case (where $\gamma_{\omega} \geq 1$) and the \emph{heavy tailed} case (where $\gamma_{\omega} < 1$), are given in Sections~\ref{subsubsec:LBlight} and~\ref{subsubsec:LBheavy}.  In both cases, we have a discrete lattice of support points on which the regression function construction will change between distributions, and a set where the regression functions does not change between distributions in our construction. Sections~\ref{sec:LB_small_r}
and~\ref{sec:LB_big_r} 
in the supplementary material show that the distributions are constructed in such as way that they belong to the class~$\mathcal{Q}_\mathrm{L}^{+}(\boldsymbol{\gamma}, C_{\mathrm{L}}, \mathcal{O})$. 

We then, in Sections~\ref{subsec:LBlighteta} 
and~\ref{subsec:LBheavyeta}
, define the regression function $\eta$ for our respective light- and heavy-tailed initial constructions when $\Omega_\star = \{\omega\}$.  The symmetry in marginal distributions allows us to define $\eta$ via a single nonzero function $f_{\omega}$ in the anova decomposition in~\eqref{eq:etadecomposition}.  This function $f_\omega$ is defined first on the region $(0, \infty)^d$ and then mirrored or flipped on the appropriate axes to define the function on the whole of $\mathbb{R}^d$, ensuring that $\eta=1/2+f_\omega$, i.e.~\eqref{eq:etadecomposition} holds with all other $f_{\omega'} = 0$ (see Lemmas~\ref{lem:E_and_S_def_satisfied} 
and~\ref{lem:E_and_S_def_satisfied_heavy_tails})
.  On the lattice, $f_{\omega}$ is taken to be either $\epsilon$  or $-\epsilon$ for some small $\epsilon>0$. The different possible combinations of $\pm \epsilon$ on these points lead to the different distributions used in our application of Assouad's Lemma. On the remainder of the marginal support, $f_\omega$ is taken to be a particular $\beta$-smooth function.  This choice allows us to extend the definition of $f_{\omega}$ to a $\beta$-smooth function on the whole of $\mathbb{R}^d$ (via McShane's extension Theorem -- see Corollaries~\ref{cor:f_beta_holder_on_IRplus}
and~\ref{cor:f_beta_holder_on_IRplus_heavy_tails}
).  Lemmas~\ref{lem:E_and_S_def_satisfied} 
and~\ref{lem:E_and_S_def_satisfied_heavy_tails} 
also show that the corresponding distributions belong to $\mathcal{P}_\mathrm{S}(\boldsymbol{\beta}, 1)$.  Concluding Sections~\ref{subsec:LBlighteta} 
and~\ref{subsec:LBheavyeta}
, Lemmas~\ref{lem:M_def_satisfied} 
and~\ref{lem:M_def_satisfied_heavy_tails} 
show that the distributions belong to $\mathcal{P}_\mathrm{M}(\alpha, C_{\mathrm{M}})$. 

Sections~\ref{subsec:LBproof_lighttails} 
and~\ref{subsec:LBproof_heavytails} 
complete the proof of the lower bound when $\Omega_\star = \{\omega\}$ for the light- and heavy-tailed cases, respectively.  More precisely, we apply our version of Assouad's Lemma to carefully selected versions of the constructions described above.  The full proof of the lower bound in Theorem~\ref{thm:minmax_bounds} is presented in Section~S2.8.  The proof extends the constructions above to the general $\Omega_{\star}$ case, using a mixture of distributions, which establishes the lower bound over $\mathcal{Q}'_{\mathrm{Miss}}( \Omega_{\star}, c_{\mathrm{E}}, \boldsymbol{\gamma}, C_{\mathrm{L}}, \boldsymbol{\beta}, C_{\mathrm{S}} , \alpha, C_{\mathrm{M}}, \mathcal{O})$; in fact we will see that our constructions belong to the smaller class  $\mathcal{Q}^+_{\mathrm{Miss}}$, and thus that our lower bound holds even over this more restrictive class.

\subsubsection{The marginal feature distribution in the light tailed case\label{subsubsec:LBlight}}

 It is convenient to introduce some additional notation at this point.  For a set $A^+\subseteq [0,\infty)^d$ and $s = (s_1,\ldots, s_d)^T \in \{-1,1\}^d$, we will write $s \odot A^+:=\{x = (x_1,\ldots,x_d)^T \in \mathbb{R}^d : x = s \odot x^+, x^+ \in A^+ \} \subseteq \mathbb{R}^d$.  We now define the marginal distribution of the feature vectors, which will in fact be the same for all $Q$ in our class.  First, for $q \in \mathbb{N}$ and $r>0$, let 
\[
\mathcal{T}_{q,r} = \biggl\{\Bigl(1+\frac{rv_1}{q},\ldots, 1+\frac{rv_d}{q}\Bigr)^T : v_j \in [q] \text{ for } j\in [d]\biggr\}.
\]
Then, fixing $\omega \in \{0,1\}^d \setminus \{\mathbf{0}_d\}$, let 
$\mathcal{T}_{q,r}^{\omega} = \{ z^\omega \in\mathbb{R}^d : z \in \mathcal{T}_{q, r}\}$,  $T := \bigl|\mathcal{T}_{q,r}^{\omega}\bigr| = q^d$ and enumerate this set as $\{z^\omega_{1}, \ldots, z^{\omega}_{T}\} \subseteq [0,1 + r]^{d}$.  Further let $\Pi_{\omega} : \mathbb{R}^d \rightarrow \mathbb{R}^{d_{\omega}}$ be the projection of $\mathbb{R}^d$ onto the $d_{\omega}$ non-zero coordinates of $\omega$, i.e. for $x := (x_1, \ldots, x_d)^T \in \mathbb{R}^d$ and $\omega := (\omega_1, \ldots, \omega_d)^T \in \{0,1\}^d$, we have $\Pi_{\omega}(x) := (x_j)_{j \in [d]: \omega_j = 1} \in \mathbb{R}^{d_{\omega}}$. For $A \subseteq \mathbb{R}^d$, let $\Pi_{\omega}(A) := \{\Pi_\omega(x) : x\in A\}$.  Further, for $\kappa > 0$, let $\mathcal{R} :=  [1+2r,1+2r+\kappa]^{d}$ and $\mathcal{R}^\omega := \{x^\omega\in\mathbb{R}^d: x \in\mathcal R\}$.  Let $\mathcal{R}_0 := [1,1+r]^d$, $\mathcal{R}_0^\omega := \{x^\omega\in\mathbb{R}^d: x \in\mathcal R_0\}$ and, for $j \in \{\tilde{j} \in [d] : \omega_{\tilde{j}} =1\}$, let $\mathcal{R}_j^\omega = \mathcal{R}_0^\omega + (1 + 2r + \kappa) \cdot e_j$.

We define the marginal $X$ distribution as follows: for $a,b \in [0,1/2]$ and $A \subseteq \mathbb{R}^d$ (measurable), let
\begin{align}
\begin{split}\label{lb:marginal_construction}
    \mu(A) \equiv \mu^{(\omega)}_{\kappa,r,q,a,b}(A) & := 
    \frac{a}{2^{d}T} \sum_{s\in\{-1,1\}^d}  \sum_{t=1}^{T}\mathbbm{1}_{\{s^\omega\odot z_t^\omega\in A\}}\\
    & \hspace{30pt} +\frac{b}{2^{d}d_\omega r^{d_\omega}} \sum_{j \in [d] : \omega_j = 1} \sum_{s\in\{-1,1\}^d} \mathcal{L}_{d_\omega}\Bigl(\Pi_\omega\bigl(A\cap(s\odot \mathcal{R}_j^{\omega})\bigr)\Bigr)\\
    & \hspace{60pt} + \frac{1-a-b}{2^{d}\kappa^{d_{\omega}}} \sum_{s\in\{-1,1\}^d} \mathcal{L}_{d_\omega}\Bigl(\Pi_\omega\bigl(A\cap(s\odot \mathcal{R}^{\omega})\bigr)\Bigr).
\end{split}
\end{align}

\begin{figure}[ht]
\centering
\includegraphics[width=0.75\linewidth]{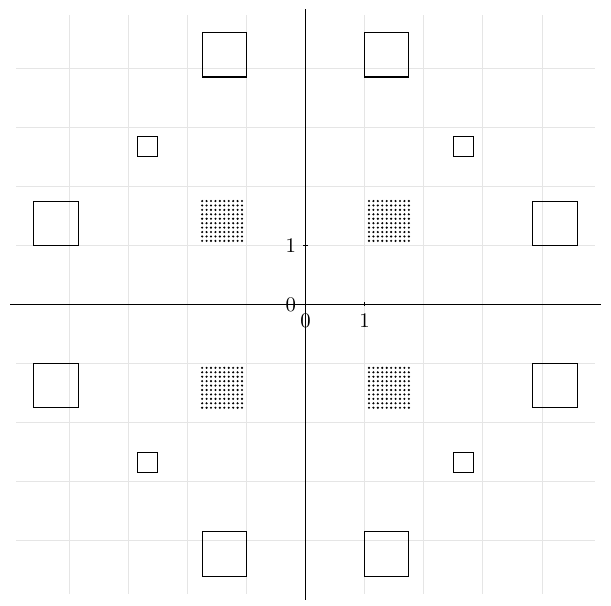}
\caption{\label{fig:LB_light_tails} The support of the light tailed marginal for an observation pattern $\omega$ with $d_\omega=2$ and $\gamma_\omega\geq1$. The lattice in the top right quadrant is $\mathcal{T}^\omega_{q,r}$, the boxes are $\mathcal{R}^\omega$, $\mathcal{R}^\omega_1$ and $\mathcal{R}^\omega_2$. The boxes carry enough mass to ensure that projecting onto one variable results in light tails. Specifically here we plot the support of $\mu^{(\mathbf{1}_2)}_{\kappa,r,q,a,b}$ with $\kappa = 0.35$, $r = 0.75$ and $q = 10$.}
\end{figure}

A particular instance of the support of this construction is given in Figure~\ref{fig:LB_light_tails}. The regression functions in this case are given in Section~\ref{subsec:LBlighteta}
. The main idea is that we will take $\eta = 1/2 + f_{\omega}$ to be slightly above or below $1/2$  on each of the grid points in $\mathcal{T}_{q,r}^{\omega}$, and bounded away from $1/2$ on the remainder of the support.  This is initially defined only on the ``top right" quadrant of the support and then reflected asymmetrically to ensure the $f_{\omega}$ satisfies the orthogonality constraint in the anova decomposition -- see~\eqref{def:f_omega}
in Section~\ref{subsec:LBlighteta}
. 

Note that the construction of the marginal distribution used in this subsection shares some aspects with the construction used by~\citet{reeve2021adaptive} in their transfer learning problem.  In particular, the regions $\mathcal{T}^{\omega}_{q,r}$ and $\mathcal{R}^\omega$ are used similarly to how they were used in~\citet{reeve2021adaptive}, while here we introduce the sets $\mathcal{R}^\omega_j$ for $j \in [d]$ and carefully \emph{reflect} the upper-right quadrant in the coordinate axes to give our final $X$ marginal construction.  These latter aspects are crucial for constructing distributions $Q$ satisfying Definitions~1 to~5 in our missing data problem.  

\subsubsection{The marginal feature distribution in the heavy tailed case\label{subsubsec:LBheavy}}

The marginal construction for the heavy tailed case is somewhat simpler, in particular the marginal distribution of $X$ can be taken to be a product measure.  Let $\nu_0$ denote a point mass at zero, i.e. $\nu_0(A) = \mathbbm{1}_{\{0\in A\}}$, for $A \subseteq \mathbb{R}$.   For $\tilde{q} \in \mathbb{N}$, let $\nu_1 \equiv \nu_{1,\tilde{q}}$ denote the distribution on $\mathbb{R}$ given by 
\[
\nu_{1,\tilde{q}}(A) := \frac{1}{2 \tilde{q}} \sum_{j=1}^{\tilde{q}} \mathbbm{1}_{\{ 1+j/\tilde{q} \in A\}} + \mathbbm{1}_{\{-1-j/\tilde{q} \in A\}},
\]
for $A \subseteq \mathbb{R}$. Further, for $a \in [0,1], r > 1$ and $q \in \mathbb{N}$, let $\nu_2 \equiv \nu_{2,q,r,a}$ denote the distribution on $\mathbb{R}$ given by 
\begin{align*}
    \nu_{2,q,r,a}(A)   &:=  \frac{a}{2 q} \sum_{j=1}^{q} \Bigl(\mathbbm{1}_{\{1+ \frac{rj}{q} \in A\}} + \mathbbm{1}_{\{-1-\frac{rj}{\tilde{q}} \in A\}}\Bigr) \\
    &\hspace{60pt}+ \frac{1-a}{2} \mathcal{L}_1\Bigl( A \cap \bigl\{(-r-2,-r-1) \cup (r+1,r+2)\bigr\}\Bigr).
\end{align*}
Now fix $\omega \in \{0,1\}^d\setminus \{\mathbf{0}_d\}$ and $\tilde{j} \in \{j \in [d] : \omega_j =1\}$. Let 
\begin{equation}
\label{eq:muheavy}
\mu \equiv \mu^{(\omega)}_{q,\tilde{q},r,a,\tilde{j}} = \bigotimes_{j=1}^{\tilde{j}-1} \nu_{\omega_j} \otimes \nu_{2,q,r,a} \otimes \bigotimes_{j = \tilde{j}+1}^{d} \nu_{\omega_j}.
\end{equation}

\begin{figure}[ht]
\centering
\includegraphics[width=0.9\linewidth]{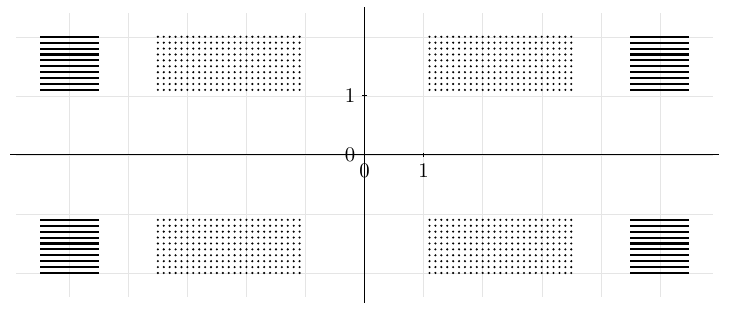}
\caption{The support of the heavy tailed marginal for an observation pattern $\omega$ with $d_\omega=2$ and $\gamma_\omega<1$. Specifically here we plot the support of $\mu^{(\mathbf{1}_2)}_{q,\tilde{q},r,a,1} = \nu_{2,q,r,a} \otimes \nu_{1,\tilde{q}}$, with $q = 25$, $\tilde{q} = 10$, $r = 2.5$. The lines on the sides ensure that projecting onto the $x_2$ axis results in light tails.  \label{fig:LB_heavy_tails} }
\end{figure}

A particular instance of the support of this construction is given in Figure~\ref{fig:LB_heavy_tails}. The corresponding regression function in this case is conceptually similar to that in the light tailed case above and is therefore deferred to Section~\ref{subsec:LBheavyeta}
.

\subsection{Outline of the proof of the upper bound in Theorem~\ref{thm:minmax_bounds}\label{subsec:proofUB}}

In this section, we provide the proof of the upper bound in Theorem~\ref{thm:minmax_bounds}. Our strategy for the proof is to introduce two high-probability events $E_{1}^{\delta}(x)$ and $E_{2}^\delta(x)$. The first provides control of the nearest neighbour distances to a point $x$ in the support of $\mu$ for each observation pattern $\omega \in \mathcal{N}$. Our second high-probability event asks for control of the empirical average of the labels of the nearest neighbours in each observation pattern.  Then, working on these events, we provide in Lemma~\ref{lem:f_is_not_zero} control of the error $|\hat{f}_\omega(x)-f_\omega(x)|$ for each $\omega \in \mathcal{N}$.  This establishes the rate at which we are able to estimate the functions $f_{\omega}$ for $\omega \in \mathcal{N}$. 

We next show that in regions where $\eta$ is sufficiently far from a half, a version of our classifier agrees with the Bayes classifier with high probability.  The main result in this subsection is Proposition~\ref{prop:rate_if_Omega_correct}, which provides a tail bound on the excess risk of this version of our algorithm.   We then prove the upper bound in Theorem~\ref{thm:minmax_bounds} at the end of this subsection, by integrating the tail bound for an oracle version of the algorithm, which has access to the true $\Omega_{\star}$.  The proofs of the results in subsection, as well as some additional lemmas are given in Section~\ref{sec:appendixUBproofs} 
in the supplement. 

Before presenting the results, it is convenient to fix some notation that we will use throughout this section.  First, recall from the statement of Theorem~\ref{thm:minmax_bounds} that we are treating the missingness indicators $o_1, \ldots, o_n \in \mathcal{O}\subseteq\{0,1\}^d$ as fixed and all probability statements in this section should be interpreted as being conditional on $O_1 = o_1, \ldots, O_n = o_n$.  For $\omega \in \{0,1\}^d$, we will write $\rho_{\omega}(\cdot)$ in place of $\min_{o\in\mathcal{O}:\omega\preceq o}\{\rho_{\mu_{\omega|o}, d_{\omega}}(\cdot)\}$. Let $k_{\mathbf{0}_d}=n$ and recall from Algorithm~\ref{alg:nonadaptive} that $k_{\omega} = 1 + \lfloor n_{\omega}^{\frac{2\beta_{\omega}\gamma_{\omega}}{\gamma_{\omega}(2\beta_{\omega}+d_{\omega}) + \alpha \beta_{\omega}}}\rfloor$, for $\omega\in \mathcal{N} \setminus \{\mathbf{0}\}$.  Let $\mathcal{X} := \{x \in \mathbb{R}^d : \rho_{\omega}(x^{\omega}) > 0, \  \text{for all} \ \omega \in \mathcal{N}\}$. For $\delta \in (0,1]$, $\omega \in \mathcal{N}$, $k_\omega \in [n_{\omega}]$, and $x \in \mathcal{X}$, let 
$\tilde{k}_{\omega} \equiv \tilde{k}_{\omega} (x) := \lceil 4\log_{+}(|\mathcal{N}|/\delta)\rceil \mathbbm{1}_{\{k_{\omega} <\lceil4\log_{+}(|\mathcal{N}|/\delta)\rceil
\}}  + k_{\omega} \mathbbm{1}_{\{\lceil4\log_{+}(|\mathcal{N}|/\delta)\rceil \leq k_{\omega} < n_{\omega}\rho_{\omega}(x^{\omega})/2 \}} +  (\lceil n_{\omega}\rho_{\omega}(x^{\omega})/2 \rceil - 1 )\mathbbm{1}_{\{ 
n_{\omega}\rho_{\omega}(x^{\omega})/2 \leq k_{\omega}\}}$
 if $\lceil4\log_{+}(|\mathcal{N}|/\delta)\rceil<n_{\omega}\rho_{\omega}(x^{\omega})/2$, and let $\tilde{k}_{\omega} \equiv \tilde{k}_{\omega}(x) := 0$ if $\lceil4\log_{+}(|\mathcal{N}|/\delta)\rceil\geq n_{\omega}\rho_{\omega}(x^{\omega})/2$. 
 
We now formally introduce our two high-probability events. First, for $x \in \mathcal{X}$ and $\omega \in \mathcal{N}$, write $X_{(0)_\omega}^{\omega}(x^\omega) = x^{\omega}$ and define the event 
\begin{align*}
    A_{\omega}^\delta(x)&:=  \biggl\{\lVert X_{(\tilde{k}_\omega)_\omega}^{\omega}(x^\omega)-x^\omega\rVert\leq\Bigl(\frac{2\tilde k_\omega }{n_\omega\rho_\omega(x^\omega)}\Bigr)^{1/d_\omega} \biggr\}.
\end{align*}
Let $E_{1}^\delta(x):=\bigcap_{\omega \in \mathcal{N}} A_{\omega}^\delta(x)$.  Lemma~\ref{lem:E_1} 
in Section~\ref{sec:appendixUBproofs} 
bounds the probability of the event $E_{1}^\delta(x)$. 

We now define the high-probability event on which the empirical average of labels of the nearest neighbours is close to the corresponding average of the regression function evaluated at the feature vectors.  For $\omega \in \{0,1\}^d$, write $\eta_\omega : \mathbb{R}^d \rightarrow [0,1]$ for the regression function of $Y$ given $X^{\omega}$ given by $\eta_\omega(x) := \mathbb{P}_Q(Y =1 \mid X^{\omega} = x^{\omega})$.  For $\delta \in (0,1)$, $\omega \in \mathcal{N}$ and $x \in \mathbb{R}^d$ let 
\begin{align*}
    B^\delta_\omega(x) := \biggl\{\Bigl|\frac{1}{k_\omega}\sum_{i=1}^{k_\omega} \bigl\{Y_{(i)_\omega}(x) - \eta_\omega(X_{(i)_\omega}(x))\bigr\} \Bigr|\leq\sqrt{\frac{\log_+(2 |\mathcal{N}|/\delta)}{2k_\omega}}\biggr\},
\end{align*}
and let $E_{2}^\delta(x) := \bigcap_{\omega \in \mathcal{N}} B^\delta_\omega(x)$. Lemma~\ref{lem:E_2_prob} 
in Section~\ref{sec:appendixUBproofs} 
bounds the probability of this event.

Our first result here controls the bias and variance of $\hat{f}_{\omega}$ for $\omega \in \mathcal{N}$ on the event $E_1^{\delta}(x) \cap E_2^{\delta}(x)$.  To this end, we will make use of a universal bound on $\|f_\omega\|_\infty$, given by $C_{\mathrm{B}}$ in Proposition~\ref{prop:f_bounded}
.  For $\omega \in \mathcal{N} \setminus \{\mathbf{0}\}$ and $x \in \mathcal{X}$, let
\begin{align}
\label{eq:biasvariance}
    R_{\omega,1}(x) =  2C_{\mathrm{B}}C_{\mathrm{S}} \cdot \max_{\omega' \preceq \omega} \Bigl(\frac{2\tilde k_{\omega'}}{n_{\omega'}\rho_{\omega'}(x^{\omega'})}\Bigr)^{\beta_{\omega'}/d_{\omega'}}; \quad R_{\omega,2} := \sqrt{\frac{\log_+(2 |\mathcal{N}|/\delta)}{2k_\omega}}, 
\end{align}
and let $R_{\mathbf{0}_d,1}(x) = 0$ and $R_{\mathbf{0}_d,2} = \sqrt{\frac{\log_+(2 |\mathcal{N}|/\delta)}{2n}}$.

\begin{lemma}\label{lem:f_is_not_zero}
Fix $\omega\in \mathcal{N}$ and $x\in \mathcal X$, 
then on the event $E_1^{\delta}(x) \cap E_{2}^{\delta}(x)$ we have that 
\begin{align}\label{ineq:f_bound}
    \bigl|\hat{f}_{\omega}(x)-f_{\omega}(x)\bigr| \leq \kappa_{\omega}\cdot \bigl(2^{d_\omega} \cdot R_{\omega,1}(x) +  R_{\omega,2}\bigr),
\end{align}
for some $1 \leq \kappa_{\omega} \leq 2(d_\omega+1)^{d_\omega}$, which is given explicitly in the proof. 
\end{lemma}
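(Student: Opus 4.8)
The plan is to induct on the dimension $d_\omega$, since the estimator $\hat f_\omega$ is defined recursively in terms of the $\hat f_{\omega'}$ with $\omega' \prec \omega$. Writing $\bar Y_\omega(x) := \frac{1}{k_\omega}\sum_{j=1}^{k_\omega} Y_{(j)_\omega}(x)$ and recalling that $\hat f_\omega(x) = \bar Y_\omega(x) - \tfrac12 - \sum_{\omega'\prec\omega}\hat f_{\omega'}(x)$, while the population identity~\eqref{eq:def_f_omega} combined with~\eqref{eq:etadecomposition} gives $f_\omega(x) = \mathbb{E}_P[\eta(X) \mid X^\omega = x^\omega] - \tfrac12 - \sum_{\omega'\prec\omega} f_{\omega'}(x)$, I would decompose the error as
\begin{equation*}
\hat f_\omega(x) - f_\omega(x) = \Bigl(\bar Y_\omega(x) - \tfrac12 - \mathbb{E}_P[\eta(X)\mid X^\omega = x^\omega]\Bigr) - \sum_{\omega'\prec\omega}\bigl(\hat f_{\omega'}(x) - f_{\omega'}(x)\bigr).
\end{equation*}
The second sum is handled by the inductive hypothesis applied to each $\omega'\prec\omega$ (there are at most $2^{d_\omega}-1$ of them, which is where a factor like $2^{d_\omega}$ in front of $R_{\omega,1}$ will come from, after noting $\max_{\omega'\preceq\omega}$ already dominates the per-term $R_{\omega',1}$ bounds). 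So the heart of the matter is the first bracket, the $k_\omega$-nearest-neighbour estimation error for $\eta_\omega$ at $x^\omega$.

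For that term I would split further into a stochastic part and a bias part:
\begin{equation*}
\bar Y_\omega(x) - \tfrac12 - \mathbb{E}_P[\eta(X)\mid X^\omega=x^\omega] = \underbrace{\frac{1}{k_\omega}\sum_{j=1}^{k_\omega}\bigl(Y_{(j)_\omega}(x) - \eta_\omega(X_{(j)_\omega}(x))\bigr)}_{\text{stochastic}} + \underbrace{\frac{1}{k_\omega}\sum_{j=1}^{k_\omega}\bigl(\eta_\omega(X_{(j)_\omega}(x)) - \eta_\omega(x^\omega)\bigr)}_{\text{bias}},
\end{equation*}
using that $\mathbb{E}_P[\eta(X)\mid X^\omega=x^\omega] = \eta_\omega(x^\omega) + \tfrac12$ up to the constant (more precisely $\eta_\omega(x^\omega) - \tfrac12 = \sum_{\omega'\preceq\omega}f_{\omega'}(x)$). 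On the event $B_\omega^\delta(x) \subseteq E_2^\delta(x)$ the stochastic part is at most $R_{\omega,2} = \sqrt{\log_+(2|\mathcal N|/\delta)/(2k_\omega)}$ by definition. For the bias, I would first use the smoothness of $\eta_\omega$ — which follows from Definition~\ref{def:smoothness} applied to the finitely many $f_{\omega'}$, $\omega'\preceq\omega$, giving a Hölder constant of order $2^{d_\omega}C_{\mathrm S}$ — to bound each summand by a constant times $\|X_{(j)_\omega}^\omega(x) - x^\omega\|^\beta$, and then use the event $A_\omega^\delta(x)\subseteq E_1^\delta(x)$, which controls the $\tilde k_\omega$-th nearest-neighbour distance (hence all nearer ones) by $(2\tilde k_\omega/(n_\omega\rho_\omega(x^\omega)))^{1/d_\omega}$. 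Here I need to be a little careful because $k_\omega$ may exceed $\tilde k_\omega$; the construction of $\tilde k_\omega$ in the excerpt is precisely designed so that the NN distance bound degrades gracefully, and one absorbs the discrepancy into the $C_{\mathrm B}$ factor (a uniform bound on $\|f_\omega\|_\infty$ from Proposition~\ref{prop:f_bounded}), since when the ball of the claimed radius fails to contain $k_\omega$ points one falls back on the trivial bound $|\eta_\omega|\le 1$. This is what yields the $R_{\omega,1}(x) = 2C_{\mathrm B}C_{\mathrm S}\max_{\omega'\preceq\omega}(2\tilde k_{\omega'}/(n_{\omega'}\rho_{\omega'}(x^{\omega'})))^{\beta/d_{\omega'}}$ form, with the $\max$ over $\omega'\preceq\omega$ appearing because the inductive bounds for the $\hat f_{\omega'}$ each contribute their own $R_{\omega',1}$, all dominated by this maximum.

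Putting the pieces together by the triangle inequality, the error of $\hat f_\omega$ is bounded by (its own stochastic and bias contributions) plus (the sum of the inductive bounds for $\omega'\prec\omega$), and a clean bookkeeping of the combinatorial factors gives a multiplicative constant $\kappa_\omega$ satisfying $1\le\kappa_\omega\le 2(d_\omega+1)^{d_\omega}$ — the bound $(d_\omega+1)^{d_\omega}$ arising naturally from a recursion of the shape $\kappa_\omega \le 1 + \sum_{\omega'\prec\omega}\kappa_{\omega'}$ over the lattice of sub-patterns, or more crudely from the total number of such recursive calls. The base case $\omega=\mathbf{0}_d$ is immediate: $\hat f_{\mathbf 0_d} - f_{\mathbf 0_d} = \frac1n\sum_i Y_i - \mathbb{P}_P(Y=1)$ is a centred average of Bernoullis, controlled on $B_{\mathbf 0_d}^\delta(x)$ by $R_{\mathbf 0_d,2}$ with $\kappa_{\mathbf 0_d}=1$ and $R_{\mathbf 0_d,1}=0$. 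The main obstacle I anticipate is not any single estimate but the careful propagation of the smoothness/Hölder constants and the combinatorial constants through the recursion so that the final $\kappa_\omega$ genuinely satisfies the stated $2(d_\omega+1)^{d_\omega}$ bound rather than something exponential in $2^{d_\omega}$; keeping the $\max_{\omega'\preceq\omega}$ (rather than a sum) in $R_{\omega,1}$ throughout is the key trick that makes this work, and verifying that the $\tilde k_\omega$ truncation in $A_\omega^\delta(x)$ interacts correctly with a possibly large $k_\omega$ is the other delicate point.
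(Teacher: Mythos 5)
Your proposal is correct and follows essentially the same route as the paper's proof: both induct on $d_\omega$, both split $\hat f_\omega - f_\omega$ into a stochastic term (controlled by $E_2^\delta$), a bias term (controlled by $E_1^\delta$ plus smoothness, with the $C_{\mathrm B}$ fallback absorbing the $\tilde k_\omega < k_\omega$ mismatch), and a recursive term over $\omega'\prec\omega$, and both arrive at the constant $\kappa_\omega$ via the ordered-Bell recursion $\kappa_\omega = 1 + \sum_{\omega'\prec\omega}\kappa_{\omega'}$. The only cosmetic difference is that you bound the bias of $\eta_\omega$ in one shot using its Hölder constant $\sim 2^{d_\omega}C_{\mathrm S}$, whereas the paper expands $\eta_\omega$ into $\sum_{\omega'\preceq\omega}f_{\omega'}$ and bounds each $f_{\omega'}$-bias separately by $R_{\omega,1}(x)$; these are the same quantity regrouped, and both produce the factor $2^{d_\omega}$ in front of $R_{\omega,1}(x)$.
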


It is now convenient to define a version of our HAM classifier that depends on a deterministic set $\Omega \subseteq \mathcal{N}$.  Indeed, let $\hat{\eta}_{\Omega}(x_0) := 1/2+ \sum_{\omega\in \Omega\cup L(\Omega)} \hat{f}_{\omega}(x_0)$ and let $\hat{C}_{\Omega}(x_0) := \mathbbm{1}_{\{\hat{\eta}_{\Omega}(x_0) \geq 1/2\}}$. 
Our next two results concern the properties of $\hat{C}_{\Omega}(x_0)$, and we begin by establishing the rate at which $\hat{\eta}_{\Omega}(x)$ approximates $\eta(x)$ on the event $E_1^{\delta}(x) \cap E_2^{\delta}(x)$.  For $x\in \mathcal{X}$ let $R^\delta_{\mathbf{0}_d}(x):= \sqrt{\frac{\log_+(2 |\mathcal{N}|/\delta)}{2n}}$ and for $\omega \in \mathcal{N}\setminus\{\mathbf{0}_d\}$ and $x\in \mathcal{X}$ define 
\[
R^{\delta}_{\omega}(x) := \biggl(\frac{n_{\omega}^{-\frac{\alpha\beta_{\omega}^2/d_{\omega}}{\gamma_{\omega}(2\beta_{\omega}+d_{\omega}) + \alpha \beta_{\omega}}}}{\rho_{\omega}^{\beta_{\omega}/d_{\omega}}(x^{\omega}) }  + \log^{1/2}_+(2|\mathcal{N}|/\delta) \biggr) n_{\omega}^{-\frac{\beta_{\omega}\gamma_{\omega}}{\gamma_{\omega}(2\beta_{\omega}+d_{\omega}) + \alpha \beta_{\omega}}}. 
\]
This represents the rate at which $\hat{f}_{\omega}(x)$ estimates $f_{\omega}(x)$ on our high-probability event.  Further, for $\Omega \in \{0,1\}^d$ satisfying $\Omega_{\star} \subseteq \Omega \cup L(\Omega)$ and $x \in \mathcal{X}$, let  
\[ 
R^{\delta}_{\Omega}(x) := \max_{\omega \in \Omega} R^{\delta}_{\omega}(x). 
\]
Lemma~\ref{lem:hatC=BayesC} shows that $\hat{C}_{\Omega}$ agrees with $C^{\mathrm{Bayes}}$ on the event $E_1^\delta(x) \cap E_2^\delta(x)$, whenever $\eta$ is not too close to $1/2$.

\begin{lemma}\label{lem:hatC=BayesC}
Fix $d,n\in\IN$, $\mathcal{O} \subseteq \{0,1\}^d$, $o_1, \ldots, o_n \in \mathcal{O}$, $\Omega_{\star}, \Omega \in \mathcal{I}(\{0,1\}^d)\setminus\{\{\mathbf{0}_d\},\emptyset\}$, $ c_{\mathrm{E}} \in (0,1)$, $\boldsymbol{\gamma} \in [0,\infty)^d$, $C_{\mathrm{L}}\geq 1$, $\boldsymbol{\beta} \in (0,1]^{d}$, $C_{\mathrm{S}} \geq 1$, $\alpha \in [0,\infty)$, $C_{\mathrm{M}}\geq1$ and $\delta \in (0,1)$. Suppose that $\Omega_{\star} \subseteq \Omega\cup L(\Omega) \subseteq \mathcal{N}$. Fix $Q\in\mathcal{Q}'_{\mathrm{Miss}}$. Let 
\[
C := \max_{\omega \in \Omega}\Bigl\{2 \kappa_{\omega} |\Omega\cup L(\Omega)|\bigl(2^{3 + d_{\omega}}C_{\mathrm{B}}
C_{\mathrm{S}}+\sqrt{1/2}\bigr)\Bigr\}.
\]
If $x\in\mathcal{X}$ is such that
\begin{align*}
    |\eta(x)-1/2 |\geq C \cdot R_{\Omega}^{\delta}(x),
\end{align*}
then, on the event $E_1^\delta(x) \cap E_2^\delta(x)$, we have $\hat{ C}_{\Omega}(x)=C^{\mathrm{Bayes}}(x)$.
\end{lemma}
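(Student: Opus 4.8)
# Proof Proposal for Lemma~\ref{lem:hatC=BayesC}

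\textbf{Strategy.} The plan is to show that on the event $E_1^\delta(x)\cap E_2^\delta(x)$, the estimated regression function $\hat\eta_\Omega(x)$ is close enough to $\eta(x)$ that it lands on the correct side of $1/2$ whenever $\eta(x)$ is bounded away from $1/2$ by the stated margin. The natural decomposition is
\[
\bigl|\hat\eta_\Omega(x) - \eta(x)\bigr| = \Bigl| \sum_{\omega \in \Omega\cup L(\Omega)} \bigl(\hat f_\omega(x) - f_\omega(x)\bigr) - \sum_{\omega \notin \Omega\cup L(\Omega)} f_\omega(x) \Bigr|.
\]
The key point is that by the hypothesis $\Omega_\star \subseteq \Omega\cup L(\Omega)$ together with the definition of $\mathcal{Q}_{\mathrm{E}}(\Omega_\star, c_{\mathrm{E}}, \mathcal{O})$ (Definition~\ref{def:effectivespace}), every $\omega \notin \Omega\cup L(\Omega)$ lies in $U(\Omega_\star)$, so $\sigma_\omega^2 = 0$; I would argue (using the boundedness/continuity of $f_\omega$ and that $x \in \mathrm{supp}(\mu)$, appealing to the relevant property established around~\eqref{def:sigmaomega} and Proposition~\ref{prop:f_bounded}) that this forces $f_\omega(x) = 0$ for all such $\omega$ with $x$ in the support. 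Hence the second sum vanishes and we are left only with the estimation error over $\Omega\cup L(\Omega)$.

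\textbf{Key steps.} First, I would invoke Lemma~\ref{lem:f_is_not_zero} to bound $|\hat f_\omega(x) - f_\omega(x)| \le \kappa_\omega(2^{d_\omega} R_{\omega,1}(x) + R_{\omega,2})$ for each $\omega \in \mathcal{N}$ on the event $E_1^\delta(x)\cap E_2^\delta(x)$. Next, I would translate the quantities $R_{\omega,1}(x)$ and $R_{\omega,2}$ into the consolidated rate $R^\delta_\omega(x)$: substituting the explicit choices $k_\omega = 1 + \lfloor n_\omega^{2\beta\gamma_\omega/(\gamma_\omega(2\beta+d_\omega)+\alpha\beta)}\rfloor$ and $\tilde k_{\omega'} \lesssim k_{\omega'} \vee \log_+(|\mathcal{N}|/\delta)$ from the definitions, one checks that $R_{\omega,2} \lesssim \log_+^{1/2}(2|\mathcal{N}|/\delta)\, n_\omega^{-\beta\gamma_\omega/(\gamma_\omega(2\beta+d_\omega)+\alpha\beta)}$ and that the $R_{\omega,1}$ term, after raising $k_{\omega'}/(n_{\omega'}\rho_{\omega'})$ to the power $\beta/d_{\omega'}$, produces exactly the $n_\omega^{-\alpha\beta^2/d_\omega/(\cdots)}/\rho_\omega^{\beta/d_\omega}(x^\omega)$ factor appearing in $R^\delta_\omega(x)$ — here I would use the monotonicity $\gamma_\omega \ge \gamma_{\omega'}$, $d_{\omega'} \le d_\omega$ for $\omega'\preceq\omega$ and the fact that $n_{\omega'}\ge n_\omega$ to reduce the max over $\omega'\preceq\omega$ to the $\omega'=\omega$ term (up to constants). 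Summing over $\omega\in\Omega\cup L(\Omega)$, using $|\Omega\cup L(\Omega)|$ as the cardinality factor and bounding each $\kappa_\omega \le 2(d_\omega+1)^{d_\omega}$, gives $|\hat\eta_\Omega(x) - \eta(x)| \le C' R^\delta_\Omega(x)$ where $C'$ matches (or is dominated by) the stated constant $C = \max_{\omega\in\Omega}\{2\kappa_\omega|\Omega\cup L(\Omega)|(2^{3+d_\omega}C_{\mathrm{B}}C_{\mathrm{S}} + \sqrt{1/2})\}$.

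\textbf{Conclusion.} With $|\hat\eta_\Omega(x) - \eta(x)| \le C\, R^\delta_\Omega(x) \le |\eta(x) - 1/2|$ on the event, a standard sign argument finishes the proof: if $\eta(x) \ge 1/2$ then $\hat\eta_\Omega(x) \ge \eta(x) - |\eta(x)-1/2| \ge 1/2$, so $\hat C_\Omega(x) = 1 = C^{\mathrm{Bayes}}(x)$; symmetrically if $\eta(x) < 1/2$ then $\hat\eta_\Omega(x) < 1/2$ (using $|\eta(x)-1/2| \le 1/2 - \eta(x)$ is strict here, or handling the boundary case by the convention $C^{\mathrm{Bayes}}(x) = \mathbbm{1}_{\{\eta(x)\ge 1/2\}}$), so $\hat C_\Omega(x) = 0 = C^{\mathrm{Bayes}}(x)$.

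\textbf{Main obstacle.} The routine but delicate part is the bookkeeping in step two: carefully verifying that the $R_{\omega,1}$ bound, which involves a maximum over all $\omega'\preceq\omega$ of terms $(\tilde k_{\omega'}/(n_{\omega'}\rho_{\omega'}(x^{\omega'})))^{\beta/d_{\omega'}}$ with potentially different exponents and different $\rho$'s, collapses into the single clean expression $R^\delta_\omega(x)$ with the stated exponents. This requires simultaneously tracking the definition of $\tilde k_{\omega'}$ (three regimes), the definition of $k_{\omega'}$, and the monotonicity relations among the $n_{\omega'}$, $\gamma_{\omega'}$, $d_{\omega'}$ — and checking that the $\log_+$ terms combine correctly into the additive $\log_+^{1/2}$ piece. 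The conceptual step (that $f_\omega(x)=0$ on the support for $\omega\in U(\Omega_\star)$) should be quick given Definition~\ref{def:effectivespace}, though one must be slightly careful that $x\in\mathcal{X}\cap\mathrm{supp}(\mu)$ so that the conditional-expectation-zero statement $\sigma^2_\omega=0$ genuinely yields pointwise vanishing there.
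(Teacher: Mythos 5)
Your overall structure matches the paper's proof: both decompose the difference $\hat\eta_\Omega(x)-\eta(x)$ over $\Omega\cup L(\Omega)$ (using that $f_\omega\equiv 0$ for $\omega\in U(\Omega_\star)$ so the omitted terms vanish), both invoke Lemma~\ref{lem:f_is_not_zero}, both convert the $R_{\omega,1}$ and $R_{\omega,2}$ bounds into $R^\delta_\omega(x)$, and both finish with the sign argument. So the route is the same; the question is whether the conversion step actually goes through.

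The gap is precisely in the step you flag as the main obstacle, and your proposed handling of it does not close it. You write $\tilde{k}_{\omega'}\lesssim k_{\omega'}\vee\log_+(|\mathcal{N}|/\delta)$ and then substitute $k_{\omega'}$ into the $R_{\omega,1}$ bound. But an upper bound on $\tilde{k}_{\omega'}$ is not what you need; what matters is the ratio $\tilde k_{\omega'}/(n_{\omega'}\rho_{\omega'}(x^{\omega'}))$. In the third regime of the definition of $\tilde k_{\omega'}$ (where $n_{\omega'}\rho_{\omega'}(x^{\omega'})/2\le k_{\omega'}$, which occurs when $\rho_{\omega'}(x^{\omega'})$ is small), we have $\tilde k_{\omega'}=\lceil n_{\omega'}\rho_{\omega'}(x^{\omega'})/2\rceil-1<k_{\omega'}$, yet $(2\tilde k_{\omega'}/(n_{\omega'}\rho_{\omega'}(x^{\omega'})))^{\beta/d_{\omega'}}$ is of order one and does not shrink. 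Likewise the first regime (where $k_{\omega'}<\lceil 4\log_+\rceil$) gives a factor larger than $(2k_{\omega'}/(n_{\omega'}\rho_{\omega'}))^{\beta/d_{\omega'}}$. In neither case does your substitution yield the clean expression $R^\delta_\omega(x)$. The paper's resolution is the key trick you are missing: since $\eta(x)\in[0,1]$, the hypothesis $|\eta(x)-1/2|\ge C\,R^\delta_\Omega(x)$ forces $R^\delta_\Omega(x)\le 1/(2C)$, and unwinding this (term by term of $R^\delta_\omega(x)$) gives both $\lceil 4\log_+(|\mathcal{N}|/\delta)\rceil\le k_\omega$ and $k_\omega<n_\omega\rho_\omega(x^\omega)/2$ for every $\omega\in\Omega\cup L(\Omega)$; hence $\tilde k_\omega=k_\omega$ exactly, and only then does the computation you sketch in step two go through. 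Without this step you cannot rule out the problematic regimes, so the bound $R_{\omega,1}(x)\lesssim R^\delta_\Omega(x)$ is not established. (A small separate slip: for $\omega'\preceq\omega$ the monotonicity is $\gamma_{\omega'}\ge\gamma_\omega$, not $\gamma_\omega\ge\gamma_{\omega'}$, since $\gamma_{\omega'}$ is a minimum over a smaller index set.)
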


We are now in a position to state Proposition~\ref{prop:rate_if_Omega_correct}.

\begin{proposition}\label{prop:rate_if_Omega_correct}
Fix $d,n\in\IN$, $\mathcal{O}\subseteq\{0,1\}^d$, $o_1, \ldots, o_n \in \mathcal{O}$, $\Omega_{\star},\Omega \in \mathcal{I}(\{0,1\}^d)\setminus\{\{\mathbf{0}_d\},\emptyset\}$, $ c_{\mathrm{E}} \in (0,1)$, $\boldsymbol{\gamma} \in [0,\infty)^d$, $C_{\mathrm{L}}\geq 1$, $\boldsymbol{\beta} \in (0,1]^d$, $C_{\mathrm{S}} \geq 1$, $\alpha \in [0,\infty)$, $C_{\mathrm{M}} \geq 1$ and $\delta \in (0,1)$. Let $\delta' := \frac{\delta}{2} \max_{\omega \in \Omega}\{1/n_{\omega}^{1+\alpha}\}$.  Suppose that $\Omega_{\star} \subseteq \Omega \cup L(\Omega) \subseteq \mathcal{N}$ and $\min_{\omega \in \Omega} \gamma_{\omega} > \max_{\omega\in\Omega} \beta_{\omega}/d_{\omega}$. 
There exists a constant $K_{\Omega} \geq 2$ such that, for any $Q\in\mathcal{Q}'_{\mathrm{Miss}}$, we have
\begin{align}\label{eq:tailbound}
    \mathbb{P}_{Q}\biggl\{\mathcal{E}_P(\hat C_{\Omega}) > K_{\Omega} \cdot \log_+^{\frac{1+\alpha}{2}}(2|\mathcal N|/\delta') \cdot \max_{\omega \in \Omega} n_\omega^{-\frac{\beta_{\omega}\gamma_\omega(1+\alpha)}{\gamma_\omega(2\beta_{\omega}+d_\omega)+\alpha\beta_{\omega}}} \biggm| O_1=o_1, \ldots, O_n=o_n\biggr\}\!\leq\!\delta.
\end{align}
\end{proposition}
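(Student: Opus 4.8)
The plan is to dominate $\mathcal{E}_P(\hat C_\Omega)$ by a deterministic quantity of the advertised order plus a random term whose expectation is negligibly small, and then pass to the tail bound by Markov's inequality. The whole argument is run with $\delta'$ in place of $\delta$; write $r:=\log_+^{(1+\alpha)/2}(2|\mathcal N|/\delta')\max_{\omega\in\Omega}n_\omega^{-\beta\gamma_\omega(1+\alpha)/(\gamma_\omega(2\beta+d_\omega)+\alpha\beta)}$ for the target bound.

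\textbf{Step 1 (splitting the excess-risk integral).} Since $\min_{\omega\in\Omega}\gamma_\omega>\max_{\omega\in\Omega}\beta/d_\omega>0$ and $\gamma$ is monotone along $\preceq$, we have $\gamma_\omega>0$ for every $\omega\in\Omega\cup L(\Omega)$, so letting $\xi\downarrow0$ in Definition~\ref{def:lowerdensity} gives $\rho_\omega(x^\omega)>0$ for $\mu$-a.e.\ $x$ and all such $\omega$; hence Lemma~\ref{lem:hatC=BayesC} applies $\mu$-a.e.\ and yields $\hat C_\Omega(x)=C^{\mathrm{Bayes}}(x)$ on $E_1^{\delta'}(x)\cap E_2^{\delta'}(x)$ whenever $|\eta(x)-1/2|\ge C R_\Omega^{\delta'}(x)$, with $C$ the constant of that lemma. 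Splitting $\mathcal{E}_P(\hat C_\Omega)=\int\mathbbm 1_{\{\hat C_\Omega(x)\ne C^{\mathrm{Bayes}}(x)\}}|2\eta(x)-1|\,d\mu(x)$ over the two regions and bounding the indicator by $1$ on each gives $\mathcal{E}_P(\hat C_\Omega)\le I_1+Z$, where $I_1:=\int_{\{|\eta(x)-1/2|<CR_\Omega^{\delta'}(x)\}}|2\eta(x)-1|\,d\mu(x)$ is deterministic and $Z:=\mu\bigl(\{x:(E_1^{\delta'}(x)\cap E_2^{\delta'}(x))^c\}\bigr)$.

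\textbf{Step 2 (the random term).} By Tonelli and Lemmas~\ref{lem:E_1} and~\ref{lem:E_2_prob}, $\mathbb{E}_Q[Z]=\int\mathbb{P}_Q\bigl((E_1^{\delta'}(x)\cap E_2^{\delta'}(x))^c\bigr)\,d\mu(x)\le c_0\delta'$ for an absolute constant $c_0$. Because $\beta\gamma_\omega(1+\alpha)/(\gamma_\omega(2\beta+d_\omega)+\alpha\beta)\le 1+\alpha$, the choice $\delta'=\tfrac\delta2\max_\omega n_\omega^{-(1+\alpha)}$ gives $\delta'\le\tfrac\delta2\, r$, whence $\mathbb{E}_Q[Z]\le\tfrac{c_0}{2}\,\delta\, r$ and Markov's inequality yields $\mathbb{P}_Q\bigl(Z>\tfrac{c_0}{2}\, r\bigr)\le\delta$.

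\textbf{Step 3 (the deterministic term, the crux).} Since $R_\Omega^{\delta'}=\max_{\omega\in\Omega}R_\omega^{\delta'}$, it suffices to bound, for each fixed $\omega\in\Omega$, the integral $J_\omega:=\int_{\{|\eta(x)-1/2|<CR_\omega^{\delta'}(x)\}}|2\eta(x)-1|\,d\mu(x)$ by a constant multiple of $r$. Write $R_\omega^{\delta'}(x)=\rho_\omega(x^\omega)^{-\beta/d_\omega}A_\omega+L_\omega$ with $A_\omega:=n_\omega^{-(\alpha\beta^2/d_\omega+\beta\gamma_\omega)/(\gamma_\omega(2\beta+d_\omega)+\alpha\beta)}$ and $L_\omega:=\log_+^{1/2}(2|\mathcal N|/\delta')\,n_\omega^{-\beta\gamma_\omega/(\gamma_\omega(2\beta+d_\omega)+\alpha\beta)}$, and set $\xi_\omega:=(A_\omega/L_\omega)^{d_\omega/\beta}$, the level at which the two terms of $R_\omega^{\delta'}$ balance. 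On $\{\rho_\omega(x^\omega)\ge\xi_\omega\}$ one has $R_\omega^{\delta'}(x)\le 2L_\omega$, so by the margin condition (Definition~\ref{def:margin}) this part of $J_\omega$ is at most a constant multiple of $C_{\mathrm M}L_\omega^{1+\alpha}=C_{\mathrm M}\log_+^{(1+\alpha)/2}(2|\mathcal N|/\delta')\,n_\omega^{-\beta\gamma_\omega(1+\alpha)/(\gamma_\omega(2\beta+d_\omega)+\alpha\beta)}\le C_{\mathrm M}\,r$. On $\{\rho_\omega(x^\omega)<\xi_\omega\}$ peel off dyadic shells $S_m:=\{x:2^{-m-1}\xi_\omega\le\rho_\omega(x^\omega)<2^{-m}\xi_\omega\}$, $m\ge0$, on each of which $R_\omega^{\delta'}(x)\asymp 2^{m\beta/d_\omega}L_\omega$; Definition~\ref{def:lowerdensity} gives $\mu(S_m)\le C_{\mathrm L}(2^{-m}\xi_\omega)^{\gamma_\omega}$ and Definition~\ref{def:margin} bounds $\mu\bigl(S_m\cap\{|\eta-1/2|<CR_\omega^{\delta'}(x)\}\bigr)$ by a constant times $(2^{m\beta/d_\omega}L_\omega)^\alpha$, so $S_m$ contributes at most a constant times $2^{m\beta/d_\omega}L_\omega\min\{C_{\mathrm L}(2^{-m}\xi_\omega)^{\gamma_\omega},(2^{m\beta/d_\omega}L_\omega)^\alpha\}$. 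Splitting the sum in $m$ at the scale $m^*$ where the two bounds in the minimum cross, the part $m\le m^*$ is geometric with ratio $2^{(1+\alpha)\beta/d_\omega}>1$ and the part $m>m^*$ is geometric with ratio $2^{\beta/d_\omega-\gamma_\omega}<1$ --- convergent precisely because $\gamma_\omega>\beta/d_\omega$ --- so both are dominated by their value at $m^*$, which by the definition of $\xi_\omega$ (equivalently, of the bandwidth $k_\omega$) equals a constant times $n_\omega^{-\beta\gamma_\omega(1+\alpha)/(\gamma_\omega(2\beta+d_\omega)+\alpha\beta)}\le r$ --- crucially with no residual logarithmic factor, since the $\log_+(2|\mathcal N|/\delta')$ in $L_\omega$ cancels at the balance scale. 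Summing the two regions over $\omega\in\Omega$ gives $I_1\le K_2\, r$.

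Combining Steps 1--3, with probability at least $1-\delta$ we have $\mathcal{E}_P(\hat C_\Omega)\le I_1+Z\le(K_2+c_0/2)\,r$, which is the assertion for a suitable $K_\Omega\ge2$. The main obstacle is Step~3: one must place the split level $\xi_\omega$ exactly at the balance point of the bias term $\rho_\omega^{-\beta/d_\omega}A_\omega$ and the stochastic term $L_\omega$ of $R_\omega^{\delta'}$, and then verify that the dyadic sum over the low-density shells is a genuinely convergent geometric series, so that no spurious logarithmic factor appears --- this is where the hypothesis $\gamma_\omega>\beta/d_\omega$ and the precise exponent defining $k_\omega$ in Algorithm~\ref{alg:nonadaptive} enter.
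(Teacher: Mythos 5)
Your proof is correct and follows essentially the same strategy as the paper's own argument: decompose the excess risk into a random part (the $\mu$-mass of the set where the high-probability events $E_1^{\delta'}(x)\cap E_2^{\delta'}(x)$ fail, i.e.\ $\mu(\mathcal{X}_\delta^c)$) handled by Markov's inequality, plus a deterministic integral over a margin set, handled by a dyadic low-density decomposition balanced against the margin condition. The only differences are cosmetic: the paper (Section~\ref{sec:appendixUBproofs}) places the dyadic split level at $t_\omega=n_\omega^{-\alpha\beta/\phi_\omega}$ and carries the $\log_+^{1/2}$ factor explicitly through the summation, whereas you absorb it into $\xi_\omega=(A_\omega/L_\omega)^{d_\omega/\beta}$; the paper uses a single simultaneous partition $(\mathcal{X}_{\delta,j})_j$ indexed by all $\omega\in\Omega$ at once and bounds $\mu(\mathcal{X}_{\delta,j})$ by a sum, whereas you bound each $J_\omega$ separately and sum over $\omega$; and the paper packages the Markov step as Lemma~\ref{lem:bound_prob_mu_big}. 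All three are equivalent up to constants, and the convergence of your tail series relies on the same hypothesis $\min_{\omega\in\Omega}\gamma_\omega>\max_{\omega\in\Omega}\beta/d_\omega$ that the paper invokes in~\eqref{eq:sumconverges}.
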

The idea behind the proof of this result is to integrate the bound on the regression function in Lemma~\ref{lem:hatC=BayesC} over $\mathcal{X}$. This involves ruling out an exceptional set on which the event $E_{1}^{\delta'}(x) \cap E_{2}^{\delta'}(x)$ fails to hold; see Lemma~\ref{lem:bound_prob_mu_big} 
in Section~\ref{sec:appendixUBproofs} 
for full details.  The proof of the upper bound in Theorem~\ref{thm:minmax_bounds} now follows by bounding the expectation of the excess risk using~\eqref{eq:tailbound}, and replacing $\Omega$ with $\Omega_{\star}$. The details are also deferred to Section~\ref{sec:appendixUBproofs}
.

\subsection{Outline of the proof of Theorem~\ref{thm:nonadaptiveUBnew}\label{subsec:proof2}}
In this subsection we provide an outline for the proof of Theorem~\ref{thm:nonadaptiveUBnew}. The full details of the proof are somewhat lengthy and are therefore given in Section~\ref{subsec:proofnonadaptiveUBnew}
in the supplementary material.  The key idea is to show that our estimator $\hat{\Omega}$ used in the HAM classifier in Algorithm~\ref{alg:nonadaptive} correctly identifies $\Omega_\star$ with high probability; see Proposition~\ref{lem:correct_thresholding} below. The proof of Theorem~\ref{thm:nonadaptiveUBnew} then follows by carefully combining this proposition with the upper bound in Theorem~\ref{thm:minmax_bounds}.

Before stating the proposition, we will use the constant $C_{\mathrm{B}}$ (which provides an upper bound on $\|f_{\omega}\|_\infty$ and only depends on $d$) from Proposition~\ref{prop:f_bounded}
, and define $\phi_{\omega} := \gamma_{\omega}(2\beta_{\omega} + d_{\omega}) + \alpha\beta_{\omega}$, $\kappa_{\omega,\circ} := 2\phi_{\omega} /(\beta_{\omega}\gamma_\omega)$ and $C_{\rho} := (32C^2_{\mathrm{B}}C_{\mathrm{L}})^{-\frac{1}{\min_{\omega\in \mathcal{N}}\gamma_\omega}}$.  Further, we will make use of a constant $\kappa_{\omega,\mathrm{T}}$, which only depends on $d_\omega$; see the proof of Lemma~\ref{lem:f_bound_at_training_points} 
in Section~\ref{subsec:proofnonadaptiveUBnew} 
for a precise definition. 

\begin{proposition}
\label{lem:correct_thresholding} 
Fix $d,\!n \in\mathbb{N}$, $\mathcal{O}\subseteq\{0,\!1\}^d$, $o_1, \ldots, o_n \in \mathcal{O}$, $\Omega_{\star} \in \mathcal{I}(\{0,1\}^d)\setminus\{\{\mathbf{0}_d\}, \emptyset\}$, $c_{\mathrm{E}} \in (0,1/4]$, $\boldsymbol{\gamma} \in [0, \infty)^d$, $C_{\mathrm{L}} \geq 1$,  $\boldsymbol{\beta} \in (0,1]^{d}$, $C_{\mathrm{S}} \geq 1$, $\alpha \in [0, \infty)$,  $C_{\mathrm{M}} \geq 1$ and $\delta \in (0,1)$. Suppose that $\Omega_{\star} \subseteq \mathcal{N}$ and $\min_{\omega \in \Omega_{\star}} \gamma_\omega > \max_{\omega \in \Omega_{\star}} \beta_{\omega}/d_{\omega}$. Suppose further that $(n_\omega)_{\omega \in \mathcal{N}}$ satisfies 
\[
n_{\omega} \geq 
\Bigl(\frac{2^7 C_{B}^2 C_{\mathrm{L}}}{ c_{\mathrm{E}}} \Bigr)^{\kappa_{\omega,\circ} (1\vee \frac{1}{2\alpha})} \Bigl(\frac{20 C_{\mathrm{S}} \kappa_{\omega,\mathrm{T}} \kappa_{\omega,\circ} \cdot \log_{+}(2|\mathcal{N}|n_{\omega}/\delta)}{C_{\rho}^{1 \vee \gamma_{\omega}}} \Bigr)^{2\kappa_{\omega,\circ}} 
\]
for all $\omega \in \Omega_{\star}$, and 
\[
n_{\omega} \geq \Bigl(16\kappa_{\omega,\circ} \kappa_{\omega,T}^2  \cdot \log(2|\mathcal{N}|n_{\omega}/\delta) \Bigr)^{\kappa_{\omega,\circ}},
\]
for all $\omega \in \mathcal{N} \cap U(\Omega_{\star})$. 
Then we have 
\[
\sup_{Q \in \mathcal{Q}_{\mathrm{Miss}}^+} \mathbb{P}_Q\Bigl(\hat{\Omega} \neq \Omega_{\star} \Bigm| O_1 = o_1, \ldots, O_n = o_n\Bigr) \leq 4\delta .
\]
\end{proposition}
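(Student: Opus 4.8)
The plan is to exhibit a high-probability event $G$, depending only on the training data, on which the estimated signal strengths satisfy $\hat\sigma^2_\omega\ge\tau_\omega$ for every $\omega\in\Omega_\star$ and $\hat\sigma^2_\omega<\tau_\omega$ for every $\omega\in U(\Omega_\star)\cap\mathcal N$, and then to check that the top-down selection step of Algorithm~\ref{alg:nonadaptive} necessarily returns $\Omega_\star$ whenever this holds. Throughout I would work conditionally on $O_1=o_1,\dots,O_n=o_n$. The starting point is the algebraic identity $\hat f_\omega^2-f_\omega^2=2f_\omega(\hat f_\omega-f_\omega)+(\hat f_\omega-f_\omega)^2$ which, using $\|f_\omega\|_\infty\le C_{\mathrm B}$ from Proposition~\ref{prop:f_bounded} and the Cauchy--Schwarz inequality, gives
\[
\Bigl|\hat\sigma^2_\omega-\frac{1}{n_\omega}\sum_{i\in N_\omega}f_\omega^2(X_i^\omega)\Bigr|\le 2C_{\mathrm B}\,\varepsilon_\omega+\varepsilon_\omega^2,\qquad \varepsilon_\omega^2:=\frac{1}{n_\omega}\sum_{i\in N_\omega}\bigl(\hat f_\omega(X_i^\omega)-f_\omega(X_i^\omega)\bigr)^2 .
\]
So the task reduces to controlling the in-sample error $\varepsilon_\omega^2$ and the empirical second moment $n_\omega^{-1}\sum_{i\in N_\omega}f_\omega^2(X_i^\omega)$.

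The in-sample error is handled by Lemma~\ref{lem:f_bound_at_training_points}, the analogue of Lemma~\ref{lem:f_is_not_zero} for evaluations at the training feature vectors: on a high-probability event (the intersection, over $\omega\in\mathcal N$ and $i\in N_\omega$, of the nearest-neighbour-distance and empirical-label-average events now anchored at the $X_i^\omega$), one bounds $\varepsilon_\omega^2$ by the bias/variance decomposition and the recursion over $\omega'\prec\omega$ of Section~\ref{subsec:proofUB}, after first discarding from each $N_\omega$ the training points $i$ for which $\rho_\omega(X_i^\omega)$ falls below a suitable cutoff — these are few, by the $\mathcal Q^+_{\mathrm L}$ tail bound of Definition~\ref{def:lowerdensity+}, and contribute negligibly since $\hat f_\omega-f_\omega$ is bounded, while on the remaining points the usual nearest-neighbour analysis applies with $\rho_\omega$ bounded away from zero. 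Two features distinguish this from the test-point analysis of Section~\ref{subsec:proofUB}: one must union-bound over the $n_\omega$ training points, which is precisely why the sample-size conditions carry $\log_+(2|\mathcal N|n_\omega/\delta)$ rather than $\log_+(2|\mathcal N|/\delta)$; and bounding the $\mu_{\omega\mid o_i}$-probability of the low-density set (rather than merely its $\mu_\omega$-probability) requires the strengthened tail control $\mathcal Q^+_{\mathrm L}$, which is where $C_\rho=(32C_{\mathrm B}^2C_{\mathrm L})^{-1/\min_{\omega\in\mathcal N}\gamma_\omega}$ and the exponent $\kappa_{\omega,\circ}=2\phi_\omega/(\beta\gamma_\omega)$ enter. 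The hypothesis $\min_{\omega\in\Omega_\star}\gamma_\omega>\max_{\omega\in\Omega_\star}\beta/d_\omega$ guarantees that, with this choice of cutoff, the resulting bound on $\varepsilon_\omega^2$ still decays polynomially in $n_\omega$ — in fact strictly faster than $\tau_\omega=2^{-4}n_\omega^{-\beta\gamma_\omega/(2\phi_\omega)}$.

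For $\omega\in U(\Omega_\star)\cap\mathcal N$ we have $\sigma^2_\omega=0$, and I would first note that this forces $f_\omega\equiv0$ on $\mathrm{supp}(\mu_\omega)$: some $o^\star\succeq\omega$ has $\mathbb{E}_Q[f_\omega^2(X)\mid O=o^\star]=0$; the lower-density part of Definition~\ref{def:lowerdensity} gives $\mathrm{supp}(\mu_\omega)\subseteq\mathrm{supp}(\mu_{\omega\mid o^\star})$; and $f_\omega$ is continuous by Definition~\ref{def:smoothness}. Since each available case satisfies $X_i^\omega\in\mathrm{supp}(\mu_{\omega\mid o_i})\subseteq\mathrm{supp}(\mu_\omega)$, it follows that $f_\omega(X_i^\omega)=0$ for all $i\in N_\omega$ almost surely, so $\hat\sigma^2_\omega=\varepsilon_\omega^2$, and the sample-size condition for $\omega\in U(\Omega_\star)\cap\mathcal N$ is exactly what makes $\varepsilon_\omega^2<\tau_\omega$. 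For $\omega\in\Omega_\star$, the random variables $f_\omega^2(X_i^\omega)$, $i\in N_\omega$, are independent given the $o_i$, take values in $[0,C_{\mathrm B}^2]$, and have mean $\mathbb{E}_Q[f_\omega^2(X)\mid O=o_i]\ge\sigma^2_\omega\ge c_{\mathrm E}$, so Hoeffding's inequality gives $n_\omega^{-1}\sum_{i\in N_\omega}f_\omega^2(X_i^\omega)\ge c_{\mathrm E}-C_{\mathrm B}^2\sqrt{\log_+(2|\mathcal N|/\delta)/(2n_\omega)}$ with probability at least $1-\delta/|\mathcal N|$; combined with the display, this yields $\hat\sigma^2_\omega\ge c_{\mathrm E}-C_{\mathrm B}^2\sqrt{\log_+(2|\mathcal N|/\delta)/(2n_\omega)}-2C_{\mathrm B}\varepsilon_\omega-\varepsilon_\omega^2$, and the sample-size condition for $\omega\in\Omega_\star$ is calibrated so that each correction is at most $c_{\mathrm E}/6$ and, simultaneously, $\tau_\omega\le c_{\mathrm E}/2$ — here one uses $\beta\gamma_\omega/(2\phi_\omega)<1/2$ (i.e. $\phi_\omega>2\beta\gamma_\omega$), so that $\tau_\omega$ dominates both $\varepsilon_\omega$ and the $n_\omega^{-1/2}$ term — whence $\hat\sigma^2_\omega\ge c_{\mathrm E}/2\ge\tau_\omega$. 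Taking $G$ to be the intersection of all the events above, a union bound over $\omega\in\mathcal N$ and over the constituent events gives $\mathbb{P}_Q(G^c\mid O_1=o_1,\dots,O_n=o_n)\le 4\delta$, uniformly over $Q\in\mathcal Q^+_{\mathrm{Miss}}$.

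It remains to verify $\hat\Omega=\Omega_\star$ on $G$, by downward induction on dimension. The selection step processes patterns in decreasing dimension, keeps $\hat\Omega$ an antichain, and adds $\omega\in\mathcal N$ exactly when $\hat\sigma^2_\omega\ge\tau_\omega$ and no pattern already in $\hat\Omega$ strictly dominates $\omega$. Assume that after all dimensions larger than $d_\omega$ have been processed one has $\hat\Omega=\{\omega'\in\Omega_\star:d_{\omega'}>d_\omega\}$. If $\omega\in U(\Omega_\star)\cap\mathcal N$ then $\hat\sigma^2_\omega<\tau_\omega$ on $G$, so $\omega$ is not added; if $\omega\in L(\Omega_\star)\cap\mathcal N$ then some $\omega'\in\Omega_\star$ with $\omega\prec\omega'$ (hence $d_{\omega'}>d_\omega$) already lies in $\hat\Omega$, so $\omega$ is dominated and not added; and if $\omega\in\Omega_\star$ then $\hat\sigma^2_\omega\ge\tau_\omega$ on $G$, while no pattern of $\hat\Omega$ can strictly dominate $\omega$ since $\Omega_\star$ is an antichain, so $\omega$ is added. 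As $\{0,1\}^d=\Omega_\star\cup L(\Omega_\star)\cup U(\Omega_\star)$ is a partition, this disposes of all of $\mathcal N$ at dimension $d_\omega$ and closes the induction; since $\Omega_\star\notin\{\{\mathbf{0}_d\},\emptyset\}$ contains no element of dimension $0$, we conclude $\hat\Omega=\Omega_\star$ on $G$, which proves the claim. The main obstacle is Lemma~\ref{lem:f_bound_at_training_points}: obtaining a usable bound on $\hat f_\omega-f_\omega$ at the (random) training points without sample splitting, which both inflates the logarithmic factors and forces the passage to $\mathcal Q^+_{\mathrm L}$; once that is in hand, the remaining work is the lengthy but routine bookkeeping needed to confirm that the stated lower bounds on $n_\omega$ — with exponents tied to $\kappa_{\omega,\circ}$ — are exactly strong enough to place $\hat\sigma^2_\omega$ on the correct side of $\tau_\omega$ for every $\omega\in\mathcal N$.
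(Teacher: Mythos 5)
Your proposal is correct and follows essentially the same route as the paper: the same four high-probability events (Hoeffding concentration of the empirical $f_\omega^2$-moment, the tail count via the $\mathcal Q^+_{\mathrm L}$ condition, and the nearest-neighbour distance and label-average events at the training points), the same training-point error bound (Lemma~\ref{lem:f_bound_at_training_points}), and the same conclusion that on their intersection $\hat\sigma^2_\omega$ lands on the correct side of $\tau_\omega$ for every $\omega\in\Omega_\star\cup(U(\Omega_\star)\cap\mathcal N)$. Your algebra differs cosmetically — you expand $\hat f_\omega^2-f_\omega^2=2f_\omega(\hat f_\omega-f_\omega)+(\hat f_\omega-f_\omega)^2$ and apply Cauchy--Schwarz where the paper writes it as $(\hat f_\omega-f_\omega)(\hat f_\omega+f_\omega)$ and bounds $|\hat f_\omega+f_\omega|\le 2C_{\mathrm B}$ pointwise before splitting body and tail (your version incurs a $\sqrt{\tau_\omega}$-type error for $\omega\in\Omega_\star$, which is still dwarfed by $c_{\mathrm E}$ under the stated sample-size conditions) — and you supply two details the paper states but does not spell out: the continuity-and-support argument that $\sigma^2_\omega=0$ forces $f_\omega(X_i^\omega)=0$ a.s.\ for every available case, and the downward induction showing that the selection step of Algorithm~\ref{alg:nonadaptive} returns exactly $\Omega_\star$ once the thresholds are correctly classified.
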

Proposition~\ref{lem:correct_thresholding} shows that, under a sample size condition, we have $\hat{\Omega} = \Omega_{\star}$ with high probability, uniformly over the class $\mathcal{Q}_{\mathrm{Miss}}^{+}$. For the proof of this result, given in Section~\ref{subsec:proofnonadaptiveUBnew}
, we introduce several additional high-probability events, which control how well we can estimate $\sigma_{\omega}^2$ as well as the error of $\hat{f}$ at each of the training data points simultaneously.  On the intersection of these events, we are then able to show that $\hat{\sigma}^2_\omega$ is on the correct side of the threshold $\tau_{\omega}$ for each relevant $\omega \in \mathcal{N}$. The sample size condition ensures that we have enough signal in our training data to detect that $f_\omega$ is nonzero.  Note, however, that in Theorem~\ref{thm:nonadaptiveUBnew} we are in fact able to remove the condition on $n_{\omega}$ for $\omega \in \Omega_{\star}$ and simplify the condition for $\omega \in  \mathcal{N} \cap U(\Omega_{\star})$.

\section*{Funding} 
The research of TS and TIC is supported by an Engineering and Physical Sciences Research Council (EPSRC) New Investigator Award (EP/V002694/1), and the research of TBB is supported by an EPSRC New Investigator Award (EP/W016117/1).

\setcounter{section}{0}
\setcounter{equation}{0}
\setcounter{algorithm}{0}
\setcounter{figure}{0}
\setcounter{theorem}{0}
\setcounter{definition}{0}
\setcounter{example}{0}
\def\theequation{S\arabic{equation}}
\def\thesection{S\arabic{section}}
\def\thetheorem{S\arabic{theorem}}
\def\thefigure{S\arabic{figure}}
\def\thedefinition{S\arabic{definition}}
\def\theexample{S\arabic{example}}
\def\theremark{S\arabic{remark}}
\def\thealgorithm{S\arabic{algorithm}}

\newpage

\section{Additional results and proofs of the claims in Section~\ref{sec:statistical_setting}
\label{sec:proofsfor2}}

\subsection{Intuition behind Definition~\ref{def:missingness} 
and proofs of the claim in the examples\label{sec:intuitionfor1}}

Our first result below provides additional understanding of the class $\mathcal{Q}_{\mathrm{Miss}}(\mathcal{O})$ in Definition~\ref{def:missingness}
, based on conditional independence properties. 

\begin{proposition}\label{prop:missingness}
Fix $Q \in \mathcal{Q}$ and suppose that $(X,Y,O) \sim Q$. Let $\mathcal{O}_{Q} := \{o \in \{0,1\}^d : \mathbb{P}_Q(O = o) > 0\}$. If, for all $\omega \in \{0,1\}^d$, we have that $Y$ and $O$ are conditionally independent given $X^\omega$, then $Q \in \mathcal{Q}_{\mathrm{Miss}}(\mathcal{O}_Q)$.
\end{proposition}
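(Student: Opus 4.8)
The plan is to verify the two defining conditions of $\mathcal{Q}_{\mathrm{Miss}}(\mathcal{O}_Q)$ directly. The positivity requirement, that $\IP_Q(O = o) > 0$ for every $o \in \mathcal{O}_Q$, holds by the very definition of $\mathcal{O}_Q$, so the entire content lies in checking the identity \eqref{eq:missingness} for each $\omega \in \{0,1\}^d$ and each $o \in \mathcal{O}_Q$ with $\omega \preceq o$. I would fix such a pair $(\omega, o)$ and work with fixed regular conditional distributions (and the canonical versions of the associated regression functions), so that the pointwise statements in \eqref{eq:missingness} are to be read $\mu$-almost everywhere, as they are throughout the paper; this interpretive point should be stated up front.

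The first step is to put the hypothesis in usable form. Since $O$ is finitely valued, conditional independence $Y \indep O \mid X^\omega$ is symmetric in $Y$ and $O$, and a routine monotone-class argument (testing against products $h(X^\omega) g(O)$) shows it is equivalent to $\IP_Q(Y = 1 \mid X^\omega, O) = \IP_Q(Y = 1 \mid X^\omega)$, $Q$-almost surely; write $\eta_\omega(X^\omega)$ for the right-hand side and $g(X^\omega, O)$ for a version of the left-hand side. The second, and main, step is to localize this identity to the event $\{O = o\}$. Because $\IP_Q(O = o) > 0$, on $\{O = o\}$ we have $g(X^\omega, o) = g(X^\omega, O) = \eta_\omega(X^\omega)$, $Q$-a.s.; and by the elementary relation between conditioning on a positive-probability event and conditional expectation, $g(\cdot, o)$ is a version of the conditional probability of $\{Y = 1\}$ given $X^\omega$ under the restricted measure $Q(\,\cdot \mid O = o)$. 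Hence $\IP_Q(Y = 1 \mid X^\omega = x^\omega, O = o) = \eta_\omega(x^\omega) = \IP_Q(Y = 1 \mid X^\omega = x^\omega)$ for $\mu_{\omega\mid o}$-almost every $x$, which is exactly \eqref{eq:missingness} for $(\omega, o)$.

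The final step is a null-set upgrade: since $\IP_Q(O = o) > 0$ forces $\mu_{\omega\mid o} \ll \mu_\omega$, and $\mu_\omega$ is the pushforward of $\mu$ under $x \mapsto x^\omega$, the exceptional $\mu_{\omega\mid o}$-null set pulls back to a $\mu$-null subset of $\mathbb{R}^d$, so the identity holds $\mu$-a.e.; ranging over all $\omega$ and all admissible $o$ then yields $Q \in \mathcal{Q}_{\mathrm{Miss}}(\mathcal{O}_Q)$. I expect the only real obstacle to be measure-theoretic bookkeeping — in particular, checking cleanly that the canonical version $\eta_\omega$ of $\IP_Q(Y = 1 \mid X^\omega)$ may be used as a version of the conditional probability of $Y$ given $X^\omega$ under $Q(\,\cdot \mid O = o)$, and being precise about the sense in which the ``for all $x \in \mathrm{supp}(\mu)$'' clause of Definition~\ref{def:missingness} is meant. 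The genuinely probabilistic content, namely reading $Y \indep O \mid X^\omega$ after conditioning on the positive-probability event $\{O = o\}$, is short and essentially automatic.
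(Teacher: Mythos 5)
Your proposal is correct in its essential content but takes a genuinely different route from the paper, and one step of your null-set argument is misstated. The paper proves the identity by a pointwise application of Bayes' theorem: it writes $\mathbb{P}_Q(Y=1\mid X^\omega=x^\omega, O=o)$ as a ratio, cancels the factor $\mathbb{P}_Q(O=o\mid Y=1, X^\omega=x^\omega)=\mathbb{P}_Q(O=o\mid X^\omega=x^\omega)$ using the conditional independence hypothesis, and then handles the degenerate case $\mathbb{P}_Q(O=o\mid X^\omega=x^\omega)=0$ by observing that the right-hand side of \eqref{eq:missingness} is then conditioning on a $Q$-null event, so one is free to choose a version. Your route avoids Bayes entirely: you reformulate the hypothesis as $\mathbb{P}_Q(Y=1\mid X^\omega, O)=\mathbb{P}_Q(Y=1\mid X^\omega)$ $Q$-a.s., localize to the positive-probability event $\{O=o\}$, and identify $g(\cdot,o)$ as a version of the regression function under $Q(\cdot\mid O=o)$. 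This is a clean and valid alternative that trades the explicit Bayes computation and its case-split for a more abstract version/a.e. argument; each approach concentrates the measure-theoretic care in a different place.

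The one genuine slip is in your final ``null-set upgrade.'' You argue that since $\mathbb{P}_Q(O=o)>0$ forces $\mu_{\omega\mid o}\ll\mu_\omega$, the exceptional $\mu_{\omega\mid o}$-null set is $\mu_\omega$-null (and hence $\mu$-null). That inference runs the absolute continuity implication backwards: $\mu_{\omega\mid o}\ll\mu_\omega$ says that $\mu_\omega$-null sets are $\mu_{\omega\mid o}$-null, not the reverse, and $\mu_\omega\ll\mu_{\omega\mid o}$ need not hold. Fortunately the conclusion you want is reachable more directly: you have already shown that $\eta_\omega$ agrees $\mu_{\omega\mid o}$-a.e.\ with a version of $\mathbb{P}_{Q(\cdot\mid O=o)}(Y=1\mid X^\omega)$, and since versions are determined only up to $\mu_{\omega\mid o}$-null sets, $\eta_\omega$ is itself such a version. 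Thus one may take $\eta_\omega$ as the canonical version of both sides of \eqref{eq:missingness}, and the identity then holds everywhere on $\mathrm{supp}(\mu)$, which is exactly the reading the paper intends when it says ``we can define a version of the regression function.'' With that one sentence repaired, the argument is complete.
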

\begin{proof}
Fix $o \in \mathcal{O}_Q$ and $\omega\preceq o$, then by Bayes' Theorem and the assumed conditional independence we have
\begin{align*}
    \mathbb{P}_Q(Y = 1 | X^{\omega} = x^{\omega}, O = o) &= \frac{\mathbb{P}_Q( O = o| Y = 1, X^{\omega} = x^{\omega}) \mathbb{P}_Q(Y = 1 | X^{\omega} = x^{\omega})}{\mathbb{P}_Q( O = o| X^{\omega} = x^{\omega}) }\\
    &= 
\mathbb{P}_Q(Y = 1 | X^{\omega} = x^{\omega}) 
\end{align*}
whenever $\mathbb{P}_Q( O = o| X^{\omega} = x^{\omega})>0$. On the other hand, if $\mathbb{P}_Q( O = o| X^{\omega} = x^{\omega})=0$, then the right hand side in~\eqref{eq:missingness} 
is conditioning on a null set under the distribution $Q$. Thus we can define a version of regression function satisfying 
\[
\mathbb{P}_Q(Y = 1 | X^{\omega} = x^{\omega}, O = o) = \mathbb{P}_Q(Y=1 | X^\omega = x^\omega),
\]
for all $o \in \mathcal{O}_Q$ and $\omega\preceq o$ and $x\in\mathrm{supp}(\mu)$, without changing the distribution $Q$. This completes the proof.
\end{proof}

\begin{lemma}\label{lem:missingness}
Fix $S_1, S_2 \subseteq [d]$ with $S_1\cap S_2 = \emptyset$ and $Q\in\mathcal{Q}$.  Suppose that $(X,Y,O)\sim Q$, and let $\mathcal{O}_{Q} := \{o \in \{0,1\}^d : \mathbb{P}_Q(O = o) > 0\}$.  Suppose further that the regression function $\eta(x)$ depends on $x = (x_1,\ldots,x_d) \in \mathbb{R}^d$ only via $x_{S_1} := \{x_j : j \in S_1\}$, and that $O \mid \{X = x, Y = y\}$ depends on $(x,y) \in \mathbb{R}^d \times \{0,1\}$ only via $x_{S_2} := \{x_j : j \in S_2\}$. If $X_{S_1}$ and $X_{S_2}$ are independent, then $Q \in \mathcal{Q}_{\mathrm{Miss}}(\mathcal{O}_Q)$.
\end{lemma}
\begin{proof}
First write $\omega' = (\omega'_1, \ldots, \omega'_d)$ with $\omega'_j = 1$ if $j\in S_1$ and $\omega'_j =0$, otherwise. Similarly write $\omega'' = (\omega''_1, \ldots, \omega''_d)$ with $\omega''_j = 1$ if $j\in S_2$ and $\omega''_j =0$ otherwise.   

Fix $\omega = (\omega_1,\ldots, \omega_d) \in \{0,1\}^d$.  We then have that for $x \in \mathrm{supp}(\mu)$
\begin{align*}
 \mathbb{P}(Y=1 \mid X^\omega = x^\omega ) &= \int_{\mathbb{R}^d} \mathbb{P}(Y=1 \mid X = x) \, d\mu_{\mathbf{1}_d-\omega}(x) 
 \\ & =  \int_{\mathbb{R}^d} \mathbb{P}(Y=1 \mid X^{\omega'} = x^{\omega'}) \, d\mu_{\mathbf{1}_d-\omega}(x) 
 \\ & = \int_{\mathbb{R}^d} \mathbb{P}(Y=1 \mid X_{S_1} = x_{S_1}) \, d\mu_{(\mathbf{1}_d-\omega) \wedge \omega'}(x)
\end{align*}
Similarly, for $o \in \mathcal{O}_Q$ with $o\preceq\omega$ and $x \in \mathrm{supp}(\mu)$, we have that 
\[
     \mathbb{P}(O=o \mid X^\omega = x^\omega ) =  \int_{\mathbb{R}^d} \mathbb{P}(O=o \mid X_{S_2} = x_{S_2}) \, d\mu_{(\mathbf{1}_d-\omega) \wedge \omega''}(x).
\]

Now, since $X_{S_1}$ and $X_{S_2}$ are independent, we have that $X^{\omega'}$ and $X^{\omega''}$ are independent and hence $X^{(\mathbf{1}_d-\omega) \wedge \omega'}$ and $X^{(\mathbf{1}_d-\omega) \wedge \omega''}$ are independent. It follows that, for $o \in \mathcal{O}_Q$ with $o\preceq\omega$ and $x \in \mathrm{supp}(\mu)$, we have
\begin{align*}
    &\mathbb{P}(Y=1,O=o \mid X^\omega = x^\omega ) \\
    &= \int_{\mathbb{R}^d} \mathbb{P}(Y=1,O=o \mid X = x )  \, d\mu_{\mathbf{1}_d-\omega}(x)  \\
    &= \int_{\mathbb{R}^d} \mathbb{P}(Y=1 \mid X = x )
    \mathbb{P}(O=o \mid Y=1, X = x) \, d\mu_{\mathbf{1}_d-\omega}(x)  \\
    & = \int_{\mathbb{R}^d} \mathbb{P}(Y=1 \mid  X_{S_1}=x_{S_1}) \mathbb{P}(O=o \mid  X_{S_2}=x_{S_2}) \, d\mu_{\mathbf{1}_d-\omega}(x) \\
   & = \int_{\mathbb{R}^d} \mathbb{P}(Y=1 \mid  X_{S_1}=x_{S_1}) \mathbb{P}(O=o \mid  X_{S_2}=x_{S_2}) \, d\mu_{\mathbf{1}_d-\omega}(x) \\
    & = \int_{\mathbb{R}^d} \int_{\mathbb{R}^d} \mathbb{P}(Y=1 \mid  X_{S_1}\!=\!x_{S_1}) \mathbb{P}(O=o \mid  X_{S_2}\!=\!z_{S_2}) \, d\mu_{(\mathbf{1}_d-\omega) \wedge \omega'}(x^{\omega'}) \, d\mu_{(\mathbf{1}_d-\omega) \wedge \omega''}(z^{\omega''}) \\
    & = \int_{\mathbb{R}^d} \int_{\mathbb{R}^d} \mathbb{P}(Y=1 \mid  X_{S_1}\!=\!x_{S_1}) \mathbb{P}(O=o \mid  X_{S_2}\!=\!z_{S_2}) \, d\mu_{(\mathbf{1}_d-\omega) \wedge \omega'}(x) \, d\mu_{(\mathbf{1}_d-\omega) \wedge \omega''}(z) \\
    & = \Bigl\{ \int_{\mathbb{R}^d} \mathbb{P}(Y\!=\!1 \mid  X_{S_1}\!=\!x_{S_1}) d\mu_{(\mathbf{1}_d-\omega) \wedge \omega'}(x) \Bigr\} \Bigl\{ \int_{\mathbb{R}^d} \mathbb{P}(O\!=\!o \mid   X_{S_2}\!=\!z_{S_2}) d\mu_{(\mathbf{1}_d-\omega) \wedge \omega''}(z) \Bigr\}.
\end{align*}
It follows that
\[
    \mathbb{P}(Y=1 \mid O=o, X^\omega = x^\omega ) = \frac{\mathbb{P}(Y=1,O=o \mid X^\omega = x^\omega )}{\mathbb{P}(O=o \mid X^\omega = x^\omega )} = \mathbb{P}(Y=1 \mid X^\omega = x^\omega )
\]
as required.
\end{proof}

We now prove the claims made in the examples in Section~\ref{sec:statistical_setting}
, as well as provide additional detail on the particular classes the corresponding distributions belong to. 

\begin{proof}[Proof of the claim in the Example~\ref{ex:missingness1}
] 
We have $\IP_Q(O=o)>0$ for any $o\in\{0,1\}^d$, and thus $Q \in \mathcal{Q}_{\mathrm{Miss}}(\{0,1\}^d)$ by Proposition~\ref{prop:missingness}, since $(X,Y)$ and $O$ are independent.  In the special case with $u = (a,0)^T$ for some $a > 0$, and $\Sigma = I$ (in Setting~1 in Section~\ref{sec:numericalresults} 
we have $a=2^{1/2})$), we will in fact show that 
\[
Q \in \mathcal{Q}_{\mathrm{Miss}}^+ \Bigl( \bigl\{(1,0)^T\bigr\}, 0.074, (\gamma_1,1)^T, C_{\mathrm{L}}, \mathbf{1}_2, 1\vee(a/2), 1, 4/a , \{0,1\}^2 \Bigr),
\]
for any $\gamma_1 \in [0,1)$ and sufficiently large $C_{\mathrm{L}}$. 

Indeed, for $x = (x_1,x_2)^T\in \mathbb{R}^2$, we have
\[
\eta(x) = \frac{e^{-ax_1}} {e^{-ax_1 } + e^{ax_1}  } = \frac{1}{2} + \frac{1}{2} \tanh(-ax_1).
\]
Therefore, in the decomposition in~\eqref{eq:etadecomposition} 
the only nonzero term is $f_{(1,0)^T}(x)=\tanh(-ax_1)/2$, and all other $f_\omega$s are zero. Further, 
\begin{align*}
\sigma_{(1,0)^T}^2 &= \mathbb{E}_Q\{\tanh^2(aX_1)/4\} = \int_{-\infty}^{\infty} \frac{1}{4\sqrt{2\pi}} e^{-x_1^2/2 - a^2/2} \cosh(ax_1) \tanh^2(ax_1) \, dx_1
\\ & \geq \int_{-\infty}^{\infty} \frac{1}{4\sqrt{2\pi}} e^{-x_1^2/2 - a^2/2-a^2x_1^2/2} \sinh^2(ax_1) \, dx_1 = \frac{e^{-a^2/2}\{e^{2a^2/(1+a^2)} - 1\}}{8(1+a^2)^{1/2}},
\end{align*} 
where we used the inequality $\cosh(ax)\leq e^{a^2x^2/2}$. Thus, if for example $a=2^{1/2}$, then $Q \in \mathcal{Q}_{\mathrm{E}}(\{(1,0)^T\}, 0.074, \{0,1\}^2)$. 

Turning now to the parameters in Definition~\ref{def:lowerdensity}
, first consider $\omega = (0,1)^T$, in this case the marginal distribution of $X_2$ is a standard Gaussian random variable.  Hence, for $x = (x_1,x_2)^T \in \{0\} \times [2,\infty)$, we have by convexity
\begin{align*}
    \rho_{\mu_\omega,1}(x) &= \inf_{r \in (0,1)} \frac{\Phi(x_2+r) - \Phi(x_2-r)}{r} = 2\phi(x_2)
\end{align*}
where $\Phi$ and $\phi$ denote the cumulative distribution function and density, respectively, of a standard Gaussian random variable.   It follows that, for $\xi \leq \xi_0 := 2\phi(2)$, we have 
\[
\mu_{\omega} \bigl( \{x \in \mathbb{R}^2 : \rho_{\mu_\omega,1}(x) < \xi\} \bigr) \leq \mu_{\omega} \bigl( \{x \in \mathbb{R}^2 : 2\phi(x_2) < \xi\} \bigr) = 2\Phi(-b) \leq e^{-b^2/2} = \frac{\sqrt{\pi}\xi}{\sqrt{2}} ,
\]
where $b := \Bigl(2\log(\frac{\sqrt{2}}{\sqrt{\pi}\xi}) \Bigr)^{1/2}$.  On the other hand, for $\xi > \xi_0$, we have $\mu_{\omega} \bigl( \{x \in \mathbb{R}^2 : \rho_{\mu_\omega,1}(x) < \xi\} \bigr) \leq 1 \leq (\xi/\xi_0)$.  We conclude that we may take any $\gamma_2 \in[0,1]$.  Now, for $\omega=(1,0)^T$, the marginal distribution of $X_1$ is a mixture of two Gaussian random variables with means $-a$ and $a$, respectively.  In this case, for $x = (x_1,x_2)^T \in \mathbb{R} \times \{0\}$ with $|x_1| > a+2$ we have 
\[
\rho_{\mu_\omega,1}(x) = \phi(x_1-a) + \phi(x_1+a) \geq  2\phi(|x_1|+a).
\]
Then similarly to above, we have  for $\xi \leq \xi_1 := 2\phi(2a+2)$, we have 
\begin{align*} 
\mu_{\omega} \bigl( \{x \in \mathbb{R}^2 : \rho_{\mu_\omega,1}(x) < \xi\} \bigr) &\leq \mu_{\omega} \bigl( \{x \in \mathbb{R}^2 : 2\phi(|x_1| + a) < \xi\} \bigr) \\ & = \Phi(-(c+a)) + \Phi(-(c-a)) \leq 2\Phi(-(c-a)) 
\\ & \leq e^{-(c-a)^2/2} = e^{-(c+a)^2/2 + 2ac} 
\\ & \leq \frac{\sqrt{\pi}\xi}{\sqrt{2}} \exp\Bigl(2a \sqrt{2\log(\sqrt{2}/(\sqrt{\pi}\xi))} \Bigr),
\end{align*}
where $c := \Bigl(2\log(\frac{\sqrt{2}}{\sqrt{\pi}\xi}) \Bigr)^{1/2} - a$.  On the other hand, for $\xi > \xi_1$, we have $\mu_{\omega} \bigl( \{x \in \mathbb{R}^2 : \rho_{\mu_\omega,1}(x) < \xi\} \bigr) \leq 1 \leq (\xi/\xi_1)^{\gamma_1}$, for any $\gamma_1 < 1$.  Finally, by \citet[Lemma~S8]{reeve2021adaptive}, we also deduce that
\[
\mu\bigl( \{x \in \mathbb{R}^2 : \rho_{\mu,2}(x) < \xi\} \bigr) \leq C_{\mathrm{L}}\xi^{\gamma_1}.
\]
for any $\gamma_1 < 1$ and some appropriate choice of $C_{\mathrm{L}}$.  We therefore deduce that $Q \in \mathcal{Q}_{\mathrm{L}}(\boldsymbol{\gamma}, C_{\mathrm{L}}, \{0,1\}^d)$, for any $\boldsymbol{\gamma} \in [0,1) \times [0,1]$ and the corresponding choice of $C_{\mathrm{L}}$.  

Regarding smoothness, for $x=(x_1,x_2)^T,x'=(x'_1,x'_2)^T\in\mathrm{supp}(\mu)$, we have 
\[
|f_{(1,0)^T}(x)-f_{(1,0)^T}(x')| = \frac{1}{2}|\tanh(-ax_1)-\tanh(-ax'_1)| \leq \frac{a}{2} |x_1-x'_1|,
\]
and thus $P_Q \in \mathcal{P}_{\mathrm{S}}(\mathbf{1}_2, 1 \vee (a/2))$ since all other $f_\omega$ are $0$.

Finally, for $t>0$, we have 
\begin{align*}
\mathbb{P}_P(|\eta(X) - 1/2| < t) &= 2\mathbb{P}_P\Bigl\{0 \leq \tanh(aX_1) < 2t\Bigr\} = 2\mathbb{P}_P\Bigl\{0 \leq X_1 < \frac{\tanh^{-1}(2t)}{a}\Bigr\} 
\\ & = \Phi\Bigl(2^{1/2} + \frac{\tanh^{-1}(2t)}{a}\Bigr) -  \Phi\Bigl(a - \frac{\tanh^{-1}(2t)}{a}\Bigr) 
\\ & \leq \frac{2\tanh^{-1}(2t)}{a} \leq \frac{4}{a}\cdot t.
\end{align*}
It follows that $P_Q \in \mathcal{P}_{\mathrm{M}}(1, 1\vee 4/a)$. 

\end{proof}

\begin{proof}[Proof of the claim in the Example~\ref{ex:missingness2}
] 
First, for $o\in\{(1,1,1,0)^T,(1,0,0,1)^T\}$, we have that $\IP_Q(O=o)>0$ and no other observation patterns can be observed in the training data. Since $Y$ and $O$ are conditionally independent given $X^{\omega}$ for all $\omega \in \{0,1\}^4$, we have $Q \in \mathcal{Q}_{\mathrm{Miss}}(\{(1,1,1,0)^T,(1,0,0,1)^T\})$ by Proposition~\ref{prop:missingness}.

In the special case used in Setting~2 in Section~\ref{sec:numericalresults}
, where for $x = (x_1,x_2,x_3,x_4)^T \in [0,1]^4$ we have
\[
\eta(x):= \frac{1}{2} +  \frac{(x_2-1/2)\cdot x_3^2}{2} + \frac{x_4-1/2}{2},
\]
the nonzero $f_{\omega}$ functions are 
\[
f_{e_2}(x) = \frac{(x_2-1/2)}{6}; \quad f_{e_2+e_3}(x)=\frac{(x_2-1/2)(x_3^2-1/3)}{2}; \quad f_{e_4}(x)=\frac{(x_4-1/2)}{2}.
\]

To see that $Y$ and $O$ are conditionally independent given $X^{\omega}$ in this case, observe that for any $\omega \in \{0,1\}^d$ and $o=(1,1,1,0)^T$ and writing $X = (X_1,X_2,X_3,X_4)^T$ we have
\begin{align*}
    \mathbb{P}\bigl(Y=1 , O=o \bigm| X^\omega = x^\omega\bigr) &=\mathbb{E}\Bigl\{  \mathbb{P}(Y=1, O=o \mid X) \Bigm| X^{\omega} = x^{\omega} \Bigr\} \\
    &=\mathbb{E}\Bigl\{ \mathbb{P}(Y=1\mid X) \cdot \mathbb{P}(O=o \mid X) \Bigm| X^{\omega} = x^{\omega} \Bigr\} \\
    &= \mathbb{E}\bigl\{\eta(X) \cdot X_1 \bigm| X^{\omega} = x^{\omega} \bigr\}  
    \\ & = \mathbb{E}\bigl\{\eta(X) \bigm| X^{\omega} = x^{\omega} \bigr\} \cdot \mathbb{E}\bigl(X_1 \bigm| X^{\omega} = x^{\omega} \bigr),
\end{align*}
since $\eta(x)$ does not depend on $x_1$.  If $o=(1,0,0,1)^T$, then similarly for any $\omega \in \{0,1\}^d$ we have that $\mathbb{P}(Y=1 , O=o \mid X^\omega = x^\omega) =\mathbb{E}\{\eta(X) \mid X^{\omega} = x^{\omega} \} \cdot \mathbb{E}(1-X_1 \mid X^{\omega} = x^{\omega})$.

We now show that $Q \in \mathcal{Q}_{\mathrm{E}}$ for appropriate choices of the parameters.  For $\omega = (\omega_1,\omega_2, \omega_3,\omega_4)^T \in \{0,1\}^4$, we have $\mu_{\omega} = \otimes_{j=1}^4 \bigl\{U([0,1]) \mathbbm{1}_{\{\omega_j = 1\}} + \delta_0 \mathbbm{1}_{\{\omega_j = 0\}}\bigr\}$, where $\delta_0$ denotes a Dirac mass at $0$. Further, for $\omega \preceq (1,1,1,0)^T$, we have 
\begin{align*} 
\mu_{\omega | (1,1,1,0)^T} & = \bigl\{\mathrm{Beta}(2,1) \mathbbm{1}_{\{\omega_1 = 1\}} + \delta_0 \mathbbm{1}_{\{\omega_1 = 0\}}\bigr\} \otimes \bigl\{U([0,1]) \mathbbm{1}_{\{\omega_2 = 1\}} + \delta_0 \mathbbm{1}_{\{\omega_2 = 0\}}\bigr\} 
\\ & \hspace{30pt} \otimes \bigl\{U([0,1]) \mathbbm{1}_{\{\omega_3 = 1\}} + \delta_0 \mathbbm{1}_{\{\omega_3 = 0\}}\bigr\} \otimes \delta_{0}
\end{align*}
and for $\omega \preceq (1,0,0,1)^T$, we have \[
\mu_{\omega | (1,0,0,1)^T} =\bigl\{\mathrm{Beta}(1,2) \mathbbm{1}_{\{\omega_1 = 1\}} + \delta_0 \mathbbm{1}_{\{\omega_1 = 0\}}\bigr\} \otimes \delta_{0}^{\otimes 2} \otimes \bigl\{U([0,1]) \mathbbm{1}_{\{\omega_4 = 1\}} + \delta_0 \mathbbm{1}_{\{\omega_4 = 0\}}\bigr\} .
\]
To see this, note that for a measurable set $A \subseteq [0,1]^3 $ and $\omega = o = (1,1,1,0)^T$, we let $\tilde{A}= A \times \{0\}$ and have
\begin{align*}
    \mu_{\omega | o}(\tilde{A}) & = \IP_Q(X^\omega\in \tilde{A}\mid O=o) 
    = 2\cdot \IP_Q(O=o \mid X^\omega\in \tilde{A}) \cdot \IP_Q(X^\omega\in \tilde{A}) \\
    &= 2\cdot \int_{A} x_1  dx_1 \, dx_2 \, dx_3,
\end{align*}
which proves the claim for $\mu_{(1,1,1,0)^T|(1,1,1,0)^T}$, the other cases follows by similar calculations.

Now focusing on the nonzero $\sigma_{\omega}^2$ for $\omega \in \Omega_{\star}$, we have 
\[
\sigma_{(0,1,1,0)^T}^2 =  \mathbb{E}\Bigl\{\frac{(X_2 - 1/2)^2 (X_3^2 - 1/3)^2}{4} \Bigm| O = (1,1,1,0)^T\Bigr\} = \frac{1}{540} \approx 0.002,
\]
\[
\sigma_{(0,1,0,0)^T}^2 =  \mathbb{E}\Bigl\{\frac{(X_2 - 1/2)^2}{36} \Bigm| O = (1,1,1,0)^T\Bigr\} = \frac{1}{432}  \approx 0.002
\]
and
\[
\sigma_{(0,0,0,1)^T}^2 =  \mathbb{E}\Bigl\{\frac{(X_4 - 1/2)^2}{4} \Bigm| O = (1,0,0,1)^T\Bigr\} = \frac{1}{48}  \approx 0.02
\]
therefore we have $Q\in \mathcal{Q}_{\mathrm{E}}(\{(0,1,1,0)^T,(0,0,0,1)^T\},1/540,\{(1,1,1,0)^T, (1,0,0,1)^T\})$.

Turning to Definition~\ref{def:lowerdensity}
, consider first the lower density of $\mu_{(1,0,0,0)^T|(1,1,1,0)^T}$. For $x_1 \in [0,1]$ and $r \in (0,x_1\wedge (1-x_1))$, we have 
\[
2\int_{x_1-r}^{x_1+r} x \, dx =  \bigl\{(x_1+r)^{2}-(x_1-r)^{2}\bigr\}= 4x_1r,
\]
and for $0 \leq x_1 < r$ we have 
\[
2\int_{0}^{(x_1+r)\wedge 1} x \, dx  = (x_1+r)^2  \wedge 1.
\]
For $0<(1-x_1)<r$ we have that
\[
2\int_{(x_1-r)\vee 0}^{1} x \, dx  =  1 - \{(x_1-r) \vee 0\}^2.
\]
It follows that, for $x_1 \in [0,1]$, we have 
\[
\rho_{\mu_{(1,0,0,0)^T|(1,1,1,0)^T},1}\bigl((x_1,0,0,0)^T\bigr) = \inf_{r\in(0,1)} \frac{2\int_{(x_1-r)\vee 0}^{(x_1+r)\wedge1} x\, dx}{r} = 4x_1 \wedge 1
\]
A similar calculation shows that
\[
\rho_{\mu_{(1,0,0,0)^T|(1,0,0,1)^T},1}\bigl((x_1,0,0,0)^T\bigr) = \inf_{r\in(0,1)} \frac{2\int_{(x_1-r)\vee 0}^{(x_1+r)\wedge1} x\, dx}{r} = \{4(1- x_1)\} \wedge 1,
\]
for $x_1 \in [0,1]$.  Therefore, for $\xi < 1$, we have 
\[
\mu_{(1,0,0,0)^T} \Bigl( \Bigl\{x \in [0,1]^4 : \min_{o \in \{(1,1,1,0)^T,(1,0,0,1)^T \}  } \rho_{\mu_{(1,0,0,0)^T|o},1}(x) < \xi \Bigr\} \Bigr) = \xi/2 \leq \xi,
\]
whereas, for $\xi \geq 1$, we have 
\[
\mu_{(1,0,0,0)^T} \Bigl( \Bigl\{x \in [0,1]^4 : \min_{o \in \{(1,1,1,0)^T,(1,0,0,1)^T \}  } \rho_{\mu_{(1,0,0,0)^T|o},1}(x) < \xi \Bigr\} \Bigr) = 1 \leq \xi. 
\]
For $\omega \in \{e_2, e_3, e_4\}$, and $o \in \{(1,1,1,0)^T, (1,0,0,1)^T\}$ with $\omega \preceq o$, we have $\mu_{\omega}=\mu_{\omega | o}$ and thus $\rho_{\mu_\omega,1}(\cdot) = 1$ on the support of $X^{\omega}$, and
\[
\mu_{\omega} \Bigl( \Bigl\{x \in [0,1]^4 : \min_{o \in \{(1,1,1,0)^T,(1,0,0,1)^T \}  } \rho_{\mu_{\omega|o},1}(x) < \xi \Bigr\} \Bigr) = \mathbbm{1}_{\{\xi > 1\}} \leq \xi^{\gamma}
\]
for any choice of $\gamma\geq0$. For $\omega=o=(1,1,1,0)^T$, we have that $\mu_{(1,1,1,0)^T | (1,1,1,0)^T} = \mathrm{Beta}(2,1)  \otimes U([0,1])^{\otimes 2} \otimes \delta_{0}$. Thus, for $x=(x_1,x_2,x_3,0)^T \in [0,1]^3 \times \{0\}$, since $\{z \in \mathbb{R}^3 : \|z - (x_1,x_2,x_3)^T\|_{\infty} < r/\sqrt{3}\} \subseteq \{z \in \mathbb{R}^2 : \|z - (x_1,x_2,x_3)^T\| < r\}$, we have 
\[
\rho_{\mu_{(1,1,1,0)^T | (1,1,1,0)^T},3}(x) \geq \inf_{r\in (0,1)} \frac{\nu((x_1-r/\sqrt{3},x_1+r/\sqrt{3}))  \cdot r^2/3}{ r^3} \geq \frac{(4x_1\wedge 1)}{3\sqrt{3}},
\]
where $\nu$ denotes the measure of a $\mathrm{Beta}(2,1)$ distribution. Thus, for $\xi < 1/(3\sqrt{3})$, we have 
\[
\mu_{(1,1,1,0)^T} \Bigl( \Bigl\{x \in [0,1]^4 : \rho_{\mu_{(1,1,1,0)^T|(1,1,1,0)^T},3}(x) < \xi \Bigr\} \Bigr) \leq \frac{2\xi}{3^{3/2}}.  
\]
For $\xi\geq 1/(3\sqrt{3})$ we have
\[
\mu_{(1,1,1,0)^T} \Bigl( \Bigl\{x \in [0,1]^4 : \rho_{\mu_{(1,1,1,0)^T|(1,1,1,0)^T},2}(x) < \xi \Bigr\} \Bigr) \leq 1 \leq 3^{3/2} \xi.  
\]
For $\omega  \prec o=(1,1,1,0)^T$, and $\omega  \preceq o=(1,0,0,1)^T$ similar calculations show that~\eqref{eq:lowerdensity} 
holds when $\boldsymbol{\gamma} = (1,\gamma_2,\gamma_3,\gamma_4) \in \{1\} \times [0,\infty)^3$.  Therefore 
\[
Q\in\mathcal{Q}_{\mathrm{L}}\bigl((1,\gamma_2,\gamma_3,\gamma_4)^T,3^{3/2},\{(1,1,1,0)^T, (1,0,0,1)^T\}\bigr)
\]
for any $\gamma_2,\gamma_3, \gamma_4 \in[0,\infty)$. Further, similar (and slightly simpler) calculations confirm that in fact $Q$ belongs to the $\mathcal{Q}_{\mathrm{L}}^+$ class with the same parameters.

In terms of smoothness and the margin parameter, we have $P_Q \in \mathcal{P}_{\mathrm{S}}(\mathbf{1}_4, 1)$ and 
\begin{align*}
\mathbb{P}_P&\Bigl(|\eta(X) - 1/2| < t\Bigr) \\
&= \int_{0}^{1} \int_{0}^{1} \int_{0}^{1} \mathbb{P}_P\Bigl(|\eta(X) - 1/2| < t \Bigm|  X_1 = x_1,X_2=x_2,X_3=x_3\Bigr) \, dx_1\, dx_2\, dx_3 \\
&\leq  \int_{0}^{1} \int_{0}^{1} \int_{0}^{1} \int_{-2t-1/2-(x_2-1/2)x_3^2}^{2t-1/2-(x_2-1/2)x_3^2}\, dx_4 \, dx_1\, dx_2\, dx_3  = 4t,
\end{align*}
thus $P_Q \in \mathcal{P}_{\mathrm{M}}(1, 4)$. 
\end{proof}

\begin{proof}[Proof of the claim in the Example~\ref{ex:missingness3}
] 
We have $\IP_Q(O=o)>0$ for any $o\in\{0,1\}^2$, and further $\IP_Q(O=o|X^\omega=x^\omega)>0$ for any $o,\omega\in\{0,1\}^2$ and $x\in[0,1]^2$. 

First, for $x = (x_1,x_2)^T \in [0,1]^2$ with $x_2 \leq 1/2$, $\omega\in\{0,1\}^d$, we have that $\IP_Q( O = \mathbf{1}_{2} \mid   X^{\omega} = x^{\omega}, Y = 1) = 1/2 =  \IP_Q( O = \mathbf{1}_{2} \mid   X^{\omega} = x^{\omega}, Y = 0)$, and thus $\IP_Q( O = \mathbf{1}_{2} \mid   X^{\omega} = x^{\omega}, Y = 1) = 1/2$. It follows that, for $x = (x_1,x_2)^T \in [0,1]^2$ with $x_2 \leq 1/2$ and $\omega \in \{0,1\}^2$,
\begin{align*}
\IP_Q(Y = 1 \mid X^{\omega} = x^{\omega},  O = \mathbf{1}_{2}) &= \frac{\IP_Q( O = \mathbf{1}_{2} \mid Y = 1, X^{\omega} = x^{\omega}) \cdot \IP_Q(Y = 1 \mid X^{\omega} = x^{\omega})}{\IP_Q( O = \mathbf{1}_{2} \mid X^{\omega} = x^{\omega}) }  \\
&= \frac{1/2 \cdot \IP_Q(Y = 1 \mid X^{\omega} = x^{\omega})}{1/2} = \IP_Q(Y = 1 \mid X^{\omega} = x^{\omega}) .
\end{align*}
Similarly,  for $x = (x_1,x_2)^T \in [0,1]^2$ with $x_2 \leq 1/2$, $r \in \{0,1\}$, and $o,\omega\in\{0,1\}^2\setminus\{\mathbf{1}_{2}\}$ with $\omega\preceq o$, we have that $\IP_Q( O = o \mid Y = r,  X^{\omega} = x^{\omega}) = 1/6$.  Therefore~\eqref{eq:missingness} 
holds for $x = (x_1,x_2)^T \in [0,1]^2$ with $x_2 \leq 1/2$. Similar calculations show that~\eqref{eq:missingness} 
also holds $x = (x_1,x_2)^T \in [0,1]^2$ with $x_2> 1/2$, and we deduce that $Q \in \mathcal{Q}_{\mathrm{Miss}}(\{0,1\}^2)$. 

Regarding the class $\mathcal{Q}_{\mathrm{E}}$, we recall that $\eta(x) = 1/4 + x_1/2 + 1/4\cdot\cos(4\pi x_2)$ and thus the nonzero $f_\omega$ functions are
\[
f_{e_1}=\frac{x_1-1/2}{2};\qquad f_{e_2}=\frac{\cos(4\pi x_2)}{4}.
\]
We therefore have 
\begin{align*}
\sigma_{e_1}^2 = \min\Bigl\{ \mathbb{E}_{Q}\bigl\{f_{e_1}^2(X) \bigm| O= e_1 \bigr\} , \mathbb{E}_{Q}\bigl\{f_{e_1}^2(X) \bigm| O= \mathbf{1}_2 \bigr\} \Bigr\} = 1/48 \approx 0.02
\end{align*}
and
\begin{align*}
\sigma_{e_2}^2 = \min\Bigl\{ \mathbb{E}_{Q}\bigl\{f_{e_2}^2(X) \bigm| O= e_2 \bigr\} , \mathbb{E}_{Q}\bigl\{f_{e_2}^2(X) \bigm| O= \mathbf{1}_2 \bigr\} \Bigr\} = 1/32 \approx 0.03,
\end{align*}
and thus $Q\in\mathcal{Q}_{\mathrm{E}}(\{e_1,e_2\},1/48,\{0,1\}^2)$.

Further, in this example, all relevant lower densities are bounded below by a universal constant, and therefore $Q \in \mathcal{Q}_{\mathrm{L}}^+(\boldsymbol{\gamma}, C_{\mathrm{L}}, \{0,1\}^2)$, 
for any $\boldsymbol{\gamma}\in[0,\infty)^2$ and sufficiently large $C_{\mathrm{L}}\geq 1$ (depending on $\boldsymbol{\gamma}$ and the lower bound on the lower densities).  Finally, we have $P_Q \in \mathcal{P}_{\mathrm{S}}(\mathbf{1}_2, \pi)$ and $P_Q \in \mathcal{P}_{\mathrm{M}}(1, 8)$, since
\[
\mathbb{P}_{P}(|\eta(X) -1/2| \leq t ) \leq \int_{0}^{1} \int_{1/2- 4t - \cos(4\pi x_2)}^{1/2+4t - \cos(4\pi x_2)} \, dx_1 \, dx_2 \leq 8t. 
\]
\end{proof}

\subsection{Properties of the regression function and its anova decomposition\label{sec:propertiesofeta}}

\begin{proposition}\label{prop:eta_decomposition}Fix $Q\in\mathcal{Q}$. For any $\omega \in \{0,1\}^d$, we have for the conditional regression functions $\eta_\omega(x):=\mathbb{P}_P(Y = 1 \mid X^\omega = x^\omega)$ that
\begin{align*}
\eta_{\omega}(\cdot) = \frac{1}{2} + \sum_{\omega' \preceq \omega} f_{\omega'}(\cdot).
\end{align*}
In particular, $\eta(\cdot) = \frac{1}{2} + \sum_{\omega\in\{0,1\}^d} f_{\omega}(\cdot)$.
\end{proposition}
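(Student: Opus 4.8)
The plan is to verify the stated identity for $\eta_\omega$ directly from the recursive definition~\eqref{eq:def_f_zero}--\eqref{eq:def_f_omega}, arguing by induction on the dimension $d_\omega = \|\omega\|_1$. The displayed special case $\eta(\cdot) = \frac12 + \sum_{\omega} f_\omega(\cdot)$ will then follow immediately by taking $\omega = \mathbf{1}_d$, since $X^{\mathbf{1}_d} = X$ forces $\eta_{\mathbf{1}_d} \equiv \eta$ and $\{\omega' \in \{0,1\}^d : \omega' \preceq \mathbf{1}_d\} = \{0,1\}^d$.

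For the base case $\omega = \mathbf{0}_d$ I would note that $X^{\mathbf{0}_d} = \mathbf{0}_d$ almost surely, so conditioning on $\{X^{\mathbf{0}_d} = x^{\mathbf{0}_d}\}$ is vacuous; hence $\eta_{\mathbf{0}_d}(\cdot) \equiv \mathbb{P}_P(Y = 1)$, which equals $\frac12 + f_{\mathbf{0}_d}(\cdot)$ by~\eqref{eq:def_f_zero} together with $\mathbb{E}_P\{\eta(X)\} = \mathbb{P}_P(Y = 1)$. Since $\{\omega' : \omega' \preceq \mathbf{0}_d\} = \{\mathbf{0}_d\}$, this is the claim.

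For the inductive step, fix $\omega$ with $d_\omega \geq 1$. I would first record the elementary observation, immediate from~\eqref{eq:def_f_omega}, that every $f_{\omega'}$ depends on its argument only through the coordinates on which $\omega'$ equals $1$, i.e. $f_{\omega'}(z) = f_{\omega'}(z^{\omega'})$ for all $z$ (and trivially so for $\omega' = \mathbf{0}_d$). Applying linearity of conditional expectation to~\eqref{eq:def_f_omega} gives
\begin{align*}
f_\omega(x) = \mathbb{E}_P\bigl\{\eta(X) \bigm| X^\omega = x^\omega\bigr\} - \frac12 - \sum_{\omega' \prec \omega} \mathbb{E}_P\bigl\{f_{\omega'}(X) \bigm| X^\omega = x^\omega\bigr\}.
\end{align*}
The first term equals $\eta_\omega(x)$ by the tower property, using $\sigma(X^\omega) \subseteq \sigma(X)$ and $\eta(X) = \mathbb{E}_P\{\mathbbm{1}_{\{Y = 1\}} \mid X\}$. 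For each $\omega' \prec \omega$ one has $\omega' \preceq \omega$, so a short computation with the masking notation shows $(X^\omega)^{\omega'} = X^{\omega'}$; combined with the dependence observation, $f_{\omega'}(X) = f_{\omega'}(X^\omega)$ is $\sigma(X^\omega)$-measurable, whence $\mathbb{E}_P\{f_{\omega'}(X) \mid X^\omega = x^\omega\} = f_{\omega'}(x^\omega) = f_{\omega'}(x)$. Substituting and rearranging yields $\eta_\omega(x) = \frac12 + \sum_{\omega' \preceq \omega} f_{\omega'}(x)$, completing the induction. The recursion is well founded because $\omega' \prec \omega$ implies $d_{\omega'} < d_\omega$; in fact the inductive hypothesis is not needed for the identity itself (only for the definition of $f_\omega$ to make sense), so this could equally be presented as a one-line direct verification.

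The argument is essentially routine, and the step that needs the most care — the closest thing to an obstacle here — is the bookkeeping with the masking map $x \mapsto x^\omega$: verifying $(X^\omega)^{\omega'} = X^{\omega'}$ and that $f_{\omega'}$ is unchanged under $x \mapsto x^\omega$ when $\omega' \preceq \omega$, together with the usual null-set caveats attached to conditional expectations (handled exactly as in the proof of Proposition~\ref{prop:missingness}).
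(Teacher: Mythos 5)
Your proposal is correct and follows essentially the same route as the paper: both proofs hinge on rearranging the defining recursion for $f_\omega$, applying the tower property to identify $\mathbb{E}_P\{\eta(X)\mid X^\omega = x^\omega\}$ with $\eta_\omega(x)$, and observing that for $\omega' \prec \omega$ the function $f_{\omega'}(X) = f_{\omega'}(X^{\omega'})$ is $\sigma(X^\omega)$-measurable so the inner conditional expectations collapse. The paper presents this as a direct chain of equalities rather than an induction, but as you yourself note the inductive scaffolding is only needed to make the recursive definition well-founded, not for the identity, so the two arguments are the same in substance.
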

\begin{proof}
Fix an observation pattern $\omega \in \{0,1\}^d$ and $x\in\IR^d$, and recall that 
\[
f_{\omega}(x) = \mathbb{E}_P\Bigl\{\eta(X)  - \frac{1}{2} - \sum_{\omega'\prec\omega}f_{\omega'}(X) \Bigm| X^{\omega} = x^{\omega}\Bigr\} .
\]
It follows that
\begin{align*}
    \eta_\omega(x)&=\IE_P(Y \mid X^\omega=x^\omega)
    =\IE_Q\bigl\{\eta(X) \mid X^\omega=x^\omega\bigr\} \\
    &= \IE_P\Bigl\{\frac{1}{2} + \sum_{\omega'\prec \omega} f_{\omega'}(X) \Bigm| X^\omega=x^\omega\Bigr\} + f_{\omega}(x)\\
    &= \frac{1}{2} + \sum_{\omega'\preceq\omega}f_{\omega'}(x^\omega)= \frac{1}{2} + \sum_{\omega'\preceq\omega}f_{\omega'}(x).
\end{align*}
This completes the proof. 
\end{proof}

\begin{proposition}\label{prop:f_bounded}
Fix $d\in\IN$. There exists a constant $C_\mathrm{B} \geq 1$ only depending on $d$ such that for any $x\in\IR^d$, any $P \in \mathcal{P}$, and the estimators $\hat{f}_{\omega}$ for $\omega \in \{0,1\}^d$ defined in Section~\ref{sec:algorithmUB} 
we have $|f_\omega(x)| \leq C_\mathrm{B}$, $|\hat f_\omega(x)|\leq C_\mathrm{B}$, and $|\hat f_\omega(x)-f_\omega(x)|\leq 2C_\mathrm{B}$, for all $\omega \in \{0,1\}^d$. In particular, we may take $C_\mathrm{B}$ to be the $d$th ordered Bell number. 
\end{proposition}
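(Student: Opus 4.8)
The plan is to prove the stronger, dimension-graded estimate $|f_\omega(x)| \le \mathrm{OB}_{d_\omega}$ for all $x \in \mathbb{R}^d$ and all $\omega \in \{0,1\}^d$, where $\mathrm{OB}_m$ denotes the $m$-th ordered Bell (Fubini) number, characterised by $\mathrm{OB}_0 = 1$ and $\mathrm{OB}_m = \sum_{j=0}^{m-1}\binom{m}{j}\mathrm{OB}_j$ for $m \ge 1$. Since $\mathrm{OB}_m$ is increasing in $m$ and $d_\omega \le d$, setting $C_{\mathrm{B}} := \mathrm{OB}_d \ge \mathrm{OB}_1 = 1$ then gives $|f_\omega(x)| \le C_{\mathrm{B}}$ for every $\omega$; the bound $|\hat f_\omega(x)| \le C_{\mathrm{B}}$ will follow from an essentially identical argument, and $|\hat f_\omega(x) - f_\omega(x)| \le 2C_{\mathrm{B}}$ is then immediate from the triangle inequality.

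I would prove the graded bound for $f_\omega$ by induction on $m = d_\omega$. For $m = 0$ we have $\omega = \mathbf{0}_d$ and $|f_{\mathbf{0}_d}(x)| = |\mathbb{P}_P(Y = 1) - 1/2| \le 1/2 \le 1 = \mathrm{OB}_0$. For the inductive step, fix $\omega$ with $d_\omega = m \ge 1$; by Proposition~\ref{prop:eta_decomposition}, $\eta_\omega = \tfrac12 + \sum_{\omega' \preceq \omega} f_{\omega'}$, so $f_\omega = (\eta_\omega - \tfrac12) - \sum_{\omega' \prec \omega} f_{\omega'}$. Separating the term $\omega' = \mathbf{0}_d$ (which satisfies $\mathbf{0}_d \prec \omega$ since $d_\omega \ge 1$), using $|\eta_\omega(x) - \tfrac12| \le \tfrac12$ (as $\eta_\omega$ is a conditional probability), $|f_{\mathbf{0}_d}(x)| \le \tfrac12$, and the inductive hypothesis for the remaining patterns (each such $\omega' \prec \omega$ has $1 \le d_{\omega'} \le m-1$, since $\omega' \prec \omega$ forces $d_{\omega'} < d_\omega$), and noting there are exactly $\binom{m}{j}$ patterns $\omega' \prec \omega$ with $d_{\omega'} = j$, one gets
\begin{equation*}
|f_\omega(x)| \le \tfrac12 + \tfrac12 + \sum_{j=1}^{m-1}\binom{m}{j}\mathrm{OB}_j = \binom{m}{0}\mathrm{OB}_0 + \sum_{j=1}^{m-1}\binom{m}{j}\mathrm{OB}_j = \sum_{j=0}^{m-1}\binom{m}{j}\mathrm{OB}_j = \mathrm{OB}_m,
\end{equation*}
which closes the induction.

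For $\hat f_\omega$, the observation is that its defining recursion in Algorithm~\ref{alg:nonadaptive} has precisely the same shape: $\hat f_{\mathbf{0}}(x) = \tfrac1n\sum_{i=1}^n Y_i - \tfrac12 \in [-\tfrac12,\tfrac12]$, and for $d_\omega \ge 1$, $\hat f_\omega(x) = \bigl(\tfrac1{k_\omega}\sum_{j=1}^{k_\omega} Y_{(j)_\omega}(x) - \tfrac12\bigr) - \sum_{\omega' \prec \omega}\hat f_{\omega'}(x)$, where $\tfrac1{k_\omega}\sum_{j=1}^{k_\omega} Y_{(j)_\omega}(x)$ is an average of $\{0,1\}$-valued labels and hence lies in $[0,1]$, so the bracketed term lies in $[-\tfrac12,\tfrac12]$; moreover $\omega \in \mathcal{N}$ implies $\omega' \in \mathcal{N}$ for every $\omega' \preceq \omega$, so all the $\hat f_{\omega'}$ appearing are well defined. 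Replacing $\eta_\omega - \tfrac12$ by this bracketed term and $f_{\mathbf{0}_d}$ by $\hat f_{\mathbf{0}}$, the identical induction on $d_\omega$ yields $|\hat f_\omega(x)| \le \mathrm{OB}_{d_\omega} \le C_{\mathrm{B}}$. There is no substantial obstacle in this argument; the only point requiring a little care is the combinatorial bookkeeping that makes the bound land exactly on the ordered Bell number, namely that the two contributions $|\eta_\omega - \tfrac12| \le \tfrac12$ and $|f_{\mathbf{0}_d}| \le \tfrac12$ together supply precisely the missing $\binom{m}{0}\mathrm{OB}_0$ term in the Fubini recurrence.
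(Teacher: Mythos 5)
Your proof is correct and follows essentially the same route as the paper's: induction on $d_\omega$ using the ordered Bell recurrence $\mathrm{OB}_m = 1 + \sum_{j=1}^{m-1}\binom{m}{j}\mathrm{OB}_j$, with the population and empirical recursions treated in parallel. The only cosmetic difference is that you bound $|\eta_\omega - \tfrac12|$ and $|f_{\mathbf{0}_d}|$ separately ($\tfrac12 + \tfrac12$), whereas the paper combines them as $|\eta(X) - \tfrac12 - f_{\mathbf{0}_d}| = |\eta(X) - \mathbb{E}_P\eta(X)| \le 1$ inside a conditional expectation; both yield the same constant.
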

\begin{proof}
We will in fact show that $\|f_{\omega}\|_{\infty}$  and $\|\hat{f}_{\omega}\|_{\infty}$ are bounded by the $d_{\omega}$th ordered Bell number $B_{d_\omega} := \frac{1}{2} \sum_{j=0}^{\infty} \frac{j^{d_\omega}}{2^j}$, which are increasing and satisfy the recurrence relation that $B_{0} = B_{1} = 1$ and $B_{k} = 1 + \sum_{j=1}^{k-1} \binom{k}{j} B_{j}$, for $k \in \mathbb{N}$. 

Now, since $\eta(x) \in [0,1]$ for all $x$, we have $f_{\mathbf{0}_d} = \mathbb{E}_P\{\eta(X)\} -1/2 \in [-1/2,1/2]$ and $|f_{\mathbf{0}_d}| \leq 1/2 < B_0$. By induction over $d_\omega$, suppose that $\|f_{\omega'}\|_{\infty} \leq B_{d_{\omega'}}$ for $\omega' \preceq \omega$, then we have that
\begin{align*}
|f_{\omega}(x)| &= \Bigl|\mathbb{E}_P\Bigl\{\eta(X) -\frac{1}{2} - \sum_{\omega'\prec\omega}f_{\omega'}(X) \Bigm| X^{\omega} = x^{\omega}\Bigr\} \Bigr| \\
&\leq \mathbb{E}_P\Bigl\{|\eta(X) - 1/2 - f_{\mathbf{0}_d}| + \sum_{j=1}^{d_\omega-1} \sum_{\omega'\prec \omega : d_{\omega'} = j} |f_{\omega'}(X)| \Bigm| X^{\omega} = x^{\omega}\Bigr\} 
\\ & \leq 1 + \sum_{j=1}^{d_\omega-1} \binom{d_{\omega}}{j} B_{j} =: B_{d_\omega}.
\end{align*} 
This proves the result for $f_\omega(x)$. Similar steps using \eqref{def:f_tilde} give a bound for the estimator $\hat f_\omega$. Finally, $|\hat f_\omega(x)-f_\omega(x)|\leq|\hat f_\omega(x)|+|f_\omega(x)|\leq 2C_{\mathrm{B}}$.
\end{proof}

\section{Technical results and additional lemmas for the proof of the lower bound in Theorem~\ref{thm:minmax_bounds} 
\label{sec:appendixLBproofs}} 

The proof of the lower bound in Theorem~\ref{thm:minmax_bounds} 
makes use of several intermediate results.  The first in Section~\ref{subsec:Assouad} is a version of Assouad's Lemma (Lemma~\ref{lem:assouad}), modified to our missing data setting.  The remainder of the proof involves constructing a set of distributions belonging to $\mathcal{Q}_{\mathrm{Miss}}'$ for which we can apply Assouad's Lemma and obtain the desired lower bound. The proof is separated into the light and heavy tailed settings, and begins with the case that $\Omega_{\star} = \{\omega\}$. For the light tailed case,  the marginal constructions are given in Section~\ref{sec:lower_bound} 
in the main text. Section~\ref{sec:LB_small_r} establishes that the corresponding distributions belong to the class $\mathcal{Q}_{\mathrm{L}}(\boldsymbol{\gamma},C_{\mathrm{L}},\mathcal{O})$, Section~\ref{subsec:LBlighteta} defines the associated regression functions, and establishes that the corresponding distributions belong to the classes $\mathcal{Q}_{\mathrm{E}}(\{\omega\}, c_{\mathrm{E}},\mathcal{O})$, $\mathcal{P}_{\mathrm{S}}(\boldsymbol{\beta}, C_{\mathrm{S}})$ and $\mathcal{P}_{\mathrm{M}}(\alpha,C_{\mathrm{M}})$. The proofs for the special case when $\Omega_{\star}$ is a singleton in the light tailed case is completed in Section~\ref{subsec:LBproof_lighttails}. The corresponding arguments for the heavy tailed case are presented in Sections 5.1.2, \ref{sec:LB_big_r}, \ref{subsec:LBheavyeta} and~\ref{subsec:LBproof_heavytails}.  Finally, the proof for  general $\Omega_{\star}$, which builds on the earlier arguments, is presented in Section~\ref{subsec:LBproof_general}. 

\subsection{Assouad's Lemma for classification with missing data}
\label{subsec:Assouad}
\begin{lemma}[Assouad]\label{lem:assouad}
Fix $d,n, m\in\mathbb{N}$ and $\omega, o_1, \ldots, o_n \in\{0,1\}^d$ and $r > 0$. Let $\mathcal P$ be a set of distributions on $\mathbb{R}^d\times\{0,1\}$, let  $\Sigma=\{-1,1\}^{m}$, $u\in[0,1/(m\cdot2^{d_\omega})]$, and $\epsilon\in[0,1/4]$. Fix $z_1, \ldots, z_m \in (0,r)^d$ satisfying $z_i^\omega\neq z_j^\omega$ for $i\neq j$. Assume that the class of distributions $\{Q^\sigma :\sigma\in\Sigma\} \subseteq \mathcal{Q}$, which have associated marginal distributions $P^{\sigma}$ of $(X,Y)$, regression functions $\eta^\sigma:\IR^d\rightarrow[0,1]$ and each with $X$-marginal $\mu$, satisfy
\begin{enumerate}
    \item[(i)] $ 2^{d_{\omega}+5}n_\omega \epsilon^2u \leq1$, where $n_\omega=\sum_{i=1}^n\mathbbm{1}_{\{\omega\preceq o_i\}}$;
    \item[(ii)] we have $\mu(\{s^{\omega} \odot z_t^{\omega}\})=u$ for all $t\in[m]$ and $s\in\{-1,1\}^d$;
    \item[(iii)] for $\sigma=(\sigma_t)_{ t\in [m]}\in\Sigma$, we have $\eta^\sigma(s^{\omega} \odot z_t^{\omega}) = 1/2 + \bigl(\prod_{j \in [d] :(s^{\omega})_{j} =-1} (s^{\omega})_{j} \bigr)\cdot\sigma_t\epsilon$, for all $t\in[m]$ and $s\in\{-1,1\}^d$, in particular we have $\eta^\sigma(z^{\omega}_t) = 1/2 + \sigma_t \epsilon$, for  $t\in[m]$; 
    \item[(iv)] for $\sigma,\sigma'\in\Sigma$, we have $\eta^\sigma(x)=\eta^{\sigma'}(x)$ for all $x\in \mathrm{supp}(\mu) \setminus \bigcup_{t\in [m], s\in \{-1,1\}^d} \{s^{\omega}\odot z_t^{\omega}\}$;
    \item[(v)] we have $\Bigl(\mathrm{supp}(\mu) \setminus \bigcup_{t\in [m], s\in \{-1,1\}^d} \{s^{\omega}\odot z_t^{\omega}\} \Bigr) \bigcap (-r-1,r+1)^d = \emptyset$;
    \item[(vi)] for $\sigma \in \Sigma$, if $(X,Y,O) \sim Q^{\sigma}$, then $O \indep (X,Y)$.
\end{enumerate}
Then\begin{align}\label{eq:lb_assouad}
    \inf_{\hat C\in\mathcal C}\sup_{Q\in\mathcal{Q}}\mathbb{E}_{Q}\bigl\{\mathcal{E}_{P_Q}(\hat C) \bigm| O_1 = o_1, \ldots, O_n = o_n \bigr\}\geq 2^{d_\omega-1} m \epsilon u.
\end{align}
\end{lemma}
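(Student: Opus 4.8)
The plan is to prove this by an Assouad-type argument tailored to the missing-data setting: I build the lower bound from the family $\{Q^\sigma\}_{\sigma\in\Sigma}$, reduce it to $m\cdot 2^{d_\omega}$ two-point testing problems indexed by the coordinates of $\Sigma=\{-1,1\}^m$ and the sign patterns on the support of $\omega$, and then — this is the crux — show that the only training observations carrying information about a coordinate $\sigma_t$ are the $n_\omega$ available cases for $\omega$. Throughout I write $\chi(s):=\prod_{j:(s^\omega)_j=-1}(s^\omega)_j\in\{-1,1\}$, so that by (iii) one has $\eta^\sigma(s^\omega\odot z_t^\omega)=1/2+\chi(s)\sigma_t\epsilon$ and hence $C^{\mathrm{Bayes},\sigma}(s^\omega\odot z_t^\omega)=\mathbbm{1}_{\{\chi(s)\sigma_t=1\}}$. (If $u\epsilon=0$ the claimed bound is $0$, so assume $u,\epsilon>0$.)

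First I would restrict the excess risk to the $m\cdot 2^{d_\omega}$ atoms $s^\omega\odot z_t^\omega$ (pairwise distinct because $z_t\in(0,r)^d$ and $z_i^\omega\neq z_j^\omega$ for $i\neq j$), each of $\mu$-mass $u$ by (ii), on which $|2\eta^\sigma-1|=2\epsilon$: for any $\hat C\in\mathcal C_n$,
\[
\mathcal E_{P^\sigma}(\hat C)\;\geq\;2\epsilon u\sum_{t=1}^m\sum_{s^\omega}\mathbbm{1}_{\{\hat C(s^\omega\odot z_t^\omega)\neq\mathbbm{1}_{\{\chi(s)\sigma_t=1\}}\}},
\]
the inner sum ranging over the $2^{d_\omega}$ distinct values of $s^\omega$. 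Then, averaging over $\sigma$ uniform on $\Sigma$ (so $\sup_\sigma\geq$ the average), for each fixed pair $(t,s^\omega)$ I pair $\sigma$ with its coordinate-$t$ flip $\sigma^{(t)}$; since $\hat C(s^\omega\odot z_t^\omega,\cdot)$ is a $\{0,1\}$-valued statistic of $D_n$ and the targets under $\sigma$ and $\sigma^{(t)}$ are complementary, the usual two-point argument gives $\frac1{2^m}\sum_\sigma\mathbb P_{Q^\sigma}\{\hat C(s^\omega\odot z_t^\omega)\neq\mathbbm{1}_{\{\chi(s)\sigma_t=1\}}\mid O_1=o_1,\dots,O_n=o_n\}\geq\frac12(1-\mathrm{TV}_{\sigma,t})$, where $\mathrm{TV}_{\sigma,t}$ is the total variation between the conditional laws of $D_n$ given $\{O_i=o_i\}$ under $Q^\sigma$ and $Q^{\sigma^{(t)}}$, and the left-hand side does not depend on the unflipped coordinates of $\sigma$.

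The heart of the proof is to show $\mathrm{TV}_{\sigma,t}<1/2$. By (vi), conditional on the observed patterns the triples remain independent, so this conditional law factorises as $\bigotimes_{i=1}^n\nu_i^\sigma$ with $\nu_i^\sigma:=\mathcal L_{P^\sigma}(X^{o_i},Y)$, having $X^{o_i}$-marginal $\mu_{o_i}$ (independent of $\sigma$) and $Y$-conditional $\eta_{o_i}^\sigma(\cdot)=\mathbb E_\mu[\eta^\sigma(X)\mid X^{o_i}=\cdot^{o_i}]$. The key sub-claim is $\nu_i^\sigma=\nu_i^{\sigma^{(t)}}$ whenever $\omega\not\preceq o_i$: picking a coordinate $j^\ast$ with $\omega_{j^\ast}=1$ but $o_{i,j^\ast}=0$, by (iv) the only $\sigma$-dependence in $\eta_{o_i}^\sigma$ comes from the reflected atoms, all of which carry mass $u$ by (ii); among those consistent with a fixed value of $X^{o_i}$ the coordinate $j^\ast$ is unconstrained, so they split into pairs differing only in the sign of $j^\ast$, and since $\chi$ flips under that flip the $\sigma$-dependent contributions cancel pairwise (the continuous part of $\mu_{o_i}$ contributes no $\sigma$-dependence, as the atoms have conditional probability zero there). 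Hence only the $n_\omega$ indices with $\omega\preceq o_i$ contribute, and for each the data-processing inequality for squared Hellinger distance applied to the deterministic map $(X,Y)\mapsto(X^{o_i},Y)$, together with $H^2(\mathrm{Bern}(\tfrac12+\epsilon),\mathrm{Bern}(\tfrac12-\epsilon))=1-\sqrt{1-4\epsilon^2}\leq 4\epsilon^2$ and the fact that $\eta^\sigma,\eta^{\sigma^{(t)}}$ differ only on the $2^{d_\omega}$ atoms of mass $u$ indexed by $t$, gives $H^2(\nu_i^\sigma,\nu_i^{\sigma^{(t)}})\leq H^2(P^\sigma,P^{\sigma^{(t)}})\leq 2^{d_\omega+2}\epsilon^2u$. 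Subadditivity of $H^2$ over products and $\mathrm{TV}\leq H$ then yield $\mathrm{TV}_{\sigma,t}^2\leq 2^{d_\omega+2}n_\omega\epsilon^2u$, which is at most $1/8$ by hypothesis (i), so $\mathrm{TV}_{\sigma,t}<1/2$.

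Combining the pieces, each $(t,s^\omega)$ contributes at least $\frac12(1-\frac12)=\frac14$, so summing over the $m\cdot 2^{d_\omega}$ pairs gives $\sup_\sigma\mathbb E_{Q^\sigma}\{\mathcal E_{P^\sigma}(\hat C)\mid O_1=o_1,\dots,O_n=o_n\}\geq 2\epsilon u\cdot m\,2^{d_\omega}\cdot\frac14=2^{d_\omega-1}m\epsilon u$, and since $\{Q^\sigma\}\subseteq\mathcal Q$ this lower-bounds $\inf_{\hat C}\sup_{Q\in\mathcal Q}$, establishing \eqref{eq:lb_assouad}. \textbf{The main obstacle} is the exact identity $\nu_i^\sigma=\nu_i^{\sigma^{(t)}}$ for the non-available cases $\omega\not\preceq o_i$: this combinatorial cancellation — which is precisely why the asymmetric sign prefactor $\chi$ is imposed in (iii) and why (ii) forces equal atom masses — is what makes the rate scale with $n_\omega$ rather than with $n$, and it requires some care with regular conditional distributions, including the continuous part of $\mu_{o_i}$.
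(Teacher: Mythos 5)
Your proof is correct and follows the same overall Assouad strategy as the paper: restrict the excess risk to the $m\cdot 2^{d_\omega}$ atoms of mass $u$, pair each $\sigma$ with its coordinate-$t$ flip $\sigma^{(t)}$ to reduce to a collection of two-point testing problems, and then show that the total-variation distance between the conditional training-data laws scales with $n_\omega$ rather than $n$, so that hypothesis (i) forces it below $1/2$. The one purely technical divergence is the tool used for the information bound: you pass through squared Hellinger distance, its tensorization inequality, and $\mathrm{TV}\le H$, obtaining $\mathrm{TV}_{\sigma,t}^2\le 2^{d_\omega+2}n_\omega\epsilon^2u\le 1/8$, whereas the paper bounds the per-coordinate $\mathrm{KL}$ by $2^{d_\omega+4}\epsilon^2u$ and invokes Pinsker. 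Both routes use (i) in exactly the same way and produce the same constant $2^{d_\omega-1}m\epsilon u$, so this is a matter of taste, not of substance.

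Where you do differ in substance --- and in fact improve on what is written in the paper --- is in justifying the reduction to $n_\omega$ factors. The paper asserts that when $\omega\not\preceq\omega_0$ the conditional regression $\eta^\sigma_{\omega_0}$ is identically $1/2$; but hypotheses (i)--(vi) say nothing about the \emph{value} of $\eta^\sigma$ off the atoms (condition (iv) only says it is $\sigma$-independent there), so that identity does not follow from the stated assumptions. What is actually needed, and what you prove, is the weaker statement that $\eta^\sigma_{o_i}=\eta^{\sigma^{(t)}}_{o_i}$ holds $\mu_{o_i}$-almost everywhere, hence $\nu_i^\sigma=\nu_i^{\sigma^{(t)}}$. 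Your pairing argument --- pick $j^\ast$ with $\omega_{j^\ast}=1$ and $o_{i,j^\ast}=0$, observe that the atoms consistent with a given $x^{o_i}$ split into $j^\ast$-flip pairs of equal mass $u$ by (ii), and that the sign prefactor in (iii) reverses under that flip so the $\sigma$-dependent contributions cancel, while the non-atomic part of the conditional is $\sigma$-independent by (iv) --- is exactly the right mechanism, and it makes transparent why (ii) must impose equal atom masses and why (iii) imposes the asymmetric reflection via the product of negative signs. That cancellation is what makes the bound scale with $n_\omega$, and you have identified it and handled it with the care it requires.
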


\begin{proof}
For $\sigma = (\sigma_t)_{t\in [m]}\in\Sigma$ and $t' \in [m]$, define $\sigma' \equiv \sigma(t') := (\sigma_t(t'))_{t\in [m]}\in\Sigma$, with $\sigma_t( t'):= -\sigma_t$ if $t' = t$, and $\sigma_t(t'):= \sigma_t$ otherwise. Then, for $s \in \{-1,1\}^d$, we have $\eta^{\sigma'}(s^{\omega} \odot z_{t'}^{\omega}) = 1 - \eta^\sigma(s^{\omega}\odot z_{t'}^{\omega})$ and $\eta^{\sigma'}(x) = \eta^\sigma(x)$ for $x \in \mathrm{supp}(\mu) \setminus \bigcup_{s\in \{-1,1\}^d}\{ s^{\omega}\odot z_{t'}^{\omega}\} $. Now, for $\omega_0 \in \{0,1\}^d$, let $P^{\sigma}_{\omega_0}$ denote the marginal distribution of $(X^{\omega_0}, Y)$ when $(X, Y) \sim P^{\sigma}$.  By \textit{(vi)}, we have that $P^{\sigma}_{\omega_0} = P^{\sigma}_{\omega_0 | o}$, for $o \in \{0,1\}^d$, where $P^{\sigma}_{\omega_0 | o}$ denotes the conditional distribution of  $(X^{\omega_0}, Y) | \{O = o
\}$ when $(X, Y, O) \sim Q^{\sigma}$.     For $\omega \preceq \omega_0$ and $x \in (-r-1,r+1)^d \cap \mathrm{supp}(\mu)$, we have that 
\[
\eta^{\sigma}_{\omega_0}(x) :=   \mathbb{E}_{P_{\omega_0}^{\sigma}}(Y | X^{\omega_0} = x^{\omega_0}) = \mathbb{E}_{P^{\sigma}}(Y | X^{\omega} = x^{\omega}) = \eta^{\sigma}(x) .
\]
Thus, for $\omega \preceq \omega_0$, we have
\begin{align*}
\mathrm{KL}(&P_{\omega_0}^\sigma,P_{\omega_0}^{\sigma'})\\
&:= \! \! \int_{\mathbb{R}^{d_{\omega_0}}} \Bigl[
\eta_{\omega_0}^\sigma(x^{\omega_0})\log\Bigl(\frac{\eta_{\omega_0}^\sigma(x)}{\eta_{\omega_0}^{\sigma'}(x)}\Bigr)
    +\{1-\eta_{\omega_0}^\sigma(x^{\omega_0})\}\log\Bigl(\frac{1-\eta_{\omega_0}^\sigma(x^{\omega_0})}{1-\eta_{\omega_0}^{\sigma'}(x^{\omega_0})}\Bigr) \Bigr] \, d\mu_{\omega_0}(x^{\omega_0})\\
    & \phantom{:}= \frac{1}{2^{d-d_{\omega}}}\sum_{s\in\{-1,1\}^d} \bigl\{2\eta^\sigma(s^\omega\odot z_{t'}^{\omega})-1\bigr\}\log\Bigl(\frac{\eta^\sigma(s^\omega\odot z_{t'}^{\omega})}{1-\eta^\sigma(s^\omega\odot z_{t'}^{\omega})}\Big)\cdot\mu( \{s^\omega\odot z_{t'}^{\omega}\})\\
    & \phantom{:}= 2^{d_\omega+1}\epsilon \sigma_{t'} \cdot \log\Bigl(\frac{1+2\epsilon\sigma_{t'}}{1-2\epsilon\sigma_{t'}}\Bigr)\cdot u  \leq \frac{2^{d_\omega+3} \epsilon^2 u }{1-2\epsilon} \leq 2^{d_\omega+4}\epsilon^2 u,
\end{align*}
where the penultimate inequality uses the inequality $\log a\leq a-1$ for $a\geq1$.  On the other hand, if $\omega \npreceq \omega_0$, then
\[
\eta^{\sigma}_{\omega_0}(x) =   \mathbb{E}_{P_{\omega_0}^{\sigma}}(Y | X^{\omega \wedge \omega_0} = x^{\omega \wedge \omega_0} ) = 1/2,
\]
and therefore $P_{\omega_0}^\sigma  = P_{\omega_0}^{\sigma'}$, and in particular $ \mathrm{KL}(P_{\omega_0}^\sigma,P_{\omega_0}^{\sigma'})=0$. Then, writing $\Pi_n^\sigma := \bigotimes_{i \in [n]} P^\sigma_{o_i}$, by Pinsker's inequality and (i), we have that
\begin{align*}
\mathrm{TV}\bigl(\Pi_n^\sigma, \Pi_n^{\sigma'}\bigr) \leq \sqrt{\frac{\sum_{i=1}^n  \mathrm{KL}(P_{o_i}^\sigma,P_{o_i}^{\sigma'})}{2}}\leq \sqrt{n_{\omega} 2^{d_\omega+3}\epsilon^2u}\leq 1/2.
\end{align*}

Now, for $\hat{C} \in \mathcal{C}_n$ and $\sigma \in \Sigma$, let $C^{\sigma}(x) := \mathbbm{1}_{\{\eta^{\sigma}(x) \geq 1/2\}}$ be the Bayes classifier under $P^{\sigma}$, then we have 
\begin{align*} 
\mathcal E_{P^{\sigma}}(\hat C) &= \int_{\mathbb{R}^d} \mathbbm{1}_{\{\hat C(x)\neq C^{\sigma}(x)\}}|2\eta^{\sigma}(x)-1| \, d\mu(x) \\
&\geq  \frac{ 2\epsilon  u}{2^{d-d_{\omega}}} \sum_{t \in [m]}\sum_{s\in\{-1,1\}^d} \mathbbm{1}_{\{\hat{C}(s^{\omega} \odot z^{\omega}_{t})\neq C^{\sigma}(s^\omega\odot z^{\omega}_{t})\}}.
\end{align*}
Further, for $\hat{C} \in \mathcal{C}_n$, we write $\tilde{C} : \mathbb{R}^d \times (\mathbb{R}^d \times \{0,1\})^n \rightarrow \{0,1\}$ when $\hat{C}$ is applied with the observation indicators being equal to $o_1, \ldots, o_n$, i.e. 
\[
\tilde{C}(x, (x_1^{o_1},y_1), \ldots, (x_n^{o_n},y_n)) = \hat{C}(x, (x_1^{o_1},y_1,o_1), \ldots, (x_n^{o_n},y_n,o_n))
\]
Then, using \textit{(vi)},
\[
\mathbb{E}_{Q^\sigma} \bigl\{ \mathcal E_{P^\sigma}(\hat C) \bigm| O_1 = o_1, \ldots, O_n = o_n \bigr\} = \mathbb{E}_{\Pi_{n}^\sigma} \bigl\{ \mathcal E_{P^\sigma}(\tilde{C})\bigr\}.
\]
Thus
\[
\mathbb{E}_{\Pi_{n}^\sigma} \bigl\{ \mathcal E_{P^\sigma}(\tilde C)\bigr\} \geq \frac{ 2\epsilon  u}{2^{d-d_{\omega}}} \sum_{t \in [m]} \sum_{s\in\{-1,1\}^d} \mathbb{P}_{\Pi_{n}^\sigma}\Bigl\{\tilde{C}(s^\omega\odot z^{\omega}_{t})\neq C^{\sigma}(s^\omega\odot z^{\omega}_{t})\Bigr\}.
\]
It follows that, for $t' \in [m]$, we have
\begin{align*}
\mathbb{E}_{\Pi_{n}^\sigma}\bigl\{ \mathcal E_{P^\sigma}(\tilde C)\bigr\} &+ \mathbb{E}_{\Pi_{n}^{\sigma(t')}} \bigl\{ \mathcal E_{P^{\sigma(t')}}(\tilde C)\bigr\}  \\& \geq \frac{2^{d_\omega+1}\epsilon  u}{2^d} \sum_{s \in \{-1,1\}^d} \Bigl[\mathbb{P}_{\Pi_{n}^\sigma}\Bigl\{\tilde{C}(s^\omega\odot z^{\omega}_{t'})\neq C^{\sigma}(s^\omega\odot z^{\omega}_{t'})\Bigr\} 
\\ & \hspace{40pt} + \mathbb{P}_{\Pi_{n}^{\sigma(t')}}\Bigl\{\tilde{C}(s^\omega\odot z^{\omega}_{t'})\neq C^{\sigma(t')}(s^\omega\odot z^{\omega}_{t'})\Bigr\}\Bigr] \\ 
& \geq 2^{d_\omega+1}\epsilon  u\Bigl\{1 - \mathrm{TV}\bigl(\Pi^\sigma_n, \Pi^{\sigma'}_n\bigr)\Bigr\} \geq 2^{d_\omega}\epsilon u,
\end{align*}
since $C^\sigma(s^{\omega}\odot z_{t'}^{\omega}) = 1 - C^{\sigma(t')}(s^{\omega} \odot z_{t'}^{\omega})$. We conclude that, for $\hat{C} \in \mathcal{C}_n$,  we have 
\begin{align*}
\sup_{Q\in\mathcal{Q}} \mathbb{E}_{Q} \bigl\{ & \mathcal{E}_{P_Q}(\hat C) \bigm| O_1 = o_1, \ldots, O_n = o_n \bigr\} 
\\ & \geq \max_{\sigma \in \Sigma} \IE_{\Pi_n^{\sigma}}\bigl\{\mathcal E_{P^\sigma}(\tilde C) \bigr\} \geq \frac{1}{2^{m}} \sum_{\sigma\in \Sigma} \IE_{\Pi_n^{\sigma}}\bigl\{\mathcal E_{P^{\sigma}}(\tilde C)  \bigr\} \\
& = \frac{1}{2^{m}} \sum_{t \in [m]} \sum_{\sigma \in \Sigma : \sigma_t = 1} \Bigl[ \IE_{\Pi_n^{\sigma}}\bigl\{\mathcal E_{P^{\sigma}}(\tilde C)\bigr\} + \IE_{\Pi_n^{\sigma(t)}}\bigl\{\mathcal E_{P^{\sigma(t)}}(\tilde C)\bigr\} \Bigr] \\
& \geq  \frac{2^{d_\omega}}{2^{m}} \sum_{t \in [m]} \sum_{\sigma \in \Sigma : \sigma_t = 1} \epsilon u = 2^{d_\omega-1}m\epsilon u,
\end{align*} 
as required. 
\end{proof}

\subsection{Properties of the marginal construction in the light tailed case\label{sec:LB_small_r}}
Recall the construction of $\mu^{(\omega)}_{\kappa,r,q,a,b}$ given in Section~\ref{subsubsec:LBlight}
. We show in this section that this marginal feature distribution indeed falls into our proposed class of distributions $\mathcal{Q}_{\mathrm{L}}(\boldsymbol{\gamma},C_{\mathrm{L}},\mathcal{O})$.  We start with three technical lemmas. 

\begin{lemma}\label{lem:LB_lebesgue_bound}
Fix $d\in\IN$, $\omega \in \{0,1\}^d\setminus \{\mathbf{0}_d\}$, $\kappa\in(0,1/\sqrt{d_{\omega}})$, $r>0$, and $\omega'\in\{0,1\}^d \setminus \{\mathbf{0}_d\}$. For any $x\in\mathcal{R}^{\omega}$ we have that
\[
\mathcal{L}_{d_{\omega}} \Bigl(\Pi_{\omega}\bigl(\bigl\{ z\in\mathbb{R}^d: z^{\omega'}\in B_\zeta(x^{\omega'}) \bigr\}\cap\mathcal R^\omega\bigr)\Bigr) \geq \kappa^{d_{\omega}} \cdot \zeta^{d_{\omega\wedge\omega'}} 
\]
for all $\zeta\in[0,1]$. For any $x\in\mathcal{R}_j^{\omega}$ we have that
\[
\mathcal{L}_{d_{\omega}} \Bigl(\Pi_{\omega}\bigl(\bigl\{ z\in\mathbb{R}^d: z^{\omega'}\in B_\zeta(x^{\omega'}) \bigr\}\cap\mathcal{R}_j^\omega\bigr)\Bigr) \geq r^{d_\omega} \cdot \min\{1, (r\sqrt{d_{\omega\wedge \omega'}})^{-d_{\omega\wedge\omega'}}\} \cdot \zeta^{d_{\omega\wedge\omega'}} 
\]
for all $\zeta\in[0,1]$.  
\end{lemma}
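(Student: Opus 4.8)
The plan is to isolate one elementary geometric estimate and then deduce both displays from it by a product decomposition. The estimate is: for an axis-aligned box $Q = \prod_{i=1}^{k} I_i \subseteq \mathbb{R}^k$ with $|I_i| = \ell_i$, any $p \in Q$ and any $\zeta > 0$, one has $\mathcal{L}_k\bigl(B_\zeta(p) \cap Q\bigr) \geq \prod_{i=1}^{k} \min(\ell_i, \zeta/\sqrt{k})$ (with the empty product equal to $1$ when $k = 0$). To prove it, in each coordinate $i$ I choose a closed subinterval $K_i \subseteq I_i$ with $p_i \in K_i$ and $|K_i| = \min(\ell_i, \zeta/\sqrt k)$, taking $K_i = [p_i, p_i + \min(\ell_i, \zeta/\sqrt k)]$ and sliding it towards the far endpoint of $I_i$ if it would otherwise leave $I_i$; this is possible precisely because $\min(\ell_i, \zeta/\sqrt k) \leq \ell_i$. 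Then $\prod_i K_i \subseteq Q$, and for $z \in \prod_i K_i$ we have $\|z - p\|^2 = \sum_i (z_i - p_i)^2 \leq \sum_i |K_i|^2 \leq k\,(\zeta/\sqrt k)^2 = \zeta^2$, so $\prod_i K_i \subseteq \overline{B_\zeta(p)} \cap Q$ and the estimate follows since the sphere is Lebesgue-null.

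Next I reduce the first display to this estimate. Fix $x \in \mathcal{R}^\omega$; any $z \in \mathcal{R}^\omega$ has $z_j = 0 = x_j$ for every $j$ with $\omega_j = 0$, so the constraint $z^{\omega'} \in B_\zeta(x^{\omega'})$ reads $\sum_{j : (\omega\wedge\omega')_j = 1}(z_j - x_j)^2 < \zeta^2$ and involves only coordinates $j$ with $\omega_j = 1$. Consequently $\Pi_\omega\bigl(\{z \in \mathbb{R}^d : z^{\omega'} \in B_\zeta(x^{\omega'})\} \cap \mathcal{R}^\omega\bigr)$ is, after relabelling coordinates, the product of a full $(d_\omega - d_{\omega\wedge\omega'})$-dimensional face of the cube $\Pi_\omega(\mathcal{R}^\omega) = [1+2r, 1+2r+\kappa]^{d_\omega}$ with the $d_{\omega\wedge\omega'}$-dimensional set $B_\zeta(\tilde x) \cap [1+2r, 1+2r+\kappa]^{d_{\omega\wedge\omega'}}$, where $\tilde x$ is the corresponding restriction of $x$ (which lies in that cube). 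Applying the estimate with all side lengths equal to $\kappa$ and $k = d_{\omega\wedge\omega'}$ bounds the $\mathcal{L}_{d_\omega}$-measure below by $\kappa^{d_\omega - d_{\omega\wedge\omega'}} \cdot \min(\kappa, \zeta/\sqrt{d_{\omega\wedge\omega'}})^{d_{\omega\wedge\omega'}}$, and it remains only to check $\min(\kappa, \zeta/\sqrt{d_{\omega\wedge\omega'}}) \geq \kappa\zeta$: if $\kappa \leq \zeta/\sqrt{d_{\omega\wedge\omega'}}$ this reads $\kappa \geq \kappa\zeta$, true since $\zeta \leq 1$; otherwise it reads $\kappa \leq 1/\sqrt{d_{\omega\wedge\omega'}}$, which holds because $\kappa < 1/\sqrt{d_\omega} \leq 1/\sqrt{d_{\omega\wedge\omega'}}$.

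The second display follows by the same product decomposition applied to $\mathcal{R}_j^\omega$ (again $z_\ell = x_\ell = 0$ off the support of $\omega$), using that $\Pi_\omega(\mathcal{R}_j^\omega)$ is an axis-aligned box all of whose side lengths equal $r$. The estimate bounds the measure below by $r^{d_\omega - d_{\omega\wedge\omega'}} \cdot \min(r, \zeta/\sqrt{d_{\omega\wedge\omega'}})^{d_{\omega\wedge\omega'}}$, and a short case split shows this is at least $r^{d_\omega}\min\{1, (r\sqrt{d_{\omega\wedge\omega'}})^{-d_{\omega\wedge\omega'}}\}\,\zeta^{d_{\omega\wedge\omega'}}$: after dividing by $r^{d_\omega - d_{\omega\wedge\omega'}}$ one compares $\min(r, \zeta/\sqrt{d_{\omega\wedge\omega'}})^{d_{\omega\wedge\omega'}}$ with $\min\{r^{d_{\omega\wedge\omega'}}, (\sqrt{d_{\omega\wedge\omega'}})^{-d_{\omega\wedge\omega'}}\}\,\zeta^{d_{\omega\wedge\omega'}}$, and in either case ($r \le \zeta/\sqrt{d_{\omega\wedge\omega'}}$ or $r > \zeta/\sqrt{d_{\omega\wedge\omega'}}$) the inequality reduces to $\zeta \leq 1$. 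The degenerate cases are immediate: when $\omega \wedge \omega' = \mathbf{0}_d$ the ball constraint is vacuous on $\mathcal{R}^\omega$ (resp. $\mathcal{R}_j^\omega$) for $\zeta > 0$, so the left-hand sides equal $\kappa^{d_\omega}$ (resp. $r^{d_\omega}$), matching the right-hand sides; and $\zeta = 0$ is trivial since $B_0$ is empty. I do not anticipate a genuine obstacle here: the only things to be careful about are the bookkeeping of which coordinates survive $\Pi_\omega$ and are constrained by the ball, and the elementary $\min$-comparisons above.
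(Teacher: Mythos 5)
Your proof is correct and takes essentially the same route as the paper: both reduce via the product decomposition of $\Pi_\omega$ to bounding the Lebesgue measure of a ball--cube intersection in the $\omega\wedge\omega'$ coordinates, and then fit an axis-aligned box anchored at (a coordinate of) $x$ inside that intersection. The paper realises this box via a dilation centred at $x$ for the $\mathcal{R}^\omega$ case and via a separate clipped-hypercube case analysis for $\mathcal{R}_j^\omega$, whereas your single box-intersection estimate with side length $\min(\ell_i,\zeta/\sqrt{k})$ treats both cases uniformly -- a modest tidying rather than a different method.
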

\begin{proof} 
First, fix $x\in\mathcal{R}^\omega$. For $\zeta\in[0,1]$, consider the map on $\IR^{d_{\omega\wedge\omega'}}$ given by
\[
\psi_{x,\zeta}: \tilde{x} \mapsto \Pi_{\omega\wedge\omega'}(x) + \zeta \cdot\Bigl(\tilde{x}-\Pi_{\omega\wedge\omega'}(x)\Bigr).
\] 
Observe that $\psi_{x,\zeta}(\Pi_{\omega\wedge\omega'}(\mathcal{R}^{\omega}))\subset\Pi_{\omega\wedge\omega'}(\mathcal{R}^{\omega})$. Furthermore, since 
\[
\Pi_{\omega\wedge\omega'}(x^{\omega'}) = \Pi_{\omega\wedge\omega'}(x^{\omega\wedge\omega'}) \in \psi_{x,\zeta}(\Pi_{\omega\wedge\omega'}(\mathcal{R}^\omega))
\]
and $\mathrm{diam}(\psi_{x,\zeta}(\Pi_{\omega\wedge\omega'}(\mathcal{R}^\omega)) \leq \zeta \kappa d^{1/2}_{\omega\wedge\omega'} < \zeta$, 
we also have $\psi_{x,\zeta}(\Pi_{\omega\wedge\omega'}(\mathcal{R}^\omega))\subseteq\Pi_{\omega\wedge\omega'}(\{z \in \mathbb{R}^d: z^{\omega'} \in B_\zeta(x^{\omega'})\})$. We conclude that
\begin{align*} 
\mathcal{L}_{d_\omega} \Bigl( \Pi_\omega\bigl( &\bigl\{ z\in\mathbb{R}^d: z^{\omega'}\in B_\zeta(x^{\omega'}) \bigr\} \cap\mathcal R^\omega\bigr) \Bigr) 
\\ & = \kappa^{d_{\omega}-d_{\omega\wedge\omega'}} \cdot \mathcal{L}_{d_{\omega\wedge\omega'}} \Bigl( \Pi_{\omega\wedge\omega'} \Bigl(\bigl\{ z\in\mathbb{R}^d: z^{\omega'}\in B_\zeta(x^{\omega'}) \bigr\}\cap\mathcal R^\omega\Bigr) \Bigr) 
\\ &\geq \kappa^{d_\omega-d_{\omega\wedge\omega'}} \cdot \mathcal{L}_{d_{\omega\wedge\omega'}} \bigl(\psi_{x,\zeta}(\Pi_{\omega\wedge\omega'}(\mathcal{R}^\omega))\bigr) \geq \kappa^{d_{\omega}} \cdot \zeta^{d_{\omega\wedge\omega'}}.
\end{align*}
This concludes the proof of the first part of the lemma. 

For the second part, fix $x\in\mathcal{R}_j^\omega$. If $\zeta \in [0, (r \sqrt{d_{\omega \wedge \omega'}}) \wedge 1]$, then there exists a $d_{\omega\wedge \omega'}$-dimensional hypercube $A$ with side length $\zeta/\sqrt{d_{\omega \wedge \omega'}}$ such that  $A \subseteq \Pi_{\omega' \wedge \omega'}\Bigl(\Pi_{\omega}\bigl(\bigl\{ z\in\mathbb{R}^d: z^{\omega'}\in B_\zeta(x^{\omega'}) \bigr\}\cap\mathcal R^{\omega}_j\bigr)\Bigr)$, thus for $\zeta \in [0, (r \sqrt{d_{\omega \wedge \omega'}})\wedge1]$,
\[
\mathcal{L}_{d_{\omega}} \Bigl(\Pi_{\omega}\bigl(\bigl\{ z\in\mathbb{R}^d: z^{\omega'}\in B_\zeta(x^{\omega'}) \bigr\}\cap\mathcal{R}_j^\omega\bigr)\Bigr) \geq d_{\omega\wedge\omega'}^{-d_{\omega\wedge\omega'}/2} \cdot r^{d_\omega-d_{\omega\wedge\omega'}} \zeta^{d_{\omega\wedge\omega'}} 
\]

If $\zeta \in [(r \sqrt{d_{\omega \wedge \omega'}}) \wedge 1, 1]$, then $\mathcal{R}_j^\omega \subseteq \Pi_{\omega}\bigl(\bigl\{ z\in\mathbb{R}^d: z^{\omega'}\in B_\zeta(x^{\omega'}) \bigr\}\cap\mathcal{R}_j^\omega\bigr)$, and 
\[
\mathcal{L}_{d_{\omega}} \Bigl(\Pi_{\omega}\bigl(\bigl\{ z\in\mathbb{R}^d: z^{\omega'}\in B_\zeta(x^{\omega'}) \bigr\}\cap\mathcal{R}_j^\omega\bigr)\Bigr) = r^{d_\omega} \geq r^{d_\omega} \zeta^{d_{\omega \wedge \omega'}}.
\]
The result follows. 
\end{proof}

\begin{lemma}\label{lem:LB_numberofpointsinball}
Fix $d\in\IN$, $\omega,\omega'\in\{0,1\}^d \setminus \{\mathbf{0}_d\}$ , $\kappa \in (0,1/\sqrt{d_{\omega}})$, $r>0$, $q \in \mathbb{N}$, $x \in \mathcal T_{q,r}^{\omega}$, and $\zeta>0$. 
If $\zeta \leq 4r\sqrt{d_{\omega\wedge\omega'}}$, then we have that
\[
|\mathcal {T}_{q,r}^{\omega\wedge\omega'} \cap B_\zeta(x^{\omega'})| \geq \Bigl(\frac{q\zeta}{8r\sqrt{d_{\omega\wedge\omega'}}} \Bigr)^{d_{\omega\wedge\omega'}}.
\]
\end{lemma}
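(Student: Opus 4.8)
The plan is to trade the Euclidean ball for an $\ell_\infty$-ball so that the count factorises coordinate-by-coordinate, and then estimate each one-dimensional count by a direct integer-counting argument. Write $S := \{j\in[d] : \omega_j = \omega'_j = 1\}$, so that $|S| = d_{\omega\wedge\omega'}$; note the hypothesis $0<\zeta\le 4r\sqrt{d_{\omega\wedge\omega'}}$ forces $|S|\ge 1$. Since $x\in\mathcal T^\omega_{q,r}$ we may write $x = z^\omega$ with $z\in\mathcal T_{q,r}$, and then $x^{\omega'} = z^{\omega\wedge\omega'}$ has $j$th coordinate $1 + rv_j/q$ for $j\in S$ (with $v_j\in[q]$) and $0$ for $j\notin S$. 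Likewise every element of $\mathcal T^{\omega\wedge\omega'}_{q,r}$ is of the form $w^{\omega\wedge\omega'}$, with $j$th coordinate $1 + ru_j/q$ for $j\in S$ (with $u_j\in[q]$) and $0$ otherwise. Because $\|w^{\omega\wedge\omega'} - x^{\omega'}\|^2 = \sum_{j\in S}\bigl(r(u_j - v_j)/q\bigr)^2$, if $|u_j - v_j| < M := q\zeta/(r\sqrt{|S|})$ holds for every $j\in S$ then this is a sum of $|S|\ge 1$ terms each strictly below $\zeta^2/|S|$, hence strictly below $\zeta^2$, so $w^{\omega\wedge\omega'}\in B_\zeta(x^{\omega'})$. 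This yields
\[
\bigl|\mathcal T^{\omega\wedge\omega'}_{q,r}\cap B_\zeta(x^{\omega'})\bigr|\ \ge\ \prod_{j\in S} N_j, \qquad N_j := \bigl|\{u\in[q] : |u - v_j| < M\}\bigr|.
\]

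It then remains to show $N_j \ge M/8$ for each $j\in S$. Here I would use the elementary fact that for any $v\in[q]$ and $M>0$ the set $\{u\in[q] : |u - v| < M\}$ has at least $\min(\lceil M\rceil, q)$ elements (take consecutive integers starting at $v$ in whichever direction stays inside $[q]$). If this minimum equals $\lceil M\rceil$ then it is $\ge M \ge M/8$; if it equals $q$, then since $\zeta\le 4r\sqrt{|S|}$ gives $M\le 4q$ we get $M/8 \le q/2 \le q$. In either case $N_j\ge M/8$, and multiplying over $j\in S$ gives
\[
\bigl|\mathcal T^{\omega\wedge\omega'}_{q,r}\cap B_\zeta(x^{\omega'})\bigr|\ \ge\ (M/8)^{|S|} = \Bigl(\frac{q\zeta}{8r\sqrt{d_{\omega\wedge\omega'}}}\Bigr)^{d_{\omega\wedge\omega'}},
\]
which is the claimed bound.

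There is no substantial obstacle: the argument is entirely elementary counting, and the factor $8$ (rather than a tighter constant) is precisely the slack that absorbs the two places needing a little care — first, the use of the \emph{open} ball $B_\zeta$, which is what makes the strict inequality $|u_j - v_j|<M$ and the strict comparison of the sum of $|S|$ terms each below $\zeta^2/|S|$ the right thing to track, and second, the boundary effects when $v_j$ is close to $1$ or $q$ and when $M$ is comparable to $q$, handled together with the bound $M\le 4q$ coming from $\zeta\le 4r\sqrt{d_{\omega\wedge\omega'}}$.
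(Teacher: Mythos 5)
Your proof is correct, and it follows essentially the same strategy as the paper: inscribe a box inside the Euclidean ball and count lattice points coordinate by coordinate. The paper instead anchors a hypercube of side length $\zeta/(8\sqrt{d_{\omega\wedge\omega'}})$ at the corner $x^{\omega'}$ and treats $q=1$ separately, whereas your centred $\ell_\infty$-ball absorbs the boundary and small-$q$ cases uniformly via the single bound $\min(\lceil M\rceil,q)\geq M/8$.
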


\begin{proof}
First observe that since $x \in \mathcal T_{q,r}^{\omega}$, we have $x^{\omega'}\in\mathcal {T}_{q,r}^{\omega\wedge\omega'}$. If $q = 1$, then $|\mathcal{ T}_{q,r}^{\omega\wedge\omega'} \cap B_\zeta(x^{\omega'})| = 1 \geq (\zeta/(8r\sqrt{d_{\omega\wedge\omega'}}))^{d_{\omega\wedge\omega'}}$.  Now if $q \geq 2$, since $\frac{\zeta}{8\sqrt{d_{\omega\wedge\omega'}}} \leq \frac{r}{2}$ we can find a $d_{\omega \wedge \omega'}$-dimensional, axis-aligned hypercube $A$ with vertex $x^{\omega'}$ and side length $\zeta/(8\sqrt{d_{\omega\wedge\omega'}})$ containing at least $\lceil q\zeta/(8r \sqrt{d_{\omega\wedge\omega'}})\rceil^{d_{\omega\wedge\omega'}}$ elements of $\mathcal{T}_{q,r}^{\omega\wedge\omega'}$. We conclude that $|\mathcal {T}_{q,r}^{\omega\wedge\omega'}\cap B_\zeta(x^{\omega'})| \geq |\mathcal{T}_{q,r,\tilde{r}}^{\omega\wedge\omega'}\cap A| \geq \bigl(\frac{q\zeta}{8r \sqrt{d_{\omega\wedge\omega'}}} \bigr)^{d_{\omega\wedge\omega'}}$.
\end{proof}

Now recall the definition of the lower density $\rho_{\nu, s}$ from~\eqref{def:lower_density}
. For $\mu \equiv \mu^{(\omega)}_{\kappa,r,q,a,b}$ and $\omega' \in \{0,1\}^d$, we'll write $\rho_{\omega'}$ in place of $\rho_{(\mu^{(\omega)}_{\kappa,r,q,a,b})_{\omega'},d_{\omega'}}$. Our next result derives lower bounds on $\rho_{\omega'}$ in different regions of the feature space.

\begin{lemma}\label{lem:LB_boundonmu}
Fix $d\in\IN$, $\omega \in \{0,1\}^d \setminus \{\mathbf{0}_d\}$, $\kappa \in (0,1/\sqrt{d_{\omega}})$, $r>0$, $q \in \mathbb{N}$, and $a,b \in [0,1/4]$. Then, for $\omega' \in \{0,1\}^d$, such that $\omega \wedge \omega' \neq \mathbf{0}_d$, we have
\begin{enumerate}
    \item[(i)]  $\rho_{\omega'}(x^{\omega'}) \geq  \frac{1-a-b}{2^{d_{\omega\wedge\omega'}}}$ for all $x \in \mathcal{R}^\omega$,
    \item[(ii)] $\rho_{\omega'}(x^{\omega'}) \geq  \frac{b}{d_{\omega} \cdot 2^{d_{\omega\wedge\omega'}}} \cdot \min\Bigl\{1, (r \sqrt{d_{\omega \wedge \omega'}})^{-d_{\omega\wedge\omega'}}\Bigr\}$ for all $x \in \mathcal{R}_j^\omega$ and $j\in[d]$ with $\omega_j = 1$,
    \item[(iii)] if $\omega \npreceq \omega'$, then $\rho_{\omega'}(x^{\omega'}) \geq  \frac{b}{d_{\omega} \cdot 2^{d_{\omega\wedge\omega'}}} \cdot \min\Bigl\{1, (r \sqrt{d_{\omega \wedge \omega'}})^{-d_{\omega\wedge\omega'}}\Bigr\}$ for all $x\in\mathcal T^\omega_{q,r}$,
      \item[(iv)] if $\omega \preceq \omega'$, then $\rho_{\omega'}(x^{\omega'}) \geq 2^{-3d_{\omega}} \cdot \min\Bigl\{1, a(4r \sqrt{d_{\omega}})^{-d_{\omega}} \Bigr\}$ for all $x\in\mathcal T^\omega_{q,r}$.
\end{enumerate}
\end{lemma}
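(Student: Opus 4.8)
The plan is to bound $\rho_{\omega'}(x^{\omega'}) = \inf_{\zeta \in (0,1)} \mu_{\omega'}(B_\zeta(x^{\omega'}))/\zeta^{d_{\omega'}}$ from below by exhibiting, for each of the three pieces of the mixture in~\eqref{lb:marginal_construction}, a lower bound on the mass that $\mu_{\omega'}$ puts on $B_\zeta(x^{\omega'})$ that scales like $\zeta^{d_{\omega \wedge \omega'}}$; since $d_{\omega \wedge \omega'} \le d_{\omega'}$ with equality exactly when $\omega \preceq \omega'$, this dichotomy is precisely what produces cases (i)--(iii) versus case (iv). Throughout, the key observation is that $\mu_{\omega'}(B_\zeta(x^{\omega'}))$ is determined by the $(\omega\wedge\omega')$-slice of the support: applying $\Pi_{\omega'}$ (or equivalently intersecting with $B_\zeta(x^{\omega'})$) collapses the $\omega$-coordinates not in $\omega'$, so all three mixture components project down to $d_{\omega\wedge\omega'}$-dimensional objects. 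The symmetrisation over $s \in \{-1,1\}^d$ contributes a factor $2^{-d_{\omega\wedge\omega'}}$, since a ball around $x^{\omega'}$ typically meets only one sign-orthant (for $\zeta < 1$, and the points of $\mathcal{R}^\omega$, $\mathcal{R}_j^\omega$, $\mathcal{T}^\omega_{q,r}$ are bounded away from coordinate hyperplanes).

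For part (i): if $x \in \mathcal{R}^\omega$, use only the third (Lebesgue) component of $\mu$, whose $\omega'$-marginal has, near $x^{\omega'}$, a density at least $(1-a-b)/(2^{d_{\omega\wedge\omega'}} \kappa^{d_{\omega\wedge\omega'}})$ with respect to $\mathcal{L}_{d_{\omega\wedge\omega'}}$ on $\Pi_{\omega\wedge\omega'}(\mathcal R^\omega)$. Then apply the first bound of Lemma~\ref{lem:LB_lebesgue_bound} with $\omega$ replaced by $\omega\wedge\omega'$ (noting $\mathcal R^\omega$ projects to $\mathcal R^{\omega\wedge\omega'}$ of side $\kappa$) to get $\mu_{\omega'}(B_\zeta(x^{\omega'})) \ge \frac{1-a-b}{2^{d_{\omega\wedge\omega'}}} \zeta^{d_{\omega\wedge\omega'}}$; since $\zeta \le 1$ and $d_{\omega\wedge\omega'} \le d_{\omega'}$, dividing by $\zeta^{d_{\omega'}}$ gives the claim. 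For part (ii): $x \in \mathcal{R}_j^\omega$, so use the second component; its $\omega'$-marginal has density at least $\frac{b}{2^{d_{\omega\wedge\omega'}} d_\omega r^{d_{\omega\wedge\omega'}}}$ on the relevant box, and the second bound of Lemma~\ref{lem:LB_lebesgue_bound} supplies the $\zeta^{d_{\omega\wedge\omega'}}$ scaling with constant $\min\{1,(r\sqrt{d_{\omega\wedge\omega'}})^{-d_{\omega\wedge\omega'}}\}$. For part (iii): $x \in \mathcal T^\omega_{q,r}$ but $\omega \npreceq \omega'$, so $x^{\omega'} = x^{\omega\wedge\omega'}$ is a lattice point whose own atom is invisible at dimension $d_{\omega'} > d_{\omega\wedge\omega'}$; instead observe that $x^{\omega'}$ lies in the closure of (a translate of) $\mathcal R_j^\omega$ for a suitable $j$ with $\omega_j = 1$, $\omega'_j=0$ — here one needs that the lattice $\mathcal T_{q,r}$ sits inside $[1,1+r]^d = \mathcal R_0$ and $\mathcal R_j^\omega$ is $\mathcal R_0^\omega$ shifted in a $\omega$-direction, so balls around $x^{\omega'}$ capture Lebesgue mass from the $\mathcal R_j^\omega$ blocks — and then repeat the argument of part (ii). For part (iv): $\omega \preceq \omega'$, so $d_{\omega\wedge\omega'} = d_\omega = d_{\omega'}|_{\text{restricted to }\omega}$; now the atoms of $\mathcal T^\omega_{q,r}$ do matter. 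Split on the size of $\zeta$: for $\zeta$ comparable to or larger than the lattice spacing $r/q$, count lattice points in $B_\zeta$ via Lemma~\ref{lem:LB_numberofpointsinball} (each carrying mass $a/(2^d T) = a/(2^d q^d)$), giving mass $\gtrsim a (q\zeta/(8r\sqrt{d_\omega}))^{d_\omega} \cdot q^{-d_\omega} \asymp a(8r\sqrt{d_\omega})^{-d_\omega}\zeta^{d_\omega}$; for $\zeta$ smaller than the spacing, $B_\zeta(x^{\omega'})$ still contains the single atom at $x^{\omega'}$ of mass $a 2^{-d} q^{-d}$, but we must compare this to $\zeta^{d_\omega}$ — since $\zeta < r/q$ we have $\zeta^{d_\omega} < (r/q)^{d_\omega}$, and one checks $a 2^{-d}q^{-d}/(r/q)^{d_\omega} \ge a 2^{-d} q^{d_\omega - d} r^{-d_\omega} \ge 2^{-3d_\omega} a (4r\sqrt{d_\omega})^{-d_\omega}$ using $q \ge 1$, $d_\omega \le d$, giving the stated constant (the extra $\min\{1,\cdot\}$ absorbs the regime where $a$-free terms dominate). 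Taking the infimum over $\zeta$ in each case yields the four bounds.

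The main obstacle I anticipate is part (iv) and the small-$\zeta$ regime in general: one has to be careful that the infimum over $\zeta \in (0,1)$ is genuinely controlled — for very small $\zeta$ the ball contains only the central atom, and the ratio (atom mass)$/\zeta^{d_\omega}$ blows up, which is good, but one must verify it never dips below the claimed constant at the crossover scale $\zeta \asymp r/q$, and track all the $\sqrt{d_\omega}$ and $8$ factors coming out of Lemma~\ref{lem:LB_numberofpointsinball}. A secondary subtlety is making the projection/symmetry argument rigorous: one must confirm that for $\zeta < 1$ the ball $B_\zeta(x^{\omega'})$ cannot straddle two different orthants $s^\omega \odot(\cdot)$ and $s'^\omega\odot(\cdot)$ with $s \ne s'$ on the $\omega\wedge\omega'$ coordinates — this follows because every point in the relevant support pieces has all $\omega$-coordinates $\ge 1$ in absolute value (indeed $\ge 1 + r/q$ for the lattice, $\ge 1$ for the boxes), so distinct orthants are separated by $\ge 2$ in each shared coordinate, comfortably more than $\zeta$. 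Once these two points are pinned down, the rest is bookkeeping: invoke Lemma~\ref{lem:LB_lebesgue_bound} for (i)--(iii) and Lemma~\ref{lem:LB_numberofpointsinball} together with the atom-counting for (iv), then take infima.
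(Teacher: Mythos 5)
Your approach for parts (i)--(iii) is essentially the paper's: restrict attention to the relevant mixture component, use the orthant-separation to handle the sign symmetrisation (contributing the $2^{-d_{\omega\wedge\omega'}}$), and invoke Lemma~\ref{lem:LB_lebesgue_bound} for (i)--(ii) and the translate argument for (iii). These parts are sound.

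Part (iv), however, has a genuine gap. You split the infimum over $\zeta\in(0,1)$ at the lattice spacing $r/q$, counting lattice points for $\zeta\gtrsim r/q$ and using the single central atom for $\zeta<r/q$. Both of these regimes give a bound of the form (constant)$\cdot a\,(r\sqrt{d_\omega})^{-d_\omega}$ that carries a factor of $a$, and you then wave at the $\min\{1,\cdot\}$ to ``absorb the regime where $a$-free terms dominate.'' But this regime is real and your argument says nothing about it: when $4r\sqrt{d_\omega}<1$ and $\zeta\in(4r\sqrt{d_\omega},1]$, the ball already contains every point of $\mathcal{T}^\omega_{q,r}$ (so lattice-point counting saturates at mass $a/2^{d_\omega}$), and dividing by $\zeta^{d_{\omega'}}\le 1$ gives at best $a/2^{d_\omega}$. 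If $a$ is small while $(4r\sqrt{d_\omega})^{d_\omega}\le a$ (so that the claimed bound is $2^{-3d_\omega}$, with the min equal to $1$), then $a/2^{d_\omega}$ falls short of $2^{-3d_\omega}$ and the atom mass alone cannot close the gap. The paper's fix is to switch ingredients in this large-$\zeta$ regime: since $\|x^\omega-z^\omega\|\le 2r\sqrt{d_\omega}<\zeta/2$ for the corner $z=(1+2r,\ldots,1+2r)$ of $\mathcal R^\omega$, one has $B_{\zeta/2}(z^{\omega'})\subseteq B_\zeta(x^{\omega'})$ and then part (i) supplies the $a$-free bound $\frac{1-a-b}{2^{d_\omega}}(\zeta/2)^{d_\omega}\ge 2^{-3d_\omega}\zeta^{d_\omega}$. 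You need an analogous step. As a secondary point, your worry about the very small $\zeta<r/q$ regime is unnecessary: Lemma~\ref{lem:LB_numberofpointsinball} is valid for all $\zeta\le 4r\sqrt{d_{\omega\wedge\omega'}}$ and its lower bound is at most $1$ there, so the single-atom case is already subsumed and there is no delicate crossover to track; and your atom-mass bookkeeping ($a\,2^{-d}q^{-d}$) does not correctly account for the sign multiplicities and the fact that $\mathcal T^\omega_{q,r}$ has only $q^{d_\omega}$ distinct points when $d_\omega<d$, so that sub-argument should simply be dropped in favour of the lemma.
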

\begin{proof}
To prove (i), fix $x\in\mathcal{R}^\omega$, $\zeta\in(0,1)$ and  $\omega'\in\{0,1\}^d \setminus \{\mathbf{0}_d\}$. Then, by Lemma~\ref{lem:LB_lebesgue_bound}, we have
\begin{align}
    \nonumber \mu_{\omega'}\bigl(B_\zeta(x^{\omega'})\bigr)
    &= \mu\Bigl(\bigl\{ z\in\mathbb{R}^d: z^{\omega'}\in B_\zeta(x^{\omega'}) \bigr\}\Bigr)
    \\ \nonumber &\geq \frac{(1-a-b)2^{d-d_{\omega}}}{2^d\kappa^{d_\omega}} \mathcal{L}_{d_\omega} \Bigl( \Pi_\omega\bigl( \bigl\{ z\in\mathbb{R}^d: z^{\omega'}\in B_\zeta(x^{\omega'}) \bigr\}\cap\mathcal R^\omega \bigr)\Bigr)
    \\\label{eq:casei} 
    & \geq \frac{1-a-b}{2^{d_{\omega\wedge\omega'}}}\zeta^{d_{\omega\wedge\omega'}}.
\end{align}
Hence 
\[
\rho_{\omega'}(x^{\omega'}) = \inf_{\zeta\in (0,1)} \frac{\mu_{\omega'}\bigl(B_\zeta(x^{\omega'})\bigr)} {\zeta^{d_{\omega'}}} \geq  \frac{1-a-b}{2^{d_{\omega\wedge\omega'}}} \cdot \inf_{\zeta\in (0,1)} \frac{1}{\zeta^{d_{\omega'}-d_{\omega\wedge\omega'}}} = \frac{1-a-b}{2^{d_{\omega\wedge\omega'}}}. 
\]

To prove (ii), fix $x\in\mathcal{R}_j^\omega$, $\zeta\in(0,1)$ and  $\omega'\in\{0,1\}^d \setminus \{\mathbf{0}_d\}$. Then, by Lemma~\ref{lem:LB_lebesgue_bound}, we have
\begin{align}
    \nonumber \mu_{\omega'}\bigl(B_\zeta(x^{\omega'})\bigr)
    &= \mu\Bigl(\bigl\{ z\in\mathbb{R}^d: z^{\omega'}\in B_\zeta(x^{\omega'}) \bigr\}\Bigr)
    \\ \nonumber &\geq \frac{b2^{d-d_{\omega\wedge\omega'}}}{2^{d}d_\omega r^{d_\omega}} \mathcal{L}_{d_\omega} \Bigl( \Pi_\omega\bigl( \bigl\{ z\in\mathbb{R}^d: z^{\omega'}\in B_\zeta(x^{\omega'}) \bigr\}\cap\mathcal R_j^\omega \bigr)\Bigr)
    \\ \nonumber &\geq \frac{b}{2^{d_{\omega\wedge\omega'}}d_\omega r^{d_\omega}} \cdot r^{d_\omega} \cdot \min\{1, (r\sqrt{d_{\omega\wedge \omega'}})^{-d_{\omega\wedge\omega'}}\} \cdot \zeta^{d_{\omega\wedge\omega'}} 
    \\\nonumber &\geq \frac{b}{2^{d_{\omega\wedge\omega'}}d_\omega } \cdot \min\{1, (r\sqrt{d_{\omega\wedge \omega'}})^{-d_{\omega\wedge\omega'}}\} \cdot \zeta^{d_{\omega'}}.
\end{align}
The result of part (ii) follows via the same argument used in part (i). 

To proof (iii), fix $x \in\mathcal{T}^\omega_{q,r}$ and $\omega'\in\{0,1\}^d\setminus \{\mathbf{0}_d\}$, with $\omega \npreceq \omega'$. Note that for any $x\in\mathcal T^\omega_{q,r}$ there exists a $j\in[d]$ such that $\omega_j=1$ and $\tilde{x}\in\mathcal{R}^\omega_j$ such that $\tilde{x}^{\omega'}=x^{\omega'}$, and we have by Lemma~\ref{lem:LB_lebesgue_bound}
\begin{align*}
\mu_{\omega'}\bigl(B_\zeta(x^{\omega'})\bigr)
    &= \mu\Bigl(\bigl\{ z\in\mathbb{R}^d: z^{\omega'}\in B_\zeta(x^{\omega'}) \bigr\}\Bigr) 
    = \mu\Bigl(\bigl\{ z\in\mathbb{R}^d: z^{\omega'}\in B_\zeta(\tilde{x}^{\omega'}) \bigr\}\Bigr)  
    \\ &\geq \frac{b2^{d-d_{\omega\wedge\omega'}}}{2^{d}d_\omega r^{d_\omega}} \mathcal{L}_{d_\omega} \Bigl( \Pi_\omega\bigl( \bigl\{ z\in\mathbb{R}^d: z^{\omega'}\in B_\zeta(\tilde{x}^{\omega'}) \bigr\}\cap\mathcal R_j^\omega \bigr)\Bigr)
    \\ &\geq \frac{b}{2^{d_{\omega\wedge\omega'}}d_\omega r^{d_\omega}} \cdot r^{d_\omega} \cdot \min\{1, (r\sqrt{d_{\omega\wedge \omega'}})^{-d_{\omega\wedge\omega'}}\} \cdot \zeta^{d_{\omega\wedge\omega'}} 
    \\ &\geq \frac{b}{2^{d_{\omega\wedge\omega'}}d_\omega } \cdot \min\{1, (r\sqrt{d_{\omega\wedge \omega'}})^{-d_{\omega\wedge\omega'}}\} \cdot \zeta^{d_{\omega'}}.
\end{align*}

To prove (iv), fix  $x \in\mathcal{T}^\omega_{q,r}$ and $\omega'\in\{0,1\}^d\setminus \{\mathbf{0}_d\}$, with $\omega \preceq \omega'$. If $\zeta \in(4r\sqrt{d_{\omega}},1]$, then letting $z:=\bigl(1+2r,\ldots,1+2r)$, we have $z^{\omega} \in \mathcal{R}^{\omega}$ and $\|x^{\omega} - z^{\omega}\| \leq 2r\sqrt{d_{\omega}} < \zeta/2$. Hence, by \eqref{eq:casei}, we have 
\[
\mu_{\omega'}\bigl(B_\zeta(x^{\omega'})\bigr)  \geq \mu_{\omega'}\bigl(B_{\zeta/2}(z^{\omega'})\bigr) \geq \frac{1-a-b}{2^{d_{\omega\wedge\omega'}}} (\zeta/2)^{d_{\omega}} \geq 2^{-3d_{\omega}} \cdot \zeta^{d_{\omega}}.
\]
On the other hand, if $\zeta\in(0,(4r\sqrt{d_{\omega}}) \wedge 1]$ 
we have by Lemma~\ref{lem:LB_numberofpointsinball},
\begin{align*}
\mu_{\omega'}\bigl(B_\zeta(x^{\omega'})\bigr)
    &= \mu\Bigl(\bigl\{ z\in\mathbb{R}^d: z^{\omega'}\in B_\zeta(x^{\omega'}) \bigr\}\Bigr) 
    \geq \frac{a}{(2q)^{d_{\omega}}}  \cdot\bigl|\mathcal{T}_{q,r}^{\omega}\cap  B_\zeta(x^{\omega'}) \bigr|  \\
    &\geq \frac{a}{(2q)^{d_{\omega}}} \Bigl(\frac{q\zeta}{8r\sqrt{d_{\omega}}} \Bigr)^{d_{\omega}}  \geq  \frac{a}{2^{3d_{\omega}}} \Bigl(\frac{1}{4r\sqrt{d_{\omega}}}\Bigr)^{d_{\omega}}\zeta^{d_{\omega'}}.
\end{align*}

\end{proof}

The next corollary exploits the previous lemmas and gives conditions under which a marginal distribution of the type~\eqref{lb:marginal_construction} 
falls into the class $\mathcal{Q}_{\mathrm{L}}(\boldsymbol{\gamma}, C_{\mathrm{L}},\mathcal{O})$.

\begin{corollary}\label{cor:L_def_satisfied}
Fix $d\in\IN$, $\mathcal{O} \subseteq \{0,1\}^d$, $\boldsymbol{\gamma} = (\gamma_1,\dots,\gamma_d)^T \in [0, \infty)^d$, $r> 0$, $q \in \mathbb{N}$, $\omega\in\{0,1\}^d\setminus\{\mathbf{0}_d\}$,  $\kappa \in (0,1/\sqrt{d_{\omega}})$ and $a,b \in [0,1/4]$. Let 
$\gamma_{\omega} := \min\{\gamma_j : \omega_j = 1\}$ and let $\gamma_{\max} := \max\{\gamma_j : j\in[d]\}$. Let $a_0 := 2^{-3d_{\omega}\gamma_{\omega}}$ and $b_0 = 2^{-1}(d_\omega 2^{d_{\omega}})^{-\gamma_{\max}}$. Fix $C_{\mathrm{L}}>4^{\gamma_{\max}(d_\omega+1)}$. Let $Q \equiv Q_{\omega,\kappa,r,q,a,b}$ denote a distribution on $\mathbb{R}^d \times \{0,1\}\times \{0,1\}^d$ with corresponding $X$-marginal distribution $\mu=\mu^{(\omega)}_{\kappa,r,q,a,b}$, that also satisfies $\min_{o \in \mathcal{O}} \mathbb{P}_Q(O = o) >0$ and for which  $O \indep (X,Y)$, when $(X, Y, O) \sim Q$. If $a \leq a_0 \wedge b$ and 
\begin{align}
&\label{eq:a_bound} a^{1-\gamma_{\omega}} (4r\sqrt{d_\omega})^{d_{\omega}\gamma_{\omega}}\leq a_0,\\
&\label{eq:b_bound1} b^{1-\gamma_{\omega}} \leq C_{\mathrm{L}} \cdot b_0 \cdot \Bigl( \min\Bigl\{1, (r\sqrt{d_{\omega}})^{-d_{\omega}}\Bigr\} \Bigr)^{\gamma_{\omega}},\\
&\label{eq:b_bound2} b^{1-\gamma_{\max}} \leq C_{\mathrm{L}} \cdot b_0 \cdot \Bigl( \min\Bigl\{1, (r\sqrt{d_{\omega\wedge\omega'}})^{-d_{\omega\wedge\omega'}}\Bigr\} \Bigr)^{\gamma_{\max}},
\end{align}
for all $\omega'$ with $\omega \wedge \omega' \neq \mathbf{0}_d$, then $Q \in \mathcal{Q}_{\mathrm{L}}(\boldsymbol{\gamma},C_{\mathrm{L}},\mathcal{O})$.
\end{corollary}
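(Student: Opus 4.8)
The plan is to verify the defining inequality of Definition~\ref{def:lowerdensity} directly. Since $O\indep(X,Y)$ under $Q$, we have $\mu_{\omega'\mid o}=\mu_{\omega'}$ for every $o\in\mathcal{O}$ with $\omega'\preceq o$, so $\min_{o\in\mathcal{O}:\omega'\preceq o}\rho_{\mu_{\omega'\mid o},d_{\omega'}}(\cdot)=\rho_{\mu_{\omega'},d_{\omega'}}(\cdot)=:\rho_{\omega'}(\cdot)$, and it suffices to show that for every $\omega'\in\{0,1\}^d\setminus\{\mathbf{0}_d\}$ and every $\xi>0$,
\begin{equation*}
\mu_{\omega'}\bigl(\{x\in\mathbb{R}^d:\rho_{\omega'}(x)<\xi\}\bigr)\le C_{\mathrm{L}}\,\xi^{\gamma_{\omega'}}.\tag{$\star$}
\end{equation*}
We may assume $\gamma_{\max}>0$ (else $\xi^{\gamma_{\omega'}}\equiv1$ and ($\star$) is immediate from $C_{\mathrm{L}}\ge1$), and, since vanishing of $a$ or $b$ merely deletes the corresponding components below, that $a,b>0$. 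First suppose $\omega\wedge\omega'=\mathbf{0}_d$: then $\mathrm{supp}(\mu)\subseteq\{x:x_j=0\text{ whenever }\omega_j=0\}$ forces $X^{\omega'}=\mathbf{0}_d$ $\mu$-a.s., so $\mu_{\omega'}=\delta_{\mathbf{0}_d}$ and $\rho_{\omega'}(\mathbf{0}_d)=\inf_{r\in(0,1)}1/r^{d_{\omega'}}=1$; hence the left-hand side of ($\star$) is $0$ for $\xi\le1$ and at most $1$ for $\xi>1$, and in both cases it is bounded by $C_{\mathrm{L}}\xi^{\gamma_{\omega'}}$ because $\xi^{\gamma_{\omega'}}\ge1$ when $\xi\ge1$.

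Now suppose $\omega\wedge\omega'\ne\mathbf{0}_d$. Decompose $\mathrm{supp}(\mu)=S_a\sqcup S_b\sqcup S_c$ (disjoint up to a $\mu$-null set of box boundaries), where $S_a:=\{s^\omega\odot z:s\in\{-1,1\}^d,\ z\in\mathcal{T}^\omega_{q,r}\}$ carries $\mu$-mass $a$, $S_b:=\bigcup_{j:\omega_j=1}\bigcup_{s\in\{-1,1\}^d}(s\odot\mathcal{R}^\omega_j)$ carries $\mu$-mass $b$, and $S_c:=\bigcup_{s\in\{-1,1\}^d}(s\odot\mathcal{R}^\omega)$ carries $\mu$-mass $1-a-b$. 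The measure $\mu$ is invariant under the coordinate reflections $x\mapsto s\odot x$, hence so are $\mu_{\omega'}$ and $\rho_{\omega'}$; combining this symmetry with Lemma~\ref{lem:LB_boundonmu}(i)--(iv) yields uniform lower bounds $\rho_{\omega'}(x^{\omega'})\ge\rho^{(a)}$ for $x\in S_a$, $\ge\rho^{(b)}$ for $x\in S_b$, $\ge\rho^{(c)}$ for $x\in S_c$, where $\rho^{(c)}=(1-a-b)2^{-d_{\omega\wedge\omega'}}$ [part (i)], $\rho^{(b)}=b\,(d_\omega2^{d_{\omega\wedge\omega'}})^{-1}\min\{1,(r\sqrt{d_{\omega\wedge\omega'}})^{-d_{\omega\wedge\omega'}}\}$ [part (ii)], and $\rho^{(a)}=\rho^{(b)}$ if $\omega\npreceq\omega'$ [part (iii)] while $\rho^{(a)}=2^{-3d_\omega}\min\{1,a(4r\sqrt{d_\omega})^{-d_\omega}\}$ if $\omega\preceq\omega'$ [part (iv)]. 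Pushing ($\star$) forward under $x\mapsto x^{\omega'}$ and using $\mathbbm{1}_{\{\xi>\rho\}}\le(\xi/\rho)^{\gamma_{\omega'}}$ for $\rho>0$, $\gamma_{\omega'}\ge0$, we obtain for every $\xi>0$
\[
\mu_{\omega'}\bigl(\{\rho_{\omega'}<\xi\}\bigr)\le a\,\mathbbm{1}_{\{\xi>\rho^{(a)}\}}+b\,\mathbbm{1}_{\{\xi>\rho^{(b)}\}}+(1-a-b)\,\mathbbm{1}_{\{\xi>\rho^{(c)}\}}\le\Lambda\cdot\xi^{\gamma_{\omega'}},
\]
with $\Lambda:=a(\rho^{(a)})^{-\gamma_{\omega'}}+b(\rho^{(b)})^{-\gamma_{\omega'}}+(1-a-b)(\rho^{(c)})^{-\gamma_{\omega'}}$, so it remains to check $\Lambda\le C_{\mathrm{L}}$.

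The last step is a bookkeeping estimate of the three terms of $\Lambda$ using $a,b\le1/4$, $a\le a_0\wedge b$, the hypotheses \eqref{eq:a_bound}--\eqref{eq:b_bound2}, the inequalities $d_{\omega\wedge\omega'}\le d_\omega$ and $\gamma_{\omega'}\le\gamma_{\max}$ (and $\gamma_{\omega'}\le\gamma_\omega$ when $\omega\preceq\omega'$), and $C_{\mathrm{L}}>4^{\gamma_{\max}(d_\omega+1)}$. For the $c$-term, $(1-a-b)(\rho^{(c)})^{-\gamma_{\omega'}}=(1-a-b)^{1-\gamma_{\omega'}}2^{d_{\omega\wedge\omega'}\gamma_{\omega'}}\le2^{(d_\omega+1)\gamma_{\max}}$ since $1/2\le1-a-b\le1$. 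For the $b$-term, writing $M:=\min\{1,(r\sqrt{d_{\omega\wedge\omega'}})^{-d_{\omega\wedge\omega'}}\}$ and using that $t\mapsto(bM)^{-t}$ is nondecreasing because $bM\le b<1$, one has $b(\rho^{(b)})^{-\gamma_{\omega'}}=(d_\omega2^{d_{\omega\wedge\omega'}})^{\gamma_{\omega'}}b(bM)^{-\gamma_{\omega'}}\le(d_\omega2^{d_\omega})^{\gamma_{\max}}\,b(bM)^{-\gamma_{\omega'}}$, and the factor $b(bM)^{-\gamma_{\omega'}}$ is bounded by $C_{\mathrm{L}}b_0$ via \eqref{eq:b_bound1} when $\omega\preceq\omega'$ (so $\omega\wedge\omega'=\omega$, $\gamma_{\omega'}\le\gamma_\omega$) and via \eqref{eq:b_bound2} otherwise; by the choice $b_0=2^{-1}(d_\omega2^{d_\omega})^{-\gamma_{\max}}$ this gives $b(\rho^{(b)})^{-\gamma_{\omega'}}\le C_{\mathrm{L}}/2$. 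For the $a$-term: when $\omega\npreceq\omega'$, $\rho^{(a)}=\rho^{(b)}$ and $a\le b$ give $a(\rho^{(a)})^{-\gamma_{\omega'}}\le b(\rho^{(b)})^{-\gamma_{\omega'}}\le C_{\mathrm{L}}/2$; when $\omega\preceq\omega'$, splitting on whether $a(4r\sqrt{d_\omega})^{-d_\omega}\ge1$, the value $\rho^{(a)}=2^{-3d_\omega}$ together with $a\le a_0=2^{-3d_\omega\gamma_\omega}$ and $\gamma_{\omega'}\le\gamma_\omega$ — respectively $\rho^{(a)}=2^{-3d_\omega}a(4r\sqrt{d_\omega})^{-d_\omega}$ together with \eqref{eq:a_bound} — yields $a(\rho^{(a)})^{-\gamma_{\omega'}}\le1$ in both cases. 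Collecting, $\Lambda\le1+C_{\mathrm{L}}/2+2^{(d_\omega+1)\gamma_{\max}}\le C_{\mathrm{L}}$, since $2^{(d_\omega+1)\gamma_{\max}}\le C_{\mathrm{L}}^{1/2}$ under $C_{\mathrm{L}}>4^{\gamma_{\max}(d_\omega+1)}$; this establishes ($\star$), hence $Q\in\mathcal{Q}_{\mathrm{L}}(\boldsymbol{\gamma},C_{\mathrm{L}},\mathcal{O})$.

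The main obstacle is precisely this final bookkeeping: one must track the contribution of each of the three mass components of $\mu$ across the several regimes ($\omega\preceq\omega'$ versus $\omega\npreceq\omega'$, and whether the $\min\{1,\cdot\}$ factors are attained at $1$), while keeping the three contributions \emph{jointly} below $C_{\mathrm{L}}$ — which is exactly what the tuned constants $a_0$, $b_0$ and the slack factor $4^{\gamma_{\max}(d_\omega+1)}$ in the hypothesis on $C_{\mathrm{L}}$ are designed to absorb. Everything else (the reduction via $O\indep(X,Y)$, the point-mass case, the disjoint decomposition of the support, and the passage from uniform lower bounds on $\rho_{\omega'}$ to the bound on $\mu_{\omega'}(\{\rho_{\omega'}<\xi\})$) is routine.
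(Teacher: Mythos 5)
Your reduction to $\omega\wedge\omega'\ne\mathbf{0}_d$ and your use of Lemma~\ref{lem:LB_boundonmu} to extract uniform lower bounds $\rho^{(a)},\rho^{(b)},\rho^{(c)}$ on $\rho_{\omega'}$ over the three mass components of $\mu$ are sound and match the paper's ingredients. The gap is in the decision to convert each indicator to a multiplicative bound and then \emph{sum} the three contributions into a single constant $\Lambda$. The resulting inequality $\Lambda\le C_{\mathrm{L}}$ is strictly stronger than what the hypotheses supply, and it is false in part of the allowed parameter range. Concretely:
(a) When $\omega\npreceq\omega'$ you bound the $a$-term by $C_{\mathrm{L}}/2$ (via $a\le b$ and $\rho^{(a)}=\rho^{(b)}$) and the $b$-term by $C_{\mathrm{L}}/2$, so your ``Collecting'' line $\Lambda\le 1+C_{\mathrm{L}}/2+2^{(d_\omega+1)\gamma_{\max}}$ does not apply; what you actually have is $\Lambda\le C_{\mathrm{L}}+2^{(d_\omega+1)\gamma_{\max}}$, which can never be $\le C_{\mathrm{L}}$.
(b) Even in the $\omega\preceq\omega'$ case, $1+C_{\mathrm{L}}/2+2^{(d_\omega+1)\gamma_{\max}}\le C_{\mathrm{L}}$ requires $C_{\mathrm{L}}\ge 2+2\cdot 2^{(d_\omega+1)\gamma_{\max}}$. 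Writing $t:=2^{(d_\omega+1)\gamma_{\max}}\ge1$, the hypothesis only gives $C_{\mathrm{L}}>t^2$, and $t^2\ge 2+2t$ fails whenever $t<1+\sqrt{3}$, i.e.\ whenever $\gamma_{\max}(d_\omega+1)<\log_2(1+\sqrt{3})\approx1.45$. For instance $d_\omega=1$, $\gamma_{\max}=0.5$, $C_{\mathrm{L}}=4.1$ is admissible, but $1+C_{\mathrm{L}}/2+2\approx5.05>C_{\mathrm{L}}$.

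The paper sidesteps exactly this loss by a case analysis on $\xi$ rather than on the support components: on each of the ranges $(0,\rho^{(b)}]$, $(\rho^{(b)},\rho^{(c)}]$ and $(\rho^{(c)},\infty)$ the measure is bounded by a \emph{single additive} quantity ($0$, $a+b\le2b$, or $1$), and each such quantity is shown to be $\le C_{\mathrm{L}}\xi^{\gamma_{\omega'}}$ using only the lower bound on $\xi$ relevant to that range (and splitting further on $\rho^{(a)}$ versus $\rho^{(b)}$ when $\omega\preceq\omega'$). Because at most one of the bounds $a$, $2b$, $1$ is in force at any $\xi$, the constants are not summed, and $C_{\mathrm{L}}>4^{\gamma_{\max}(d_\omega+1)}$ suffices. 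Your argument can be repaired by replacing the global $\Lambda$ with the paper's $\xi$-range case split; the component decomposition and the lower-density bounds you wrote down feed directly into that split.
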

\begin{proof} 
First, since $O$ is independent of $(X,Y)$, we have $\mu_{\omega' \mid o} = \mu_{\omega'}$ and thus, for any $\omega',o\in\{0,1\}^d$ and $x\in\IR^d$, $\rho_{\mu_{\omega' \mid o}, d_{\omega'}}(x) = \rho_{\mu_{\omega'}, d_{\omega'}}(x)$. Therefore it suffices to show that~\eqref{eq:lowerdensity} 
holds with $\rho_{\mu_{\omega'}, d_{\omega'}}(x)$ instead of $\min_{o \in \mathcal{O}: \omega' \preceq o} \rho_{\mu_{\omega' \mid o}, d_{\omega'}}(x)$.

Now fix $\omega'\in\{0,1\}^d$. First, if $\omega\wedge\omega'=\mathbf{0}$, then~\eqref{eq:lowerdensity} 
holds with $C_{L} = 1$ and any $\gamma_{\omega'}  > 0$, since $\mu_{\mathbf{0}}$ is a point mass at $\mathbf{0} \in \mathbb{R}^d$.  If $\omega \preceq \omega'$ and
\[
\xi \in \Biggl(0, \Bigl (2^{-3d_{\omega}}\cdot \min\Bigl\{1,a(4r\sqrt{d_{\omega}})^{-d_{\omega}}\Bigr\} \Bigr) \wedge \Bigl( \frac{b}{d_{\omega} \cdot 2^{d_{\omega}}} \cdot \min\Bigl\{1, (r \sqrt{d_{\omega}})^{-d_{\omega}}\Bigr\} \Bigr)\Biggr], 
\] 
then by the symmetry of $\mu$ and Lemma \ref{lem:LB_boundonmu}, for any $x\in \mathcal{T}^\omega_{q,r}\cup \mathcal R^\omega\cup\bigcup_{j\in[d_\omega]}\mathcal{R}_j^\omega$ and $s\in\{-1,1\}^d$,  we have $\rho_{\omega'}(s \odot x^{\omega'})>\xi$.  It follows that 
\begin{align*}
    \mu_{\omega'}\bigl(\{x\in\IR^d:\rho_{\omega'}(x)<\xi\}\bigr) = 0 \leq C_{\mathrm{L}}\cdot\xi^{\gamma_{\omega'}}.
\end{align*}

Next, if $2^{-3d_{\omega}}\cdot \min\Bigl\{1,a(4r\sqrt{d_{\omega}})^{-d_{\omega}}\Bigr\} <  \frac{b}{d_{\omega} \cdot 2^{d_{\omega}}} \cdot \min\Bigl\{1, (r \sqrt{d_{\omega}})^{-d_{\omega}} \Bigr\}$ and 
\[
\xi \in \Biggl(2^{-3d_{\omega}}\cdot \min\Bigl\{1,a(4r\sqrt{d_{\omega}})^{-d_{\omega}}\Bigr\} , \frac{b}{d_{\omega} \cdot 2^{d_{\omega}}} \cdot \min\Bigl\{1, (r \sqrt{d_{\omega}})^{-d_{\omega}}\Bigr\} \Biggr],
\]
then, by \eqref{eq:a_bound}, we have
\begin{align*}
 \mu_{\omega'}\bigl(\{x\in\IR^d:\rho_{\omega'}(x)<\xi\}\bigr)   &  \leq a \leq 2^{-3d_{\omega}\gamma_{\omega}} \cdot \min\Bigl\{1,a^{\gamma_{\omega}}(4r\sqrt{d_{\omega}})^{-\gamma_{\omega}d_{\omega}}\Bigr\}\\
& \leq C_{\mathrm{L}}\cdot\xi^{\gamma_{\omega}}
 \leq C_{\mathrm{L}}\cdot\xi^{\gamma_{\omega'}}.
\end{align*}
If $\xi \in \Biggl( \frac{b}{d_{\omega} \cdot 2^{d_{\omega}}} \cdot \min\Bigl\{1, (r \sqrt{d_{\omega}})^{-d_{\omega}}\Bigr\} , \frac{1-a-b}{2^{d_{\omega}}} \Biggr]$, then we have by \eqref{eq:b_bound1} and since $a\leq b$, that
\begin{align*}
 \mu_{\omega'}\bigl(\{x\in\IR^d:\rho_{\omega'}(x)<\xi\}\bigr)   &  \leq a+b \leq 2b \leq 2 C_{\mathrm{L}} \cdot b_0 \cdot  \Bigl(b \cdot \min\Bigl\{1, (r \sqrt{d_{\omega}})^{-d_{\omega}}\Bigr\} \Bigr)^{\gamma_{\omega}} \\ 
  &\leq C_{\mathrm{L}}\cdot \Bigl( \frac{b}{d_{\omega} \cdot 2^{d_{\omega}}} \cdot \min\Bigl\{1, (r \sqrt{d_{\omega}})^{-d_{\omega}}\Bigr\} \Bigr)^{\gamma_{\omega}}\\
 &\leq C_{\mathrm{L}}\cdot\xi^{\gamma_{\omega}}
 \leq C_{\mathrm{L}}\cdot\xi^{\gamma_{\omega'}}.
\end{align*}
If $\xi > \frac{1-a-b}{2^{d_{\omega}}}$, then we have
\[
\mu_{\omega'}\bigl(\{x\in\IR^d:\rho_{\omega'}(x)<\xi\}\bigr) \leq 1 = 1^{\gamma_{\omega'}} \leq 2^{\gamma_{\omega'}(d_{\omega}+1)}\cdot\xi^{\gamma_{\omega'}} \leq C_{\mathrm{L}}\cdot\xi^{\gamma_{\omega'}}
\]
since $1-a-b>1/2$.

Now consider $\omega \npreceq \omega'$ satisfying $\omega\wedge\omega'\neq\mathbf{0}_d$. First for
\[
\xi \in \Biggl(0, \frac{b}{d_{\omega} \cdot 2^{d_{\omega\wedge\omega'}}} \cdot \min\Bigl\{1, (r \sqrt{d_{\omega \wedge \omega'}})^{-d_{\omega\wedge\omega'}}\Bigr\} \Biggr]
\] 
we have, similarly to above, that
\begin{align*}
    \mu_{\omega'}\bigl(\{x\in\IR^d:\rho_{\omega'}(x)<\xi\}\bigr) = 0 \leq C_{\mathrm{L}}\cdot\xi^{\gamma_{\omega'}}.
\end{align*}
If 
\[
\xi \in \Biggl( \frac{b}{d_{\omega} \cdot 2^{d_{\omega\wedge\omega'}}} \cdot \min\Bigl\{1, (r \sqrt{d_{\omega \wedge \omega'}})^{-d_{\omega\wedge\omega'}}\Bigr\} , \frac{1-a-b}{2^{d_{\omega\wedge\omega'}}} \Biggr],
\] 
then, by \eqref{eq:b_bound2}, we obtain
\begin{align*}
\mu_{\omega'}\bigl(\{x\in\IR^d:\rho_{\omega'}(x)<\xi\}\bigr)   &  \leq a+b  \leq 2b \\
&\leq C_{\mathrm{L}}\cdot \Bigl( \frac{b}{d_{\omega} 2^{d_{\omega\wedge\omega'}}} \cdot \min\Bigl\{1, (r \sqrt{d_{\omega \wedge \omega'}})^{-d_{\omega\wedge\omega'}}\Bigr\} \Bigr)^{\gamma_{\max}}
\\ & \leq C_{\mathrm{L}}\cdot\xi^{\gamma_{\max}} \leq C_{\mathrm{L}}\cdot\xi^{\gamma_{\omega'}}.
\end{align*}
Finally, if $\xi > \frac{1-a-b}{2^{d_{\omega\wedge\omega'}}}$, then we have
\[
\mu_{\omega'}\bigl(\{x\in\IR^d:\rho_{\omega'}(x)<\xi\}\bigr) \leq 1 = 1^{\gamma_{\omega'}} \leq 2^{\gamma_{\omega'}(d_{\omega\wedge\omega'}+1)}\cdot\xi^{\gamma_{\omega'}} \leq C_{\mathrm{L}}\cdot\xi^{\gamma_{\omega'}}
\]
since $1-a-b>1/2$. This completes the proof. 
\end{proof}

\subsection{Regression function construction in the light tailed case\label{subsec:LBlighteta}}
We now construct different regression functions $\eta_\omega^\sigma$, for $\sigma \in \{-1,1\}^T$, which  will satisfy the assumptions of our version of Assouad's Lemma. To this end, we will use the marginal distribution $\mu^{(\omega)}_{\kappa,r,q,a,b}$ constructed in Section~\ref{subsubsec:LBlight}
, along with the additional quantities $\boldsymbol{\beta}=(\beta_1,\ldots,\beta_d)\in(0,1]^{d}$, $c_{\mathrm{E}} \in [0, 1/4]$,  $\epsilon \in (0,1/4)$ and $\sigma\in\{-1,1\}^{T}$. Let $\mathcal{S}^{\omega} = \{x^\omega \in \mathbb{R}^d : (\Pi_\omega(x^\omega))_j=0$ for some $j\in[d_\omega]\}$ be the coordinate axes lying in the $d_\omega$-dimensional submanifold $(\mathbb{R}^d)^\omega=\{x\in\mathbb{R}^d:x_j=0$ for all $j\in[d]$ with $\omega_j=0\}$. Recalling also that $z:=(1+2r,\ldots,1+2r)^{T} \in \mathbb{R}^d$, we define the function $f^{\circ,+}_{\epsilon,q,r,\sigma}: \mathcal{R}^\omega \cup \mathcal{T}^{\omega}_{q,r} \cup  \mathcal{S}^{\omega}\cup \bigcup_{j\in[d]:\omega_j=1}\mathcal{R}_j^\omega \rightarrow\IR$ by 
\begin{align}\label{def:f_circ_plus}
    f^{\circ,+}_{\epsilon,q,r,\sigma}(x):=
    \begin{cases}\epsilon + \frac14\lVert x^\omega-z^{\omega}\rVert^{\beta_{\omega}}  &\mbox{if } x^\omega\in\mathcal{R}^\omega\\ 
    \sigma_t\cdot\epsilon &\mbox{if } x^{\omega}=x^{\omega}_t \in\mathcal{T}^\omega_{q,r}\\ 
    0 &\mbox{if } x^\omega \in \mathcal{S}^{\omega}\\ 
    \frac{1}{2} &\mbox{if } x^\omega \in \bigcup_{j\in[d]:\omega_j=1}\mathcal{R}_j^\omega.
    \end{cases}
\end{align}

Further, we claim there exists a $\beta_{\omega}$-H\"older continuous extension $f_{\epsilon,q,r,\sigma}$ of $f^{\circ,+}_{\epsilon,q,r,\sigma}$ onto $\mathbb{R}^d$. Let $Q$ be the distribution of $(X,Y,O)$ with $O\indep(X,Y)$, $X$-marginal $\mu=\mu^{(\omega)}_{\kappa,r,q,a,b}$ and regression function $\eta^{\sigma}(x) = 1/2 + f_{\epsilon,q,r,\sigma}(x)$, then $Q$ belongs to $\mathcal{Q}_{\mathrm{E}}(\{\omega\}, c_{\mathrm{E}},\mathcal{O})$, and $P\equiv P_Q$ falls into the classes $\mathcal{P}_{\mathrm{S}}(\boldsymbol{\beta}, C_{\mathrm{S}})$ and $\mathcal{P}_{\mathrm{M}}(\alpha, C_{\mathrm{M}})$ for appropriate choices of $ C_{\mathrm{S}}$, $\alpha$, $C_{\mathrm{M}}$ and $\mathcal{O}$. This is made precise in the next three lemmas.

\begin{lemma}\label{lem:f_beta_holder_on_Rplus_Tplus_S}
Fix $d\in\IN$, $\omega \in \{0,1\}^d \setminus \{\mathbf{0}_d\}$, $\kappa \in (0,1/\sqrt{d_{\omega}})$, $q \in \mathbb{N}$, $r>0$, $\beta_{\omega} \in (0,1]$, $\epsilon \in (0,(1/4)\wedge(1/8\cdot(r/q)^{\beta_{\omega}})]$. Let further $\sigma=(\sigma_t)_{t\in[T]}\in\{-1,1\}^T$. Then
\begin{align*}
    |f^{\circ,+}_{\epsilon,q,r,\sigma}(x_1)-f^{\circ,+}_{\epsilon,q,r,\sigma}(x_2)|\leq \frac{1}{2} \| x_1-x_2\|^{\beta_{\omega}},
\end{align*}
for all $x_1,x_2 \in \mathcal{R}^\omega \cup \mathcal{T}^{\omega}_{q,r} \cup  \mathcal{S}^{\omega} \cup \bigcup_{j\in[d]:\omega_j=1}\mathcal{R}_j^\omega$.
\end{lemma}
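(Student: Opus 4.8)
The plan is to reduce the bound to one on $\mathbb{R}^{d_\omega}$ and then run a short case analysis over pairs of the regions making up the domain. Write $\mathcal D^\omega := \mathcal R^\omega \cup \mathcal T^\omega_{q,r}\cup \mathcal S^\omega\cup\bigcup_{j:\omega_j=1}\mathcal R_j^\omega$. Since $f^{\circ,+}_{\epsilon,q,r,\sigma}(x)$ depends on $x$ only through $x^\omega$, and the coordinate projection is $1$-Lipschitz so that $\|x_1^\omega-x_2^\omega\|\le\|x_1-x_2\|$, it suffices to prove $|f^{\circ,+}_{\epsilon,q,r,\sigma}(x_1)-f^{\circ,+}_{\epsilon,q,r,\sigma}(x_2)|\le\tfrac12\|x_1^\omega-x_2^\omega\|^\beta$. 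In the projected coordinates the four families of regions are, respectively, the cube $[1+2r,1+2r+\kappa]^{d_\omega}$ (with corner $z^\omega$), the grid $\subseteq[1+r/q,1+r]^{d_\omega}$, the union of coordinate hyperplanes $\{y:\ y_i=0\text{ for some }i\}$, and, for each relevant $j$, the cube $[1,1+r]^{d_\omega}$ with its $j$th coordinate shifted into $[2+2r+\kappa,2+3r+\kappa]$.

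First I would dispose of the within-region estimates. On $\mathcal S^\omega$ and on each $\mathcal R_j^\omega$ the function is constant, so there is nothing to check. On $\mathcal T^\omega_{q,r}$ two distinct grid points differ by at least $r/q$ in some coordinate while $f$ changes by at most $2\epsilon\le\tfrac14(r/q)^\beta\le\tfrac12\|x_1^\omega-x_2^\omega\|^\beta$, using the hypothesis $\epsilon\le\tfrac18(r/q)^\beta$. On $\mathcal R^\omega$ one uses the elementary inequality $\bigl|\,\|u\|^\beta-\|v\|^\beta\,\bigr|\le\|u-v\|^\beta$ for $\beta\in(0,1]$ with $u=x_1^\omega-z^\omega$, $v=x_2^\omega-z^\omega$, which yields Hölder constant $\tfrac14$. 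I would also record here the global bound $\|f^{\circ,+}_{\epsilon,q,r,\sigma}\|_\infty\le\tfrac12$: indeed on $\mathcal R^\omega$ we have $\epsilon+\tfrac14\|x^\omega-z^\omega\|^\beta\le\epsilon+\tfrac14(\kappa\sqrt{d_\omega})^\beta<\tfrac12$, by $\kappa<1/\sqrt{d_\omega}$ and $\epsilon\le\tfrac14$.

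Next come the cross-region estimates, where I would compute a lower bound on the Euclidean separation between each pair of region-types and check it dominates the corresponding jump of $f$. The separations are: $\mathrm{dist}(\mathcal R^\omega,\mathcal T^\omega_{q,r})\ge r\sqrt{d_\omega}$ (every coordinate differs by at least $(1+2r)-(1+r)=r$); $\mathrm{dist}(\mathcal S^\omega,\cdot)\ge 1$ against each of the other three families (every coordinate of those lies in $[1,\infty)$, as $q\ge1$ and $r,\kappa>0$); $\mathrm{dist}(\mathcal R_j^\omega,\mathcal R^\omega)\ge 1$; and $\mathrm{dist}(\mathcal R_j^\omega,\mathcal T^\omega_{q,r})\ge 1+r+\kappa$ as well as $\mathrm{dist}(\mathcal R_j^\omega,\mathcal R_{j'}^\omega)\ge 1+r+\kappa$ (the $j$th coordinate alone separates these). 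For the pair $(\mathcal R^\omega,\mathcal T^\omega_{q,r})$ I would write $|f(x_1)-f(x_2)|\le\tfrac14\|x_1^\omega-z^\omega\|^\beta+2\epsilon$ and bound the first term using $\|x_1^\omega-z^\omega\|\le\kappa\sqrt{d_\omega}$ and the second using $\epsilon\le\tfrac18(r/q)^\beta$, so that the sum is $\le\tfrac12\|x_1^\omega-x_2^\omega\|^\beta$ once one notes $\|x_1^\omega-x_2^\omega\|\ge r\sqrt{d_\omega}\ge r/q$. For the pairs involving $\mathcal S^\omega$ and the pairs $(\mathcal R_j^\omega,\mathcal R^\omega)$, $(\mathcal R_j^\omega,\mathcal R_{j'}^\omega)$, the jump is at most $\tfrac12$ and the separation is $\ge1$, so $|f(x_1)-f(x_2)|\le\tfrac12\le\tfrac12\|x_1^\omega-x_2^\omega\|^\beta$.

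The main obstacle is the pair $(\mathcal R_j^\omega,\mathcal T^\omega_{q,r})$, where the jump of $f$ attains its largest value $\tfrac12+\epsilon$: one must verify $\tfrac12+\epsilon\le\tfrac12(1+r+\kappa)^\beta$, equivalently $(1+r+\kappa)^\beta\ge1+2\epsilon$, which is precisely where the interplay between the geometric gap $1+r+\kappa>1$ and the smallness $2\epsilon\le\tfrac14(r/q)^\beta$ of the perturbation has to be exploited (and is the one place the constraint on $\epsilon$ does real work beyond the within-grid estimate). Having checked all region-pairs, the Hölder bound on $\mathcal D^\omega$ follows, and since every constant that appeared is $\le\tfrac12$, the stated constant $\tfrac12$ is obtained.
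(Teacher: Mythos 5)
Your overall strategy---project to $\mathbb{R}^{d_\omega}$ and run a case analysis over pairs of the regions making up the domain---is the same as the paper's, and your within-region estimates and most cross-region estimates agree with the paper's. Two of your cross-region bounds do not close as written, however.

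For the pair $(\mathcal{R}^\omega,\mathcal{T}^\omega_{q,r})$, you replace $\frac14\|x_1^\omega-z^\omega\|^\beta$ by the worst-case $\frac14(\kappa\sqrt{d_\omega})^\beta$ and then try to absorb both terms into $\frac12\|x_1^\omega-x_2^\omega\|^\beta$ using $\|x_1^\omega-x_2^\omega\|\ge r\sqrt{d_\omega}\ge r/q$. But absorbing the $\kappa$-term requires $\kappa\sqrt{d_\omega}\le r\sqrt{d_\omega}$, i.e.\ $\kappa\le r$, which is not among the hypotheses (and fails for the small-$r$ regime that the constructions allow). The paper never uses the crude $\kappa\sqrt{d_\omega}$ bound: in every $\omega$-coordinate one has $(x_2)_j\le 1+r<1+2r=z_j\le(x_1)_j$, so $z^\omega$ is sandwiched coordinate-wise between $x_2^\omega$ and $x_1^\omega$, giving $\|x_1^\omega-z^\omega\|\le\|x_1^\omega-x_2^\omega\|$ unconditionally. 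That sandwiching observation, not the diameter of $\mathcal{R}^\omega$, is what makes this case close.

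Your accounting for the pair $(\mathcal{R}_j^\omega,\mathcal{T}^\omega_{q,r})$ is actually \emph{more} careful than the paper's proof, which asserts the jump is at most $\frac12$, whereas (as you note) it equals $\frac12+\epsilon$ when $\sigma_t=-1$. You then correctly reduce this case to $(1+r+\kappa)^\beta\ge 1+2\epsilon$, but you do not establish it, and the stated hypotheses do not imply it: take $d_\omega=1$, $q=1$, $r=1$, $\kappa=1/10$, $\beta=1/10$, $\epsilon=1/8$ (all permitted, since $\epsilon=\frac18(r/q)^\beta\le\frac14$ and $\kappa<1/\sqrt{d_\omega}$); then $\frac12+\epsilon=0.625$ while $\frac12(1+r+\kappa)^\beta=\frac12(2.1)^{1/10}\approx 0.539$. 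So the obstacle you flag is genuine and cannot be removed from the lemma's assumptions alone; the case would need a further constraint tying $r$ and $\kappa$ to $\epsilon$ and $\beta$, which the lemma statement (and the paper's proof, via its $\le\frac12$ shortcut) currently omits.
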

\begin{proof}
If $x_1=x_2$, $x_1, x_2 \in \mathcal{S}^{\omega}$, or $x_1,x_2\in\bigcup_{j\in[d]:\omega_j=1}\mathcal{R}_j^\omega$, then $|f^{\circ,+}_{\epsilon,q,r,\sigma}(x_1)-f^{\circ,+}_{\epsilon,q,r,\sigma}(x_2)|=0\leq1/2\|x_1-x_2\|^{\beta_{\omega}}$. If $x_1 \in \mathcal{T}^{\omega}_{q,r} \cup \mathcal{S}^{\omega}$ and $x_2 \in\mathcal{T}^{\omega}_{q,r}$, with $x_1 \neq x_2$, then 
\[
|f^{\circ,+}_{\epsilon,q,r,\sigma}(x_1)-f^{\circ,+}_{\epsilon,q,r,\sigma}(x_2)| \leq 2\epsilon \leq \frac{1}{4} (r/q)^{\beta_{\omega}} \leq \frac{1}{4}\|x_1-x_2\|^{\beta_{\omega}}.
\]
If $x_1,x_2 \in \mathcal{R}^\omega$, then, by Minkowski's inequality,
\[
|f^{\circ,+}_{\epsilon,q,r,\sigma}(x_1)-f^{\circ,+}_{\epsilon,q,r,\sigma}(x_2)| = \frac{1}{4}  \bigl| \|x_1^\omega-z^\omega\|^{\beta_{\omega}}-\|x_2^\omega-z^\omega\|^{\beta_{\omega}} \bigr| \leq \frac14 \|x_1-x_2\|^{\beta_{\omega}}.
\]
If $x_1 \in \mathcal{R}^\omega$ and $x_2 \in \mathcal{T}^{\omega}_{q,r} \cup \mathcal{S}^{\omega}$, then
\begin{align*}
|f^{\circ,+}_{\epsilon,q,r,\sigma}(x_1)-f^{\circ,+}_{\epsilon,q,r,\sigma}(x_2)| &\leq  2\epsilon+\frac14\|x_1^\omega-z^\omega\|^{\beta_{\omega}} \\
&\leq \frac{r^{\beta_{\omega}}}{4}  \frac{\|x_1-x_2\|^{\beta_{\omega}}}{r^{\beta_{\omega}}} + \frac14\|x_1^\omega-x_2^\omega\|^{\beta_{\omega}} \leq \frac{1}{2} \|x_1- x_2\|^{\beta_{\omega}}.
\end{align*}
If $x_1\in\bigcup_{j\in[d]:\omega_j=1}\mathcal{R}_j^\omega$ and $x_2\in \mathcal{R}^\omega \cup \mathcal{T}^{\omega}_{q,r} \cup  \mathcal{S}^{\omega}$, then $|f^{\circ,+}_{\epsilon,q,r,\sigma}(x_1)-f^{\circ,+}_{\epsilon,q,r,\sigma}(x_2)|\leq1/2\leq1/2\|x_1- x_2\|^{\beta_{\omega}}$, and the result follows.
\end{proof}

\begin{corollary}\label{cor:f_beta_holder_on_IRplus}
Fix $d\in\IN$, $\omega \in \{0,1\}^d \setminus \{\mathbf{0}_d\}$, $\kappa \in (0,1/\sqrt{d_{\omega}})$, $q \in \mathbb{N}$, $r>0$, $\beta_{\omega} \in (0,1]$, $\epsilon \in (0,(1/4)\wedge(1/8\cdot(r/q)^{\beta_{\omega}})]$, $\sigma=(\sigma_t)_{t\in[T]}\in\{-1,1\}^T$. There exists a function $f^{+}_{\epsilon,q,r,\sigma}: [0,\infty)^d \rightarrow\IR$ such that for all $x\in\mathcal{R}^\omega \cup \mathcal{T}^{\omega}_{q,r} \cup \mathcal{S}^{\omega}\bigcup_{j\in[d]:\omega_j=1}\mathcal{R}_j^\omega$ we have $f^{+}_{\epsilon,q,r,\sigma}(x)=f^{\circ,+}_{\epsilon,q,r,\sigma}(x)$; and further we have $f^{+}_{\epsilon,q,r,\sigma}(x_1)=f^{+}_{\epsilon,q,r,\sigma}(x_1^\omega)$, as well as $|f^{+}_{\epsilon,q,r,\sigma}(x_1)-f^{+}_{\epsilon,q,r,\sigma}(x_2)|\leq (1/2)\cdot\lVert x_1-x_2\rVert^{\beta_{\omega}}$ for all $x_1,x_2\in [0,\infty)^d$.

\end{corollary}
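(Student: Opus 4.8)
The plan is to construct $f^{+}_{\epsilon,q,r,\sigma}$ as a McShane (Lipschitz/Hölder) extension, but carried out on the $d_\omega$-dimensional coordinate subspace rather than on all of $\mathbb{R}^d$, so that the resulting function automatically depends on $x$ only through $x^\omega$.

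First I would record that each of $\mathcal{R}^\omega$, $\mathcal{T}^\omega_{q,r}$, $\mathcal{S}^\omega$ and the $\mathcal{R}^\omega_j$ consists of points $x$ with $x_j=0$ for every $j$ with $\omega_j=0$, so their union $E := \mathcal{R}^\omega \cup \mathcal{T}^{\omega}_{q,r} \cup \mathcal{S}^{\omega}\cup\bigcup_{j\in[d]:\omega_j=1}\mathcal{R}_j^\omega$ lies in the subspace $(\mathbb{R}^d)^\omega=\{x\in\mathbb{R}^d:x_j=0 \text{ whenever } \omega_j=0\}$, which $\Pi_\omega$ identifies isometrically with $\mathbb{R}^{d_\omega}$. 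Transporting $f^{\circ,+}_{\epsilon,q,r,\sigma}$ through this identification gives a function $g$ on $\Pi_\omega(E)\subseteq\mathbb{R}^{d_\omega}$, and Lemma~\ref{lem:f_beta_holder_on_Rplus_Tplus_S} says $|g(u)-g(v)|\leq\tfrac12\|u-v\|^\beta$ for $u,v\in\Pi_\omega(E)$. Since $\beta\in(0,1]$, the map $(u,v)\mapsto\|u-v\|^\beta$ is a metric on $\mathbb{R}^{d_\omega}$ (subadditivity of $t\mapsto t^\beta$ together with the triangle inequality for $\|\cdot\|$), so $g$ is $1$-Lipschitz for this metric. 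I would also note that $g$ is bounded: on $\mathcal{R}^\omega$ the value $\epsilon+\tfrac14\|x^\omega-z^\omega\|^\beta$ is bounded because $\mathcal{R}$ is a bounded cube with $z$ a vertex, while on the other three pieces $g$ equals $0$, $\pm\epsilon$ or $\tfrac12$; in particular $g$ is bounded below, which is what makes the infimum below finite.

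Next I would set $h(u):=\inf_{v\in\Pi_\omega(E)}\bigl\{g(v)+\tfrac12\|u-v\|^\beta\bigr\}$ for $u\in\mathbb{R}^{d_\omega}$. The standard McShane argument then gives that $h$ is finite everywhere, that $h$ agrees with $g$ on $\Pi_\omega(E)$ (taking $v=u$ gives $h(u)\leq g(u)$, while the Lipschitz bound on $g$ gives $g(v)+\tfrac12\|u-v\|^\beta\geq g(u)$ for every $v$), and that $|h(u)-h(u')|\leq\tfrac12\|u-u'\|^\beta$ for all $u,u'\in\mathbb{R}^{d_\omega}$. I would then define $f^{+}_{\epsilon,q,r,\sigma}(x):=h\bigl(\Pi_\omega(x^\omega)\bigr)$ for $x\in[0,\infty)^d$. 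By construction $f^{+}_{\epsilon,q,r,\sigma}(x)=f^{+}_{\epsilon,q,r,\sigma}(x^\omega)$; since any $x\in E$ satisfies $x=x^\omega$ and $h(\Pi_\omega(x))=g(\Pi_\omega(x))=f^{\circ,+}_{\epsilon,q,r,\sigma}(x)$, the extension agrees with $f^{\circ,+}_{\epsilon,q,r,\sigma}$ on $E$; and since $x\mapsto\Pi_\omega(x^\omega)$ is $1$-Lipschitz, $|f^{+}_{\epsilon,q,r,\sigma}(x_1)-f^{+}_{\epsilon,q,r,\sigma}(x_2)|\leq\tfrac12\|\Pi_\omega(x_1^\omega)-\Pi_\omega(x_2^\omega)\|^\beta=\tfrac12\|x_1^\omega-x_2^\omega\|^\beta\leq\tfrac12\|x_1-x_2\|^\beta$.

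There is no serious obstacle here; the one point requiring care is that the extension must be performed on $\mathbb{R}^{d_\omega}$ and then pulled back through $\Pi_\omega\circ(x\mapsto x^\omega)$ — extending directly on $\mathbb{R}^d$ would break the invariance $f^{+}(x)=f^{+}(x^\omega)$, because the McShane infimum over $E\subseteq(\mathbb{R}^d)^\omega$ would still feel the component of $x$ orthogonal to $\omega$. The only other thing to verify is the boundedness of $g$ from below, which is routine and ensures the defining infimum does not diverge to $-\infty$.
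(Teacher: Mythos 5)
Your proposal is correct and follows the same route as the paper: invoke McShane's extension on $\mathbb{R}^{d_\omega}$ (using the Hölder bound from Lemma~\ref{lem:f_beta_holder_on_Rplus_Tplus_S}) and then pull back through $x\mapsto x^\omega$ to obtain the invariance $f^{+}(x)=f^{+}(x^\omega)$. You simply spell out the McShane infimum construction explicitly where the paper cites the theorem.
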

\begin{proof}
Note that \eqref{def:f_circ_plus} depends only on $x^\omega$, such that by McShane's extension theorem \citep{mcshane1934extension} on $\IR^{d_\omega}$ there exists a $\beta_{\omega}$-H\"older continuous extension of $f^{\circ,+}_{\epsilon,q,r,\sigma}(x^{\omega})$ onto $([0,\infty)^d)^\omega$. The result follows by setting $f^{+}_{\epsilon,q,r,\sigma}(x)=f^{+}_{\epsilon,q,r,\sigma}(x^\omega)$ for any $x\in[0,\infty)^d$.
\end{proof}

We now define the function $f_{\epsilon,q,r,\sigma}^{\omega}:\IR^d\rightarrow\IR$. Given $x = (x_1, \ldots, x_d)^T \in \mathbb{R}^d$, let $s = (s_1,\ldots, s_d)^T \in \{-1,1\}^d$ be $s_j := \mathrm{sign}(x_j)$ and define   
\begin{align}\label{def:f_omega}
    f^{(\omega)}_{\epsilon,q,r,\sigma}(x):= \Bigl(\prod_{j \in [d_{\omega}]} \{\Pi_{\omega}(s)\}_j \Bigr) \cdot f^+_{\epsilon,q,r,\sigma}(s\odot x).
\end{align}
Finally, define $\eta_{\epsilon,q,r,\sigma}^{(\omega)}(x):=1/2 + f_{\epsilon,q,r,\sigma}^{(\omega)}(x)$, for $x \in \mathbb{R}^d$.

\begin{lemma}\label{lem:E_and_S_def_satisfied}
Fix $d\in\IN$, $\mathcal{O}\subseteq\{0,1\}^d$, $\omega \in \{0,1\}^d \setminus \{\mathbf{0}_d\}$, $\kappa \in (0,1/\sqrt{d_{\omega}})$, $q \in \mathbb{N}$, $r>0$, $a,b\in(0,1/4]$, and $\boldsymbol{\beta} \in (0,1]^{d}$, and let $\beta_\omega:=\min\{\beta_j:\omega_j=1\}$. Fix further $\epsilon \in (0,(1/4)\wedge(1/8\cdot(r/q)^{\beta_{\omega}})]$, $\sigma=(\sigma_t)_{t\in[T]}\in\{-1,1\}^T$ and $c_{\mathrm{E}} \leq b/4$. Let $P = P^{(\omega)}_{\epsilon,q,r,\sigma,a,b}$ denote the distribution on $\mathbb{R}^d\times\{0,1\}$ with regression function $\eta=\eta^{(\omega)}_{\epsilon,q,r,\sigma}$ and marginal feature distribution $\mu=\mu^{(\omega)}_{\kappa,r,q,a,b}$.  Further, let $Q = P \otimes U(\mathcal{O})$ be the distribution of $(X,Y,O)$ with $(X,Y)$-marginal $P$, with $O$ distributed uniformly on $\mathcal{O}$ and $(X,Y) \indep O$. Then $Q\in\mathcal{Q}_{\mathrm{E}}(\{\omega\}, c_{\mathrm{E}},\mathcal{O})$ and $P\equiv P_Q\in \mathcal{P}_{\mathrm{S}}(\boldsymbol{\beta}, 1)$.
\end{lemma}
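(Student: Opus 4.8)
The plan is to first identify the anova decomposition of $\eta := \eta^{(\omega)}_{\epsilon,q,r,\sigma} = \tfrac12 + f^{(\omega)}_{\epsilon,q,r,\sigma}$ exactly, and then read off all three assertions. Write $g := f^{(\omega)}_{\epsilon,q,r,\sigma}$. Three structural facts will do the work. (a) By \eqref{def:f_omega}, $g(x) = \bigl(\prod_{j\in[d_\omega]}\{\Pi_\omega(s)\}_j\bigr)\,f^{+}_{\epsilon,q,r,\sigma}(s\odot x)$ with $s := (\mathrm{sign}(x_j))_{j\in[d]}$; since $s\odot x$ is the entrywise modulus of $x$ and $f^{+}_{\epsilon,q,r,\sigma}$ depends only on its $\omega$-coordinates (Corollary~\ref{cor:f_beta_holder_on_IRplus}), $g$ depends on $x$ only through $x^\omega$. (b) For each $j$ with $\omega_j=1$, let $R_j$ denote the reflection of the $j$th coordinate; the marginal $\mu=\mu^{(\omega)}_{\kappa,r,q,a,b}$ is $R_j$-invariant, because each of the three sums in \eqref{lb:marginal_construction} is invariant under the relabelling $s\mapsto s$ with $s_j$ flipped (and $\mathcal{L}_{d_\omega}$ is reflection-invariant). (c) For such $j$, $g\circ R_j = -g$: reflecting $x_j$ leaves $s\odot x$ (the entrywise modulus of $x$) unchanged while flipping exactly one factor $\{\Pi_\omega(s)\}_j$ of the sign product.

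Granting (a)--(c), I would establish $f_\omega = g$ and $f_{\omega'}\equiv0$ for $\omega'\neq\omega$ by strong induction on $d_{\omega'}$, using \eqref{eq:def_f_zero}--\eqref{eq:def_f_omega}. The base case $f_{\mathbf 0_d} = \mathbb E_\mu\{g\}$ vanishes: choosing any $j$ with $\omega_j=1$ (which exists as $\omega\neq\mathbf 0_d$), $\mathbb E_\mu\{g\} = \mathbb E_\mu\{g\circ R_j\} = -\mathbb E_\mu\{g\}$ by (b)--(c). In the inductive step, the hypothesis makes $\sum_{\omega''\prec\omega'}f_{\omega''}(X)$ equal to $g(X)$ when $\omega\prec\omega'$ and to $0$ otherwise, so $f_{\omega'}(x) = \mathbb E_P\bigl\{(1-\mathbbm 1_{\{\omega\prec\omega'\}})\,g(X)\,\big|\,X^{\omega'}=x^{\omega'}\bigr\}$. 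For $\omega'=\omega$ this yields $f_\omega = g$ by (a); for $\omega\prec\omega'$ it yields $0$; and for $\omega\npreceq\omega'$ there is $j_0$ with $\omega_{j_0}=1$, $\omega'_{j_0}=0$, so $R_{j_0}$ fixes $X^{\omega'}$ and, since $\mu$ is $R_{j_0}$-invariant and $g\circ R_{j_0}=-g$, the conditional expectation is again $0$. Proposition~\ref{prop:eta_decomposition} then confirms $\eta = \tfrac12+g$ is consistent with this decomposition.

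Membership in $\mathcal Q_{\mathrm E}(\{\omega\},c_{\mathrm E},\mathcal O)$ follows at once. First, $Q = P\otimes U(\mathcal O)\in\mathcal Q_{\mathrm{Miss}}(\mathcal O)$ by Proposition~\ref{prop:missingness}, since $O\indep(X,Y)$ and $\mathbb P_Q(O=o)>0$ on $\mathcal O$. For $\omega'\in U(\{\omega\}) = \{\omega':\omega'\npreceq\omega\}$ we have $f_{\omega'}\equiv0$ and hence $\sigma^2_{\omega'}=0$. For the signal lower bound, independence of $O$ gives $\sigma^2_\omega = \mathbb E_\mu\{g^2\}$; the middle sum in \eqref{lb:marginal_construction} places mass $b$ on $\bigcup_{s\in\{-1,1\}^d}\bigcup_{j:\omega_j=1}(s\odot\mathcal R^\omega_j)$ (these reflected boxes being pairwise disjoint and disjoint from the lattice points and from $\mathcal R^\omega$, by inspection of the coordinate ranges), where $g^2 = (f^+_{\epsilon,q,r,\sigma})^2 = (f^{\circ,+}_{\epsilon,q,r,\sigma})^2 = \tfrac14$ by \eqref{def:f_circ_plus} and Corollary~\ref{cor:f_beta_holder_on_IRplus}, so $\sigma^2_\omega\geq b/4\geq c_{\mathrm E}$ by the hypothesis $c_{\mathrm E}\leq b/4$.

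It remains to check $P_Q\in\mathcal P_{\mathrm S}(\beta,1)$, i.e. $|g(x_1)-g(x_2)|\leq\|x_1^\omega-x_2^\omega\|^\beta$ for $x_1,x_2\in\mathrm{supp}(\mu)$ (the other components vanish). Every point of $\mathrm{supp}(\mu)$ has all $\omega$-coordinates of modulus $\geq1$, and $|g|\leq\tfrac12$ there (inspect \eqref{def:f_circ_plus}, using $\kappa<1/\sqrt{d_\omega}$ and $\epsilon\leq\tfrac14$). If $x_1,x_2$ lie in the same orthant of the $\omega$-coordinates, $g$ agrees there with $\pm f^+_{\epsilon,q,r,\sigma}$ composed with a fixed coordinate reflection, so Corollary~\ref{cor:f_beta_holder_on_IRplus} gives $|g(x_1)-g(x_2)|\leq\tfrac12\|x_1^\omega-x_2^\omega\|^\beta$; otherwise some $j_0$ with $\omega_{j_0}=1$ has $\mathrm{sign}(x_{1,j_0})\neq\mathrm{sign}(x_{2,j_0})$, whence $\|x_1^\omega-x_2^\omega\|\geq|x_{1,j_0}-x_{2,j_0}|=|x_{1,j_0}|+|x_{2,j_0}|\geq2$ and $|g(x_1)-g(x_2)|\leq1\leq2^\beta\leq\|x_1^\omega-x_2^\omega\|^\beta$. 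I expect the inductive identification of the anova components — making the sign-twisted reflection $g\circ R_j=-g$ interact correctly with both the symmetry of $\mu$ and the conditioning in \eqref{eq:def_f_omega} — to be the only genuinely delicate part; the mass computation and the smoothness bound are routine.
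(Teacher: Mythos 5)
Your proof is correct and mirrors the paper's three-step structure: identify the anova decomposition $f_\omega = f^{(\omega)}_{\epsilon,q,r,\sigma}$ and $f_{\omega'}\equiv 0$ ($\omega'\neq\omega$) by strong induction on $d_{\omega'}$ using the sign symmetries of $\mu$ and of $f^{(\omega)}_{\epsilon,q,r,\sigma}$, then read off $\sigma_\omega^2\geq b/4$ from the mass $b$ placed on the $\mathcal{R}_j^\omega$ boxes where $(f^{(\omega)}_{\epsilon,q,r,\sigma})^2 = 1/4$, and finally verify the Hölder bound via Corollary~\ref{cor:f_beta_holder_on_IRplus}. Your reflection-operator formalism $g\circ R_j=-g$, $\mu\circ R_j=\mu$ packages exactly the case-splitting on $\mathrm{sign}(x_{j^\star})$ that the paper carries out integral-by-integral. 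The one genuinely different technical step is the Hölder bound when $\mathrm{sign}(x_1^\omega)\neq\mathrm{sign}(x_2^\omega)$: the paper passes through a point $z\in\mathcal{S}^\omega$ on the segment $[x_1,x_2]$ and adds two Hölder bounds (collecting a factor $2^{1-\beta}\leq 2$, absorbed since $C_{\mathrm S}=1$ and the one-orthant constant is $1/2$), whereas you exploit that every point of $\mathrm{supp}(\mu)$ has $|x_j|\geq1$ for all $j$ with $\omega_j=1$, so a sign change forces $\|x_1^\omega-x_2^\omega\|\geq2$, and couple this with the crude bound $|f^{(\omega)}_{\epsilon,q,r,\sigma}|\leq 1/2$ on the support. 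Your shortcut is cleaner for this specific construction; the paper's interpolation-through-$\mathcal{S}^\omega$ argument is the template that is then reused verbatim in the heavy-tailed and mixture analogues (Lemmas~\ref{lem:E_and_S_def_satisfied_heavy_tails} and~\ref{lem:generalLB_E_and_S_def_satisfied}), which is presumably why the authors chose it. Both routes give $P\in\mathcal{P}_{\mathrm S}(\beta,1)$.
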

\begin{proof}
First, since $O\indep(X,Y)$, we have $\mu_{\omega'} \equiv \mu_{\omega'|o}$ for any $o\in\mathcal{O}$ and $\omega'\preceq o$, thus $\sigma_{\omega'}^2=\IE_Q(f_{\omega'}^2(X))$, for $\omega' \in \{0,1\}^d$. Next, we show that the functions given by Equations~\eqref{eq:def_f_zero} 
and~\eqref{eq:def_f_omega} 
satisfy $f_{\omega} = f^{(\omega)}_{\epsilon,q,r,\sigma}$ and $f_{\omega'} \equiv 0$, for all $\omega' \in\{0,1\}^d \setminus \{\omega\}$. 
By the symmetry of $ f^{(\omega)}_{\epsilon,q,r,\sigma}$, and letting $j^\star=\min\{j\in[d]:~\omega_j=1\}$ we have 
\begin{align*}
f_{\mathbf{0}_d} &= \mathbb{E}_P\bigl\{\eta(X)\bigr\} -\frac{1}{2} = \mathbb{E}_P\{f^{(\omega)}_{\epsilon,q,r,\sigma}(X)\} = \int_{\mathbb{R}^d} f^{(\omega)}_{\epsilon,q,r,\sigma}(x) \, d\mu(x) \\
&= \int_{\mathbb{R}^d} f^{(\omega)}_{\epsilon,q,r,\sigma}(x) \mathbbm{1}_{\{x_{j^\star}>0\}} \, d\mu(x) + \int_{\mathbb{R}^d} f^{(\omega)}_{\epsilon,q,r,\sigma}(x) \mathbbm{1}_{\{x_{j^\star}<0\}} \, d\mu(x)  \\
&= \frac{1}{2^{d-d_\omega}} \sum_{s\in \{-1,1\}^d:s_{j^\star}=1} \Bigl(\prod_{j = 2 }^{d_{\omega}} \{\Pi_{\omega}(s)\}_j \Bigr) \cdot \int_{\mathcal{T}^{\omega}_{q,r} \cup \mathcal{R}^{\omega}}   f^+_{\epsilon,q,r,\sigma}(x) \, d\mu(x) \\
&\hspace{8mm} - \frac{1}{2^{d-d_\omega}} \sum_{s\in \{-1,1\}^d:s_{j^\star}=-1} \Bigl(\prod_{j = 2}^{d_{\omega}} \{\Pi_{\omega}(s)\}_j \Bigr) \cdot \int_{\mathcal{T}^{\omega}_{q,r} \cup \mathcal{R}^{\omega}}   f^+_{\epsilon,q,r,\sigma}(x) \, d\mu(x)=0,
\end{align*}
where the fourth equality holds since $f^{(\omega)}_{\epsilon,q,r,\sigma}(x)=0$ wherever $x_{j^\star}=0$.  Further, for $\mathbf{0} \prec \omega' \prec \omega$, now letting $j^\star=\min\{j\in[d]:~\omega_j=1-\omega'_j=1\}$ and by induction we see that 
\begin{align*}
    f_{\omega'}(x) &= \mathbb{E}_P\Bigl\{\eta(X) - \frac{1}{2} - \sum_{\omega''\prec\omega'}f_{\omega''}(X) \Big| X^{\omega'} = x^{\omega'}\Bigr\} = \mathbb{E}_P\Bigl\{f^{(\omega)}_{\epsilon,q,r,\sigma}(X) \Big| X^{\omega'} = x^{\omega'}\Bigr\} \\
    &= \mathbb{E}_P\Bigl\{f^{(\omega)}_{\epsilon,q,r,\sigma}(X) \bigl( \mathbbm{1}\{x_{j^\star}>0\} + \mathbbm{1}\{x_{j^\star}<0\} \bigr) \Big| X^{\omega'} = x^{\omega'}\Bigr\} = 0
\end{align*}
using the same argument as for the $\omega'=\mathbf{0}_d$ case.  We further have
\begin{align*}
f_{\omega}(x) &= \mathbb{E}_P\Bigl\{\eta(X) - \frac{1}{2} - \sum_{\omega'\prec\omega}f_{\omega'}(X) \Big| X^{\omega} = x^{\omega}\Bigr\} \\
&= \mathbb{E}_P\Bigl\{f^{(\omega)}_{\epsilon,q,r,\sigma}(X) \Big| X^{\omega} = x^{\omega}\Bigr\} = f^{(\omega)}_{\epsilon,q,r,\sigma}(x^\omega),
\end{align*}
i.e. $f_{\omega} = f^{(\omega)}_{\epsilon,q,r,\sigma}$. For $\omega' \npreceq \omega$, there exists either a $j^\star=\min\{j\in[d]:~\omega_j=1-\omega'_j=1\}$, in which case we proceed as above, or there exists no such $j^\star$, in which case $\omega\prec\omega'$. In this case we have
\begin{align*}
    f_{\omega'}(x) &= \mathbb{E}_P\Bigl\{\eta(X) - \sum_{\omega''\prec\omega'}f_{\omega''}(X) \Bigm| X^{\omega'} = x^{\omega'}\Bigr\} = \mathbb{E}_P\Bigl\{0\Bigm| X^{\omega'} = x^{\omega'}\Bigr\} = 0.
\end{align*}
We deduce that $\sigma_{\omega'}^2=0$ for all $\omega'\neq \omega$.  Finally, we have 
\[
\sigma_\omega^2 = \int_{\mathbb{R}^d} f_\omega^2(x) \,d\mu(x)  =  a \epsilon^2  + \frac{b}{4} + 2^{d_\omega} \int_{\mathcal{R}^\omega}  \Bigl(\epsilon + \frac{1}{4}\|x^\omega-z^\omega\|^{\beta_\omega}\Bigr)^2 \, d\mu(x) \geq b/4.
\]
Thus $Q\in\mathcal{Q}_{\mathrm{E}}(\{\omega\},c_{\mathrm{E}},\mathcal{O})$, for $c_{\mathrm{E}} \leq b/4$. 

For the final part of the result, if $x_1,x_2\in\mathbb{R}^d$ are such that $\mathrm{sign}(x_1^\omega)=\mathrm{sign}(x_2^\omega)$, then by Corollary~\ref{cor:f_beta_holder_on_IRplus},
\[
|f_\omega(x_1) - f_\omega(x_2)| = |f^+_{\epsilon,q,r,\sigma}(s\odot x_1) - f^+_{\epsilon,q,r,\sigma}(s\odot x_2)| \leq \frac{1}{2} \|s\odot x_1-s\odot x_2\|^{\beta_{\omega}} \leq \|x_1-x_2\|^{\beta_{\omega}}. 
\]
On the other hand, if $s_1:=\mathrm{sign}(x_1^\omega) \neq \mathrm{sign}(x_2^\omega)=:s_2$, then there exists $z \in \mathcal{S}^\omega$ with $z=x_1+\zeta(x_2-x_1)$ for some $\zeta\in[0,1]$. Then, again by Corollary~\ref{cor:f_beta_holder_on_IRplus},
\begin{align*}
|f_\omega(x_1)-&f_\omega(x_2)| \leq |f_\omega(x_1)-f_\omega(z)| + |f_\omega(z)-f_\omega(x_2)| \\
&\leq |f^+_{\epsilon,q,r,\sigma}(s_1\odot x_1)-f^+_{\epsilon,q,r,\sigma}(s_1\odot z)| + |f^+_{\epsilon,q,r,\sigma}(s_2\odot z) - f^+_{\epsilon,q,r,\sigma}(s_2\odot x_2)| \\
&\leq \frac{1}{2} \|s_1\odot x_1-s_1\odot z\|^{\beta_{\omega}} + \frac{1}{2} \|s_2\odot z-s_2\odot x_2\|^{\beta_{\omega}} = \frac{1}{2} \|x_1-z\|^{\beta_{\omega}} + \frac{1}{2} \|z-x_2\|^{\beta_{\omega}} \\
&= \frac{1}{2} (\zeta^{\beta_{\omega}}+(1-\zeta)^{\beta_{\omega}}) \|x_1-x_2\|^{\beta_{\omega}} \leq \frac{1}{2} 2^{1-\beta_{\omega}} \|x_1-x_2\|^{\beta_{\omega}} \leq \|x_1-x_2\|^{\beta_{\omega}}.
\end{align*}
Finally, for $\omega'\neq\omega$ we have $|f_{\omega'}(x_1)-f_{\omega'}(x_2)|=0\leq\|x_1-x_2\|^{\beta_{\omega'}}$; In other words, we have $P\in\mathcal{P}_{\mathrm{S}}(\boldsymbol{\beta},1)$. 
\end{proof}

\begin{lemma}\label{lem:M_def_satisfied}
Fix $d\in\IN$, $\omega \in \{0,1\}^d \setminus \{\mathbf{0}_d\}$, $\kappa \in (0,1/\sqrt{d_{\omega}})$, $q \in \mathbb{N}$, $r>0$, $\beta_{\omega} \in (0,1]$, $\epsilon \in (0,(1/4)\wedge(1/8\cdot(r/q)^{\beta_{\omega}})]$, $\sigma=(\sigma_t)_{t\in[T]}\in\{-1,1\}^T$.
Fix $C_{\mathrm{M}} \geq \max\{1+4^{d_\omega/\beta_{\omega}}(2\kappa)^{-d_\omega}V_{d_\omega}, 2^{\alpha}\}$, $\alpha\in[0,d_{\omega}/\beta_{\omega}]$, and $a\in[0,\epsilon^\alpha]$, $b\in(0,1/4]$. Let $P = P^{(\omega)}_{\epsilon,q,r,\sigma,a,b}$ denote the distribution on $\mathbb{R}^d\times\{0,1\}$ with regression function $\eta=\eta^{(\omega)}_{\epsilon,q,r,\sigma}$ and marginal feature distribution $\mu=\mu^{(\omega)}_{\kappa,r,q,a,b}$. Then $P\in\mathcal{P}_{\mathrm{M}}(\alpha,C_{\mathrm{M}})$.
\end{lemma}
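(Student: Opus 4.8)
The plan is to verify the defining inequality of $\mathcal{P}_{\mathrm{M}}(\alpha,C_{\mathrm{M}})$ directly, i.e.\ to bound $\mu\bigl(\{x\in\mathbb{R}^d:|\eta(x)-1/2|<t\}\bigr)$ for every $t>0$, using the explicit piecewise form of $\eta=1/2+f^{(\omega)}_{\epsilon,q,r,\sigma}$ together with the explicit structure of $\mu=\mu^{(\omega)}_{\kappa,r,q,a,b}$ in \eqref{lb:marginal_construction}. Since $|\eta(x)-1/2|=|f^{(\omega)}_{\epsilon,q,r,\sigma}(x)|$ and, by \eqref{def:f_omega}, $|f^{(\omega)}_{\epsilon,q,r,\sigma}(x)|=|f^{+}_{\epsilon,q,r,\sigma}(s\odot x)|$ with $s=\mathrm{sign}(x)$, the first step is to record the values of $|f^{(\omega)}_{\epsilon,q,r,\sigma}|$ on $\mathrm{supp}(\mu)$ using \eqref{def:f_circ_plus}: it equals $\epsilon$ on the reflected grids $s^\omega\odot\mathcal T^\omega_{q,r}$ (total $\mu$-mass $a$), it equals $\epsilon+\tfrac14\|x^\omega-s^\omega\odot z^\omega\|^{\beta}\ge\epsilon$ on the reflected boxes $s\odot\mathcal R^\omega$ (total $\mu$-mass $1-a-b$), and it equals $\tfrac12$ on the reflected boxes $s\odot\mathcal R^\omega_j$ (total $\mu$-mass $b$). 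The coordinate-axis set $\mathcal S^\omega$, on which $f^{\circ,+}_{\epsilon,q,r,\sigma}$ vanishes, is $\mu$-null because every point of $\mathcal T^\omega_{q,r}$, $\mathcal R^\omega$ and $\mathcal R^\omega_j$ has all $\omega$-coordinates bounded away from $0$; hence $|f^{(\omega)}_{\epsilon,q,r,\sigma}|\ge\epsilon$ holds $\mu$-almost everywhere, and the same is true after reflection.

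The verification then splits into three regimes. For $0<t\le\epsilon$ the set $\{|f^{(\omega)}_{\epsilon,q,r,\sigma}|<t\}$ is $\mu$-null, so the bound is immediate. For $t>1/2$ we simply use $\mu\bigl(\{|f^{(\omega)}_{\epsilon,q,r,\sigma}|<t\}\bigr)\le 1$ together with $t^\alpha>2^{-\alpha}$ and $C_{\mathrm{M}}\ge 2^\alpha$, which gives $1\le C_{\mathrm{M}}t^\alpha$. The substantive regime is $\epsilon<t\le 1/2$: here the $\mathcal R^\omega_j$ pieces contribute nothing (their value $1/2$ is not $<t$), the grids contribute exactly their mass $a$, and on each reflected box $s\odot\mathcal R^\omega$ the relevant set is $\{x:\|x^\omega-s^\omega\odot z^\omega\|<\rho\}$ with $\rho:=\bigl(4(t-\epsilon)\bigr)^{1/\beta}$.

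The key geometric point is that $z^\omega$ is the corner of the cube $\mathcal R^\omega$ (seen in its $d_\omega$ active coordinates) nearest the origin, so $\mathcal R^\omega$, and likewise each reflected copy, lies in a single closed orthant with apex $s^\omega\odot z^\omega$; consequently $\mathcal L_{d_\omega}\bigl(\Pi_\omega(\{x:\|x^\omega-s^\omega\odot z^\omega\|<\rho\}\cap s\odot\mathcal R^\omega)\bigr)\le 2^{-d_\omega}V_{d_\omega}\rho^{d_\omega}$ for every $\rho>0$. Summing the $2^d$ reflected boxes against the density $\tfrac{1-a-b}{2^d\kappa^{d_\omega}}$ of $\mu$ from \eqref{lb:marginal_construction} and using $1-a-b\le 1$, the box contribution is at most $(2\kappa)^{-d_\omega}V_{d_\omega}\bigl(4(t-\epsilon)\bigr)^{d_\omega/\beta}$. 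Since $0<t-\epsilon<t\le 1/2<1$ and $\alpha\le d_\omega/\beta$, we have $(t-\epsilon)^{d_\omega/\beta}\le(t-\epsilon)^\alpha\le t^\alpha$, while $a\le\epsilon^\alpha\le t^\alpha$ because $t>\epsilon$ and $\alpha\ge 0$. Adding the two contributions gives $\mu\bigl(\{|f^{(\omega)}_{\epsilon,q,r,\sigma}|<t\}\bigr)\le\bigl(1+4^{d_\omega/\beta}(2\kappa)^{-d_\omega}V_{d_\omega}\bigr)t^\alpha\le C_{\mathrm{M}}t^\alpha$, completing the proof. I expect no genuine difficulty here: the only care needed is the bookkeeping in the last regime — identifying the sublevel set inside each $\mathcal R^\omega$ as an orthant-section of a ball (this orthant factor is exactly what turns $\kappa^{-d_\omega}$ into $(2\kappa)^{-d_\omega}$ in the constant) and checking that the grid term $a$ is absorbed via $a\le\epsilon^\alpha$.
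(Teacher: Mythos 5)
Your proof is correct and follows essentially the same route as the paper: split on $t\in(0,\epsilon]$, $t\in(\epsilon,1/2]$, $t>1/2$; absorb the atom mass via $a\le\epsilon^\alpha\le t^\alpha$; bound the box contribution by an orthant-section of a ball of radius $\sim(4t)^{1/\beta}$, which produces the $(2\kappa)^{-d_\omega}V_{d_\omega}$ factor. The only (harmless) difference is that you use the sharp radius $\bigl(4(t-\epsilon)\bigr)^{1/\beta}$ where the paper uses the looser $(4t)^{1/\beta}$; since you then relax $(t-\epsilon)^\alpha\le t^\alpha$, both arguments land on the same constant.
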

\begin{proof}
First, if $t\in (0,\epsilon)$, then $\mu\bigl(\bigl\{x \in \IR^d : |\eta(x)- 1/2| < t \bigr\} \bigr) = 0 \leq C_{\mathrm{M}} \cdot t^{\alpha}$.  For $t\in [\epsilon,1/2)$, by \eqref{def:f_circ_plus} and the definition of $\eta$, if $x\in\mathrm{supp}(\mu)\setminus \bigcup_{s\in \{-1,1\}^d}(s\odot \mathcal{T}^{\omega}_{q,r})$ satisfies $|\eta(x)-1/2|<t$, then $\|x-z^\omega\|\leq(4t)^{1/\beta_{\omega}}$. Thus
\begin{align*}
\mu\Bigl(\Bigl\{x \in \mathbb{R}^d : \bigl|\eta(x)-1/2\bigr| < t \Bigr\} \Bigr) &=  2^{d_\omega} \mu\bigl(\mathcal{T}^\omega_{q,r}\bigl) + 2^{d_{\omega}} \mu\bigl( B_{(4t)^{1/\beta_{\omega}}}(z^\omega) \cap \mathcal{R}^{\omega} \bigr) \\
&=  a + \frac{(1-a-b)}{\kappa^{d_\omega}}\mathcal{L}_{d_\omega}\Bigl(\Pi_\omega\bigl(B_{(4t)^{1/\beta_{\omega}}}(z^\omega) \cap \mathcal{R}^\omega\bigr)\Bigr)\\
& \leq   \epsilon^\alpha + \frac{ (1-a-b)(4t)^{d_{\omega}/\beta_{\omega}}V_{d_{\omega}}}{(2\kappa)^{d_\omega}} \leq C_{\mathrm{M}} \cdot t^{\alpha}.
\end{align*}
Finally, if $t\geq1/2$, then
\begin{align*}
\mu\Bigl(\Bigl\{x \in \mathbb{R}^d : \bigl|\eta(x)-1/2\bigr| < t \Bigr\} \Bigr) \leq 1 \leq (2t)^{\alpha}  \leq C_{\mathrm{M}} \cdot t^{\alpha},
\end{align*}
as required. 
\end{proof}

\subsection{Proof of the lower bound in the light tailed case\label{subsec:LBproof_lighttails}}

We can now complete the proof in the case that $\Omega_{\star} = \{\omega\}$ is a singleton for which $\gamma_\omega = \min\{\gamma_j : \omega_j = 1\} \geq 1$.  Recall that $n_{\omega} = \sum_{i=1}^n \mathbbm{1}_{\{\omega \preceq o_i\}}$.  

\begin{lemma}\label{lem:LB_light_tails}
Fix $d, n \in \mathbb{N}$, $\mathcal{O}\subseteq\{0,1\}^d$, $o_1, \ldots, o_n \in \mathcal{O}$,  $\omega \in \{0,1\}^d \setminus \{\mathbf{0}_d\}$, $c_{\mathrm{E}} \in [0,1/4]$, $\boldsymbol{\gamma} \in [0,\infty)^d$, $C_{\mathrm{L}}\geq 1$, $\boldsymbol{\beta}\in (0,1]^{d}$, $C_{\mathrm{S}} \geq 1$, $\alpha \in [0, \infty)$ and $C_{\mathrm{M}} \geq 1$. Let $\beta_{\omega} := \min\{\beta_j : \omega_j = 1\}$. Suppose that $\gamma_{\omega} = \min\{\gamma_j : \omega_j = 1\} \geq 1$, $\alpha\beta_{\omega} \leq d_\omega$, $C_{\mathrm{M}}\geq\max\{1+4^{d_\omega/\beta_{\omega}}(d_\omega)^{-d_\omega/2}V_{d_\omega}, 2^{\alpha}\}$, $c_{\mathrm{E}} \leq 1/16$ and $C_{\mathrm{L}}\geq 4^{\gamma_{\max}(d_{\omega}+1)}$, where $\gamma_{\max} = \max_{j\in[d]}\{\gamma_j\}$.
Then there exists a constant $c$, depending only on $\gamma_\omega$, $d_{\omega}$, $\beta_{\omega}$ and $\alpha$, such that 
\begin{align*}
    \inf_{\hat C\in\mathcal C_n} \sup_{Q} \mathbb E_{Q}\Bigl\{\mathcal E_{P_Q}(\hat C) \Bigm| O_1 = o_1, \ldots, O_n = o_n \Bigr\}\geq c \cdot \max\Bigl\{ n_\omega^{-\frac{\beta_{\omega}\gamma_\omega(1+\alpha)}{\gamma_\omega(2\beta_{\omega}+d_\omega)+\alpha\beta_{\omega}}}, 1 \Bigr\},
\end{align*}
where the supremum is taken over $Q\in\mathcal{Q}_{\mathrm{Miss}}'(\{\omega\},c_{\mathrm{E}}, \boldsymbol{\gamma}, C_{\mathrm{L}}, \boldsymbol{\beta}, C_{\mathrm{S}} , \alpha, C_{\mathrm{M}},\mathcal{O})$.
\end{lemma}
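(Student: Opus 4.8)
The strategy is to apply the missing-data Assouad Lemma (Lemma~\ref{lem:assouad}) to a carefully chosen subfamily of the constructions assembled in Sections~\ref{subsubsec:LBlight}--\ref{subsec:LBlighteta}, and then optimise the free parameters $\kappa, r, q, a, b, \epsilon$ over the constraints imposed by membership in $\mathcal{Q}'_{\mathrm{Miss}}(\{\omega\},c_{\mathrm{E}},\boldsymbol\gamma,C_{\mathrm{L}},\beta,C_{\mathrm{S}},\alpha,C_{\mathrm{M}},\mathcal{O})$. Concretely, I would take the $Q^\sigma$ to be the distributions built in Lemma~\ref{lem:E_and_S_def_satisfied}: marginal $\mu=\mu^{(\omega)}_{\kappa,r,q,a,b}$, regression function $\eta^\sigma=\eta^{(\omega)}_{\epsilon,q,r,\sigma}$ (indexed by $\sigma\in\{-1,1\}^T$ with $T=q^{d}=q^{d_\omega}$ after projection), and $O$ uniform on $\mathcal{O}$ independent of $(X,Y)$. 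In the notation of Lemma~\ref{lem:assouad} the perturbation points are the grid $\mathcal{T}^\omega_{q,r}$, so $m=T=q^{d_\omega}$, the per-point mass is $u = a/(2^{d}T) \cdot 2^{d}= a/(q^{d_\omega})$ wait --- more precisely $u=a\,q^{-d_\omega}$ adjusted for the $2^{d_\omega}$ sign-reflections, and the perturbation size is $\epsilon$. I would then verify the six hypotheses of Lemma~\ref{lem:assouad}: (ii)--(iv) are immediate from \eqref{lb:marginal_construction} and \eqref{def:f_omega}; (v) holds by taking $r$ large enough that the ``buffer'' boxes $\mathcal{R}^\omega,\mathcal{R}^\omega_j$ sit outside $(-r-1,r+1)^d$ (or simply by the placement of the lattice at distance $\geq 1$ from the origin); (vi) is built in; and (i) is the quantitative constraint $2^{d_\omega+5} n_\omega \epsilon^2 u \leq 1$. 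Membership of each $Q^\sigma$ in the target class then comes from Corollary~\ref{cor:L_def_satisfied} (the $\mathcal{Q}_{\mathrm{L}}$ part), Lemma~\ref{lem:E_and_S_def_satisfied} (the $\mathcal{Q}_{\mathrm{E}}$ and $\mathcal{P}_{\mathrm{S}}$ parts) and Lemma~\ref{lem:M_def_satisfied} (the margin part), provided we choose the parameters to satisfy all of their hypotheses simultaneously.

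**Parameter selection.**
The conclusion of Assouad gives a lower bound of order $2^{d_\omega-1} m\epsilon u \asymp q^{d_\omega}\epsilon\, u$. With $u$ controlled by the margin and smoothness constraints, the natural scaling is: set $\epsilon\asymp r^\beta q^{-\beta}$ (forced by the H\"older constraint in Lemma~\ref{lem:f_beta_holder_on_Rplus_Tplus_S}, which needs $\epsilon\lesssim (r/q)^\beta$, and by $\epsilon\le 1/4$); take $a\asymp \epsilon^{\alpha}$ (the largest allowed by Lemma~\ref{lem:M_def_satisfied}), so $u\asymp \epsilon^\alpha q^{-d_\omega}$; and fix $b,\kappa$ to absolute constants (depending on $d_\omega$) so that the $\mathcal{Q}_{\mathrm{L}}$ and $\mathcal{Q}_{\mathrm{E}}$ hypotheses hold --- here $r$ must be bounded away from $0$ and $\infty$ so that the $\min\{1,(r\sqrt{d_\omega})^{-d_\omega}\}$ factors in Corollary~\ref{cor:L_def_satisfied} stay bounded below, and $b_0, a_0$ fix the required bounds on $a,b$. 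Then the lower bound is $\asymp q^{d_\omega}\cdot \epsilon \cdot \epsilon^\alpha q^{-d_\omega} = \epsilon^{1+\alpha}$, while constraint (i) reads $n_\omega \epsilon^2 \epsilon^\alpha q^{-d_\omega}\lesssim 1$, i.e. $n_\omega \epsilon^{2+\alpha} q^{-d_\omega}\lesssim 1$, and using $\epsilon\asymp q^{-\beta}$ gives $n_\omega \lesssim q^{(2+\alpha)\beta+d_\omega}$. Choosing $q$ as large as this permits, $q\asymp n_\omega^{1/((2+\alpha)\beta+d_\omega)}$, yields $\epsilon\asymp n_\omega^{-\beta/((2+\alpha)\beta+d_\omega)}$ and hence a lower bound of order $n_\omega^{-\beta(1+\alpha)/((2+\alpha)\beta+d_\omega)}$. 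This matches the claimed exponent once one checks that for $\gamma_\omega\ge 1$ the exponent $\tfrac{\beta\gamma_\omega(1+\alpha)}{\gamma_\omega(2\beta+d_\omega)+\alpha\beta}$ coincides with (or is bounded below by a constant multiple of) $\tfrac{\beta(1+\alpha)}{(2+\alpha)\beta+d_\omega}$ --- indeed in the light-tailed regime the $\gamma_\omega$-dependence washes out because $\gamma_\omega$ large only relaxes the tail constraint, so the binding exponent is exactly $\tfrac{\beta(1+\alpha)}{2\beta+d_\omega+\alpha\beta}$, which is what \eqref{eq:minimaxrate} reduces to when $\gamma_\omega\to\infty$; for finite $\gamma_\omega\ge 1$ one gets the stated rate by the same computation with the $\gamma_\omega$ terms retained, and I would simply carry them through. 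The constant-order term $\max\{\cdot,1\}$ is handled trivially: the excess risk of any classifier is at most $1$ and the construction with $q=1$, $\epsilon=1/8$ already forces a constant lower bound when $n_\omega$ is bounded.

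**Integer and edge-case bookkeeping, and the main obstacle.**
A few routine wrinkles need attention: $q$ must be a positive integer, so I replace the real optimiser by $q=\lfloor\cdot\rfloor\vee 1$ and absorb the loss into the constant; the hypotheses $c_{\mathrm{E}}\le 1/16$, $C_{\mathrm{M}}\ge\max\{1+4^{d_\omega/\beta}d_\omega^{-d_\omega/2}V_{d_\omega},2^\alpha\}$ and $C_{\mathrm{L}}\ge 4^{\gamma_{\max}(d_\omega+1)}$ in the statement are exactly what is needed to allow $b,\kappa,r$ to be chosen as absolute constants meeting Corollary~\ref{cor:L_def_satisfied} and Lemmas~\ref{lem:E_and_S_def_satisfied}--\ref{lem:M_def_satisfied}, so I would spell out the one admissible choice (e.g. $\kappa=1/(2\sqrt{d_\omega})$, $b$ a small constant $\le 1/4$ with $4c_{\mathrm{E}}\le b$, $r$ an absolute constant in $(0,1/\sqrt{d_\omega})$ after rescaling) and check the displayed inequalities \eqref{eq:a_bound}--\eqref{eq:b_bound2} hold for all $\omega'$ with $\omega\wedge\omega'\ne\mathbf{0}_d$. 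The genuinely delicate step --- and the one I expect to be the main obstacle --- is reconciling \emph{all} the parameter constraints at once: the margin lemma wants $a$ as large as $\epsilon^\alpha$, the tail lemma wants $a\le a_0\wedge b$ with $a^{1-\gamma_\omega}(4r\sqrt{d_\omega})^{d_\omega\gamma_\omega}\le a_0$ (which, since $\gamma_\omega\ge 1$, is a \emph{lower} constraint on $a$ unless $\gamma_\omega=1$ — one must check the direction of the inequality carefully), the smoothness lemma wants $\epsilon\le (1/8)(r/q)^\beta$, and Assouad's (i) wants $\epsilon$ small relative to $n_\omega$ and $u$. Threading these so that the \emph{only} active constraint for large $n_\omega$ is (i) — thereby producing the sharp exponent rather than a suboptimal one — requires checking that the $\gamma_\omega\ge1$ hypothesis makes the tail constraints automatically satisfiable with $a\asymp\epsilon^\alpha$ and constant $b,r$; verifying this compatibility, and doing it uniformly in the incomparable patterns $\omega'$ appearing in \eqref{eq:b_bound2}, is where the real work lies.
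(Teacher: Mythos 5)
Your overall strategy is the same as the paper's (apply Lemma~\ref{lem:assouad} to the constructions of Sections~\ref{subsubsec:LBlight}--\ref{subsec:LBlighteta}, then verify class membership via Corollary~\ref{cor:L_def_satisfied} and Lemmas~\ref{lem:E_and_S_def_satisfied}--\ref{lem:M_def_satisfied}), but your parameter selection breaks down exactly at the step you flag as ``the main obstacle,'' and the way you propose to resolve it cannot work. You fix $r$ to an absolute constant, take $a\asymp\epsilon^\alpha$, and use $\epsilon\asymp q^{-\beta}$, arriving at the exponent $\frac{\beta(1+\alpha)}{(2+\alpha)\beta+d_\omega}$. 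For $\gamma_\omega>1$ this exponent is \emph{strictly smaller} than the target $\frac{\beta\gamma_\omega(1+\alpha)}{\gamma_\omega(2\beta+d_\omega)+\alpha\beta}$ (the target can be rewritten $\frac{\beta(1+\alpha)}{2\beta+d_\omega+\alpha\beta/\gamma_\omega}$, and $\alpha\beta/\gamma_\omega<\alpha\beta$), so your bound would be \emph{stronger} than the one in the lemma --- which would contradict the matching upper bound in Theorem~\ref{thm:minmax_bounds}. That is already a sign the parameter choices cannot be legal, and indeed they are not: with $r$ constant and $\gamma_\omega>1$, condition~\eqref{eq:a_bound} reads $a^{1-\gamma_\omega}(4r\sqrt{d_\omega})^{d_\omega\gamma_\omega}\le a_0$, and since $1-\gamma_\omega<0$ the left side blows up as $a\to0$, forcing a \emph{lower} bound $a\ge c(r,d_\omega,\gamma_\omega)>0$. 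But then Lemma~\ref{lem:M_def_satisfied} needs $a\le\epsilon^\alpha$, hence $\epsilon$ bounded below, which is incompatible with $\epsilon\to0$. So the ``compatibility check'' you defer does not succeed: for $\gamma_\omega>1$, $a\asymp\epsilon^\alpha$ and constant $r$ cannot coexist inside $\mathcal{Q}_{\mathrm{L}}$. (You also misstate the limiting behaviour: $\frac{\beta(1+\alpha)}{(2+\alpha)\beta+d_\omega}$ is the target exponent at $\gamma_\omega=1$, not its $\gamma_\omega\to\infty$ limit, which is $\frac{\beta(1+\alpha)}{2\beta+d_\omega}$.)

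The missing idea is that $r$ must shrink with $n_\omega$. The paper takes $r=(8\epsilon)^{1/\beta}q$, so that $r^{d_\omega\gamma_\omega}\to0$ fast enough to offset $a^{1-\gamma_\omega}\to\infty$ in \eqref{eq:a_bound}; combined with $q:=\lfloor(q_0n_\omega^\rho)^{1/d_\omega}\rfloor$ where $\rho=\frac{\gamma_\omega(d_\omega-\alpha\beta)+\alpha\beta}{\gamma_\omega(2\beta+d_\omega)+\alpha\beta}$ (not $\frac{d_\omega}{(2+\alpha)\beta+d_\omega}$), this produces $\epsilon\asymp n_\omega^{(\rho-1)/(2+\alpha)}$ and a lower bound $\epsilon^{1+\alpha}\asymp n_\omega^{-\frac{\beta\gamma_\omega(1+\alpha)}{\gamma_\omega(2\beta+d_\omega)+\alpha\beta}}$. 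It is precisely the shrinking $r$ that injects $\gamma_\omega$ into the rate; with $r$ held constant the dependence cannot appear, which is why your computed exponent coincides with the target only at $\gamma_\omega=1$.
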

\begin{proof}
First, let
$a_0 := 2^{-3d_{\omega}\gamma_{\omega}}$, $a_1 := 2^{3\gamma_{\omega} d_{\omega}/\beta_{\omega}+2\gamma_{\omega}d_{\omega}}
 \cdot d_{\omega}^{\gamma_{\omega}d_{\omega}/2}$, $b_0 = 2^{-1}(d_\omega 2^{d_{\omega}})^{-\gamma_{\max}}$,
and
\[
q_0:=\min\biggl\{ a_0^{(2+\alpha)/(\alpha \wedge 1)}2^{5+d_\omega} , \Bigr(\frac{a_0}{a_1} \cdot 2^{(d_{\omega} + 5)\frac{\alpha\beta_{\omega}(1-\gamma_{\omega})+\gamma_{\omega}d_{\omega}}{\beta_{\omega}(2+\alpha)} } \Bigl)^{\frac{(2+\alpha)\beta_{\omega}}{\alpha\beta_{\omega}+\gamma_{\omega}d_{\omega}+2\gamma_{\omega}\beta_{\omega}}}, 2^{\frac{d_\omega(d_\omega-3\alpha-1)}{d_\omega+(2+\alpha)\beta_{\omega}}}\biggr\}.
\]
Further let
\begin{align*}
\rho &:= \frac{\gamma_{\omega}(d_\omega-\alpha\beta_{\omega})+\alpha\beta_{\omega}}{\gamma_{\omega}(2\beta_{\omega}+d_\omega)+\alpha\beta_{\omega}}\in[0,1); \quad q :=\lfloor (q_{0} n_\omega^{\rho})^{1/d_\omega}\rfloor, 
\\ m &:= q^{d_{\omega}}, \quad 
\epsilon:= \min\Bigl\{\Bigl(\frac{m}{2^{d_\omega+5} n_\omega}\Bigr)^{\frac{1}{2+\alpha}},1/4\Bigr\} , \quad
u := \frac{\epsilon^\alpha}{m},
\\  \kappa &:=\frac{1}{2\sqrt{d_\omega}}, \quad
a:=\epsilon^\alpha, \quad 
b:=\frac{1}{4}, \quad
r:= (8\epsilon)^{1/\beta_{\omega}}\cdot q.
\end{align*}
Suppose initially that $n_{\omega}  >  q_{0}^{-1/\rho}$, so that $q \geq 1$. For $\sigma \in\{-1,1\}^m$, let $Q^\sigma$ denote the distribution with marginal $\mu=\mu^{(\omega)}_{\kappa,r,q,a,b}$, regression function $\eta^\sigma=\eta^{(\omega)}_{\epsilon,q,r,\sigma}$ and for which $O$ is independent of $(X, Y)$ and uniformly distributed on $\mathcal{O}$. 

We now show that $Q^\sigma \in \mathcal{Q}_{\mathrm{Miss}}'(\{\omega\},c_{\mathrm{E}}, \boldsymbol{\gamma}, C_{\mathrm{L}}, \boldsymbol{\beta}, C_{\mathrm{S}} , \alpha, C_{\mathrm{M}},\mathcal{O})$, by applying in turn the results from the previous subsections.  First, $Q^\sigma\in\mathcal{Q}_{\mathrm{Miss}}(\mathcal{O})$ since the missingness indicator $O$ is independent of $(X,Y)$ and $\IP(O=o)>0$ for all $o\in\mathcal{O}$. Next, to apply Corollary \ref{cor:L_def_satisfied}, using that $\rho-1 = \frac{-\gamma_{\omega} \beta_{\omega} (2+\alpha)}{\gamma_{\omega}(2\beta_{\omega} + d_{\omega}) + \alpha\beta_{\omega}}$, we have  
\begin{align*} 
a & = \epsilon^{\alpha} \leq \Bigl(\frac{m}{2^{d_{\omega} + 5} n_{\omega}} \Bigr)^{\frac{\alpha}{2+\alpha}} \leq \Bigl(\frac{q_0 n_{\omega}^{\rho}} {2^{d_{\omega} + 5} n_{\omega}} \Bigr)^{\frac{\alpha}{2+\alpha}} \\
&= \Bigl(\frac{q_0}{2^{5 + d_{\omega}}}\Bigr)^{\alpha/(2+\alpha)}  n_{\omega}^{-\frac{\gamma_{\omega}\alpha\beta_{\omega}}{\gamma_{\omega}(2\beta+d_{\omega}) + \alpha \beta_{\omega}}} \leq  a_0^{\alpha \wedge 1} \leq a_0,
\end{align*}
by the first term in the minimum in the definition of $q_0$. Further, this also implies that $\bigl(\frac{m}{2^{d_{\omega} + 5} n_{\omega}} \bigr)^{\frac{1}{2+\alpha}} < \frac{1}{4}$, so $\epsilon < 1/4$. Moreover, since $\alpha\beta_{\omega} \leq d_{\omega}$, 
\begin{align*}
a^{1-\gamma_{\omega}} (4r\sqrt{d_{\omega}})^{\gamma_{\omega}d_{\omega}}
&= \epsilon^{\alpha(1-\gamma_{\omega})+\gamma_{\omega}d_{\omega}/\beta_{\omega}} \cdot q^{\gamma_{\omega}d_{\omega}} \cdot 2^{3d_{\omega}\gamma_{\omega}/\beta_{\omega}+2\gamma_{\omega}d_{\omega}} \cdot d_{\omega}^{\gamma_{\omega}d_{\omega}/2}\\
&\leq \Bigl(\frac{q_0 n_{\omega}^{\rho-1}} {2^{d_{\omega} + 5}} \Bigr)^{\frac{\alpha(1-\gamma_{\omega})+\gamma_{\omega}d_{\omega}/\beta_{\omega}}{2+\alpha}} \cdot \bigl(q_0 n_{\omega}^{\rho}\bigr)^{\gamma_{\omega}} \cdot a_1\\
&= \Bigl(\frac{q_0} {2^{d_{\omega} + 5}} \Bigr)^{\frac{\alpha(1-\gamma_{\omega})+\gamma_{\omega}d_{\omega}/\beta_{\omega}}{2+\alpha}} \cdot q_0^{\gamma_{\omega}} \cdot a_1 \leq a_0,
\end{align*} 
where we have used the second term in the minimum in the definition of $q_0$, thus \eqref{eq:a_bound} holds.  To show that~\eqref{eq:b_bound1} and~\eqref{eq:b_bound2} hold, first note that
\begin{align*}
    r&=(8\epsilon)^{1/\beta_{\omega}}\cdot q 
    \leq 8^{1/\beta_{\omega}} \Bigl(\frac{m}{2^{d_{\omega} + 5} n_{\omega}} \Bigr)^{\frac{1}{(2+\alpha)\beta_{\omega}}} \cdot  (q_{0} n_\omega^{\rho})^{1/d_\omega} 
    \\ & = 8^{1/\beta_{\omega}} \cdot \Bigl(\frac{q_0}{2^{5 + d_{\omega}}}\Bigr)^{\frac{1}{(2+\alpha)\beta_{\omega}}} \cdot q_0^{1/d_\omega} \cdot n_{\omega}^{\frac{(1-\gamma_\omega)\alpha\beta_{\omega}}{(\gamma_{\omega}(2\beta_{\omega}+d_{\omega}) + \alpha \beta_{\omega})d_\omega}}  \leq 1,
\end{align*}
where we have used the fact that $\gamma_{\omega} \geq 1$ and the third term in the definition of $q_0$. Thus, the minima in \eqref{eq:b_bound1} and \eqref{eq:b_bound2} are both attained by $1$, and we have
\begin{align*}
b^{1-\gamma_{\omega}} \leq b^{1-\gamma_{\max}} = 4^{\gamma_{\max}-1}\leq C_{\mathrm{L}} \cdot b_0,
\end{align*}
since $C_{\mathrm{L}} \geq 4^{\gamma_{\max}(d_\omega+1)} \geq 4^{\gamma_{\max}-1}b_0^{-1}$. We deduce that \eqref{eq:b_bound1} and \eqref{eq:b_bound2} hold, and therefore that $Q^\sigma \in \mathcal{Q}_{\mathrm{L}}(\boldsymbol{\gamma}, C_{\mathrm{L}},\mathcal{O})$ by Corollary~\ref{cor:L_def_satisfied}. Further, since $b = 1/4$, we have $c_{\mathrm{E}} \leq 1/16 = b/4$, thus by Lemma~\ref{lem:E_and_S_def_satisfied}, the upper bound on $C_{\mathrm{M}}$ and Lemma~\ref{lem:M_def_satisfied}, we deduce that $P^{\sigma} \equiv P_{Q^{\sigma}} \in \mathcal{P}_{\mathrm{S}}(\boldsymbol{\beta}, 1) \cap \mathcal{P}_{\mathrm{M}}(\alpha, C_{\mathrm{M}})$ and $Q^\sigma\in\mathcal{Q}_{\mathrm{E}}(\{\omega\}, c_{\mathrm{E}},\mathcal{O}) $.  

To complete the proof we apply Lemma~\ref{lem:assouad}. We first verify that Assumptions~(i)-(vi) in that lemma hold: 
\begin{enumerate}
\item[(i)] By the definition of $\epsilon$ and $u$, we have $2^{d_\omega+5}n_\omega\epsilon^2u=1$, 
\item[(ii)] Letting $z_t:=z_t^\omega$ for $t\in[m]$, we have that $\mu(\{s^{\omega} \odot z_t^{\omega}\})=u$ for all $t\in[m]$ and $s\in\{-1,1\}^d$ by the construction of the marginal measure in~\eqref{lb:marginal_construction}
,
\item[(iii)]  
By the construction of the regression function \eqref{def:f_circ_plus}, we have $\eta^\sigma(z^{\omega}_t) = 1/2 + \sigma_t \epsilon$ for $t\in[m]$, and $\eta^\sigma(s^{\omega} \odot z_t^{\omega}) = 1/2 + \bigl(\prod_{j \in [d] :(s^{\omega})_{j} =-1} (s^{\omega})_{j} \bigr)\cdot\sigma_t\epsilon$, for all $t\in[m]$ and $s\in\{-1,1\}^d$.
\item[(iv)] Since, for $\sigma,\sigma'\in\Sigma$, the support is given by $\mathrm{supp}(\mu)=\bigcup_{s\in\{-1,1\}^d}s\odot(\mathcal{T}_{q,r}^\omega \cup \mathcal{R}^\omega \cup \bigcup_{j\in[d]:\omega_j=1}\mathcal{R}_j^\omega)$, and since we have $\eta^\sigma=\eta^{\sigma'}$ on $\mathcal{R}^\omega \cup \bigcup_{j\in[d]:\omega_j=1}\mathcal{R}_j^\omega$, we indeed have $\eta^\sigma(x)=\eta^{\sigma'}(x)$ for all $x\in \mathrm{supp}(\mu) \setminus \bigcup_{t\in [m], s\in \{-1,1\}^d} \{s^{\omega}\odot z_t^{\omega}\}$,
\item[(v)] 
Again by construction, we have $\Bigl(\mathrm{supp}(\mu) \setminus \bigcup_{t\in [m], s\in \{-1,1\}^d} \{s^{\omega}\odot z_t^{\omega}\} \Bigr) \bigcap (-r-1,r+1)^d = \Bigl(\bigcup_{s\in\{-1,1\}^d}s \odot \bigl(\mathcal{R}^\omega\cup\bigcup_{j\in[d]:\omega_j=1}\mathcal{R}_j^\omega\bigr) \Bigr) \bigcap (-r-1,r+1)^d  = \emptyset$,
\item[(vi)]  By construction of the distribution of $O$, we have $O\indep(X,Y)$.
\end{enumerate} 
Then, for $n_{\omega} > (q_0)^{-1/\rho}$, since $\epsilon = \bigl(\frac{m}{2^{d_{\omega}+5}n_\omega}\bigr)^{\frac{1}{2+\alpha}}$ the lower bound in~\eqref{eq:lb_assouad} in Lemma~\ref{lem:assouad} gives
\begin{align*}
     \inf_{\hat C\in\mathcal C_n} \sup_{Q}\ &\mathbb E_{Q}\{\mathcal E_{P}(\hat C) | O_1 = o_1, \ldots, O_n = o_n\} \geq  \frac{m u \epsilon}{2} = \frac{\epsilon^{1+\alpha}}{2} = \frac{1}{2} \left(\frac{m}{2^{d_{\omega}+5}n_\omega}\right)^{\frac{1+\alpha}{2+\alpha}} \\
    &\geq \frac{1}{2} \Bigl( \frac{q_0 n_\omega^\rho} {2^{2d_{\omega}+5} n_\omega} \Bigr)^{\frac{1+\alpha}{2+\alpha}} = 2^{-1-\frac{(2d_{\omega}+5)(1+\alpha)}{2+\alpha}} q_0^{\frac{1+\alpha}{2+\alpha}} n_\omega^{\frac{(1+\alpha)(\rho-1)}{(2+\alpha)}}
    =: \tilde{c} \cdot  n_\omega^{-\frac{\beta_{\omega}\gamma_\omega(1+\alpha)}{\gamma_\omega(2\beta_{\omega}+d_\omega)+\alpha\beta_{\omega}}}.
\end{align*}
Finally, if $n_{\omega} \leq q_0^{-1/\rho}$ (including possibly $n_{\omega} = 0$), then using the fact that the excess risk is decreasing in $n_\omega$, we conclude that the result follows with $c := \tilde{c} \cdot q_0^{\frac{\beta_{\omega}\gamma_\omega(1+\alpha)}{\rho\{\gamma_\omega(2\beta_{\omega}+d_\omega)+\alpha\beta_{\omega}\}}}$. 
\end{proof}

\subsection{Properties of the marginal construction in the heavy tailed case\label{sec:LB_big_r}}
We now turn to the proof of the lower bound when $\gamma_{\omega} < 1$. Recall the definitions of $\nu_0$, $\nu_{1,\tilde{q}}$,  $\nu_{2,q,r,a}$ and $\mu^{(\omega)}_{q,\tilde{q},r,a,\tilde{j}}$ from Section~\ref{subsubsec:LBheavy} 
(and see Figure~\ref{fig:LB_heavy_tails}
). 

As mentioned above, the ideas behind the proof in this case are conceptually similar to (and in fact slightly simpler than) the results in the light tailed case. The overall outline of the remainder of the proof in this case follows the steps in Sections~\ref{sec:LB_small_r}, \ref{subsec:LBlighteta} and \ref{subsec:LBproof_lighttails}.

\begin{lemma}
\label{lem:tailparametersformarginals}
Fix $d\in\IN$, $\tilde{q} \in \mathbb{N}$, $d_0, d_1 \in \mathbb{N}$, with $d_0+d_1\geq 1$, and $\gamma_1 > 0$. Let $\nu := \nu_0^{\otimes d_0} \otimes \nu_1^{\otimes d_1}$. Then 
\[
\nu\Bigl(\{x\in \mathbb{R}^{d_0+d_1} : \rho_{\nu,d_1}(x) < \xi \}\Bigr) \leq \bigl\{(2\sqrt{d_1})^{d_1} \xi\bigr\}^{\gamma_1}. 
\] 
Further, fixing $C_0 > 1$, $q \in \mathbb{N}$, $r>1$ and $\gamma_2 
\in (0,1)$. If $a < \min\{1/2, 1-1/C_0^{1/{\gamma_2}}\}$ and satisfies $a^{1-\gamma_2} r^{\gamma_2} \leq 1$, then 
\[
\nu_{2}\Bigl(\{x\in \mathbb{R} : \rho_{\nu_2,1}(x) \leq \xi \}\Bigr) \leq C_0\cdot(2 \xi)^{\gamma_2}.
\]
As a consequence, for $\gamma_1 > 2$, we have
\[
(\nu \otimes \nu_{2})\Bigl(\{x\in \mathbb{R}^{d_0+d_1+1} : \rho_{\nu \otimes \nu_2,d_1+1}(x) \leq \xi \}\Bigr)\leq C_0 \cdot 2^{2 + 3\gamma_{2} + 3(d_1 + 1)\gamma_{1}/2}  \cdot d_1^{d_1\gamma_{1}/2} \xi^{\gamma_2}.
\]
\end{lemma}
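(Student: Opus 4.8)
The plan is to prove the three statements in sequence, using the structure of product measures together with the sub-additivity property of lower densities. The first statement concerns $\nu = \nu_0^{\otimes d_0}\otimes\nu_1^{\otimes d_1}$. The point-mass factors $\nu_0$ contribute nothing (the relevant coordinates are identically zero on the support), so computing $\rho_{\nu,d_1}$ reduces to a calculation for $\nu_1^{\otimes d_1}$ on $\mathbb{R}^{d_1}$. Here $\nu_1 = \nu_{1,\tilde q}$ is a uniform distribution on $2\tilde q$ atoms at $\pm(1+j/\tilde q)$, $j\in[\tilde q]$. For a point $x$ in the support, the ball $B_r(x)$ of small radius $r$ contains at least the single atom $x$ itself; more carefully, using that consecutive atoms are at distance $1/\tilde q$, one sees that $\nu_1^{\otimes d_1}(B_r(x))$ is bounded below by a quantity of order $r^{d_1}$ once $r$ is comparable to or exceeds $\sqrt{d_1}/\tilde q$, and the infimum defining the lower density is controlled by the smallest admissible scale. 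The cleaner route: since the support of $\nu_1^{\otimes d_1}$ has all pairwise distances bounded below, for $r$ small the ratio $\nu(B_r(x))/r^{d_1}$ is large, and the infimum over $r\in(0,1)$ is attained (up to constants) at $r$ of order $1$, giving $\rho_{\nu,d_1}(x)$ bounded below by a universal constant times $(2\sqrt{d_1})^{-d_1}$ for \emph{every} $x$ in the support. Consequently $\{x:\rho_{\nu,d_1}(x)<\xi\}$ is either empty (when $\xi$ is below that constant) or all of the support; in the former case the measure is $0\le\bigl((2\sqrt{d_1})^{d_1}\xi\bigr)^{\gamma_1}$, and in the latter $\xi\ge (2\sqrt{d_1})^{-d_1}$, so $\bigl((2\sqrt{d_1})^{d_1}\xi\bigr)^{\gamma_1}\ge 1$ and the bound holds trivially.

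For the second statement, $\nu_2 = \nu_{2,q,r,a}$ is a mixture: with weight $a$ it puts mass on $2q$ atoms near $\pm(1,1+r)$, and with weight $1-a$ it is uniform on the two intervals $(-r-2,-r-1)\cup(r+1,r+2)$. On the interior of these intervals the lower density $\rho_{\nu_2,1}$ is bounded below by $(1-a)/2$ times a constant (a uniform density on an interval of length $1$), so such points contribute nothing once $\xi$ is below that constant. The atoms, carrying total mass $a$, are the only points where the lower density can be small; at an atom $x$, for small radius $r'$ the ball captures only mass of order $a/q$, while the atoms are spaced $r/q$ apart, so $\rho_{\nu_2,1}(x)$ is of order $a/(q\cdot(r/q)) = a/r$. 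Thus $\{x:\rho_{\nu_2,1}(x)<\xi\}$ has $\nu_2$-measure at most $a$ once $\xi$ exceeds the interval threshold, and is empty for $\xi$ below roughly $a/r$. Using the hypothesis $a^{1-\gamma_2}r^{\gamma_2}\le 1$, i.e.\ $a\le (a/r)^{\gamma_2}$, we get that on the range of $\xi$ where the set is nonempty, $\nu_2(\cdots)\le a\le (a/r)^{\gamma_2}\le C_0(2\xi)^{\gamma_2}$ (pulling the threshold value $a/r$ up to $\xi$, and using $C_0\ge 1$ and the constant $2^{\gamma_2}$). One must also handle the regime $\xi\ge$ the interval threshold where the measure could jump to $1$; here the constraint $a<1-1/C_0^{1/\gamma_2}$ ensures $C_0(2\xi)^{\gamma_2}\ge 1$, so the bound survives. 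The main care is in bookkeeping the three $\xi$-ranges (below $a/r$, between $a/r$ and the interval threshold, and above), exactly as in the light-tailed Corollary \ref{cor:L_def_satisfied}.

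The third (consequence) statement combines the first two via \citet[Lemma~S8 / Proposition~S8]{reeve2021adaptive}, which controls the lower density of a product in terms of the lower densities of the factors --- concretely, $\rho_{\nu\otimes\nu_2,\,d_1+1}(x,y)\ge c_{d_1}\cdot\rho_{\nu,d_1}(x)\cdot\rho_{\nu_2,1}(y)$ for an explicit dimensional constant, or at least that the super-level sets of the product lower density are contained in a product of super-level sets at comparable thresholds. Writing $\{\rho_{\nu\otimes\nu_2,d_1+1}<\xi\}\subseteq \{\rho_{\nu,d_1}<\xi^{1/2}\cdot(\text{const})\}\times\mathbb{R} \;\cup\; \mathbb{R}^{d_1}\times\{\rho_{\nu_2,1}<\xi^{1/2}\cdot(\text{const})\}$ (using a union bound on where the product is small) and applying the first estimate with exponent $\gamma_1>2$ and the second with exponent $\gamma_2<1$, the $\gamma_1$-term dominates a fixed power of $\xi$ larger than $\xi^{\gamma_2}$ and is absorbed, leaving $\xi^{\gamma_2}$ with the stated constant $C_0\cdot 2^{2+3\gamma_2+3(d_1+1)\gamma_1/2}d_1^{d_1\gamma_1/2}$. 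The main obstacle is getting the product-of-lower-densities inequality and its associated constants exactly right: one needs the precise form of Reeve--Cannings' result and must track how a ball in $\mathbb{R}^{d_1+1}$ decomposes (via $\|\cdot\|_\infty$ vs.\ $\|\cdot\|_2$ comparisons, contributing the $\sqrt{d_1}$ and power-of-$2$ factors) into balls in the factor spaces, then feed the two one-factor estimates through without losing the requirement $\gamma_1>2$ that makes the $\nu$-contribution negligible. Everything else is the routine three-regime casework already rehearsed in the light-tailed constructions.
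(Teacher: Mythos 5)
Your treatment of the first two parts tracks the paper's proof closely: you lower-bound the lower density pointwise on the support (atoms of $\nu_1^{\otimes d_1}$ and of the discrete part of $\nu_2$; the interval part of $\nu_2$) and then run the three-regime casework in $\xi$, using $a^{1-\gamma_2}r^{\gamma_2}\le 1$ exactly as the paper does to handle the middle range and $(1-a)\ge C_0^{-1/\gamma_2}$ for the top range. A small imprecision in Part~1: the paper shows $\rho_{\nu,d_1}(x)\ge (2\sqrt{d_1})^{-d_1}$ uniformly in $r\in(0,1)$ by embedding a hypercube of side $2\zeta/\sqrt{d_1}$ in $B_\zeta$ and applying the one-dimensional bound $\nu_1(B_{\zeta'}(x_j))\ge\zeta'/2$ coordinatewise, with no slack constant; your phrasing ("up to constants at $r$ of order $1$") suggests an extra multiplicative factor which, if present, would spoil the stated bound $\{(2\sqrt{d_1})^{d_1}\xi\}^{\gamma_1}$.

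For Part~3, the paper does not derive the product statement at all — it invokes \citet[Proposition~S8]{reeve2021adaptive} as a black box and simply reads off the constant. Your attempt to sketch what that lemma does internally has a real error: the symmetric split $\{\rho_{\nu\otimes\nu_2}<\xi\}\subseteq\{\rho_{\nu}<c\xi^{1/2}\}\cup\{\rho_{\nu_2}<c\xi^{1/2}\}$ gives, after feeding in Parts~1 and~2, a bound of order $\xi^{\gamma_1/2}+\xi^{\gamma_2/2}$, and the second term is $\xi^{\gamma_2/2}\gg\xi^{\gamma_2}$ for small $\xi$, so you do not recover the claimed exponent $\gamma_2$. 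More generally, a naïve union bound with split $\theta$ yields exponent $\gamma_1\gamma_2/(\gamma_1+\gamma_2)<\gamma_2$, strictly worse than the minimum. So you cannot "re-derive" Proposition~S8 by this route; its proof must use a sharper decomposition (e.g.\ fixing a level set for one factor and integrating). You do correctly identify the proposition as the key tool and acknowledge you would need its precise form, which is exactly what the paper relies on — so the overall structure is right, but the sketched argument for the third part would not close the proof if carried out as written.
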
 
\begin{proof} 
First note that $\rho_{\nu_1,1}(x) = 0$, for all $x \in \mathbb{R} \setminus \bigcup_{j \in [\tilde{q}]}\{1+j/\tilde{q}\} \cup \{-1-j/\tilde{q}\}$.  On the other hand, for $x \in  \bigcup_{j \in [\tilde{q}]}\{1+j/\tilde{q}\} \cup \{-1-j/\tilde{q}\}$ and $\zeta \in (0,1)$, we have $\nu_1\bigl([x-\zeta, x+\zeta]\bigr) \geq \zeta/2$. Thus $\rho_{\nu_1,1}(x) \geq 1/2$, for all $x \in \bigcup_{j \in [\tilde{q}]}\{1+j/\tilde{q}\} \cup \{-1-j/\tilde{q}\}$. Hence, if $\xi \in (0,1/2)$, then we have 
\[
\nu_{1}\Bigl(\{x\in \mathbb{R} : \rho_{\nu_1,1}(x) \leq \xi \}\Bigr) = 0 \leq (2 \xi)^{\gamma_1}. 
\]
If $\xi > 1/2$, then we have 
\[
\nu_{1}\Bigl(\{x\in \mathbb{R} : \rho_{\nu_1,1}(x) \leq \xi \}\Bigr) \leq 1 \leq (2 \xi)^{\gamma_1}. 
\]
Now turning to the measure $\nu := \nu_0^{\otimes d_0} \otimes \nu_1^{\otimes d_1}$, for $d_0, d_1 \in \mathbb{N}$. By similar arguments, for $\zeta \in (0,1)$ and $x = (0,\ldots,0,x_{d_0+1},\ldots, x_{d_0+d_1})^T \in \mathbb{R}^{d_0+d_1}$, with $x_{d_0+k} \in \bigcup_{j \in [\tilde{q}]}\{1+j/\tilde{q}\} \cup \{-1-j/\tilde{q}\}$, for $k \in [d_1]$, we have
\[
\frac{\nu(B_{\zeta}(x))}{\zeta^{d_1}} \geq d_1^{-d_1/2} \prod_{j \in [d_1]} \frac{\nu_{1}\Bigl((x_{d_0+j} - \zeta/\sqrt{d_1}, x_{d_0+j} + \zeta/\sqrt{d_1})\Bigr) }{\zeta/\sqrt{d_1}} \geq  \frac{1}{(2\sqrt{d_1})^{d_1}}.
\]
Therefore, if $\xi \in (0, (2\sqrt{d_1})^{-d_1})$, then we have 
\[
\nu\Bigl(\{x\in \mathbb{R} : \rho_{\nu,d_1}(x) < \xi \}\Bigr) = 0 \leq \{(2\sqrt{d_1})^{d_1} \xi\bigr\}^{\gamma_1}.
\]
Further, if $\xi \geq (2\sqrt{d_1})^{-d_1}$, then we have 
\[
\nu\Bigl(\{x\in \mathbb{R} : \rho_{\nu,d_1}(x) < \xi \}\Bigr) \leq 1 \leq \bigl\{(2\sqrt{d_1})^{d_1} \xi\bigr\}^{\gamma_1}. 
\]
This completes the proof of the first part of the lemma. 

For the second part, first consider the $\nu_2$ measure. We have $\rho_{\nu_2,1}(x) = 0$, for all $x \in \mathbb{R} \setminus \Bigl\{\bigcup_{j \in [q]}\{1+rj/q\} \cup \{-1-rj/q\}  \bigcup (1+r, 2+r) \cup (-2-r,-1-r)\Bigr\}$.  For $x \in  \bigcup_{j \in [q]}\{1+rj/q\} \cup \{-1-rj/q\}$ and $\zeta \in (0,1)$, since $r>1$, we have $\nu_2\bigl([x-\zeta, x+\zeta]\bigr) \geq \frac{a\zeta}{2r}$. Thus $\rho_{\nu_2,1}(x) \geq a/(2r)$, for all $x \in \bigcup_{j \in [q]}\{1+rj/q\} \cup \{-1-rj/q\}$. Moreover, if $x \in (1+r, 2+r) \cup (-2-r,-1-r)$, then 
$\nu_2\bigl([x-\zeta, x+\zeta]\bigr) \geq \frac{(1-a)\zeta}{2}$. It follows that, if $\xi \in (0,a/(2r))$, then we have 
\[
\nu_{2}\Bigl(\{x\in \mathbb{R} : \rho_{\nu_0,1}(x) \leq \xi \}\Bigr) = 0 \leq (2 \xi)^{\gamma_2}. 
\]
If $\xi \in [a/(2r), (1-a)/2)$, then we have 
\[
\nu_{2}\Bigl(\{x\in \mathbb{R} : \rho_{\nu_2,1}(x) \leq \xi \}\Bigr) \leq a \leq 2^{\gamma_2} \bigl(\frac{a}{2r}\bigr)^{\gamma_2} \leq (2\xi)^{\gamma_2}. 
\]
Finally, if $\xi \geq (1-a)/2$, then we have 
\[
\nu_{2}\Bigl(\{x\in \mathbb{R} : \rho_{\nu_2,1}(x) \leq \xi \}\Bigr) \leq 1 \leq \Bigl(\frac{2 \xi}{1-a}\Bigr)^{\gamma_2} \leq C_0(2 \xi)^{\gamma_2} . 
\]

For the final conclusion, by \citet[Proposition~S8]{reeve2021adaptive}, we have 
\[
(\nu \otimes \nu_2)\Bigl(\{x\in \mathbb{R}^{d_0 + d_1 + 1} : \rho_{\nu \otimes \nu_2, d_1+1}(x) < \xi \}\Bigr) \leq C_0 \cdot 2^{2 + 3\gamma_{2} + 3(d_1 + 1)\gamma_{1}/2}  \cdot d_1^{d_1\gamma_{1}/2} \cdot \xi^{\gamma_{2}}.
\]
\end{proof}

\begin{corollary}\label{cor:L_def_satisfied_heavy_tails}
Fix $d\in\IN$, $\mathcal{O}\subseteq\{0,1\}^d$, $\boldsymbol{\gamma}=(\gamma_1,\ldots,\gamma_d)\in[0,\infty)^d$, $r>0$, $\tilde{q},q\in\mathbb{N}$, $\omega\in\{0,1\}^d\setminus\{\mathbf{0}_d\}$, $C_0>1$, $\tilde{j} \in \{j\in [d] : \omega_{j} = 1\}$ and $a\in[0,1)$. Let $\gamma_\omega:=\min\{\gamma_j:\omega_j=1\}$ and let $\gamma_{\max}:=2\vee\max\{\gamma_j:j\in[d]\}$. Let $Q\equiv Q_{q,\tilde{q},r,a,\tilde{j}}$ be the distribution of $(X,Y,O)$ on $\mathbb{R}^d\times\{0,1\}\times\{0,1\}^d$ with $X$-marginal distribution $\mu = \mu^{(\omega)}_{q,\tilde{q},r,a,\tilde{j}}$, with $O\indep(X,Y)$ and with $\IP(O=o)>0$ for all $o\in\mathcal{O}$. If $\gamma_{\tilde{j}} = \gamma_{\omega} < 1$,  $a < \min\{1/2, 1-1/C_0^{1/{\gamma_\omega}}\}$ and $a^{1-\gamma_{\omega}}r^{\gamma_{\omega}}\leq 1$,
then $Q\in\mathcal{Q}_{\mathrm{L}}(\boldsymbol{\gamma},C_{\mathrm{L}},\mathcal{O})$ for any $C_{\mathrm{L}}\geq C_0 \cdot 2^{2 + 3\gamma_{\omega} + 3d_\omega \gamma_{\max}/2}  \cdot d_{\omega}^{d_{\omega}\gamma_{\max}/2}$.
\end{corollary}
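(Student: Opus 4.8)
## Proof proposal for Corollary~\ref{cor:L_def_satisfied_heavy_tails}

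The plan is to verify directly that the $X$-marginal $\mu = \mu^{(\omega)}_{q,\tilde q,r,a,\tilde j}$ satisfies the defining inequality \eqref{eq:lowerdensity} of $\mathcal{Q}_{\mathrm{L}}(\boldsymbol{\gamma}, C_{\mathrm{L}}, \mathcal{O})$ for every $\omega' \in \{0,1\}^d \setminus \{\mathbf{0}_d\}$ and every $\xi > 0$. Since $O \indep (X,Y)$ forces $\mu_{\omega' \mid o} = \mu_{\omega'}$ for all $o$, the left-hand side of \eqref{eq:lowerdensity} is just $\mu_{\omega'}(\{x : \rho_{\mu_{\omega'}, d_{\omega'}}(x) < \xi\})$, which I will bound using Lemma~\ref{lem:tailparametersformarginals}. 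Two elementary ingredients grease the argument. First, by \eqref{eq:muheavy}, after freezing the coordinates outside $\omega'$ at $0$, $\mu_{\omega'}$ is a product of $d_1 := d_{\omega \wedge \omega'} - \mathbbm{1}_{\{\tilde j \in \omega'\}}$ copies of $\nu_{1,\tilde q}$, one copy of $\nu_{2,q,r,a}$ when $\tilde j \in \omega'$, and $k := d_{\omega'} - d_1 - \mathbbm{1}_{\{\tilde j \in \omega'\}}$ copies of $\nu_0$; these are exactly the products treated in Lemma~\ref{lem:tailparametersformarginals}. Second, when the infimum defining $\rho_{\nu,s}$ is over radii in $(0,1)$, the map $s \mapsto \rho_{\nu,s}(x)$ is non-decreasing, so passing to a larger exponent only shrinks the bad set $\{x : \rho_{\nu,s}(x) < \xi\}$; this lets me transfer the bounds of Lemma~\ref{lem:tailparametersformarginals}, stated with the ``intrinsic'' exponents $d_1$ or $d_1 + 1$, up to the exponent $d_{\omega'}$ demanded by Definition~\ref{def:lowerdensity}.

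Write $\omega'' := \omega \wedge \omega'$. If $\gamma_\omega = 0$ then $\gamma_{\omega'} = 0$ whenever $\tilde j \in \omega'$, and \eqref{eq:lowerdensity} reduces to $\mu_{\omega'}(\cdot) \leq C_{\mathrm{L}}$, true since $C_{\mathrm{L}} \geq 1$; more generally every sub-case below with exponent $0$ is trivial, so I assume $\gamma_\omega > 0$. I would then split into three cases. Case (i), $\omega'' = \mathbf{0}_d$: $\mu_{\omega'}$ is a point mass at the origin, $\rho_{\mu_{\omega'}, d_{\omega'}} \equiv 1$ on its support, and \eqref{eq:lowerdensity} is immediate. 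Case (ii), $\omega'' \neq \mathbf{0}_d$ and $\tilde j \notin \omega'$: here $\mu_{\omega'} \cong \nu_0^{\otimes k} \otimes \nu_{1,\tilde q}^{\otimes d_1}$ with $d_1 = d_{\omega''} \geq 1$, and the first assertion of Lemma~\ref{lem:tailparametersformarginals}, applied with $\gamma_1 = \gamma_{\omega'}$ (legitimate for any positive exponent, since $\nu_{1,\tilde q}$ is compactly supported with lower density bounded below), bounds the bad set of $\rho_{\mu_{\omega'}, d_1}$; monotonicity upgrades this to $\rho_{\mu_{\omega'}, d_{\omega'}}$ and gives \eqref{eq:lowerdensity} provided $C_{\mathrm{L}} \geq (2\sqrt{d_1})^{d_1 \gamma_{\omega'}}$, which is at most $(2\sqrt{d_\omega})^{d_\omega \gamma_{\max}}$ and hence at most the stated $C_{\mathrm{L}}$.

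Case (iii), $\omega'' \neq \mathbf{0}_d$ and $\tilde j \in \omega'$: now $\mu_{\omega'} \cong \nu_0^{\otimes k} \otimes \nu_{1,\tilde q}^{\otimes d_1} \otimes \nu_{2,q,r,a}$ and $\gamma_{\omega'} \leq \gamma_{\tilde j} = \gamma_\omega < 1$. If $d_1 = 0$ I would apply the second assertion of Lemma~\ref{lem:tailparametersformarginals} with $\gamma_2 = \gamma_\omega$ — its hypotheses $a < \min\{1/2, 1 - 1/C_0^{1/\gamma_\omega}\}$ and $a^{1-\gamma_\omega} r^{\gamma_\omega} \leq 1$ are exactly the assumptions of the corollary — obtaining $\nu_{2,q,r,a}(\{\rho_{\nu_2,1} \leq \xi\}) \leq C_0(2\xi)^{\gamma_\omega}$; if $d_1 \geq 1$ I would apply the final assertion of Lemma~\ref{lem:tailparametersformarginals} with $\gamma_2 = \gamma_\omega$ and $\gamma_1 = \gamma_{\max} \geq 2$, obtaining a bound of the shape $C_0 \cdot 2^{2 + 3\gamma_\omega + 3 d_{\omega''} \gamma_{\max}/2} \cdot d_1^{d_1 \gamma_{\max}/2}\, \xi^{\gamma_\omega}$. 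In both sub-cases monotonicity transfers the bound up to $\rho_{\mu_{\omega'}, d_{\omega'}}$, and it remains only to convert the $\xi$-exponent $\gamma_\omega$ into the required $\gamma_{\omega'} \leq \gamma_\omega$: for $\xi \leq 1$, $\xi^{\gamma_\omega} \leq \xi^{\gamma_{\omega'}}$, and for $\xi > 1$ the left-hand side is at most $1 \leq C_{\mathrm{L}}\xi^{\gamma_{\omega'}}$. Since $d_{\omega''} \leq d_\omega$ and $d_1 \leq d_\omega - 1$, the constants produced here are all at most $C_0 \cdot 2^{2 + 3\gamma_\omega + 3 d_\omega \gamma_{\max}/2} \cdot d_\omega^{d_\omega \gamma_{\max}/2}$, which is the hypothesised lower bound on $C_{\mathrm{L}}$.

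The work here is bookkeeping rather than conceptual: the main thing to keep straight is the interplay of the three ``dimensions'' — the exponent $d_{\omega'}$ demanded by Definition~\ref{def:lowerdensity}, the number $d_{\omega''}$ of genuinely varying coordinates of $\mu_{\omega'}$, and its split between $\nu_{1,\tilde q}$- and $\nu_{2,q,r,a}$-factors — together with checking that one constant $C_{\mathrm{L}}$, depending only on $d_\omega$, $\gamma_\omega$, $\gamma_{\max}$ and $C_0$, simultaneously dominates the constants arising in every case. The monotonicity of $\rho_{\nu,s}$ in $s$ over radii below $1$ is the small lemma that reconciles the exponents in Lemma~\ref{lem:tailparametersformarginals} with the one in the definition, and it is essentially the only non-routine observation needed.
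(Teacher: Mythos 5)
Your proposal is correct and follows the same route as the paper's proof: decompose $\mu_{\omega'}$ into a product of $\nu_0$, $\nu_{1,\tilde q}$ and (possibly) a $\nu_{2,q,r,a}$ factor, apply the three assertions of Lemma~\ref{lem:tailparametersformarginals} according to whether $\tilde{j}\in\omega'$ and whether $d_{\omega\wedge\omega'}>1$, and check that the resulting constants are dominated by the hypothesised $C_{\mathrm{L}}$. You make explicit two steps the paper leaves tacit — the monotonicity of $s\mapsto\rho_{\nu,s}(x)$ over radii in $(0,1)$, needed to lift the bound from the intrinsic exponent $d_1$ (or $d_1+1$) up to $d_{\omega'}$ in Definition~\ref{def:lowerdensity}, and the conversion from $\xi^{\gamma_\omega}$ to the required $\xi^{\gamma_{\omega'}}$ — and your case split differs cosmetically (separating out $\omega\wedge\omega'=\mathbf{0}_d$, and using part~2 rather than part~3 of Lemma~\ref{lem:tailparametersformarginals} when $\omega\wedge\omega'=e_{\tilde j}$ but $\omega'\neq e_{\tilde j}$); both variations are harmless and yield constants within the stated bound.
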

\begin{proof}
For $\omega' \in \{0,1\}^d\setminus \{\mathrm{0}_{d}\}$, we have
\begin{align*}
\mu_{\omega'} = \left\{
\begin{array}{c c} 
\bigotimes_{j=1}^{\tilde{j}-1} \nu_{\omega_j\wedge \omega'_j} \otimes \nu_{2} \otimes \bigotimes_{j = \tilde{j}+1}^{d} \nu_{\omega_j\wedge \omega'_{j}} & \text{if } \omega'_{\tilde{j}}= 1 \\
\bigotimes_{j=1}^{d} \nu_{\omega_j \wedge \omega'_j} & \text{otherwise.}
\end{array} \right.
\end{align*}
If $\omega'_{\tilde{j}} = 0$, then by permuting the order of the coordinates if necessary and the first part of Lemma~\ref{lem:tailparametersformarginals} with $\gamma_1 = \gamma_{\omega'}$, $d_0 = d - d_{\omega \wedge \omega'}$, $d_1 = d_{\omega \wedge \omega'} \leq \max\{d_{\omega'}, d_{\omega}-1\}$, we have
\[
\mu_{\omega'}\bigl(\{x \in \mathbb{R}^d : \rho_{\omega'}(x) < \xi \}\bigr) \leq C_{\mathrm{L}} \cdot \xi^{\gamma_{\omega'}},
\]
for all $\xi >0$, since $C_{\mathrm{L}} \geq (2\sqrt{d_{\omega}})^{d_{\omega}\gamma_{\max} }$. Further, if $\omega' = e_{\tilde{j}}$, by the second part of Lemma~\ref{lem:tailparametersformarginals} with $\gamma_2 = \gamma_{\omega}$ we have  
\[
\mu_{\omega'}\bigl(\{x \in \mathbb{R}^d : \rho_{\omega'}(x) < \xi \}\bigr)  \leq  
C_{\mathrm{L}} \cdot \xi^{\gamma_{\omega}},
\]
for all $\xi >0$, since $C_{\mathrm{L}} \geq C_0 2^{\gamma_{\omega}}$.
Finally, by the third part of Lemma~\ref{lem:tailparametersformarginals} with $\gamma_1 = \gamma_{\max} \geq 2$, $\gamma_2 = \gamma_\omega$, $d_0 = d - d_{\omega \wedge \omega'}$, $d_1 = d_{\omega \wedge \omega'} - 1$, we have 
\[
\mu_{\omega'}\bigl(\{x \in \mathbb{R}^d : \rho_{\omega'}(x) < \xi \}\bigr)  \leq  
C_{\mathrm{L}} \cdot \xi^{\gamma_{\omega}},
\]
since $C_{\mathrm{L}} \geq C_0 \cdot 2^{2 + 3\gamma_{\omega} + 3d_\omega \gamma_{\max}/2}  \cdot d_{\omega}^{d_{\omega}\gamma_{\max}/2}.$ Since $O\indep(X,Y)$ and thus $\mu_{\omega'}=\mu_{\omega'|o}$ for any $o\in\mathcal{O}$, this completes the proof. 
\end{proof}

\subsection{Regression function construction in the heavy tailed case\label{subsec:LBheavyeta}}
We now construct the regression functions $\eta_\omega^\sigma$, for $\sigma \in \{-1,1\}^T$, for the heavy tailed case ($\gamma_{\omega} <1$), using the marginal distribution constructed in Section~\ref{subsec:LBheavyeta}, along with the additional quantities $\boldsymbol{\beta}=(\beta_1,\ldots,\beta_d) \in (0,1]^{d}$, $c_{\mathrm{E}} \in [0, 1/4)]$,  $\epsilon \in (0,1/4)$ and $\sigma\in\{-1,1\}^{T}$. Let $T := \tilde{q}^{d_\omega-1} \cdot q$ and let $z_1^\omega, \ldots, z_T^\omega$ be an enumeration of the set 
\[
\mathcal{T}^{\omega} := \Bigl\{ \omega + \Bigl(\Bigl(\frac{v_1}{\tilde{q}}, \ldots,\frac{v_{\tilde{j}-1}}{\tilde{q}},  \frac{rv_{\tilde{j}}}{q},   \frac{v_{\tilde{j}+1}}{\tilde{q}}, \ldots, \frac{v_{d}}{\tilde{q}}\Bigr)^T\Bigr)^\omega : v_j \in [\tilde{q}] \text{ for } j \in [d]\setminus \{\tilde{j}\}, v_{\tilde{j}} \in [q] \Bigl\}.
\]
Let 
\[
\mathcal{R}_{\tilde{j}}^{\omega}\! :=\! \Bigl\{\omega\! +\! \Bigl(\!\Bigl(\frac{v_1}{\tilde{q}}, \ldots,\frac{v_{\tilde{j}-1}}{\tilde{q}},  r + u_{\tilde{j}}, \frac{v_{\tilde{j}+1}}{\tilde{q}},\ldots, \frac{v_{d}}{\tilde{q}}\Bigr)^T\Bigr)^\omega\! : v_j \in [\tilde{q}] \text{ for } j\! \in\! [d]\setminus \{\tilde{j}\}, u_{\tilde{j}}\! \in\! (0,1) \Bigl\}.
\]
Recall that $\mathcal{S}^{\omega} = \{x^\omega \in \mathbb{R}^d : (\Pi_\omega(x^\omega))_j=0$ for some $j\in[d_\omega]\}$ is the coordinate axes lying in the $d_\omega$-dimensional submanifold $(\mathbb{R}^d)^\omega=\{x\in\mathbb{R}^d:x_j=0$ for all $j\in[d]$ with $\omega_j=0\}$. We define the function $f^{\circ,+}_{\epsilon,\tilde{q},q,r,\sigma}: \mathcal{R}_{\tilde{j}}^\omega \cup \mathcal{T}^{\omega} \cup  \mathcal{S}^{\omega} \rightarrow\IR$ by
\begin{align}\label{def:f_circ_plus_heavy_tails}
    f^{\circ,+}_{\epsilon,\tilde{q},q,r,\sigma}(x):=
    \begin{cases} 
    \sigma_t\cdot\epsilon &\mbox{if } x^{\omega} = z^{\omega}_t \\ 
    0 &\mbox{if } x^\omega \in \mathcal{S}^{\omega}\\ 
    \frac{1}{2} &\mbox{if } x^{\omega} \in \mathcal{R}^{\omega}_{\tilde{j}} .
    \end{cases}
\end{align}

Recalling that $\beta_{\omega} := \min\{\beta_j : \omega_j = 1\}$, we will now show that there exists a $\beta_{\omega}$-H\"older continuous extension $f_{\epsilon,q,r,\sigma}$ of $f^{\circ,+}_{\epsilon,\tilde{q},q,r,\sigma}$ onto $\mathbb{R}^d$. Then, letting $\eta^{\sigma}(x) = 1/2 + f_{\epsilon,\tilde{q},q,r,\sigma}(x)$, the corresponding distribution $Q$ of $(X,Y,O)$ belongs to $\mathcal{Q}_{\mathrm{E}}(\{\omega\}, c_{\mathrm{E}},\mathcal{O})$, and the distribution $P=P_Q$ of $(X,Y)$ belongs to $\mathcal{P}_{\mathrm{S}}(\boldsymbol{\beta}, C_{\mathrm{S}})$ and $\mathcal{P}_{\mathrm{M}}(\alpha, C_{\mathrm{M}})$ for appropriate choices of the parameters, see the subsequent lemmas.

\begin{lemma}\label{lem:f_beta_holder_on_Rplus_Tplus_S_heavy_tails}
Fix $d\in\IN$, $\omega \in \{0,1\}^d \setminus \{\mathbf{0}_d\}$, $\tilde{q},q \in \mathbb{N}$, $r>1$, $\boldsymbol{\beta}\in(0,1]^d$ and let $\beta_{\omega} = \min\{\beta_j : \omega_j = 1\}$. Fix further $\tilde{j} \in \{j\in [d] : \omega_{j} = 1\}$ and $\epsilon \in (0,(1/4\cdot(r/q)^{\beta_{\omega}}) \wedge (1/4\cdot(1/\tilde{q})^{\beta_{\omega}})]$. Let further $\sigma=(\sigma_t)_{t\in[T]}\in\{-1,1\}^T$. Then
\begin{align*}
    |f^{\circ,+}_{\epsilon,\tilde{q},q,r,\sigma}(x_1)-f^{\circ,+}_{\epsilon,\tilde{q},q,r,\sigma}(x_2)|\leq \frac{1}{2} \| x_1-x_2\|^{\beta_{\omega}},
\end{align*}
for all $x_1,x_2 \in \mathcal{T}^{\omega} \cup  \mathcal{S}^{\omega} \cup \mathcal{R}_{\tilde{j}}^\omega$.
\end{lemma}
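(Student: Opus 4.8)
**Plan for the proof of Lemma (the Hölder continuity of $f^{\circ,+}_{\epsilon,\tilde{q},q,r,\sigma}$ in the heavy tailed case).**

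The plan is to mirror the structure of the proof of Lemma~\ref{lem:f_beta_holder_on_Rplus_Tplus_S} in the light tailed case, checking the Hölder inequality by a finite case analysis according to which of the three pieces of the domain $\mathcal{T}^\omega$, $\mathcal{S}^\omega$, $\mathcal{R}^\omega_{\tilde j}$ the two points $x_1,x_2$ lie in. The key point to exploit is that $f^{\circ,+}_{\epsilon,\tilde{q},q,r,\sigma}$ depends only on $x^\omega$, that it is constant ($0$ or $\tfrac12$) on $\mathcal{S}^\omega$ and on $\mathcal{R}^\omega_{\tilde j}$ respectively, and that the only ``oscillation" is the $\pm\epsilon$ between distinct grid points of $\mathcal{T}^\omega$. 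Since $\epsilon$ is assumed to be at most $\tfrac14(r/q)^\beta$ and at most $\tfrac14(1/\tilde q)^\beta$, any two distinct points of $\mathcal{T}^\omega$ are separated by at least $\min\{r/q,1/\tilde q\}$ in the relevant coordinate, which controls the jump of size $2\epsilon$ by $\tfrac14\|x_1^\omega-x_2^\omega\|^\beta$.

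Concretely, I would organise the cases as follows. (1) If $x_1=x_2$, or $x_1,x_2\in\mathcal{S}^\omega$, or $x_1,x_2\in\mathcal{R}^\omega_{\tilde j}$, then $|f^{\circ,+}(x_1)-f^{\circ,+}(x_2)|=0$ and the bound is trivial. (2) If $x_1,x_2\in\mathcal{T}^\omega$ are distinct, or $x_1\in\mathcal{S}^\omega$ and $x_2\in\mathcal{T}^\omega$, then $|f^{\circ,+}(x_1)-f^{\circ,+}(x_2)|\le 2\epsilon$; since the minimal separation in $\mathcal{T}^\omega\cup\mathcal{S}^\omega$ (in the non-zero $\omega$-coordinates) is at least $\min\{r/q,1/\tilde q\}$, we get $2\epsilon\le\tfrac14\min\{(r/q)^\beta,(1/\tilde q)^\beta\}\le\tfrac14\|x_1-x_2\|^\beta$. (3) If $x_1\in\mathcal{R}^\omega_{\tilde j}$ and $x_2\in\mathcal{T}^\omega\cup\mathcal{S}^\omega$ (or vice versa), then $|f^{\circ,+}(x_1)-f^{\circ,+}(x_2)|\le\tfrac12$; here I would observe that any point of $\mathcal{R}^\omega_{\tilde j}$ and any point of $\mathcal{T}^\omega\cup\mathcal{S}^\omega$ differ by at least $1$ in the $\tilde j$th coordinate — because on $\mathcal{T}^\omega$ the $\tilde j$th coordinate lies in $\{1+rv/q:v\in[q]\}$, i.e. in $[1+r/q,1+r]$, while on $\mathcal{R}^\omega_{\tilde j}$ it lies in $(1+r,2+r)$, wait, I need to be careful: the separation between $\mathcal{T}^\omega$ and $\mathcal{R}^\omega_{\tilde j}$ in that coordinate can be made $\ge$ a constant $\ge1$, and likewise $\mathcal{S}^\omega$ has that coordinate equal to $0$ — so $\|x_1-x_2\|\ge 1$ and hence $\tfrac12\le\tfrac12\|x_1-x_2\|^\beta$.

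I expect the only genuinely delicate point to be the bookkeeping in case (3): verifying that the ``box'' piece $\mathcal{R}^\omega_{\tilde j}$ is separated by at least $1$ from the lattice $\mathcal{T}^\omega$ and from the coordinate axes $\mathcal{S}^\omega$ in the $\ell^2$ norm, which requires reading off the coordinate ranges from the definitions of $\mathcal{T}^\omega$ and $\mathcal{R}^\omega_{\tilde j}$ given just above the lemma. This is the analogue of the use of the boxes $\mathcal{R}^\omega_j$ in the light tailed proof, and the argument is the same in spirit: the boxes sit ``far out'' so that the jump to value $\tfrac12$ is harmless. Everything else is a direct transcription of the light tailed argument, with $\mathcal{R}^\omega$ (which carried the $\|x^\omega-z^\omega\|^\beta$ term there) now absent, so the proof is in fact strictly simpler — there is no Minkowski-inequality step to perform.
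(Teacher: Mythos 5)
You take exactly the same route as the paper's own proof of this lemma: the same three-way case analysis, the same use of the bound on $\epsilon$ against the minimal lattice spacing $\min\{r/q,1/\tilde q\}$ in case (2), and the same line $1/2 \leq \frac{1}{2}\|x_1-x_2\|^\beta$ in case (3). Cases (1) and (2), and the $\mathcal{S}^\omega$ versus $\mathcal{R}^\omega_{\tilde j}$ sub-case of (3), are all sound (in the latter, some $\omega$-coordinate is $0$ on the $\mathcal{S}^\omega$ side and at least $1+1/\tilde q$ on the $\mathcal{R}^\omega_{\tilde j}$ side, so the distance exceeds $1$). So at the level of strategy your proposal is the paper's, and for those sub-cases it is correct.

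The ``wait, I need to be careful'' you wrote about $\mathcal{T}^\omega$ versus $\mathcal{R}^\omega_{\tilde j}$, however, was a correct instinct that you then talked yourself out of --- and the conclusion you drew (separation $\geq 1$) is not supported by the definitions as written. On $\mathcal{T}^\omega$ the $\tilde j$th coordinate is $1+rv_{\tilde j}/q$ with $v_{\tilde j}\in[q]$, so it includes the value $1+r$ at $v_{\tilde j}=q$; on $\mathcal{R}^\omega_{\tilde j}$ it is $1+r+u_{\tilde j}$ with $u_{\tilde j}\in(0,1)$. Taking $v_{\tilde j}=q$, $u_{\tilde j}\downarrow 0$, and matching all other $\omega$-coordinates gives $\|x_1-x_2\|\to 0$, while $|f^{\circ,+}_{\epsilon,\tilde q, q, r, \sigma}(x_1)-f^{\circ,+}_{\epsilon,\tilde q, q, r, \sigma}(x_2)|=|1/2\mp\epsilon|\ge 1/4$, so the displayed inequality fails in the limit. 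The paper's own proof asserts the same chain with no justification, so you have faithfully reproduced its argument rather than introduced a new error; but the step is genuinely unjustified, and the paper resolves it in the light-tailed analogue (Lemma~\ref{lem:f_beta_holder_on_Rplus_Tplus_S}) only because there the boxes $\mathcal{R}_j^\omega$ are explicitly displaced by $1+2r+\kappa$, which yields separation at least $1$; no analogous offset is present in the heavy-tailed $\mathcal{R}^\omega_{\tilde j}$. Any repair lives in the construction, not in your case analysis --- for example restricting $v_{\tilde j}\in[q-1]$, or translating $\mathcal{R}^\omega_{\tilde j}$ outward by $1$ in the $\tilde j$th coordinate.
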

\begin{proof}
If $x_1=x_2$, $x_1, x_2 \in \mathcal{S}^{\omega}$, or $x_1,x_2\in\mathcal{R}_{\tilde{j}}^\omega$, then $|f^{\circ,+}_{\epsilon,\tilde{q},q,r,\sigma}(x_1)-f^{\circ,+}_{\epsilon,\tilde{q},q,r,\sigma}(x_2)|=0\leq1/2\|x_1-x_2\|^{\beta_{\omega}}$. If $x_1 \in \mathcal{T}^{\omega} \cup \mathcal{S}^{\omega}$ and $x_2 \in\mathcal{T}^{\omega}$, with $x_1 \neq x_2$, then 
\[
|f^{\circ,+}_{\epsilon,\tilde{q},q,r,\sigma}(x_1)-f^{\circ,+}_{\epsilon,\tilde{q},q,r,\sigma}(x_2)| \leq 2\epsilon \leq \frac{1}{2} \Bigl( \bigl(\frac{r}{q}\bigr)^{\beta_{\omega}} \wedge \bigl(\frac{1}{\tilde{q}}\bigr)^{\beta_{\omega}} \Bigr) \leq \frac{1}{2}\|x_1-x_2\|^{\beta_{\omega}}.
\]
If $x_1\in\mathcal{R}_{\tilde{j}}^\omega$ and $x_2\in \mathcal{T}^{\omega} \cup  \mathcal{S}^{\omega}$, then $|f^{\circ,+}_{\epsilon,\tilde{q},q,r,\sigma}(x_1)-f^{\circ,+}_{\epsilon,\tilde{q},q,r,\sigma}(x_2)| \leq 1/2 \leq 1/2 \cdot \|x_1- x_2\|^{\beta_{\omega}}$, and the result follows.
\end{proof}

\begin{corollary}\label{cor:f_beta_holder_on_IRplus_heavy_tails}
Fix $d\in\IN$, $\omega \in \{0,1\}^d \setminus \{\mathbf{0}_d\}$, $\tilde{q},q \in \mathbb{N}$, $r>1$, $\beta_{\omega} \in (0,1]$, $\tilde{j} \in \{j\in [d] : \omega_{j} = 1\}$, $\epsilon \in (0,(1/4\cdot(r/q)^{\beta_{\omega}}) \wedge (1/4\cdot(1/\tilde{q})^{\beta_{\omega}})]$ and $\sigma=(\sigma_t)_{t\in[T]}\in\{-1,1\}^T$. There exists a function $f^{+}_{\epsilon,\tilde{q},q,r,\sigma}: [0,\infty)^d \rightarrow\IR$ such that for all $x\in \mathcal{T}^{\omega} \cup \mathcal{S}^{\omega} \cup \mathcal{R}_{\tilde{j}}^\omega$ we have $f^{+}_{\epsilon,\tilde{q},q,r,\sigma}(x)=f^{\circ,+}_{\epsilon,\tilde{q},q,r,\sigma}(x)$; and further we have $f^{+}_{\epsilon,\tilde{q},q,r,\sigma}(x_1)=f^{+}_{\epsilon,\tilde{q},q,r,\sigma}(x_1^\omega)$, as well as $|f^{+}_{\epsilon,\tilde{q},q,r,\sigma}(x_1)-f^{+}_{\epsilon,\tilde{q},q,r,\sigma}(x_2)|\leq (1/2)\cdot\lVert x_1-x_2\rVert^{\beta_{\omega}}$ for all $x_1,x_2\in [0,\infty)^d$.

\end{corollary}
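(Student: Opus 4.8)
The plan is to mimic the proof of Corollary~\ref{cor:f_beta_holder_on_IRplus} in the light-tailed case. First I would observe that the function $f^{\circ,+}_{\epsilon,\tilde{q},q,r,\sigma}$ defined in~\eqref{def:f_circ_plus_heavy_tails} depends on its argument $x$ only through $x^\omega$, and in fact only through the $d_\omega$ coordinates picked out by $\Pi_\omega$: from the case split in~\eqref{def:f_circ_plus_heavy_tails}, if $x_1^\omega = x_2^\omega$ then $f^{\circ,+}_{\epsilon,\tilde{q},q,r,\sigma}(x_1) = f^{\circ,+}_{\epsilon,\tilde{q},q,r,\sigma}(x_2)$, so there is a well-defined function $g : \Pi_\omega\bigl(\mathcal{T}^{\omega} \cup \mathcal{S}^{\omega} \cup \mathcal{R}_{\tilde{j}}^\omega\bigr) \subseteq \IR^{d_\omega} \to \IR$ with $f^{\circ,+}_{\epsilon,\tilde{q},q,r,\sigma}(x) = g(\Pi_\omega(x))$ on the domain. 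Since $\lVert \Pi_\omega(x_1) - \Pi_\omega(x_2)\rVert = \lVert x_1^\omega - x_2^\omega\rVert$, Lemma~\ref{lem:f_beta_holder_on_Rplus_Tplus_S_heavy_tails} shows $g$ is $\beta$-H\"older on its domain with constant $1/2$.

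Next I would apply McShane's extension theorem \citep{mcshane1934extension} on $\IR^{d_\omega}$ to obtain a $\beta$-H\"older extension $\tilde{g} : \IR^{d_\omega} \to \IR$ of $g$ with the same H\"older constant $1/2$, and then define $f^{+}_{\epsilon,\tilde{q},q,r,\sigma} : [0,\infty)^d \to \IR$ by $f^{+}_{\epsilon,\tilde{q},q,r,\sigma}(x) := \tilde{g}(\Pi_\omega(x))$.

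Finally I would verify the three asserted properties. For $x \in \mathcal{T}^{\omega} \cup \mathcal{S}^{\omega} \cup \mathcal{R}_{\tilde{j}}^\omega$ we have $f^{+}_{\epsilon,\tilde{q},q,r,\sigma}(x) = \tilde{g}(\Pi_\omega(x)) = g(\Pi_\omega(x)) = f^{\circ,+}_{\epsilon,\tilde{q},q,r,\sigma}(x)$. Since $\Pi_\omega(x) = \Pi_\omega(x^\omega)$ for every $x$, we get $f^{+}_{\epsilon,\tilde{q},q,r,\sigma}(x_1) = f^{+}_{\epsilon,\tilde{q},q,r,\sigma}(x_1^\omega)$. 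And for any $x_1, x_2 \in [0,\infty)^d$, using that $\tilde{g}$ is $\beta$-H\"older with constant $1/2$ together with $\lVert \Pi_\omega(x_1) - \Pi_\omega(x_2)\rVert \le \lVert x_1 - x_2\rVert$, we obtain $|f^{+}_{\epsilon,\tilde{q},q,r,\sigma}(x_1) - f^{+}_{\epsilon,\tilde{q},q,r,\sigma}(x_2)| = |\tilde{g}(\Pi_\omega(x_1)) - \tilde{g}(\Pi_\omega(x_2))| \le \tfrac{1}{2}\lVert \Pi_\omega(x_1) - \Pi_\omega(x_2)\rVert^{\beta} \le \tfrac{1}{2}\lVert x_1 - x_2\rVert^{\beta}$.

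There is essentially no substantial obstacle here: the argument is a direct transcription of the light-tailed case, with Lemma~\ref{lem:f_beta_holder_on_Rplus_Tplus_S_heavy_tails} in place of Lemma~\ref{lem:f_beta_holder_on_Rplus_Tplus_S}. The only point needing (minor) care is confirming that $f^{\circ,+}_{\epsilon,\tilde{q},q,r,\sigma}$ genuinely factors through $\Pi_\omega$ without ambiguity, which follows immediately from inspecting the three cases in~\eqref{def:f_circ_plus_heavy_tails}.
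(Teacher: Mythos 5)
Your argument is correct and follows the same route as the paper's own proof: observe that $f^{\circ,+}_{\epsilon,\tilde{q},q,r,\sigma}$ factors through $x^\omega$, apply Lemma~\ref{lem:f_beta_holder_on_Rplus_Tplus_S_heavy_tails} for the $\beta$-H\"older bound on the restricted domain, invoke McShane's extension on $\IR^{d_\omega}$, and then extend to $[0,\infty)^d$ by precomposing with the projection. Your writeup is simply a more explicit transcription of the same argument, with the well-definedness of the factored function spelled out.
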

\begin{proof}
Note that \eqref{def:f_circ_plus_heavy_tails} depends only on $x^\omega$, such that by McShane's extension theorem \citep{mcshane1934extension} on $\IR^{d_\omega}$ there exists a $\beta_{\omega}$-H\"older continuous extension of $f^{\circ,+}_{\epsilon,\tilde{q},q,r,\sigma}(x^{\omega})$ onto $([0,\infty)^d)^\omega$. The result follows by setting $f^{+}_{\epsilon,\tilde{q},q,r,\sigma}(x)=f^{+}_{\epsilon,\tilde{q},q,r,\sigma}(x^\omega)$ for any $x\in[0,\infty)^d$.
\end{proof}

We now define the function $f_{\epsilon,\tilde{q},q,r,\sigma}^{(\omega)}:\IR^d\rightarrow\IR$. Given $x = (x_1, \ldots, x_d)^T \in \mathbb{R}^d$, let $s = (s_1,\ldots, s_d)^T \in \{-1,1\}^d$ be $s_j := \mathrm{sign}(x_j)$ and define   
\begin{align}\label{def:f_omega_heavytails}
    f^{(\omega)}_{\epsilon,\tilde{q},q,r,\sigma}(x):= \Bigl(\prod_{j \in [d_{\omega}]} \{\Pi_{\omega}(s)\}_j \Bigr) \cdot f^+_{\epsilon,\tilde{q},q,r,\sigma}(s\odot x).
\end{align}

Finally, define $\eta_{\epsilon,\tilde{q},q,r,\sigma}^{(\omega)}(x):=1/2 + f_{\epsilon,\tilde{q},q,r,\sigma}^{(\omega)}(x)$, for $x \in \mathbb{R}^d$.

\begin{lemma}\label{lem:E_and_S_def_satisfied_heavy_tails}
Fix $d\in\IN$, $\mathcal{O}\subseteq\{0,1\}^d$, $\omega \in \{0,1\}^d \setminus \{\mathbf{0}_d\}$, $\tilde{q},q \in \mathbb{N}$, $r>1$, $a\in(0,1/2]$, $\boldsymbol{\beta}\in(0,1]^d$ and let $\beta_{\omega} = \min\{\beta_j : \omega_j = 1\}$. Fix further $\tilde{j} \in \{j\in [d] : \omega_{j} = 1\}$, $\epsilon \in (0,(1/4\cdot(r/q)^{\beta_{\omega}}) \wedge (1/4\cdot(1/\tilde{q})^{\beta_{\omega}})]$, $\sigma=(\sigma_t)_{t\in[T]}\in\{-1,1\}^T$, $c_{\mathrm{E}} \leq 1/8$. Let $Q\equiv Q_{q,\tilde{q},r,a,\tilde{j}}$ be the distribution of $(X,Y,O)$ on $\mathbb{R}^d\times\{0,1\}\times\{0,1\}^d$ with $X$-marginal distribution $\mu = \mu^{(\omega)}_{q,\tilde{q},r,a,\tilde{j}}$, with regression function $\eta=\eta^{(\omega)}_{\epsilon,\tilde{q},q,r,\sigma}$, with $O\indep(X,Y)$ and with $\IP(O=o)>0$ for all $o\in\mathcal{O}$. Then $Q$ belongs to $\mathcal{Q}_{\mathrm{E}}(\{\omega\}, c_{\mathrm{E}},\mathcal{O})$ and $P_Q$, the distribution of $(X,Y)$ belongs to $\mathcal{P}_{\mathrm{S}}(\boldsymbol{\beta}, 1)$.
\end{lemma}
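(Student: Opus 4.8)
The plan is to mirror the structure of the light-tailed case (Lemma~\ref{lem:E_and_S_def_satisfied}), adapting each step to the product-measure construction $\mu = \mu^{(\omega)}_{q,\tilde q,r,a,\tilde j}$ and the regression function $\eta = \eta^{(\omega)}_{\epsilon,\tilde q,q,r,\sigma}$. Since $O \indep (X,Y)$ under $Q$, we have $\mu_{\omega'} = \mu_{\omega'\mid o}$ for every $o \in \mathcal{O}$ with $\omega' \preceq o$, so $\sigma^2_{\omega'} = \mathbb{E}_Q\{f^2_{\omega'}(X)\}$ for all $\omega' \in \{0,1\}^d$; thus the $\mathcal{Q}_{\mathrm{E}}$ membership reduces to computing the anova components $(f_{\omega'})_{\omega'}$ associated to $\eta$ via \eqref{eq:def_f_zero}--\eqref{eq:def_f_omega}, showing they vanish for $\omega' \neq \omega$, and lower-bounding $\sigma^2_\omega$.

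First I would establish that $f_{\omega'} \equiv 0$ for all $\omega' \neq \omega$ and $f_\omega = f^{(\omega)}_{\epsilon,\tilde q,q,r,\sigma}$. The key is the antisymmetry built into \eqref{def:f_omega_heavytails}: the factor $\prod_{j\in[d_\omega]}\{\Pi_\omega(s)\}_j$ means $f^{(\omega)}_{\epsilon,\tilde q,q,r,\sigma}$ is an odd function in each coordinate $x_j$ with $\omega_j = 1$, and it vanishes on the coordinate hyperplanes (the $\mathcal{S}^\omega$ branch of \eqref{def:f_circ_plus_heavy_tails}). Combined with the reflection symmetry of the product marginal $\mu$ about the origin in each coordinate, one argues by induction on $d_{\omega'}$: for $\omega' \nsucceq \omega$ pick $j^\star$ with $\omega_{j^\star} = 1$ and $\omega'_{j^\star} = 0$, split the conditional expectation over $\{x_{j^\star} > 0\}$ and $\{x_{j^\star} < 0\}$, and use oddness in coordinate $j^\star$ together with the symmetry of $\mu$ to get cancellation (exactly as in the $\omega' = \mathbf{0}_d$ and $\mathbf{0}_d \prec \omega' \prec \omega$ cases of Lemma~\ref{lem:E_and_S_def_satisfied}); for $\omega \prec \omega'$ the summand $\eta - 1/2 - \sum_{\omega''\prec\omega'}f_{\omega''}$ is already identically zero; and for $\omega' = \omega$ the conditional expectation returns $f^{(\omega)}_{\epsilon,\tilde q,q,r,\sigma}(x^\omega)$ directly. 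Then I would compute
\[
\sigma^2_\omega = \int_{\mathbb{R}^d} f^2_\omega(x)\,d\mu(x) = a\epsilon^2 + \tfrac{1-a}{4} \geq \tfrac{1}{8},
\]
where the two terms come from the mass $\mu$ places on $\bigcup_s s\odot \mathcal{T}^\omega$ (total mass $a$, value $\pm\epsilon$) and on $\bigcup_s s\odot \mathcal{R}^\omega_{\tilde j}$ (total mass $1-a$, value $1/2$), using $a \leq 1/2$; hence $\sigma^2_\omega \geq 1/8 \geq c_{\mathrm{E}}$, giving $Q \in \mathcal{Q}_{\mathrm{E}}(\{\omega\},c_{\mathrm{E}},\mathcal{O})$.

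For the smoothness claim $P_Q \in \mathcal{P}_{\mathrm{S}}(\beta,1)$, I would invoke Corollary~\ref{cor:f_beta_holder_on_IRplus_heavy_tails}, which gives a $(1/2)$-scaled $\beta$-Hölder extension $f^+_{\epsilon,\tilde q,q,r,\sigma}$ on $[0,\infty)^d$ depending only on $x^\omega$. The argument is then verbatim the final part of Lemma~\ref{lem:E_and_S_def_satisfied}: if $\mathrm{sign}(x_1^\omega) = \mathrm{sign}(x_2^\omega)$ the bound $|f_\omega(x_1) - f_\omega(x_2)| \leq \tfrac12\|x_1 - x_2\|^\beta \leq \|x_1 - x_2\|^\beta$ is immediate; if the signs differ, interpolate through a point $z \in \mathcal{S}^\omega$ on the segment $[x_1,x_2]$, apply the Hölder bound on each half, and use $\zeta^\beta + (1-\zeta)^\beta \leq 2^{1-\beta} \leq 2$. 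This yields $|f_\omega(x_1) - f_\omega(x_2)| \leq \|x_1 - x_2\|^\beta$, i.e. $C_{\mathrm{S}} = 1$.

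The main obstacle is the verification that the anova components genuinely vanish for $\omega' \neq \omega$ — in particular keeping the bookkeeping straight in the inductive step when $\omega' \nsucceq \omega$, since one must confirm that conditioning on $X^{\omega'} = x^{\omega'}$ leaves the coordinate $X_{j^\star}$ symmetric under $\mu$ (which holds because $\mu$ is a product measure, each factor of which is symmetric about $0$) and that $f^{(\omega)}_{\epsilon,\tilde q,q,r,\sigma}$ really is odd in that coordinate including on the support points in $\mathcal{T}^\omega$ and $\mathcal{R}^\omega_{\tilde j}$ (which holds because the defining formula \eqref{def:f_circ_plus_heavy_tails} is even before multiplying by $\prod_j\{\Pi_\omega(s)\}_j$, and the sign product flips under $x_{j^\star}\mapsto -x_{j^\star}$). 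Everything else is a direct transcription of the light-tailed arguments.
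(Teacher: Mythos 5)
Your proposal is correct and follows essentially the same route as the paper: identify the anova components via the coordinate-oddness of $f^{(\omega)}_{\epsilon,\tilde q,q,r,\sigma}$ and the reflection symmetry of the product marginal, inductively conclude $f_{\omega'}\equiv 0$ for $\omega'\neq\omega$, compute $\sigma^2_\omega = a\epsilon^2 + (1-a)/4 \ge 1/8$, and obtain the $\beta$-H\"older bound by interpolating through a point of $\mathcal{S}^\omega$ using $\zeta^\beta + (1-\zeta)^\beta \le 2^{1-\beta}$. The paper splits the "$\omega'\neq\omega$" cases slightly differently ($\omega'=\mathbf{0}_d$, $\mathbf{0}_d\prec\omega'\prec\omega$, then $\omega'\npreceq\omega$), but your grouping into $\omega'\nsucceq\omega$ versus $\omega\prec\omega'$ covers the same ground with the same mechanisms.
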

\begin{proof}
We first show that the functions given by Equations~\eqref{eq:def_f_zero} 
and~\eqref{eq:def_f_omega} 
satisfy $f_{\omega} = f^{(\omega)}_{\epsilon,\tilde{q},q,r,\sigma}$ and $f_{\omega'} \equiv 0$, for all $\omega' \in\{0,1\}^d \setminus \{\omega\}$. 
First, by the symmetry of $ f^{(\omega)}_{\epsilon,\tilde{q},q,r,\sigma}$, and letting $j^\star=\min\{j\in[d]:~\omega_j=1\}$ we have 
\begin{align*}
f_{\mathbf{0}} &= \mathbb{E}_P\bigl\{\eta(X)\bigr\} - \frac{1}{2}  =  \mathbb{E}_P\{f^{(\omega)}_{\epsilon,\tilde{q},q,r,\sigma}(X)\} =  \int_{\mathbb{R}^d} f^{(\omega)}_{\epsilon,\tilde{q},q,r,\sigma}(x) \, d\mu(x) \\
&= \int_{\mathbb{R}^d} f^{(\omega)}_{\epsilon,\tilde{q},q,r,\sigma}(x) \mathbbm{1}_{\{x_{j^\star}>0\}} \, d\mu(x) + \int_{\mathbb{R}^d} f^{(\omega)}_{\epsilon,\tilde{q},q,r,\sigma}(x) \mathbbm{1}_{\{x_{j^\star}<0\}} \, d\mu(x)  \\
&= \frac{1}{2^{d-d_\omega}} \sum_{s\in \{-1,1\}^d:s_{j^\star}=1} \Bigl(\prod_{j = 2 }^{d_{\omega}} \{\Pi_{\omega}(s)\}_j \Bigr) \cdot \int_{\mathcal{T}^{\omega} \cup \mathcal{R}_{\tilde{j}}^{\omega}}   f^+_{\epsilon,\tilde{q},q,r,\sigma}(x) \, d\mu(x) \\
&\hspace{8mm} - \frac{1}{2^{d-d_\omega}} \sum_{s\in \{-1,1\}^d:s_{j^\star}=-1} \Bigl(\prod_{j = 2}^{d_{\omega}} \{\Pi_{\omega}(s)\}_j \Bigr) \cdot \int_{\mathcal{T}^{\omega} \cup \mathcal{R}_{\tilde{j}}^{\omega}}   f^+_{\epsilon,\tilde{q},q,r,\sigma}(x) \, d\mu(x)=0.
\end{align*}
where the fourth equality holds since $f^{(\omega)}_{\epsilon,\tilde{q},q,r,\sigma}(x)=0$ wherever $x_{j^\star}=0$.  Further, for $\mathbf{0} \prec \omega' \prec \omega$, now letting $j^\star=\min\{j\in[d]:~\omega_j=1-\omega'_j=1\}$ and by induction we get 
\begin{align*}
    f_{\omega'}(x) &= \mathbb{E}_P\Bigl\{\eta(X) - \frac{1}{2} - \sum_{\omega''\prec\omega'}f_{\omega''}(X) \Big| X^{\omega'} = x^{\omega'}\Bigr\} = \mathbb{E}_P\Bigl\{f^{(\omega)}_{\epsilon,\tilde{q},q,r,\sigma}(X) \Big| X^{\omega'} = x^{\omega'}\Bigr\} \\
    &= \mathbb{E}_P\Bigl\{f^{(\omega)}_{\epsilon,\tilde{q},q,r,\sigma}(X) \bigl( \mathbbm{1}\{x_{j^\star}>0\} + \mathbbm{1}\{x_{j^\star}<0\} \bigr) \Big| X^{\omega'} = x^{\omega'}\Bigr\} = 0
\end{align*}
using the same argument as for the $\omega'=\mathbf{0}_d$ case.  We further get
\begin{align*}
f_{\omega}(x) & = \mathbb{E}_P\Bigl\{\eta(X) - \frac{1}{2} - \sum_{\omega'\prec\omega}f_{\omega'}(X) \Bigm| X^{\omega} = x^{\omega}\Bigr\} 
\\ & = \mathbb{E}_P\Bigl\{f^{(\omega)}_{\epsilon,\tilde{q},q,r,\sigma}(X) \Bigm| X^{\omega} = x^{\omega}\Bigr\} = f^{(\omega)}_{\epsilon,\tilde{q},q,r,\sigma}(x^\omega),
\end{align*} 
i.e. $f_{\omega} = f^{(\omega)}_{\epsilon,\tilde{q},q,r,\sigma}$. For $\omega' \npreceq \omega$, there exists either a $j^\star=\min\{j\in[d]:~\omega_j=1-\omega'_j=1\}$, in which case we proceed as above, or there exists no such $j^\star$, in which case $\omega\prec\omega'$. In this case we have
\begin{align*}
    f_{\omega'}(x) &= \mathbb{E}_P\Bigl\{\eta(X) - \sum_{\omega''\prec\omega'}f_{\omega''}(X) \Big| X^{\omega'} = x^{\omega'}\Bigr\} = \mathbb{E}_P\Bigl\{0\Big| X^{\omega'} = x^{\omega'}\Bigr\} = 0.
\end{align*}
We deduce that $\sigma_{\omega'}^2=0$ for all $\omega'\neq \omega$.  Finally, we have
\begin{align*}
\sigma_\omega^2 &= \int_{\mathbb{R}^d} f_\omega^2(x) \,d\mu(x)  =  a \epsilon^2 + 2^{d_\omega} \int_{\mathcal{R}_{\tilde{j}}^\omega}  1/4 \, d\mu(x) 
=  a \epsilon^2 + \frac{1-a}{4} \geq 1/8,
\end{align*}
thus $Q\in\mathcal{Q}_{\mathrm{E}}(\{\omega\},c_{\mathrm{E}},\mathcal{O})$, for $c_{\mathrm{E}} \leq 1/8$. 

For the final part of the result, if $x_1,x_2\in\mathbb{R}^d$ are such that $\mathrm{sign}(x_1^\omega)=\mathrm{sign}(x_2^\omega)$, then by Corollary~\ref{cor:f_beta_holder_on_IRplus_heavy_tails},
\begin{align*}
|f_\omega(x_1) - f_\omega(x_2)| &= |f^+_{\epsilon,\tilde{q},q,r,\sigma}(s\odot x_1) - f^+_{\epsilon,\tilde{q},q,r,\sigma}(s\odot x_2)| \\
&\leq \frac{1}{2} \|s\odot x_1-s\odot x_2\|^{\beta_{\omega}} \leq \|x_1-x_2\|^{\beta_{\omega}}.  
\end{align*}
On the other hand, if $s_1:=\mathrm{sign}(x_1^\omega) \neq \mathrm{sign}(x_2^\omega)=:s_2$, then there exists $z \in \mathcal{S}^\omega$ with $z=x_1+\zeta(x_2-x_1)$ for some $\zeta\in[0,1]$. Then, again by Corollary~\ref{cor:f_beta_holder_on_IRplus_heavy_tails},
\begin{align*}
|f_\omega(x_1) &-f_\omega(x_2)| \leq |f_\omega(x_1)-f_\omega(z)| + |f_\omega(z)-f_\omega(x_2)| \\
&\leq |f^+_{\epsilon,\tilde{q},q,r,\sigma}(s_1\odot x_1)-f^+_{\epsilon,\tilde{q},q,r,\sigma}(s_1\odot z)| + |f^+_{\epsilon,\tilde{q},q,r,\sigma}(s_2\odot z) - f^+_{\epsilon,\tilde{q},q,r,\sigma}(s_2\odot x_2)| \\
&\leq \frac{1}{2} \|s_1\odot x_1-s_1\odot z\|^{\beta_{\omega}} + \frac{1}{2} \|s_2\odot z-s_2\odot x_2\|^{\beta_{\omega}} = \frac{1}{2} \|x_1-z\|^{\beta_{\omega}} + \frac{1}{2} \|z-x_2\|^{\beta_{\omega}} \\
&= \frac{1}{2} (\zeta^{\beta_{\omega}}+(1-\zeta)^{\beta_{\omega}}) \|x_1-x_2\|^{\beta_{\omega}} \leq \frac{1}{2} 2^{1-\beta_{\omega}} \|x_1-x_2\|^{\beta_{\omega}} \leq \|x_1-x_2\|^{\beta_{\omega}}.
\end{align*}
Finally, for $\omega'\neq\omega$ we have $|f_{\omega'}(x_1)-f_{\omega'}(x_2)|=0\leq\|x_1-x_2\|^{\beta_{\omega'}}$; In other words, we have $P\in\mathcal{P}_{\mathrm{S}}(\boldsymbol{\beta},1)$. 
\end{proof}

\begin{lemma}\label{lem:M_def_satisfied_heavy_tails}
Fix $d\in\IN$, $\omega \in \{0,1\}^d \setminus \{\mathbf{0}_d\}$, $\tilde{q},q \in \mathbb{N}$, $r>1$, $\beta_{\omega} \in (0,1]$, $\tilde{j} \in \{j\in [d] : \omega_{j} = 1\}$, $\epsilon \in (0,(1/4\cdot(r/q)^{\beta_{\omega}}) \wedge (1/4\cdot(1/\tilde{q})^{\beta_{\omega}})]$, $\sigma=(\sigma_t)_{t\in[T]}\in\{-1,1\}^T$.
Fix $C_{\mathrm{M}} \geq 2^\alpha$, $\alpha\in[0,d_{\omega}/\beta_{\omega}]$, and $a\in[0,\epsilon^\alpha]$. Let $P = P^{(\omega)}_{\epsilon,\tilde{q},q,r,\sigma,a}$ denote the distribution on $\mathbb{R}^d\times\{0,1\}$ with regression function $\eta=\eta^{(\omega)}_{\epsilon,\tilde{q},q,r,\sigma}$ and marginal feature distribution $\mu=\mu^{(\omega)}_{\tilde{q},q,r,a}$. Then $P\in\mathcal{P}_{\mathrm{M}}(\alpha,C_{\mathrm{M}})$.
\end{lemma}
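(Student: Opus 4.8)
The proof of Lemma~\ref{lem:M_def_satisfied_heavy_tails} will closely mirror that of Lemma~\ref{lem:M_def_satisfied} in the light tailed case, but is in fact simpler because the marginal $\mu = \mu^{(\omega)}_{q,\tilde{q},r,a,\tilde{j}}$ does not place any continuous mass near the point where $\eta$ approaches $1/2$; instead, outside the atoms of $\mathcal{T}^{\omega}$ the regression function is constantly $1/2 \pm 1/2$ (up to sign), so $\eta$ is bounded away from $1/2$ on the entire support apart from the finitely many atoms. Concretely, I plan to split the argument according to the size of the margin threshold $t$.

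First I would observe that, by the construction~\eqref{def:f_circ_plus_heavy_tails}, \eqref{def:f_omega_heavytails} and the definition $\eta = 1/2 + f^{(\omega)}_{\epsilon,\tilde q,q,r,\sigma}$, for any $x \in \mathrm{supp}(\mu)$ we have $|\eta(x) - 1/2| \in \{\epsilon\} \cup \{1/2\}$: it equals $\epsilon$ exactly on the $2^{d_\omega}$-fold reflection of the atoms $z^\omega_t \in \mathcal{T}^\omega$ (each atom carrying total $\mu$-mass $a$ across reflections, since $\mu$ restricted to $\mathcal{T}^\omega$ has mass $2^{d_\omega}\cdot a/2^{d_\omega}=a$ — more carefully, $\nu_{2,q,r,a}$ assigns mass $a$ in total to its atoms and $\nu_{1,\tilde q}$ assigns mass $1$), and equals $1/2$ on the reflections of $\mathcal{R}^\omega_{\tilde j}$. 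Then: if $t \in (0,\epsilon)$, the set $\{x : |\eta(x)-1/2| < t\}$ misses the entire support, so its $\mu$-measure is $0 \le C_{\mathrm{M}} t^\alpha$. If $t \in [\epsilon, 1/2)$, then $\{x : |\eta(x)-1/2| < t\}$ meets the support only at the atoms in $\pm\mathcal{T}^\omega$, hence has $\mu$-measure $a \le \epsilon^\alpha \le t^\alpha \le C_{\mathrm{M}} t^\alpha$, using the hypotheses $a \le \epsilon^\alpha$ and $t \ge \epsilon$. Finally if $t \ge 1/2$, then $\mu(\{x : |\eta(x)-1/2| < t\}) \le 1 \le (2t)^\alpha \le C_{\mathrm{M}} t^\alpha$, using $C_{\mathrm{M}} \ge 2^\alpha$.

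The only genuinely careful point — and the one I would check explicitly rather than wave at — is the claim that the total $\mu$-mass of the atoms of $\mathcal{T}^\omega$ (and their sign reflections) is exactly $a$, equivalently that the continuous part $\mathcal{R}^\omega_{\tilde j}$ carries mass $1-a$. This follows from the product structure $\mu = \bigotimes_{j<\tilde j}\nu_{\omega_j} \otimes \nu_{2,q,r,a} \otimes \bigotimes_{j>\tilde j}\nu_{\omega_j}$ together with the fact that, in each coordinate $j \ne \tilde j$ with $\omega_j = 1$, the measure $\nu_{1,\tilde q}$ is a purely atomic probability measure supported on $\{\pm(1+v/\tilde q)\}$, while in the $\tilde j$ coordinate $\nu_{2,q,r,a}$ splits as $a$ (atomic, on $\{\pm(1+rv/q)\}$) plus $1-a$ (uniform on $(r+1,r+2)\cup(-r-2,-r-1)$). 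Taking the product, the atomic part of $\mu$ — which is exactly the reflected copies of $\mathcal{T}^\omega$ — has mass $a$, and the part with one continuous coordinate (the reflected $\mathcal{R}^\omega_{\tilde j}$) has mass $1-a$; no other pieces of the support exist. I expect this bookkeeping to be the main (though still routine) obstacle, after which the three-case split above delivers the result immediately, giving $P \in \mathcal{P}_{\mathrm M}(\alpha, C_{\mathrm M})$ for any $C_{\mathrm M} \ge 2^\alpha$.
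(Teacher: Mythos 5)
Your proof is correct and follows exactly the same three-case split on $t$ as the paper's own proof: zero mass for $t<\epsilon$, atomic mass $a\le\epsilon^\alpha\le t^\alpha$ for $t\in[\epsilon,1/2)$, and the trivial bound with $C_{\mathrm M}\ge 2^\alpha$ for $t\ge 1/2$. The extra bookkeeping you add to justify that the atomic part of $\mu$ carries total mass $a$ is sound (though the phrase ``each atom carrying total $\mu$-mass $a$ across reflections'' is loosely worded — the collection of all atoms across all reflections carries mass $a$); the paper simply states this as $2^{d_\omega}\mu(\mathcal{T}^\omega)=a$ without elaboration.
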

\begin{proof}
First, if $t\in (0,\epsilon)$, then $\mu\bigl(\bigl\{x \in \IR^d : |\eta(x)- 1/2| < t \bigr\} \bigr) = 0 \leq C_{\mathrm{M}} \cdot t^{\alpha}$.  For $t\in [\epsilon,1/2)$, we have by the definition of $\eta$ that 
\begin{align*}
\mu\Bigl(\Bigl\{x \in \mathbb{R}^d : \bigl|\eta(x)-1/2\bigr| < t \Bigr\} \Bigr) &=  2^{d_\omega} \mu\bigl(\mathcal{T}^\omega\bigl) 
=  a  \leq   \epsilon^\alpha \leq C_{\mathrm{M}} \cdot t^{\alpha}.
\end{align*}
Finally, if $t\geq1/2$, then
\begin{align*}
\mu\Bigl(\Bigl\{x \in \mathbb{R}^d : \bigl|\eta(x)-1/2\bigr| < t \Bigr\} \Bigr) \leq 1 \leq (2t)^{\alpha}  \leq C_{\mathrm{M}} \cdot t^{\alpha},
\end{align*}
as required. 
\end{proof}

\subsection{Proof of the lower bound in the heavy tailed case\label{subsec:LBproof_heavytails}}

\begin{lemma}\label{lem:LB_heavy_tails}
Fix $d, n \in \mathbb{N}$, $\mathcal{O} \subseteq \{0,1\}^d$, $o_1, \ldots, o_n \in \mathcal{O}$, $\omega \in \{0,1\}^d \setminus \{\mathbf{0}_d\}$, $c_{\mathrm{E}} \in [0,1/4]$, $\boldsymbol{\gamma} \in [0,\infty)^d$, $C_{\mathrm{L}}\geq 1$, $\boldsymbol{\beta}\in(0,1]^d$ and let $\beta_{\omega} = \min\{\beta_j : \omega_j = 1\}$. Fix further $C_{\mathrm{S}} \geq 1$, $\alpha \in [0, \infty)$ and $C_{\mathrm{M}} \geq 1$. Suppose that $\gamma_{\omega} = \min\{\gamma_j : \omega_j = 1\} < 1$, $\alpha\beta_{\omega} \leq d_\omega$, $C_{\mathrm{M}}\geq 2^\alpha$, $c_{\mathrm{E}} \leq 1/8$ and $C_{\mathrm{L}}\geq C_0 \cdot 2^{3 + 3\gamma_{\omega} + 3d_\omega \gamma_{\max}/2}  \cdot d_{\omega}^{d_{\omega}\gamma_{\max}/2}$.
Then there exists a constant $c$, depending only on $\gamma_\omega$, $C_{\mathrm{L}}$, $d_{\omega}$, $\beta_{\omega}$, $\alpha$, such that 
\begin{align*}
    \inf_{\hat C\in\mathcal C_n} \sup_{Q}\mathbb{E}_{Q}\bigl\{\mathcal{E}_{P}(\hat C) \bigm| O_1 = o_1, \ldots, O_n = o_n\bigr\}\geq c \cdot \min\Bigl\{ n_\omega^{-\frac{\beta_{\omega}\gamma_\omega(1+\alpha)}{\gamma_\omega(2\beta_{\omega}+d_\omega)+\alpha\beta_{\omega}}}, 1 \Bigr\},
\end{align*}
where the supremum is taken over $Q\in\mathcal{Q}_{\mathrm{Miss}}'(\{\omega\},c_{\mathrm{E}}, \boldsymbol{\gamma}, C_{\mathrm{L}}, \boldsymbol{\beta}, C_{\mathrm{S}} , \alpha, C_{\mathrm{M}},\mathcal{O})$.
\end{lemma}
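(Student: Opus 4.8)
The plan is to follow the route of the proof of Lemma~\ref{lem:LB_light_tails}, replacing the light-tailed marginal and regression function by the heavy-tailed constructions $\mu^{(\omega)}_{q,\tilde q,r,a,\tilde j}$ of Section~\ref{subsubsec:LBheavy} and $\eta^{(\omega)}_{\epsilon,\tilde q,q,r,\sigma}$ of Section~\ref{subsec:LBheavyeta}. First I would fix $\tilde j\in\argmin\{\gamma_j:\omega_j=1\}$, so that $\gamma_{\tilde j}=\gamma_\omega<1$, which is the hypothesis needed to apply Corollary~\ref{cor:L_def_satisfied_heavy_tails}. I would then set $\epsilon$ of order $n_\omega^{-\beta\gamma_\omega/(\gamma_\omega(2\beta+d_\omega)+\alpha\beta)}$ (truncated at $1/4$), $a:=\epsilon^\alpha$ (additionally capped below a fixed constant strictly less than $\min\{1/2,1-C_0^{-1/\gamma_\omega}\}$, which only bites when $\alpha=0$), $\tilde q:=\lfloor(4\epsilon)^{-1/\beta}\rfloor$, $q$ the largest integer for which $a^{1-\gamma_\omega}\bigl(q(4\epsilon)^{1/\beta}\bigr)^{\gamma_\omega}\le1$, $r:=q(4\epsilon)^{1/\beta}$ (so $r/q=(4\epsilon)^{1/\beta}$ is exactly the $\tilde j$-th grid spacing and the lower-density constraint $a^{1-\gamma_\omega}r^{\gamma_\omega}\le1$ holds by construction), $m:=T:=\tilde q^{\,d_\omega-1}q$, and $u:=a/(2^{d_\omega}T)$. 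The key structural difference from the light-tailed case is that here $r$ is forced to grow with $n_\omega$ whenever $\alpha>0$ (indeed $r\asymp a^{-(1-\gamma_\omega)/\gamma_\omega}$): the slower minimax rate for $\gamma_\omega<1$ reflects exactly that heavy tails are permitted, and we exploit this by letting $\mathrm{supp}(\mu)$ extend to distance $\asymp r$ along the $\tilde j$-th coordinate.

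Second, for each $\sigma\in\{-1,1\}^T$ I would take $Q^\sigma$ to have $X$-marginal $\mu=\mu^{(\omega)}_{q,\tilde q,r,a,\tilde j}$, regression function $\eta^\sigma=\eta^{(\omega)}_{\epsilon,\tilde q,q,r,\sigma}$, and $O\indep(X,Y)$ uniform on $\mathcal{O}$, and verify that $Q^\sigma\in\mathcal{Q}'_{\mathrm{Miss}}(\{\omega\},c_{\mathrm{E}},\boldsymbol\gamma,C_{\mathrm{L}},\beta,C_{\mathrm{S}},\alpha,C_{\mathrm{M}},\mathcal{O})$ one class at a time. Membership in $\mathcal{Q}_{\mathrm{Miss}}(\mathcal{O})$ is immediate since $O\indep(X,Y)$ and $\IP_Q(O=o)>0$ for all $o\in\mathcal{O}$. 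Corollary~\ref{cor:L_def_satisfied_heavy_tails}, whose hypotheses $a<\min\{1/2,1-C_0^{-1/\gamma_\omega}\}$ and $a^{1-\gamma_\omega}r^{\gamma_\omega}\le1$ hold by the choices above and which needs $C_{\mathrm{L}}\ge C_0\cdot2^{3+3\gamma_\omega+3d_\omega\gamma_{\max}/2}d_\omega^{d_\omega\gamma_{\max}/2}$ (as assumed), gives $Q^\sigma\in\mathcal{Q}_{\mathrm{L}}(\boldsymbol\gamma,C_{\mathrm{L}},\mathcal{O})$; Lemma~\ref{lem:E_and_S_def_satisfied_heavy_tails} — using $r>1$, $a\le1/2$, $c_{\mathrm{E}}\le1/8$ and the bound $\epsilon\le\tfrac14(r/q)^\beta\wedge\tfrac14(1/\tilde q)^\beta$ required by Lemma~\ref{lem:f_beta_holder_on_Rplus_Tplus_S_heavy_tails} — gives $Q^\sigma\in\mathcal{Q}_{\mathrm{E}}(\{\omega\},c_{\mathrm{E}},\mathcal{O})$ and $P_{Q^\sigma}\in\mathcal P_{\mathrm{S}}(\beta,1)\subseteq\mathcal P_{\mathrm{S}}(\beta,C_{\mathrm{S}})$; and Lemma~\ref{lem:M_def_satisfied_heavy_tails} — using $a\le\epsilon^\alpha$, $\alpha\beta\le d_\omega$ and $C_{\mathrm{M}}\ge2^\alpha$ — gives $P_{Q^\sigma}\in\mathcal P_{\mathrm{M}}(\alpha,C_{\mathrm{M}})$.

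Third, I would apply Lemma~\ref{lem:assouad} with $z^\omega_1,\ldots,z^\omega_T$ an enumeration of $\mathcal T^\omega$ and the above $\epsilon,u$. Hypothesis~(i), $2^{d_\omega+5}n_\omega\epsilon^2u\le1$, is equivalent to $32\,n_\omega\epsilon^2a\le T$, and the exponents are arranged so that the two sides match up to constants; hypotheses~(ii)--(vi) are read off from~\eqref{eq:muheavy} and~\eqref{def:f_circ_plus_heavy_tails}--\eqref{def:f_omega_heavytails} exactly as in the light-tailed case, the only points to check being that each $s^\omega\odot z^\omega_t$ carries $\mu$-mass $u$, that $\eta^\sigma(s^\omega\odot z^\omega_t)=1/2+\bigl(\prod_{j:(s^\omega)_j=-1}(s^\omega)_j\bigr)\sigma_t\epsilon$, that $\mathrm{supp}(\mu)\setminus\bigcup_{t,s}\{s^\omega\odot z^\omega_t\}\subseteq\bigcup_s s\odot\mathcal R^\omega_{\tilde j}$ lies outside $(-r-1,r+1)^d$, and that there $\eta^\sigma$ does not depend on $\sigma$. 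Then~\eqref{eq:lb_assouad} yields a lower bound $\ge2^{d_\omega-1}Tu\epsilon=\tfrac12 a\epsilon\gtrsim\epsilon^{1+\alpha}\asymp n_\omega^{-\beta\gamma_\omega(1+\alpha)/(\gamma_\omega(2\beta+d_\omega)+\alpha\beta)}$, valid once $n_\omega$ is large enough that the resulting $q\ge1$ and $r>1$. For all smaller $n_\omega$, including $n_\omega=0$, the stated bound (with a possibly smaller constant $c$) follows from monotonicity of the minimax excess risk in $n_\omega$, exactly as at the end of the proof of Lemma~\ref{lem:LB_light_tails}.

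The main obstacle is the bookkeeping in the first two steps: one must fix the value of $C_0$ and all the implied constants in $\epsilon,a,q,\tilde q,r$ so that \emph{every} inequality demanded by Corollary~\ref{cor:L_def_satisfied_heavy_tails}, Lemmas~\ref{lem:E_and_S_def_satisfied_heavy_tails} and~\ref{lem:M_def_satisfied_heavy_tails}, and hypotheses~(i)--(vi) of Lemma~\ref{lem:assouad} hold simultaneously, while the powers of $n_\omega$ collapse to the claimed exponent. This is somewhat lighter than in the light-tailed case — $\mu$ is a product measure, there is a single heavy-tail box rather than one per coordinate, and the smoothness and lower-density constraints essentially pin down $\tilde q$ and $q$ — but the coupling $r=q(4\epsilon)^{1/\beta}\asymp a^{-(1-\gamma_\omega)/\gamma_\omega}$ among $r$, $q$ and $a$, together with $T\ge32\,n_\omega\epsilon^2a$ and the requirement $\alpha\beta\le d_\omega$ (which, among other things, is what lets Lemma~\ref{lem:M_def_satisfied_heavy_tails} apply), is what forces the particular orders of magnitude above.
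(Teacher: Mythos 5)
Your proposal is correct and follows essentially the same approach as the paper: apply the modified Assouad Lemma~\ref{lem:assouad} to the heavy-tailed marginal $\mu^{(\omega)}_{q,\tilde q,r,a,\tilde j}$ and regression functions $\eta^{(\omega)}_{\epsilon,\tilde q,q,r,\sigma}$, verifying membership in $\mathcal{Q}'_{\mathrm{Miss}}$ via Corollary~\ref{cor:L_def_satisfied_heavy_tails}, Lemma~\ref{lem:E_and_S_def_satisfied_heavy_tails}, and Lemma~\ref{lem:M_def_satisfied_heavy_tails}, then truncating and using monotonicity for small $n_\omega$. The only difference is in presentation: the paper fixes $q\asymp n_\omega^{(\gamma_\omega+\alpha\beta(1-\gamma_\omega))/\phi_\omega}$ and $\tilde q\asymp n_\omega^{\gamma_\omega/\phi_\omega}$ (with $\phi_\omega=\gamma_\omega(2\beta+d_\omega)+\alpha\beta$) directly as powers of $n_\omega$ and then defines $\epsilon=(m/(2^{d_\omega+5}n_\omega))^{1/(2+\alpha)}\wedge 1/4$ so that Assouad's condition~(i) holds with equality, whereas you fix $\epsilon$ directly and back out $\tilde q,q,r$ from the smoothness and lower-density constraints; the two parametrisations give the same orders of magnitude and the same rate. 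Incidentally, your $u=a/(2^{d_\omega}T)$ is the more careful value for the atom mass of $\mu$ at each $s^\omega\odot z^\omega_t$ (the paper's $u=\epsilon^\alpha/m$ omits a $2^{-d_\omega}$ factor, which is harmless since it is absorbed into the final constant $c$ via the $2^{d_\omega-1}m\epsilon u$ bound).
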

\begin{proof}
First, let $a_1 := 2^{3\gamma_{\omega}/\beta_{\omega}}$ and
\[
q_0:=\min\biggl\{ 2^{5+d_\omega} , \Bigr(2^{(d_{\omega} + 5)\frac{\alpha(1-\gamma_{\omega})+\gamma_{\omega}/\beta_{\omega}}{2+\alpha} } \cdot a_1^{-1} \Bigl)^{\frac{2+\alpha}{\alpha(1-\gamma_{\omega})+\gamma_{\omega}/\beta_{\omega}+\gamma_{\omega}(2+\alpha)}}, 2^{(2+d_\omega)(2+\alpha)}\biggr\}.
\]
Further let
\begin{align*}
\rho &:= \frac{\gamma_{\omega}(d_\omega-\alpha\beta_{\omega})+\alpha\beta_{\omega}}{\gamma_{\omega}(2\beta_{\omega}+d_\omega)+\alpha\beta_{\omega}}\in[0,1); \quad \rho_1 := \frac{\gamma_{\omega}(1-\alpha\beta_{\omega})+\alpha\beta_{\omega}}{\gamma_{\omega}(2\beta_{\omega}+d_\omega)+\alpha\beta_{\omega}} \in [0, \rho]; 
\\ q &:= \lfloor q_0 n_{\omega}^{\frac{\gamma_{\omega} + \alpha \beta_{\omega} (1-\gamma_{\omega})}{\gamma_{\omega} (2\beta_{\omega} +d_{\omega}) + \alpha\beta_{\omega}}} \rfloor, \quad \tilde{q} := \lfloor n_{\omega}^{\frac{\gamma_{\omega}}{\gamma_{\omega} (2\beta_{\omega} +d_{\omega}) + \alpha\beta_{\omega}}} \rfloor;  \quad m := q\cdot \tilde{q}^{d_{\omega}-1}
\\ \epsilon &:= \min\Bigl\{\Bigl(\frac{m}{2^{d_\omega+5} n_\omega}\Bigr)^{\frac{1}{2+\alpha}},1/4\Bigr\}, \quad r := (8\epsilon)^{1/\beta_{\omega}} \cdot q, \quad u := \frac{\epsilon^\alpha}{m},\quad
a:=\epsilon^\alpha.
\end{align*}
Fix $\tilde{j} \in \argmin_{j \in \{j' \in [d]:\omega_{j'}=1\}} \gamma_j$, so that $\gamma_{\tilde{j}} = \gamma_{\omega}$. 

Suppose initially that $n_{\omega}  >  q_{0}^{-1/\rho_1}$, so that $q \geq 1$. For $\sigma \in\{-1,1\}^m$, let $Q^\sigma\equiv Q^\sigma_{q,\tilde{q},r,a,\tilde{j}}$ be the distribution of $(X,Y,O)$ on $\mathbb{R}^d\times\{0,1\}\times\{0,1\}^d$ with $X$-marginal distribution $\mu = \mu^{(\omega)}_{q,\tilde{q},r,a,\tilde{j}}$, with regression function $\eta=\eta^{(\omega)}_{\epsilon,\tilde{q},q,r,\sigma}$, with $O\indep(X,Y)$ and with $\IP(O=o)=1/|\mathcal{O}|$ for all $o\in\mathcal{O}$. We now show that $Q^\sigma\in\mathcal{Q}_{\mathrm{Miss}}'(\{\omega\},c_{\mathrm{E}}, \boldsymbol{\gamma}, C_{\mathrm{L}}, \boldsymbol{\beta}, C_{\mathrm{S}} , \alpha, C_{\mathrm{M}},\mathcal{O})$, by applying in turn the results from the previous subsections.  First, to apply Corollary \ref{cor:L_def_satisfied}, using that $\rho-1 = \frac{-\gamma_{\omega} \beta_{\omega} (2+\alpha)}{\gamma_{\omega}(2\beta_{\omega} + d_{\omega}) + \alpha\beta_{\omega}}$ and that 
\[
m\leq q_0 n_{\omega}^{\frac{\gamma_{\omega} + \alpha \beta_{\omega} (1-\gamma_{\omega})}{\gamma_{\omega} (2\beta_{\omega} +d_{\omega}) + \alpha\beta_{\omega}}} \cdot n_{\omega}^{\frac{(d_\omega-1)\gamma_{\omega}}{\gamma_{\omega} (2\beta_{\omega} +d_{\omega}) + \alpha\beta_{\omega}}} = q_0 n_\omega^{\rho-1},
\]
it follows that 
\begin{align*} 
a & = \epsilon^{\alpha} \leq \Bigl(\frac{q_0 n_{\omega}^{\rho}} {2^{d_{\omega} + 5} n_{\omega}} \Bigr)^{\frac{\alpha}{2+\alpha}} = \Bigl(\frac{q_0}{2^{5 + d_{\omega}}}\Bigr)^{\alpha/(2+\alpha)} \cdot n_{\omega}^{-\frac{\gamma_{\omega}\alpha\beta_{\omega}}{\gamma_{\omega}(2\beta_{\omega}+d_{\omega}) + \alpha \beta_{\omega}}} \leq  1,
\end{align*}
by the first term in the minimum in the definition of $q_0$. Further, this also implies that $\bigl(\frac{m}{2^{d_{\omega} + 5} n_{\omega}} \bigr)^{\frac{1}{2+\alpha}}  < \frac{1}{4\tilde{q}}$, so $\epsilon < 1/(4\tilde{q}) \leq 1/4$. Moreover, if 
\[
n_{\omega} > N_0 := \max\Bigl\{q_0^{-1/\rho_1},  \Bigl(\frac{8}{2^{\beta_{\omega}}} \Bigl(\frac{1}{2^{2d_{\omega}+5} }\Bigr)^{\frac{1}{2+\alpha}} q_0^{\frac{1 + 2\beta_{\omega} + \alpha\beta_{\omega}}{2+\alpha}}\Bigr)^{-\frac{\gamma_\omega(2\beta_{\omega}+d_{\omega}) + \alpha \beta_{\omega}}{\alpha\beta_{\omega}^2(1-\gamma_{\omega})}}\Bigr\},
\] 
then, using that $\gamma_{\omega} < 1$, we have
\[
r^{\beta_{\omega}} = 8 \epsilon q^{\beta_{\omega}} \geq 4\Bigl(\frac{q_0 n_{\omega}^{\rho}}{2^{2d_{\omega}+5} n_{\omega}}\Bigr)^{\frac{1}{2+\alpha}} q_0^{\beta_{\omega}}  n_{\omega}^{\beta_{\omega}\rho_1} = \frac{8}{2^{\beta_{\omega}}} \Bigl(\frac{1}{2^{2d_{\omega}+5} }\Bigr)^{\frac{1}{2+\alpha}} q_0^{\frac{1 + 2\beta_{\omega} + \alpha\beta_{\omega}}{2+\alpha}} \cdot n_{\omega}^{\frac{\alpha\beta_{\omega}^2(1-\gamma_{\omega})}{\gamma_\omega(2\beta_{\omega}+d_{\omega}) + \alpha \beta_{\omega}}} > 1,
\]
so $r >1$. Suppose now that $n_{\omega} > N_0$. Then, since $\alpha\beta_{\omega} \leq d_{\omega}$,
\begin{align*}
a^{1-\gamma_{\omega}} r^{\gamma_{\omega}} & = 2^{3\gamma_{\omega}/\beta_{\omega}} \epsilon^{\alpha(1-\gamma_{\omega}) + \gamma_{\omega}/\beta_{\omega}} \cdot q^{\gamma_{\omega}} 
\leq 2^{3\gamma_{\omega}/\beta_{\omega}}  \Bigl(\frac{q_0 n_{\omega}^{\rho-1}} {2^{d_{\omega} + 5}} \Bigr)^{\frac{\alpha(1-\gamma_{\omega})+\gamma_{\omega}/\beta_{\omega}}{2+\alpha}} \cdot q_0^{\gamma_{\omega}} n_{\omega}^{\frac{\gamma_{\omega}\{\gamma_\omega + \alpha \beta_{\omega}(1-\gamma_{\omega})\}}{\gamma_{\omega}(2\beta_{\omega} + d_{\omega}) + \alpha\beta_{\omega}} } \\
&= a_1 \cdot \Bigl(\frac{q_0} {2^{d_{\omega} + 5}} \Bigr)^{\frac{\alpha(1-\gamma_{\omega})+\gamma_{\omega}/\beta_{\omega}}{2+\alpha}} \cdot q_0^{\gamma_{\omega}} \leq 1,
\end{align*} 
where we have used the second term in the minimum in the definition of $q_0$. Thus by Corollary~\ref{cor:L_def_satisfied_heavy_tails} we have $Q^\sigma \in \mathcal{Q}_{\mathrm{L}}(\boldsymbol{\gamma}, C_{\mathrm{L}},\mathcal{O})$. Further we have
\begin{align*} 
8\epsilon\cdot \tilde{q}^{\beta_{\omega}} \leq 8 \Bigl(\frac{q_0 n_{\omega}^{\rho}} {2^{d_{\omega} + 5} n_{\omega}} \Bigr)^{\frac{1}{2+\alpha}} \tilde{q}^{\beta_{\omega}} = 8 \Bigl(\frac{q_0}{2^{5 + d_{\omega}}}\Bigr)^{\frac{1}{2+\alpha}} 
\leq 1
\end{align*}
by the third term in the definition of $q_0$.
Finally, by the upper bound on $c_{\mathrm{E}}$, Lemma~\ref{lem:E_and_S_def_satisfied_heavy_tails}, the upper bound on $C_{\mathrm{M}}$ and Lemma~\ref{lem:M_def_satisfied_heavy_tails}, we deduce that $Q^{\sigma} \in \mathcal{Q}_{\mathrm{E}}(\{\omega\}, c_{\mathrm{E}},\mathcal{O})$, as well as $P^\sigma\in \mathcal{P}_{\mathrm{S}}(\boldsymbol{\beta}, 1) \cap \mathcal{P}_{\mathrm{M}}(\alpha, C_{\mathrm{M}})$ for the distribution $P^\sigma$ of $(X,Y)$.  

To complete the proof we apply Lemma~\ref{lem:assouad}. We first verify that Assumptions~(i)-(vi) in that lemma hold: 
\begin{enumerate}
\item[(i)] By the definition of $\epsilon$ and $u$, we have $2^{d_\omega+5}n_\omega\epsilon^2u=1$, 
\item[(ii)] Letting $z_t:=z_t^\omega$ for $t\in[m]$, we have that $\mu(\{s^{\omega} \odot z_t^{\omega}\})=u$ for all $t\in[m]$ and $s\in\{-1,1\}^d$ by the construction of the marginal measure in~\eqref{eq:muheavy}
,
\item[(iii)]  
By the construction of the regression function \eqref{def:f_circ_plus}, we have $\eta^\sigma(z^{\omega}_t) = 1/2 + \sigma_t \epsilon$ for $t\in[m]$, and $\eta^\sigma(s^{\omega} \odot z_t^{\omega}) = 1/2 + \bigl(\prod_{j \in [d] :(s^{\omega})_{j} =-1} (s^{\omega})_{j} \bigr)\cdot\sigma_t\epsilon$, for all $t\in[m]$ and $s\in\{-1,1\}^d$,
\item[(iv)] Since, for $\sigma,\sigma'\in\Sigma$, the support is given by $\mathrm{supp}(\mu)=\bigcup_{s\in\{-1,1\}^d}s\odot(\mathcal{T}^\omega \cup \mathcal{R}_{\tilde{j}}^\omega)$, and since we have $\eta^\sigma=\eta^{\sigma'}$ on $\mathcal{R}_{\tilde{j}}^\omega$, we indeed have $\eta^\sigma(x)=\eta^{\sigma'}(x)$ for all $x\in \mathrm{supp}(\mu) \setminus \bigcup_{t\in [m], s\in \{-1,1\}^d} \{s^{\omega}\odot z_t^{\omega}\}$,
\item[(v)] 
Again by construction, we have $\Bigl(\mathrm{supp}(\mu) \setminus \bigcup_{t\in [m], s\in \{-1,1\}^d} \{s^{\omega}\odot z_t^{\omega}\} \Bigr) \bigcap (-r-1,r+1)^d = \emptyset$,
\item[(vi)] By construction of the distribution of $O$, we have $O\indep(X,Y)$.
\end{enumerate} 
Then, for $n_{\omega} > N_0$, since $\epsilon = \bigl(\frac{m}{2^{d_{\omega}+5}n_\omega}\bigr)^{\frac{1}{2+\alpha}}$ the lower bound in~\eqref{eq:lb_assouad} in Lemma~\ref{lem:assouad} gives
\begin{align*}
     \inf_{\hat C\in\mathcal C_n} \sup_{Q}\ &\mathbb E_{Q}\{\mathcal E_{P}(\hat C) \mid O_1=o_1, \ldots, O_n=o_n\} \geq  \frac{m u \epsilon}{2} = \frac{\epsilon^{1+\alpha}}{2} 
    = \frac{1}{2} \left(\frac{m}{2^{d_{\omega}+5}n_\omega}\right)^{\frac{1+\alpha}{2+\alpha}} \\
    &\geq \frac{1}{2} \Bigl( \frac{q_0 n_\omega^\rho} {2^{d_{\omega}+6} n_\omega} \Bigr)^{\frac{1+\alpha}{2+\alpha}} = 2^{-1-\frac{(d_{\omega}+6)(1+\alpha)}{2+\alpha}} q_0^{\frac{1+\alpha}{2+\alpha}} n_\omega^{\frac{(1+\alpha)(\rho-1)}{(2+\alpha)}}
    =: \tilde{c} \cdot  n_\omega^{-\frac{\beta_{\omega}\gamma_\omega(1+\alpha)}{\gamma_\omega(2\beta_{\omega}+d_\omega)+\alpha\beta_{\omega}}}.
\end{align*}
Finally, if $n_{\omega} \leq N_0$ (including possibly $n_{\omega} = 0$), then using the fact that the excess risk is decreasing in $n_\omega$, we conclude that the result follows with $c := \tilde{c} \cdot N_0^{\frac{\beta_{\omega}\gamma_\omega(1+\alpha)}{\gamma_\omega(2\beta_{\omega}+d_\omega)+\alpha\beta_{\omega}}}$. 
\end{proof}

\subsection{Proof of the lower bound in Theorem~\ref{thm:minmax_bounds}
\label{subsec:LBproof_general}}
In this section, we extend the ideas from this section to the case that $|\Omega_{\star}| \geq 2.$  If $\Omega_\star = \{\omega^\star\}$ for one $\omega^\star \in\{0,1\}^d\setminus\{\mathbf{0}_d\}$, then to prove the lower bound in Theorem~\ref{thm:minmax_bounds}
, we can directly apply Lemma~\ref{lem:LB_light_tails} if $\gamma_{\omega^\star}\geq 1$, or Lemma~\ref{lem:LB_heavy_tails} if $\gamma_{\omega^\star}< 1$. 
To deal with the case that $|\Omega_\star|\geq 2$ we construct a class of mixture distributions. One component in the mixture follows the same construction as in the preceding sections, whereas the other components consist of point masses, which ensure that the mixture falls into the class $\mathcal{Q}_{\mathrm{E}}(\Omega_{\star}, c_{\mathrm{E}}, \mathcal{O})$.  To this end, for $|\Omega_\star|\geq2$, fix 
\[
\omega^\star \in \argmax_{\omega\in\Omega_\star\cap\mathcal{N}} \bigl\{ n_\omega^{-\frac{\beta_{\omega}\gamma_\omega(1+\alpha)}{\gamma_\omega(2\beta_{\omega}+d_\omega)+\alpha\beta_{\omega}}} \bigr\}.
\]
If $\gamma_{\omega^\star}\geq 1$, we will use the construction from Section~\ref{sec:LB_small_r} and Lemma~\ref{lem:LB_light_tails}. On the other hand, if $\gamma_{\omega^\star}< 1$, we will use the construction from Section~\ref{sec:LB_big_r} and Lemma~\ref{lem:LB_heavy_tails}. To be more precise, let $r\geq1$, $\kappa>0$, $\tilde{q},q\in\mathbb{N}$, $a,b\in[0,1/2]$, $\tilde{j}\in[d]$, and define
\begin{align}\label{def:mu_mixture_LB}\begin{split}
\mu^{(\Omega_{\star})}(A) \equiv \mu^{(\Omega_{\star})}_{\kappa,r,\tilde{q},q,a,b,\tilde{j}}(A) &:= \frac{1}{|\Omega_\star|} \cdot \bigl\{ \mu^{(\omega^\star)}_{\kappa,r,q,a,b}(A)\mathbbm{1}_{\{\gamma_{\omega^\star}\geq 1\}} + \mu^{(\omega^\star)}_{\tilde{q},q,r,a,\tilde{j}}(A)\mathbbm{1}_{\{\gamma_{\omega^\star}<1\}} \bigr\} \\
&\hspace{18mm}+ \frac{1}{2^d|\Omega_\star|} \cdot \sum_{\omega\in\Omega_\star\setminus\{\omega^\star\}} \sum_{s\in\{-1,1\}^d}  \mathbbm{1}_{\{(1+r) \cdot (s\odot \omega)\in A\}}.
\end{split}
\end{align}
Furthermore, let 
\[
\mathcal{R}^\pm := \Bigl\{ x \in\mathbb{R}^d:~\exists s\in\{-1,1\}^d, \omega\in\Omega_\star\setminus\{\omega^\star\} \text{ such that }x=(1+r) \cdot (s\odot\omega) \Bigr\}
\]
and define, for $\epsilon\in(0,1/4)$ and $\sigma\in\{-1,1\}^T$ the regression function as
\begin{align}\label{def:eta_mixture_LB}
\eta^{(\Omega_{\star})}_{\epsilon,\tilde{q},q,r,\sigma}(x) := 
\begin{cases}
\eta^{(\omega^\star)}_{\epsilon,q,r,\sigma}(A)\mathbbm{1}_{\{\gamma_{\omega^\star}\geq 1\}} + \eta^{(\omega^\star)}_{\epsilon,\tilde{q},q,r,\sigma}\mathbbm{1}_{\{\gamma_{\omega^\star}<1\}} 
&\mbox{if } x=x^{\omega^\star}\\ 
1/2+1/2\cdot\Bigl(\prod_{j \in [d_{\omega}]} \{\Pi_{\omega}(s)\}_j \Bigr)  &\mbox{if } x=(1+r) \cdot (s\odot\omega) \in\mathcal{R}^\pm
\end{cases}.
\end{align}

Let $Q\equiv Q^{(\Omega_{\star})}_{\kappa,r,\tilde{q},q,a,b,\tilde{j},\sigma}$ denote the distribution on $\mathbb{R}^d\times\{0,1\}\times\{0,1\}^d$ with regression function $\eta^{(\Omega_{\star})}_{\epsilon,\tilde{q},q,r,\sigma}$,  marginal feature distribution $\mu^{(\Omega_{\star})}_{\kappa,r,\tilde{q},q,a,b,\tilde{j}}$, and for which $O \indep (X,Y)$ with $O \sim U(\mathcal{O})$ when $(X,Y,O) \sim Q$. We will now show that $Q \in \mathcal{Q}_{\mathrm{E}}(\Omega_\star,c_{\mathrm{E}},\mathcal{O})\cap\mathcal{Q}_{\mathrm{L}}(\boldsymbol{\gamma},C_{\mathrm{L}},\mathcal{O})$ and that $P\equiv P_Q\in\mathcal P_{\mathrm{S}}(\boldsymbol{\beta},C_{\mathrm{S}})\cap\mathcal P_{\mathrm{M}}(\alpha,C_{\mathrm{M}})$.

\begin{lemma}\label{lem:generalLB_L_def_satisfied}
Fix $d,n\in\IN$, $\mathcal{O}\subseteq\{0,1\}^d$, $\boldsymbol{\gamma} = (\gamma_1,\dots,\gamma_d) \in [0, \infty)^d$, $C_{\mathrm{L}}>1$, $\boldsymbol{\beta} = (\beta_1,\dots,\beta_d)\in(0,1]^{d}$, $\alpha\in[0,\infty)$, $\Omega_\star\in\mathcal I(\{0,1\}^d)\setminus\{\{\mathbf{0}_d\},\emptyset\}$ with $|\Omega_\star|\geq 2$.  Fix further $o_1,\ldots,o_n\in\mathcal{O}$ and $\omega^\star \in \argmax_{\omega\in\Omega_\star\cap\mathcal{N}} \bigl\{ n_\omega^{-\frac{\beta_{\omega}\gamma_\omega(1+\alpha)}{\gamma_\omega(2\beta_{\omega}+d_\omega)+\alpha\beta_{\omega}}} \bigr\}$, as well as $r> 0$, $\tilde{q},q \in \mathbb{N}$, $\kappa \in (0,1/\sqrt{d_{\omega^\star}})$, $C_0>1$, $\tilde{j} \in \{j\in [d] : \omega^\star_{j} = 1\}$, and $a,b \in [0,1/4]$. Let $\gamma_{\omega^\star} := \min\{\gamma_j : \omega^\star_j = 1\}$ and $\gamma_{\max}:=2\cdot\mathbbm{1}_{\{\gamma_{\omega^\star}<1\}}\vee\max\{\gamma_j:j\in[d]\}$, $a_0 := 2^{-3d_{\omega^\star}\gamma_{\omega^\star}}$ and $b_0 = 2^{-1}(d_{\omega^\star} 2^{d_{\omega^\star}})^{-\gamma_{\max}}$. Consider the set of assumptions defined by 
\begin{enumerate}
    \item[A1.] 
    \begin{enumerate}
        \item[(i)] $\gamma_{\omega^\star}\leq 1$
        \item[(ii)] $a \leq a_0 \wedge b$
        \item[(iii)] $a^{1-\gamma_{\omega^\star}} (4r\sqrt{d_{\omega^\star}})^{d_{\omega^\star}\gamma_{\omega^\star}}\leq a_0$
        \item[(iv)] $b^{1-\gamma_{\omega^\star}} \leq C_{\mathrm{L}} \cdot b_0 \cdot \Bigl( \min\Bigl\{1, (r\sqrt{d_{\omega^\star}})^{-d_{\omega^\star}}\Bigr\} \Bigr)^{\gamma_{\omega^\star}}$
        \item[(v)] For all $\omega'$ with $\omega^\star \wedge \omega' \neq \mathbf{0}_d$: $b^{1-\gamma_{\max}} \leq C_{\mathrm{L}} \cdot b_0 \cdot \Bigl( \min\Bigl\{1, (r\sqrt{d_{\omega^\star\wedge\omega'}})^{-d_{\omega^\star\wedge\omega'}}\Bigr\} \Bigr)^{\gamma_{\max}}$
        \item[(vi)] $C_{\mathrm{L}}>4^{\gamma_{\max}(d_{\omega^\star}+1)}$
    \end{enumerate}
    
    \item[A2.]
    \begin{enumerate}
        \item[(i)] $\gamma_{\omega^\star}> 1$
        \item[(ii)] $a < \min\{1/2, 1-1/C_0^{1/{\gamma_{\omega^\star}}}\}$
        \item[(iii)] $a^{1-\gamma_{\omega^\star}}r^{\gamma_{\omega^\star}}\leq 1$
        \item[(iv)] $C_{\mathrm{L}}\geq C_0 \cdot 2^{2 + 3\gamma_{\omega^\star} + 3d_{\omega^\star} \gamma_{\max}/2}  \cdot d_{\omega^\star}^{d_{\omega^\star}\gamma_{\max}/2}$
    \end{enumerate}
\end{enumerate}
and assume that either A1 or A2 holds, and additionally that $C_{\mathrm{L}}\geq (2^{\max_{\omega\in\Omega_\star}\{d_\omega\}}|\Omega_\star|)^{\gamma_{\max}}$. Let $Q\equiv Q^{(\Omega_{\star})}_{\kappa,r,\tilde{q},q,a,b,\tilde{j}}$ denote a distribution on $\mathbb{R}^d\times\{0,1\}\times\{0,1\}^d$ of $(X,Y,O)$ with corresponding $X$-marginal feature distribution $\mu\equiv \mu^{(\Omega_{\star})}_{\kappa,r,\tilde{q},q,a,b,\tilde{j}}$, for which $\IP(O=o)>0$ for all $o\in\mathcal{O}$ and for which $O\indep(X,Y)$. Then $Q \in \mathcal Q_{\mathrm{L}}(\boldsymbol{\gamma},C_{\mathrm{L}},\mathcal{O})$. 
\end{lemma}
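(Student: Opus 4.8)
The plan is to verify the single inequality \eqref{eq:lowerdensity} defining $\mathcal{Q}_{\mathrm{L}}(\boldsymbol{\gamma},C_{\mathrm{L}},\mathcal{O})$ for $Q$ (membership in $\mathcal{Q}_{\mathrm{Miss}}(\mathcal{O})$ being immediate from $O\indep(X,Y)$ and $\mathbb{P}(O=o)>0$). Because $O\indep(X,Y)$ we have $\mu_{\omega'\mid o}=\mu_{\omega'}$ for all $o$, so, writing $\mu:=\mu^{(\Omega_{\star})}_{\kappa,r,\tilde{q},q,a,b,\tilde{j}}$, it suffices to show that
\[
\mu_{\omega'}\bigl(\{x:\rho_{\mu_{\omega'},d_{\omega'}}(x)<\xi\}\bigr)\leq C_{\mathrm{L}}\,\xi^{\gamma_{\omega'}}
\]
for every $\omega'\in\{0,1\}^d\setminus\{\mathbf{0}_d\}$ and $\xi>0$. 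The key observation is that, by \eqref{def:mu_mixture_LB}, the $\omega'$-marginal splits as $\mu_{\omega'}=\tfrac1{|\Omega_{\star}|}\mu^{(\omega^\star)}_{\omega'}+\mu_{\mathrm{at}}$, where $\mu^{(\omega^\star)}_{\omega'}$ is the $\omega'$-marginal of the single-pattern construction for $\omega^\star$ (the light-tailed one of Corollary~\ref{cor:L_def_satisfied} when $\gamma_{\omega^\star}\geq1$, the heavy-tailed one of Corollary~\ref{cor:L_def_satisfied_heavy_tails} when $\gamma_{\omega^\star}<1$), and $\mu_{\mathrm{at}}$ is the finitely supported atomic part coming from the $(1+r)(s\odot\omega)$ point masses, each atom of which carries $\mu_{\mathrm{at}}$-mass at least $\xi_0:=(2^{\max_{\omega\in\Omega_{\star}}d_\omega}|\Omega_{\star}|)^{-1}$ (the $s$'s with equal $\omega'$-projection pool their weight).

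I would record two elementary facts about the lower density \eqref{def:lower_density}: $(a)$ $\rho_{\lambda\nu_1+(1-\lambda)\nu_2,s}\geq\lambda\,\rho_{\nu_1,s}$ pointwise; and $(b)$ if $x_0$ is an atom of $\nu$ and $s\geq1$ then $\rho_{\nu,s}(x_0)\geq\nu(\{x_0\})$, because $\nu(B_\zeta(x_0))\geq\nu(\{x_0\})$ and $\zeta^{s}\leq1$ for $\zeta\in(0,1)$. By $(b)$ every atom of $\mu_{\mathrm{at}}$ has $\mu_{\omega'}$-lower-density at least $\xi_0$. Hence if $\xi>\xi_0$ the excess set has $\mu_{\omega'}$-mass at most $1$, while $C_{\mathrm{L}}\xi^{\gamma_{\omega'}}>1$ follows from $C_{\mathrm{L}}\geq(2^{\max_\omega d_\omega}|\Omega_{\star}|)^{\gamma_{\max}}$ together with $\gamma_{\max}\geq\gamma_{\omega'}$; and if $\xi\leq\xi_0$ the excess set meets no atom of $\mu_{\mathrm{at}}$, off which $\mu_{\mathrm{at}}$ vanishes, so there $\mu_{\omega'}=\tfrac1{|\Omega_{\star}|}\mu^{(\omega^\star)}_{\omega'}$, and by $(a)$ the excess set is contained in $\{x:\rho_{\mu^{(\omega^\star)}_{\omega'},d_{\omega'}}(x)<|\Omega_{\star}|\xi\}$. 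Whichever of A1, A2 holds is precisely the hypothesis set under which Corollary~\ref{cor:L_def_satisfied} (resp.\ Corollary~\ref{cor:L_def_satisfied_heavy_tails}) certifies $\mu^{(\omega^\star)}_{\omega'}(\{x:\rho_{\mu^{(\omega^\star)}_{\omega'},d_{\omega'}}(x)<\xi'\})\leq C_{\mathrm{L}}(\xi')^{\gamma_{\omega'}}$ for all $\xi'>0$; applying it with $\xi'=|\Omega_{\star}|\xi$ gives the bound $|\Omega_{\star}|^{\gamma_{\omega'}-1}C_{\mathrm{L}}\,\xi^{\gamma_{\omega'}}$.

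When $\gamma_{\omega'}\leq1$ this is at most $C_{\mathrm{L}}\xi^{\gamma_{\omega'}}$ and the lemma is proved. The remaining --- and only genuinely delicate --- case is $\gamma_{\omega'}>1$, where the crude loss $|\Omega_{\star}|^{\gamma_{\omega'}-1}\geq1$ is not absorbed by the \emph{statement} of the single-pattern corollary. Here I would re-run its short $\xi$-by-$\xi$ case analysis restricted to $|\Omega_{\star}|\xi\leq|\Omega_{\star}|\xi_0=2^{-\max_\omega d_\omega}$, in which regime the excess set of $\mu^{(\omega^\star)}_{\omega'}$ is either empty or has mass bounded by one of the absolute constants $a$ or $a+b$ (the intermediate-$\xi$ ranges in the proofs of Corollaries~\ref{cor:L_def_satisfied} and~\ref{cor:L_def_satisfied_heavy_tails}), and I would bound $|\Omega_{\star}|^{1-\gamma_{\omega'}}C_{\mathrm{L}}\xi^{\gamma_{\omega'}}$ from below using A1(iii)--(v) (resp.\ A2(iii)) together with the slack built into $C_{\mathrm{L}}\geq(2^{\max_\omega d_\omega}|\Omega_{\star}|)^{\gamma_{\max}}$. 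Propagating this $|\Omega_{\star}|$-loss through the several sub-cases of the single-pattern argument for large $\gamma_{\omega'}$ is the step I expect to be the main obstacle; everything else is a routine assembly of Corollaries~\ref{cor:L_def_satisfied} and~\ref{cor:L_def_satisfied_heavy_tails} with facts $(a)$ and $(b)$.
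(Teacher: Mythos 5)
Your decomposition of $\mu_{\omega'}$ into a scaled single-pattern part plus a finitely supported atomic part, the observation that atoms of $\mu_{\mathrm{at}}$ have lower density at least $\xi_0:=(2^{\max_{\omega\in\Omega_\star}d_\omega}|\Omega_\star|)^{-1}$, and the split at $\xi=\xi_0$ all match the paper's proof. Your facts $(a)$ and $(b)$ are correct and are exactly what is being used implicitly.

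Where you depart --- and this is the most valuable part of your write-up --- is in being careful about which lower density appears in the excess set. The corollaries bound $\mu^{(\omega^\star)}_{\omega'}\{x:\rho_{\mu^{(\omega^\star)}_{\omega'},d_{\omega'}}(x)<\xi'\}$, whereas the set we actually need to control is $\{x:\rho_{\mu_{\omega'},d_{\omega'}}(x)<\xi\}$ with $\mu_{\omega'}$ the \emph{mixture} marginal. By your fact $(a)$, $\rho_{\mu_{\omega'},d_{\omega'}}\geq|\Omega_\star|^{-1}\rho_{\mu^{(\omega^\star)}_{\omega'},d_{\omega'}}$ and nothing stronger in general, so the containment is into $\{x:\rho_{\mu^{(\omega^\star)}_{\omega'},d_{\omega'}}(x)<|\Omega_\star|\xi\}$, giving $C_{\mathrm{L}}|\Omega_\star|^{\gamma_{\omega'}-1}\xi^{\gamma_{\omega'}}$ after the $1/|\Omega_\star|$ prefactor. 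You are right that this is innocuous when $\gamma_{\omega'}\leq1$ and genuinely loses when $\gamma_{\omega'}>1$. The paper's proof writes ``$\leq\tfrac{1}{|\Omega_\star|}\mu^{(\omega^\star)}_{\omega'}(A)\leq\tfrac{C_{\mathrm{L}}}{|\Omega_\star|}\xi^{\gamma_{\omega'}}$'' citing the corollaries directly, which is a silent identification of the two excess sets and elides exactly the rescaling you isolate. So your flag is not an artefact of your route: it points at the same loose step in the paper.

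That said, you leave the $\gamma_{\omega'}>1$ case open, so as it stands your argument is incomplete. The missing piece is not hard: you do not need to re-run the whole sub-case analysis of Corollaries~\ref{cor:L_def_satisfied} and~\ref{cor:L_def_satisfied_heavy_tails}. Simply note that $|\Omega_\star|^{\gamma_{\omega'}-1}\leq|\Omega_\star|^{\gamma_{\max}-1}\leq C_{\mathrm{L}}^{(\gamma_{\max}-1)/\gamma_{\max}}/2^{(\gamma_{\max}-1)\max_\omega d_\omega}$ by the extra hypothesis $C_{\mathrm{L}}\geq(2^{\max_\omega d_\omega}|\Omega_\star|)^{\gamma_{\max}}$, and then run the corollary with the smaller effective constant $C_{\mathrm{L}}/|\Omega_\star|^{\gamma_{\max}-1}$ in place of $C_{\mathrm{L}}$: the hypotheses A1(iv)--(vi) (resp.\ A2(iv)) continue to hold for this smaller constant after adjusting by the same $|\Omega_\star|^{\gamma_{\max}-1}$ slack, which the stated $C_{\mathrm{L}}$ lower bound was designed to provide. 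With that one extra line your proof closes, and it would actually be more explicit than the paper's.
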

\begin{proof}
Fix any $\omega'\in\{0,1\}^d$. For any $x\in\mathcal{R}^{\pm}$ and any $\zeta\in(0,1)$, we have 
\begin{align*}
    \mu_{\omega'}\bigl( B_\zeta(x^{\omega'}) \bigr) &= \mu\bigl( \{ z\in\mathbb{R}^d:z^{\omega'}\in B_\zeta(x^{\omega'}) \} \bigr)\\
    &\geq \frac{1}{2^d|\Omega_\star|}\cdot \sum_{\omega\in\Omega_\star\setminus\{\omega^\star\}}\sum_{s\in\{-1,1\}^d}\mathbbm{1}_{\{(1+r) \cdot (s\odot\omega) \in \{ z\in\mathbb{R}^d:z^{\omega'}\in B_\zeta(x^{\omega'}) \} \}} \\
    &\geq \frac{1}{2^{|x|}|\Omega_\star|}
    \geq \frac{1}{2^{\max_{\omega\in\Omega_\star}\{d_\omega\}}|\Omega_\star|},
\end{align*}
where we used that $|x|$ are the number of observed variables for $x\in\mathcal{R}^{\pm}$. Thus $\rho_{\omega'}(x)\geq (2^{\max_{\omega\in\Omega_\star}\{d_\omega\}}|\Omega_\star|)^{-1}$ for any $x\in\mathcal{R}^{\pm}$. Now, by Corollary~\ref{cor:L_def_satisfied} or Corollary~\ref{cor:L_def_satisfied_heavy_tails}, we note that if $\xi\in (0 ,(2^{\max_{\omega\in\Omega_\star}\{d_\omega\}}|\Omega_\star|)^{-1} ]$, then
\begin{align*}
    \mu_{\omega'}\bigl( \{x\in\mathbb{R}^d:~\rho_{\omega'}(x)<\xi\} \bigr) &\leq \frac{1}{|\Omega_\star|} \cdot \bigl\{ \mu^{(\omega^\star)}_{\kappa,r,q,a,b}(A)\mathbbm{1}_{\{\gamma_{\omega^\star}\geq 1\}} + \mu^{(\omega^\star)}_{\tilde{q},q,r,a,\tilde{j}}(A)\mathbbm{1}_{\{\gamma_{\omega^\star}<1\}} \bigr\} \\&\leq \frac{C_{\mathrm{L}}}{|\Omega_\star|} \cdot \xi^{\gamma_{\omega'}} \leq C_{\mathrm{L}} \cdot \xi^{\gamma_{\omega'}}.
\end{align*}

If $\xi > (2^{\max_{\omega\in\Omega_\star}\{d_\omega\}}|\Omega_\star|)^{-1}$, then
\begin{align*}
    \mu_{\omega'}\bigl( \{x\in\mathbb{R}^d:~\rho_{\omega'}(x)<\xi\} \bigr) &\leq 1 \leq C_{\mathrm{L}} \cdot \xi^{\gamma_{\omega'}},
\end{align*}
since $C_{\mathrm{L}}\geq (2^{\max_{\omega\in\Omega_\star}\{d_\omega\}}|\Omega_\star|)^{\gamma_{\max}}$. The result follow since $\mu_{\omega'}=\mu_{\omega'|o}$ by the independence of $O$ and $(X,Y)$.
\end{proof}

\begin{lemma}\label{lem:generalLB_E_and_S_def_satisfied}
Fix $d,n\in\IN$, $\mathcal{O}\subseteq\{0,1\}^d$, $\boldsymbol{\gamma} = (\gamma_1,\dots,\gamma_d) \in [0, \infty)^d$, $\boldsymbol{\beta}=(\beta_1,\ldots,\beta_d) \in (0,1]^{d}$, $\alpha\in[0,\infty)$, and $\Omega_\star\in\mathcal I(\{0,1\}^d)\setminus\{\{\mathbf{0}_d\},\emptyset\}$ with $|\Omega_\star|\geq 2$.  Fix further $o_1,\ldots,o_n\in\mathcal{O}$ and   $\omega^\star \in \argmax_{\omega\in\Omega_\star\cap\mathcal{N}} \bigl\{ n_\omega^{-\frac{\beta_{\omega}\gamma_\omega(1+\alpha)}{\gamma_\omega(2\beta_{\omega}+d_\omega)+\alpha\beta_{\omega}}} \bigr\}$.  Let $\gamma_{\omega^\star} := \min\{\gamma_j : \omega^\star_j = 1\}$.  Fix further $r>0$, $\tilde{q},q \in \mathbb{N}$, $\kappa \in (0,1/\sqrt{d_{\omega^\star}})$, $\tilde{j} \in \{j\in [d] : \omega_{j} = 1\}$, $a,b\in(0,1/4]$, $\epsilon \in (0,1]$, $\sigma=(\sigma_t)_{t\in[T]}\in\{-1,1\}^T$,  $c_{\mathrm{E}} \leq 1$. 
Consider the set of assumptions defined by 
\begin{enumerate}
    \item[B1.] 
    \begin{enumerate}
        \item[(i)] $\gamma_{\omega^\star}\leq 1$
        \item[(ii)] $\epsilon \leq (1/4)\wedge(1/8\cdot(r/q)^{\beta_{\omega^\star}})$
        \item[(iii)] $c_{\mathrm{E}} \leq b/(4|\Omega_\star|)$
    \end{enumerate}
    
    \item[B2.]
    \begin{enumerate}
        \item[(i)] $\gamma_{\omega^\star}> 1$
        \item[(ii)] $\epsilon \leq (1/4\cdot(r/q)^{\beta_{\omega^\star}}) \wedge (1/4\cdot(1/\tilde{q})^{\beta_{\omega^\star}})$
        \item[(iii)] $c_{\mathrm{E}} \leq 1/(8|\Omega_\star|)$
        \item[(iv)] $b\geq 1/2$
    \end{enumerate}
\end{enumerate}
Let $Q\equiv Q^{(\Omega_{\star})}_{\kappa,r,\tilde{q},q,a,b,\tilde{j},\sigma}$ be a distribution of $(X,Y,O)$ on $\mathbb{R}^d\times\{0,1\}\times\{0,1\}^d$ with $X$-marginal distribution $\mu^{(\Omega_{\star})}_{\kappa,r,\tilde{q},q,a,b,\tilde{j}}$, with regression function $\eta=\eta^{(\Omega_{\star})}_{\epsilon,\tilde{q},q,r,\sigma}$, with $O\indep(X,Y)$ and with $\IP(O=o)>0$ for all $o\in\mathcal{O}$. Suppose that either B1 or B2 holds, then $Q$ belongs to $\mathcal{Q}_{\mathrm{E}}(\Omega_\star, c_{\mathrm{E}},\mathcal{O})$ and $P_Q$, the distribution of $(X,Y)$, belongs to $\mathcal{P}_{\mathrm{S}}(\boldsymbol{\beta}, 1)$. 
\end{lemma}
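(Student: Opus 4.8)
```latex
\begin{proof}[Proof sketch for Lemma~\ref{lem:generalLB_E_and_S_def_satisfied}]
The plan is to verify the two membership claims separately, leaning heavily on the corresponding singleton results already established: Lemma~\ref{lem:E_and_S_def_satisfied} (light tailed) and Lemma~\ref{lem:E_and_S_def_satisfied_heavy_tails} (heavy tailed).

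First I would establish that $Q \in \mathcal{Q}_{\mathrm{E}}(\Omega_\star, c_{\mathrm{E}}, \mathcal{O})$. Since $O \indep (X,Y)$, we have $\mu_{\omega'} = \mu_{\omega'|o}$ for every $o \in \mathcal{O}$ with $\omega' \preceq o$, hence $\sigma^2_{\omega'} = \mathbb{E}_Q\{f^2_{\omega'}(X)\}$. The core task is to compute the anova functions $f_{\omega'}$ of $\eta^{(\Omega_\star)}_{\epsilon,\tilde{q},q,r,\sigma}$ under the mixture marginal $\mu^{(\Omega_\star)}$. The key observation is linearity of the recursive definitions~\eqref{eq:def_f_zero}--\eqref{eq:def_f_omega} in $(\eta, \mu)$ when $\eta$ splits over a partition of $\mathrm{supp}(\mu)$: the mixture $\mu^{(\Omega_\star)}$ is a convex combination of the ``$\omega^\star$-part'' (either $\mu^{(\omega^\star)}_{\kappa,r,q,a,b}$ or $\mu^{(\omega^\star)}_{\tilde q,q,r,a,\tilde j}$) and, for each $\omega\in\Omega_\star\setminus\{\omega^\star\}$, a symmetric collection of point masses at $(1+r)(s\odot\omega)$ over $s\in\{-1,1\}^d$. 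I would argue by induction over $d_{\omega'}$ that, for each $\omega'$, the conditional expectation defining $f_{\omega'}$ decomposes additively: the contribution from the $\omega^\star$-component reproduces exactly the singleton computation from Lemma~\ref{lem:E_and_S_def_satisfied}/\ref{lem:E_and_S_def_satisfied_heavy_tails} (so it gives $f^{(\omega^\star)}_{\cdots}$ when $\omega'=\omega^\star$ and $0$ otherwise), while the point-mass components contribute the term $\tfrac12(\prod_{j\in[d_{\omega'}]}\{\Pi_{\omega'}(s)\}_j)$ precisely when $\omega' = \omega$ for some $\omega\in\Omega_\star\setminus\{\omega^\star\}$ and $0$ otherwise. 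The sign-symmetry built into $\mu^{(\Omega_\star)}$ and into~\eqref{def:eta_mixture_LB} is what forces the cross-terms and all unwanted $f_{\omega'}$ to vanish, exactly as in the singleton proofs. This yields $\sigma^2_{\omega'} = 0$ for $\omega' \in U(\Omega_\star)$ (indeed for all $\omega'\notin\Omega_\star$), while for $\omega' = \omega\in\Omega_\star\setminus\{\omega^\star\}$ a direct computation gives $\sigma^2_{\omega} = \tfrac{1}{4|\Omega_\star|}$ (the factor $1/|\Omega_\star|$ from the mixture weight), and for $\omega' = \omega^\star$ the singleton lower bound $b/4$ (light) or $1/8$ (heavy) is scaled by $1/|\Omega_\star|$; under B1(iii) or B2(iii) all of these exceed $c_{\mathrm{E}}$, so $Q\in\mathcal{Q}_{\mathrm{E}}(\Omega_\star,c_{\mathrm{E}},\mathcal{O})$.

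Next I would show $P_Q \in \mathcal{P}_{\mathrm{S}}(\beta, 1)$, i.e.~that each $f_{\omega'}$ is $\beta$-H\"older with constant $1$ on $\mathrm{supp}(\mu)$ (in the $\|\cdot^{\omega'} - \cdot^{\omega'}\|^\beta$ sense of Definition~\ref{def:smoothness}). For $\omega' = \omega^\star$ this is exactly the smoothness already verified in Lemma~\ref{lem:E_and_S_def_satisfied}/\ref{lem:E_and_S_def_satisfied_heavy_tails}, noting that the added points $(1+r)(s\odot\omega)$ for $\omega\neq\omega^\star$ either lie on the coordinate hyperplanes $\mathcal{S}^{\omega^\star}$ (when $\omega\wedge\omega^\star\neq\omega^\star$, some coordinate of $\omega^\star$ is zero there so $f_{\omega^\star}$ already handles them as the ``$\mathcal{S}^{\omega^\star}$ case'') — here one uses that $\mathrm{supp}(\mu^{(\Omega_\star)})$ projected under $\Pi_{\omega^\star}$ adds no new points that violate the Lipschitz-type bound, since H\"older continuity was established on all of $\mathbb{R}^d$ via McShane extension. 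For $\omega' = \omega\in\Omega_\star\setminus\{\omega^\star\}$, $f_{\omega}$ is the function $x\mapsto \tfrac12\prod_{j\in[d_\omega]}\{\Pi_\omega(\mathrm{sign}(x))\}_j$, which is supported on finitely many points of $\mathrm{supp}(\mu)$ and $f_\omega\equiv 0$ elsewhere; any two distinct points in $\mathcal{R}^\pm$ with differing signs on $\omega$ are separated by Euclidean distance at least $2(1+r) \geq 2 > 2^{1/\beta}$, so the jump of at most $1$ in $f_\omega$ is dominated by $\|x_1^\omega - x_2^\omega\|^\beta$. The main obstacle I anticipate is the bookkeeping in the induction for $f_{\omega'}$: one must carefully track, for each $\omega'$, which mixture components are ``seen'' by conditioning on $X^{\omega'} = x^{\omega'}$ and confirm that the point-mass components at $(1+r)(s\odot\omega)$ contribute to $f_{\omega'}$ only through their projection onto $\omega$, so that $\sum_{\omega''\prec\omega'} f_{\omega''}$ already cancels everything except the genuine $\omega'=\omega$ term — this requires knowing that the point masses of $\mu^{(\Omega_\star)}$ for one $\omega$ are ``orthogonal'' to those for $\omega'\neq\omega$ in the anova sense, which follows from the antichain property $\Omega_\star \in \mathcal{I}(\{0,1\}^d)$ (no $\omega,\omega'\in\Omega_\star$ are comparable) together with the sign-symmetry of the construction.
\end{proof}
```
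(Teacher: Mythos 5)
Your sketch takes essentially the same route as the paper: identify the anova functions $f_{\omega'}$ by writing $\eta^{(\Omega_\star)}_{\epsilon,\tilde q,q,r,\sigma}=1/2+\sum_{\omega\in\Omega_\star}f^{(\omega)}$ with $f^{(\omega)}$ as in~\eqref{def:f_omega_general}, prove by induction on $d_{\omega'}$ that these are indeed the anova terms (using the antichain structure and the sign-symmetry of $\mu^{(\Omega_\star)}$ to kill everything in $\{0,1\}^d\setminus\Omega_\star$), then read off $\sigma_\omega^2=\tfrac{1}{4|\Omega_\star|}$ for $\omega\neq\omega^\star$ and the mixture-weighted singleton bound for $\omega^\star$, and finally check the H\"older bound for each $f^{(\omega)}$. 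A few remarks.

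The framing ``linearity of the recursive definitions in $(\eta,\mu)$'' is not quite right and could get you into trouble if you tried to formalize it: conditional expectations are not additive over mixture components, because $\omega'$-projections of different components of $\mathrm{supp}(\mu^{(\Omega_\star)})$ can overlap (trivially so when $\omega'=\mathbf{0}_d$). The paper instead proves the vanishing of cross-terms directly: for $\omega\in\Omega_\star$ with $\omega\nprec\omega'$ one finds $j^{\star,\omega}$ with $\omega_{j^{\star,\omega}}=1-\omega'_{j^{\star,\omega}}=1$, and the reflection $X_{j^{\star,\omega}}\mapsto -X_{j^{\star,\omega}}$ fixes the conditional law given $X^{\omega'}=x^{\omega'}$ while flipping the sign of $f^{(\omega)}$, so $\mathbb{E}_P\{f^{(\omega)}(X)\mid X^{\omega'}=x^{\omega'}\}=0$. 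That symmetry, not linearity, is the engine; your sketch does invoke sign-symmetry, so this is a fixable mislabelling rather than a conceptual gap, but you should drop the linearity language.

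On smoothness for $\omega\in\Omega_\star\setminus\{\omega^\star\}$: your separation-of-support argument is a valid shortcut because Definition~\ref{def:smoothness} only quantifies over $x_1,x_2\in\mathrm{supp}(\mu)$, and indeed any two points of the support with differing $f^{(\omega)}$-values differ in some $\omega$-coordinate by at least $1+r>1$. However the inequality you wrote, $2>2^{1/\beta}$, is backwards for $\beta<1$ (one has $2^{1/\beta}\geq 2$); what you want is the elementary $\|x_1^\omega-x_2^\omega\|\geq 1 \Rightarrow \|x_1^\omega-x_2^\omega\|^\beta\geq 1$, which suffices since the jump is at most $1$. You should also state the cross-case explicitly (one point at $(1+r)(s\odot\omega)$, the other in the $\omega^\star$-part or at $(1+r)(s'\odot\omega')$ with $\omega'\neq\omega$); by the antichain property the second point has a zero where $\omega$ has a one, so the $\omega$-distance is again at least $1+r>1$ and the jump of at most $1/2$ is controlled. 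The paper avoids this case analysis by extending $f^{\omega,\circ,+}$ globally via McShane and using the standard reflection argument, but your route is fine once spelled out. Finally, your sketch correctly flags the induction as the main technical burden; the paper carries it out in full, so no objection to your having identified it as the bottleneck.
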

\begin{proof}
For $\omega^\star$, we set $f^{(\omega^\star)}(x):=f^{(\omega^\star)}_{\epsilon,q,r,\sigma}$ from Definition~\ref{def:f_omega} (if $\gamma_{\omega^\star}\geq1$) or $f^{(\omega^\star)}(x):=f^{(\omega^\star)}_{\epsilon,\tilde{q},q,r,\sigma}$ from Definition~\ref{def:f_omega_heavytails} (otherwise).
For $\omega\in\Omega_\star\setminus\{\omega^\star\}$, recall that $S^\omega:=\{x^\omega\in\mathbb{R}^d:(\Pi_\omega(x^\omega))_j=0\text{ for some }j\in[d_\omega]\}$. Now define functions $f^{\omega,\circ,+}$ on the set $\{(1+r)\cdot\omega\}\cup S^\omega$ as 
\begin{align*}
    f^{\omega,\circ,+}(x)=\begin{cases}
    1/2  &\mbox{if } x=(1+r) \cdot \omega \\ 
    0  &\mbox{if } x\in S^\omega
    \end{cases}.
\end{align*}
We note that, for any $x_1,x_2\in \{(1+r) \cdot\omega\}\cup S^\omega$,  $f^{\omega,\circ,+}$ satisfies
\begin{align}\label{eq:holder_smooth_f_omega}
    |f^{\omega,\circ,+}(x_1)-f^{\omega,\circ,+}(x_2)| \leq \frac{1}{2} \|x_1-x_2\|^{\beta_{\omega}}.
\end{align}
Thus, by McShane's extension theorem \citep{mcshane1934extension} on $\IR^{d_\omega}$, there exists a $\beta_{\omega}$-H\"older continuous extension of $f^{\omega,+}$ onto $([0,\infty)^d)^\omega$. We now set $f^{\omega,+}(x)=f^{\omega,+}(x^\omega)$ for any $x\in[0,\infty)^d$.
Recall that for $x=(x_1,\ldots,x_d)^T\in\mathbb{R}^d$, we let $s=(s_1,\ldots,s_d)^T\in\{-1,1\}^d$ be $s_j:=\mathrm{sign}(x_j)$. We now define for all $\omega\in\Omega_\star\setminus\{\omega^\star\}$ the functions $f^{(\omega)}$ as
\begin{align}\label{def:f_omega_general}
    f^{(\omega)}(x):=\Bigl( \prod_{j\in[d_\omega]} \{\Pi_\omega(s)\}_j \Bigr) \cdot f^{\omega,+}(s\odot x).
\end{align}

With the choice of $f^{(\omega)}$ as in~\eqref{def:f_omega_general}, we can write 
\[
\eta_\sigma(x) = \frac{1}{2} + f^{(\omega^\star)}_\sigma(x) + \sum_{\omega\in\Omega_\star\setminus\{\omega^\star\}} f^{(\omega)}(x).
\]
Throughout the remainder of the proof, we will drop the dependence of $f^{(\omega^\star)}_\sigma$ on $\sigma$, and simply write $f^{(\omega)}$ instead of $f^{(\omega^\star)}_\sigma$, whenever $\omega=\omega^\star$, such that we have $\eta_\sigma(x) = \frac{1}{2} + \sum_{\omega\in\Omega_\star} f^{(\omega)}(x)$. We now show by induction over $d'=|\omega'|=0,\ldots,d$ that the functions given by Equations~\eqref{eq:def_f_omega} 
and \eqref{def:f_omega_general} satisfy $f_\omega=f^{(\omega)}$ for all $\omega\in\Omega_\star$
and that $f_{\omega} \equiv 0$, for all $\omega \in\{0,1\}^d \setminus \Omega_\star$. 

Firstly, for $d'=|\omega'|=0$ we have $\omega'=\mathbf{0}_d\notin\Omega_\star$. There exists, for any $\omega\in\Omega_\star$
, a $j^{\star,\omega}=\min\{j\in[d]:~\omega_j=1\}$, such that
\begin{align}\begin{split}
    \mathbb{E}_P\{f^{(\omega)}&(X)\} = \int_{\mathbb{R}^d} f^{(\omega)}(x) \mathbbm{1}_{\{x_{j^{\star,\omega}}>0\}} \, d\mu(x) + \int_{\mathbb{R}^d} f^{(\omega)}(x) \mathbbm{1}_{\{x_{j^{\star,\omega}}<0\}} \, d\mu(x)  \\
&=  \frac{1}{2^{d-d_\omega}} \sum_{s\in \{-1,1\}^d:s_{j^{\star,\omega}}=1} \Bigl(\prod_{j = 2}^{d_{\omega}} \{\Pi_{\omega}(s)\}_j \Bigr) \cdot \int_{\mathcal{R}^{\pm}\cap[0,\infty)^d}   f^{\omega,+}(x) \, d\mu(x) \\
&\hspace{8mm} - \frac{1}{2^{d-d_\omega}} \sum_{s\in \{-1,1\}^d:s_{j^{\star,\omega}}=-1} \Bigl(\prod_{j = 2}^{d_{\omega}} \{\Pi_{\omega}(s)\}_j \Bigr) \cdot \int_{\mathcal{R}^{\pm}\cap[0,\infty)^d}   f^{\omega,+}(x) \, d\mu(x)=0.\label{eq:LB_f_omega_is_zero}
\end{split}
\end{align}
We conclude that
\begin{align*}
f_{\mathbf{0}} &= \mathbb{E}_P\bigl\{\eta(X)\bigr\} -\frac{1}{2} = \mathbb{E}_P \Bigl\{  \sum_{\omega\in\Omega_\star} f^{(\omega)}(X) \Bigr\} = 0,
\end{align*}
such that the induction base case holds.

For the induction step, assume that for all $\omega$ with $|\omega|<d'$, we have $f_\omega=f^{(\omega)}$ if $\omega\in\Omega_\star$
and that $f_{\omega} \equiv 0$, if $\omega \in\{0,1\}^d \setminus \Omega_\star$. For any $\omega'$ with $|\omega'|=d'$, we consider two cases.

Case 1: $\omega'\notin \Omega_\star$.
In this case, for all $\omega\in\Omega_\star$ with $\omega\nprec\omega'$, one can take $j^{\star,\omega}=\min\{j\in[d]:~\omega_j=1-\omega'_j=1\}$, and deduce similarly to \eqref{eq:LB_f_omega_is_zero} that 
\[
\mathbb{E}_P\Bigl\{f^{(\omega)}(X) \Big| X^{\omega'} = x^{\omega'}\Bigr\} = \mathbb{E}_P\Bigl\{f^{(\omega)}(X) \bigl( \mathbbm{1}\{x_{j^{\star,\omega}}>0\} + \mathbbm{1}\{x_{j^{\star,\omega}}<0\} \bigr) \Big| X^{\omega'} = x^{\omega'}\Bigr\} = 0.
\]
Using the induction hypothesis, we conclude that 
\begin{align*}
    f_{\omega'}(x)
    &= \mathbb{E}_P\Bigl\{ \eta(X) - \frac{1}{2} - \sum_{\omega''\prec\omega'}f_{\omega''}(X) \Big| X^{\omega'} = x^{\omega'} \Bigr\}  \\
    &= \mathbb{E}_P\Bigl\{ \sum_{\omega\in\Omega_\star} f^{(\omega)}(X) - \sum_{\omega''\prec\omega'}f_{\omega''}(X) \Big| X^{\omega'} = x^{\omega'} \Bigr\} \\
    &= \mathbb{E}_P\Bigl\{ \sum_{\omega\in\Omega_\star:~\omega\nprec\omega'} f^{(\omega)}(X) \Big| X^{\omega'} = x^{\omega'} \Bigr\} = 0.
\end{align*}

Case 2: $\omega'\in\Omega_\star\setminus\{\omega^\star\}$. This case is similar to the first one, with the difference in the conclusion:
\begin{align*}
    f_{\omega'}(x)
    &= \mathbb{E}_P\Bigl\{ \eta(X) - \frac{1}{2} - \sum_{\omega''\prec\omega'}f_{\omega''}(X) \Big| X^{\omega'} = x^{\omega'} \Bigr\}  \\
    &= \mathbb{E}_P\Bigl\{ \sum_{\omega\in\Omega_\star} f^{(\omega)}(X) - \sum_{\omega''\prec\omega'}f_{\omega''}(X) \Big| X^{\omega'} = x^{\omega'} \Bigr\} \\
    &= \mathbb{E}_P\Bigl\{ f^{(\omega')}(X) + \sum_{\omega\in\Omega_\star:~\omega\nprec\omega'} f^{(\omega)}(X) \Big| X^{\omega'} = x^{\omega'} \Bigr\} = f^{(\omega')}(x),
\end{align*}
proving the induction step. Furthermore, for $\omega\in\Omega_\star\setminus\{\omega^\star\}$ we have
\begin{align*}
\sigma_\omega^2 &= \int_{\mathbb{R}^d} f_\omega^2(x) \,d\mu(x)  =  \frac{1}{4|\Omega_\star|}\geq c_{\mathrm{E}},
\end{align*}
and for $\omega^\star$, by Lemma~\ref{lem:E_and_S_def_satisfied} or Lemma~\ref{lem:E_and_S_def_satisfied_heavy_tails}, 
\begin{align*}
\sigma_{\omega^\star}^2 &= \int_{\mathbb{R}^d} (f_\sigma^{({\omega^\star})}(x))^2 \,d\mu(x)  \geq  \frac{\min\{b/4,1/8\}}{|\Omega_\star|} \geq c_{\mathrm{E}},
\end{align*}
thus $Q\in\mathcal{Q}_{\mathrm{E}}(\Omega_\star,c_{\mathrm{E}},\mathcal{O})$. 

Now, for any $\omega\in\Omega_\star\setminus\{\omega^\star\}$, if $x_1,x_2\in\mathbb{R}^d$ are such that $\mathrm{sign}(x_1^\omega)=\mathrm{sign}(x_2^\omega)$, then by Equation~\ref{eq:holder_smooth_f_omega},
\[
|f^{(\omega)}(x_1) - f^{(\omega)}(x_2)| = |f^{(\omega)}(s\odot x_1) - f^{(\omega)}(s\odot x_2)| \leq \frac{1}{2} \|s\odot x_1-s\odot x_2\|^{\beta_{\omega}} \leq \|x_1-x_2\|^{\beta_{\omega}}. 
\]
On the other hand, if $s_1:=\mathrm{sign}(x_1^\omega) \neq \mathrm{sign}(x_2^\omega)=:s_2$, then there exists $z \in \mathcal{S}^\omega$ with $z=x_1+\zeta(x_2-x_1)$ for some $\zeta\in[0,1]$. Then, again by Equation~\ref{eq:holder_smooth_f_omega},
\begin{align*}
|f^{(\omega)}(x_1) &-f^{(\omega)}(x_2)| \leq |f^{(\omega)}(x_1)-f^{(\omega)}(z)| + |f^{(\omega)}(z)-f^{(\omega)}(x_2)| \\
&\leq |f^{(\omega)}(s_1\odot x_1)-f^{(\omega)}(s_1\odot z)| + |f^{(\omega)}(s_2\odot z) - f^{(\omega)}(s_2\odot x_2)| \\
&\leq \frac{1}{2} \|s_1\odot x_1-s_1\odot z\|^{\beta_{\omega}} + \frac{1}{2} \|s_2\odot z-s_2\odot x_2\|^{\beta_{\omega}} = \frac{1}{2} \|x_1-z\|^{\beta_{\omega}} + \frac{1}{2} \|z-x_2\|^{\beta_{\omega}} \\
&= \frac{1}{2} (\zeta^{\beta_{\omega}}+(1-\zeta)^{\beta_{\omega}}) \|x_1-x_2\|^{\beta_{\omega}} \leq \frac{1}{2} 2^{1-\beta_{\omega}} \|x_1-x_2\|^{\beta_{\omega}} \leq \|x_1-x_2\|^{\beta_{\omega}}.
\end{align*}
Using additionally that $f_\omega \equiv 0$ for $\omega\notin\Omega_\star$, as well as either Lemma~\ref{lem:E_and_S_def_satisfied} or \ref{lem:E_and_S_def_satisfied_heavy_tails}, we conclude that $P\in\mathcal{P}_{\mathrm{S}}(\boldsymbol{\beta},1)$.
\end{proof}

\begin{lemma}\label{lem:generalLB_M_def_satisfied}
Fix $d,n\in\mathbb{N}$, $\mathcal{O}\subseteq\{0,1\}^d$, $\boldsymbol{\gamma} = (\gamma_1,\dots,\gamma_d) \in [0, \infty)^d$, $\boldsymbol{\beta} = (\beta_1,\ldots,\beta_d)\in (0,1]^{d}$, $\alpha\in[0,\infty)$, $C_{\mathrm{M}}\geq1$, and $\Omega_\star\in\mathcal I(\{0,1\}^d)\setminus\{\{\mathbf{0}_d\},\emptyset\}$ with $|\Omega_\star|\geq 2$.  Fix further $o_1,\ldots,o_n\in\mathcal{O}$, and fix an $\omega^\star \in \argmax_{\omega\in\Omega_\star\cap\mathcal{N}} \bigl\{ n_\omega^{-\frac{\beta_{\omega}\gamma_{\omega}(1+\alpha)}{\gamma_{\omega}(2\beta_{\omega}+d_{\omega})+\alpha\beta_{\omega}}} \bigr\}$.  Let $\gamma_{\omega^\star} := \min\{\gamma_j : \omega^\star_j = 1\}$.  Fix further $r>0$, $\tilde{q},q \in \mathbb{N}$, $\kappa \in (0,1/\sqrt{d_{\omega^\star}})$, $\tilde{j} \in \{j\in [d] : \omega_{j} = 1\}$, $a,b\in(0,1/4]$, $\epsilon \in (0,1]$, $\sigma=(\sigma_t)_{t\in[T]}\in\{-1,1\}^T$,  $c_{\mathrm{E}} \leq 1$. 
Consider the set of assumptions defined by 
\begin{enumerate}
    \item[C1.] 
    \begin{enumerate}
        \item[(i)] $\gamma_{\omega^\star}\leq 1$
        \item[(ii)] $\epsilon \leq (1/4)\wedge(1/8\cdot(r/q)^{\beta_{\omega}})$
        \item[(iii)] $C_{\mathrm{M}} \geq \max\{1+4^{d_{\omega^\star}/\beta_{\omega}}(2\kappa)^{-d_{\omega^\star}}V_{d_{\omega^\star}}, 2^{\alpha}\}$
    \end{enumerate}
    
    \item[C2.]
    \begin{enumerate}
        \item[(i)] $\gamma_{\omega^\star}> 1$
        \item[(ii)] $\epsilon \leq (1/4\cdot(r/q)^{\beta_{\omega}}) \wedge (1/4\cdot(1/\tilde{q})^{\beta_{\omega}})$ 
        \item[(iii)] $C_{\mathrm{M}} \geq 2^\alpha$
    \end{enumerate}
\end{enumerate}
Let $P\equiv P^{(\Omega_\star)}_{\kappa,r,\tilde{q},q,a,b,\tilde{j},\epsilon,\sigma}$ denote the distribution on $\mathbb{R}^d\times\{0,1\}$ with regression function $\eta^{(\Omega_\star)}_{\epsilon,\tilde{q},q,r,\sigma}$ and marginal feature distribution $\mu^{(\Omega_\star)}_{\kappa,r,\tilde{q},q,a,b,\tilde{j}}$. Suppose that either C1 or C2 holds, that $\alpha\in[0,d_{\omega^\star}/\beta_{\omega^\star}]$ and that $a\in[0,\epsilon^\alpha]$, then $P \in \mathcal P_{\mathrm{M}}(\alpha,C_{\mathrm{M}})$. 
\end{lemma}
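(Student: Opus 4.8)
The plan is to exploit the two-part structure of the mixture \eqref{def:mu_mixture_LB}: it is $\tfrac{1}{|\Omega_\star|}$ times a ``core'' marginal $\mu^\bullet$ (equal to $\mu^{(\omega^\star)}_{\kappa,r,q,a,b}$ when $\gamma_{\omega^\star}\ge 1$ and to $\mu^{(\omega^\star)}_{\tilde q,q,r,a,\tilde j}$ when $\gamma_{\omega^\star}<1$), plus a ``fringe'' part carried by the finite set $\mathcal{R}^\pm$. Writing $\eta^\bullet$ for the singleton regression function attached to $\mu^\bullet$ and $B_t:=\{x\in\mathbb{R}^d:|\eta^{(\Omega_\star)}_{\epsilon,\tilde q,q,r,\sigma}(x)-1/2|<t\}$, I would first check, directly from \eqref{def:mu_mixture_LB}--\eqref{def:eta_mixture_LB}, that the two components have disjoint supports and that the two branches of the regression function never interfere: the support of $\mu^\bullet$ lies in the subspace $\{x:x=x^{\omega^\star}\}$ (the $\omega^\star$-marginals of Sections~\ref{subsubsec:LBlight}--\ref{subsubsec:LBheavy} avoid the coordinate axes), while every point $(1+r)(s\odot\omega)$ of $\mathcal{R}^\pm$ has, because $\Omega_\star$ is an antichain, a coordinate $j$ with $\omega_j=1>0=\omega^\star_j$ and hence lies outside that subspace. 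Consequently $\eta^{(\Omega_\star)}_{\epsilon,\tilde q,q,r,\sigma}=\eta^\bullet$ throughout $\mathrm{supp}(\mu^\bullet)$, and
\[
\mu^{(\Omega_\star)}_{\kappa,r,\tilde q,q,a,b,\tilde j}(B_t)=\frac{1}{|\Omega_\star|}\,\mu^\bullet\bigl(\{|\eta^\bullet-1/2|<t\}\bigr)+\frac{1}{2^d|\Omega_\star|}\sum_{\omega\in\Omega_\star\setminus\{\omega^\star\}}\;\sum_{s\in\{-1,1\}^d}\mathbbm{1}_{\{(1+r)(s\odot\omega)\in B_t\}}.
\]

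Next I would note that on $\mathcal{R}^\pm$ the regression function takes the values $1/2\pm 1/2\in\{0,1\}$, so $|\eta^{(\Omega_\star)}_{\epsilon,\tilde q,q,r,\sigma}-1/2|=1/2$ there; hence the double sum above vanishes whenever $t\le 1/2$. The argument then splits on $t$. For $t\ge 1/2$ the estimate is immediate, $\mu^{(\Omega_\star)}_{\kappa,r,\tilde q,q,a,b,\tilde j}(B_t)\le 1\le(2t)^\alpha\le C_{\mathrm{M}}t^\alpha$, using $C_{\mathrm{M}}\ge 2^\alpha$ (guaranteed under both C1 and C2). For $0<t<1/2$, by the displayed identity it remains only to bound $\tfrac{1}{|\Omega_\star|}\mu^\bullet(\{|\eta^\bullet-1/2|<t\})$; here C1 supplies exactly the hypotheses of Lemma~\ref{lem:M_def_satisfied} applied to the singleton $\{\omega^\star\}$ with $\mu^\bullet=\mu^{(\omega^\star)}_{\kappa,r,q,a,b}$, and C2 supplies those of Lemma~\ref{lem:M_def_satisfied_heavy_tails} with $\mu^\bullet=\mu^{(\omega^\star)}_{\tilde q,q,r,a,\tilde j}$, the remaining requirements $\alpha\in[0,d_{\omega^\star}/\beta]$ and $a\in[0,\epsilon^\alpha]$ being assumed directly. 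Those lemmas give $\mu^\bullet(\{|\eta^\bullet-1/2|<t\})\le C_{\mathrm{M}}t^\alpha$, and dividing by $|\Omega_\star|\ge 1$ only improves the bound. Combining the two ranges of $t$ yields $\mu^{(\Omega_\star)}_{\kappa,r,\tilde q,q,a,b,\tilde j}(B_t)\le C_{\mathrm{M}}t^\alpha$ for every $t>0$, i.e.\ $P\in\mathcal{P}_{\mathrm{M}}(\alpha,C_{\mathrm{M}})$.

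The one step requiring genuine care is the decoupling in the first display --- verifying that the bad set $B_t$ meets the two mixture components ``separately'', i.e.\ that restricting $\eta^{(\Omega_\star)}_{\epsilon,\tilde q,q,r,\sigma}$ to the core support recovers the singleton construction and that $\mathcal{R}^\pm$ is disjoint from that support. Once this bookkeeping (a direct consequence of the antichain property of $\Omega_\star$ and of the location of the supports built in Sections~\ref{subsubsec:LBlight} and~\ref{subsubsec:LBheavy}) is in place, the claim reduces verbatim to the already-established singleton margin lemmas together with the trivial $t\ge 1/2$ bound.
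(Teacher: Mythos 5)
Your proposal is correct and takes essentially the same route as the paper's proof: decompose $\mu^{(\Omega_\star)}$ into the core $\frac{1}{|\Omega_\star|}\mu^\bullet$ plus the fringe on $\mathcal{R}^\pm$, observe that the fringe has $|\eta-1/2|=1/2$ and hence contributes nothing to $B_t$ for $t\le 1/2$, bound the core contribution for $t<1/2$ by invoking Lemma~\ref{lem:M_def_satisfied} (light tails, C1) or Lemma~\ref{lem:M_def_satisfied_heavy_tails} (heavy tails, C2), and use the trivial $1\le(2t)^\alpha\le C_{\mathrm{M}}t^\alpha$ bound for $t\ge 1/2$. The only cosmetic difference is that you merge the paper's $(0,\epsilon)$ and $[\epsilon,1/2)$ ranges into a single case by citing the singleton margin lemmas over all of $(0,1/2)$, and you spell out the support-disjointness step (via the antichain property of $\Omega_\star$) that the paper leaves implicit --- both are fine and do not change the argument in substance.
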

\begin{proof}
First, if $t\in (0,\epsilon)$, then $\mu\bigl(\bigl\{x \in \IR^d : |\eta(x)- 1/2| < t \bigr\} \bigr) = 0 \leq C_{\mathrm{M}} \cdot t^{\alpha}$.  For $t\in [\epsilon,1/2)$, by Lemma~\ref{lem:M_def_satisfied} or Lemma~\ref{lem:M_def_satisfied_heavy_tails}, we have
\begin{align*}
\mu\Bigl(\Bigl\{x \in \mathbb{R}^d : \bigl|\eta(x)-1/2\bigr| < t \Bigr\} \Bigr) &=  \frac{1}{|\Omega_\star|} \mu^{(\omega^\star)}\Bigl(\Bigl\{x \in \mathbb{R}^d : \bigl|\eta(x)-1/2\bigr| < t \Bigr\} \Bigr) \\
&\leq \frac{C_{\mathrm{M}}}{|\Omega_\star|}  \cdot t^{\alpha} \leq C_{\mathrm{M}} \cdot t^{\alpha}.
\end{align*}
Finally, if $t\geq1/2$, then
\begin{align*}
\mu\Bigl(\Bigl\{x \in \mathbb{R}^d : \bigl|\eta(x)-1/2\bigr| < t \Bigr\} \Bigr) \leq 1 \leq (2t)^{\alpha}  \leq C_{\mathrm{M}} \cdot t^{\alpha},
\end{align*}
as required. 
\end{proof}

\begin{proof}[Proof of the lower bound in Theorem~\ref{thm:minmax_bounds}
]
Firstly, if $\Omega_\star = \{\omega^\star\}$ for one $\omega^\star \in\{0,1\}^d\setminus\{\mathbf{0}_d\}$, we can directly apply Lemma~\ref{lem:LB_light_tails} if $\gamma_{\omega^\star}\geq 1$, or Lemma~\ref{lem:LB_heavy_tails} if $\gamma_{\omega^\star}< 1$. Otherwise $|\Omega_\star|\geq2$, and we will prove the lower bound using the constructions in~\eqref{def:mu_mixture_LB} and~\eqref{def:eta_mixture_LB}.

To this end, let \[\omega^\star=\argmax_{\omega\in\Omega_\star\cap\mathcal{N}} \bigl\{ n_\omega^{-\frac{\beta_{\omega}\gamma_\omega(1+\alpha)}{\gamma_\omega(2\beta_{\omega}+d_\omega)+\alpha\beta_{\omega}}} \bigr\},\]
and we consider the two different cases $\gamma_{\omega^\star}\geq 1$ and $\gamma_{\omega^\star}< 1$.

\textbf{Case 1:} $\gamma_{\omega^\star}\geq 1$.
Let
$a_0 := 2^{-3d_{\omega^\star}\gamma_{\omega^\star}}$, $a_1 := 2^{3\gamma_{\omega^\star}d_{\omega^\star}/\beta_{\omega^\star}+2\gamma_{\omega^\star}d_{\omega^\star}}
 \cdot d_{\omega^\star}^{\gamma_{\omega^\star}d_{\omega^\star}/2}$, $b_0 = 2^{-1}(d_{\omega^\star} 2^{d_{\omega^\star}})^{-\gamma_{\max}}$,
and
\[
q_0:=\min\biggl\{ a_0^{\frac{2+\alpha}{\alpha \wedge 1}}2^{5+d_{\omega^\star}} , \Bigr(\frac{a_0}{a_1} \cdot 2^{(d_{\omega^\star} + 5)\frac{\alpha\beta_{\omega^\star}(1-\gamma_{\omega^\star})+\gamma_{\omega^\star}d_{\omega^\star}}{\beta_{\omega^\star}(2+\alpha)} } \Bigl)^{\frac{(2+\alpha)\beta_{\omega^\star}}{\alpha\beta_{\omega^\star}+\gamma_{\omega^\star}d_{\omega^\star}+2\gamma_{\omega^\star}\beta_{\omega^\star}}}, 2^{\frac{d_{\omega^\star}(d_{\omega^\star}-3\alpha-1)}{d_{\omega^\star}+(2+\alpha)\beta_{\omega^\star}}}\biggr\}.
\]
Further let
\begin{align*}
\rho &:= \frac{\gamma_{\omega^\star}(d_{\omega^\star}-\alpha\beta_{\omega^\star})+\alpha\beta_{\omega^\star}}{\gamma_{\omega^\star}(2\beta_{\omega^\star}+d_{\omega^\star})+\alpha\beta_{\omega^\star}}\in[0,1); \quad q :=\lfloor (q_{0} n_{\omega^\star}^{\rho})^{1/d_{\omega^\star}}\rfloor, 
\\ m &:= q^{d_{\omega^\star}}, \quad 
\epsilon:= \min\Bigl\{\Bigl(\frac{m}{2^{d_{\omega^\star}+5} n_{\omega^\star}}\Bigr)^{\frac{1}{2+\alpha}},1/4\Bigr\} , \quad
u := \frac{\epsilon^\alpha}{m},
\\  \kappa &:=\frac{1}{2\sqrt{d_{\omega^\star}}}, \quad
a:=\epsilon^\alpha, \quad 
b:=\frac{1}{4}, \quad
r:= (8\epsilon)^{1/\beta_{\omega^\star}}\cdot q,
\\ \tilde{q}&:=\tilde{j}:=1.
\end{align*}
Suppose initially that $n_{\omega^\star} >  q_{0}^{-1/\rho}$, so that $q \geq 1$. For $\sigma \in\{-1,1\}^m$, let $Q^\sigma\equiv Q^{(\Omega_{\star})}_{\kappa,r,\tilde{q},q,a,b,\tilde{j},\sigma}$ be the distribution of $(X,Y,O)$ on $\mathbb{R}^d\times\{0,1\}\times\{0,1\}^d$ with $X$-marginal distribution $\mu = \mu^{(\Omega_\star)}_{\kappa,r,,\tilde{q},q,a,b,\tilde{j}}$, with regression function $\eta=\eta^{(\Omega_\star)}_{\epsilon,\tilde{q},q,r,\sigma}$, with $O\indep(X,Y)$ and with $O\sim U(\mathcal{O})$. We now show that $Q^\sigma\in\mathcal{Q}'_{\mathrm{Miss}}$.

First, we verify that the set of Assumptions A1 in Lemma~\ref{lem:generalLB_L_def_satisfied} is satisfied. Using that $\rho-1 = \frac{-\gamma_{\omega^\star} \beta_{\omega^\star} (2+\alpha)}{\gamma_{\omega^\star}(2\beta_{\omega^\star} + d_{\omega^\star}) + \alpha\beta_{\omega^\star}}$, we have  
\begin{align*} 
a & = \epsilon^{\alpha} \leq \Bigl(\frac{m}{2^{d_{\omega^\star} + 5} n_{\omega^\star}} \Bigr)^{\frac{\alpha}{2+\alpha}} \leq \Bigl(\frac{q_0 n_{\omega^\star}^{\rho}} {2^{d_{\omega^\star} + 5} n_{\omega^\star}} \Bigr)^{\frac{\alpha}{2+\alpha}} \\
&= \Bigl(\frac{q_0}{2^{5 + d_{\omega^\star}}}\Bigr)^{\alpha/(2+\alpha)} \cdot n_{\omega^\star}^{-\frac{\gamma_{\omega^\star}\alpha\beta_{\omega^\star}}{\gamma_{\omega^\star}(2\beta_{\omega^\star}+d_{\omega^\star}) + \alpha \beta_{\omega^\star}}} \leq  a_0^{\alpha \wedge 1} \leq a_0,
\end{align*}
by the first term in the minimum in the definition of $q_0$. This further implies that $\bigl(\frac{m}{2^{d_{\omega^\star} + 5} n_{\omega^\star}} \bigr)^{\frac{1}{2+\alpha}}  < \frac{1}{4}$, so $\epsilon < 1/4$. Moreover, since $\alpha\beta_{\omega^\star} \leq d_{\omega^\star}$, 
\begin{align*}
a^{1-\gamma_{\omega^\star}} (4r\sqrt{d_{\omega^\star}})^{\gamma_{\omega^\star}d_{\omega^\star}}
&= \epsilon^{\alpha(1-\gamma_{\omega^\star})+\gamma_{\omega^\star}d_{\omega^\star}/\beta_{\omega^\star}} \cdot q^{\gamma_{\omega^\star}d_{\omega^\star}} \cdot 2^{3d_{\omega^\star}\gamma_{\omega^\star}/\beta_{\omega^\star}+2\gamma_{\omega^\star}d_{\omega^\star}} \cdot d_{\omega^\star}^{\gamma_{\omega^\star}d_{\omega^\star}/2}\\
&\leq \Bigl(\frac{q_0 n_{\omega^\star}^{\rho-1}} {2^{d_{\omega^\star} + 5}} \Bigr)^{\frac{\alpha(1-\gamma_{\omega^\star})+\gamma_{\omega^\star}d_{\omega^\star}/\beta_{\omega^\star}}{2+\alpha}} \cdot \bigl(q_0 n_{\omega^\star}^{\rho}\bigr)^{\gamma_{\omega^\star}} \cdot a_1\\
&= \Bigl(\frac{q_0} {2^{d_{\omega^\star} + 5}} \Bigr)^{\frac{\alpha(1-\gamma_{\omega^\star})+\gamma_{\omega^\star}d_{\omega^\star}/\beta_{\omega^\star}}{2+\alpha}} \cdot q_0^{\gamma_{\omega^\star}} \cdot a_1 \leq a_0,
\end{align*} 
where we have used the second term in the minimum in the definition of $q_0$, thus A1 (iii) holds. To show that A1 (iv) and (v) hold, first note that
\begin{align*}
    r &\leq 8^{1/\beta_{\omega^\star}} \Bigl(\frac{m}{2^{d_{\omega^\star} + 5} n_{\omega^\star}} \Bigr)^{\frac{1}{(2+\alpha)\beta_{\omega^\star}}} \cdot  (q_{0} n_{\omega^\star}^{\rho})^{1/d_{\omega^\star}} 
    \\ & = 8^{1/\beta_{\omega^\star}} \cdot \Bigl(\frac{q_0}{2^{5 + d_{\omega^\star}}}\Bigr)^{\frac{1}{(2+\alpha)\beta_{\omega^\star}}} \cdot q_0^{1/d_{\omega^\star}} \cdot n_{\omega^\star}^{\frac{(1-\gamma_{\omega^\star})\alpha\beta_{\omega^\star}}{(\gamma_{\omega^\star}(2\beta_{\omega^\star}+d_{\omega^\star}) + \alpha \beta_{\omega^\star})d_{\omega^\star}}}  \leq 1,
\end{align*}
where we have used the fact that $\gamma_{\omega^\star} \geq 1$ and the third term in the definition of $q_0$. Thus, the minima in A1 (iv) and (v) are both attained by $1$, and we have
\begin{align*}
b^{1-\gamma_{\omega^\star}} \leq b^{1-\gamma_{\max}} = 4^{\gamma_{\max}-1}\leq C_{\mathrm{L}} \cdot b_0,
\end{align*}
since $C_{\mathrm{L}} \geq 4^{\gamma_{\max}(d_{\omega^\star}+1)} \geq 4^{\gamma_{\max}-1}b_0^{-1}$. We deduce that A1 (iv) and (v) hold, and therefore that $Q^\sigma \in \mathcal{Q}_{\mathrm{L}}(\boldsymbol{\gamma}, C_{\mathrm{L}},\mathcal{O})$ by Corollary~\ref{lem:generalLB_L_def_satisfied}. Further, since $b = 1/4$, we have $c_{\mathrm{E}} \leq 1/16 = b/4$, thus by Lemma~\ref{lem:generalLB_E_and_S_def_satisfied}, the upper bound on $C_{\mathrm{M}}$ and Lemma~\ref{lem:generalLB_M_def_satisfied}, we deduce that $Q^{\sigma} \in \mathcal{Q}_{\mathrm{E}}(\Omega_\star, c_{\mathrm{E}},\mathcal{O})$ and $P_{Q^\sigma}\in \mathcal{P}_{\mathrm{S}}(\boldsymbol{\beta}, 1) \cap \mathcal{P}_{\mathrm{M}}(\alpha, C_{\mathrm{M}})$ for the distribution $P_{Q^\sigma}$ of $(X,Y)$, since the set of Assumptions B1 in Lemma~\ref{lem:generalLB_E_and_S_def_satisfied} is satisfied as well as the Assumptions C1 in Lemma~\ref{lem:generalLB_M_def_satisfied}.  

\textbf{Case 2:} $\gamma_{\omega^\star}< 1$.
First, let $a_1 := 2^{3\gamma_{\omega^\star}/\beta_{\omega^\star}}$ and
\[
q_0:=\min\biggl\{ 
2^{5+d_{\omega^\star}} , \Bigr(2^{(d_{\omega^\star} + 5)\frac{\alpha(1-\gamma_{\omega^\star})+\gamma_{\omega^\star}/\beta_{\omega^\star}}{2+\alpha} } \cdot a_1^{-1} \Bigl)^{\frac{2+\alpha}{\alpha(1-\gamma_{\omega^\star})+\gamma_{\omega^\star}/\beta_{\omega^\star}+\gamma_{\omega^\star}(2+\alpha)}}, 2^{(2+d_{\omega^\star})(2+\alpha)}\biggr\}.
\]
Further let
\begin{align*}
\rho &:= \frac{\gamma_{\omega^\star}(d_{\omega^\star}-\alpha\beta_{\omega^\star})+\alpha\beta_{\omega^\star}}{\gamma_{\omega^\star}(2\beta_{\omega^\star}+d_{\omega^\star})+\alpha\beta_{\omega^\star}}\in[0,1); \quad \rho_1 := \frac{\gamma_{\omega^\star}(1-\alpha\beta_{\omega^\star})+\alpha\beta_{\omega^\star}}{\gamma_{\omega^\star}(2\beta_{\omega^\star}+d_{\omega^\star})+\alpha\beta_{\omega^\star}} \in [0, \rho]; 
\\ q &:= \lfloor q_0 n_{\omega^\star}^{\frac{\gamma_{\omega^\star} + \alpha \beta_{\omega^\star} (1-\gamma_{\omega^\star})}{\gamma_{\omega^\star} (2\beta_{\omega^\star} +d_{\omega^\star}) + \alpha\beta_{\omega^\star}}} \rfloor, \quad \tilde{q} := \lfloor n_{\omega^\star}^{\frac{\gamma_{\omega^\star}}{\gamma_{\omega^\star} (2\beta_{\omega^\star} +d_{\omega^\star}) + \alpha\beta_{\omega^\star}}} \rfloor;  \quad m := q\cdot \tilde{q}^{d_{\omega^\star}-1}
\\ \epsilon &:= \min\Bigl\{\Bigl(\frac{m}{2^{d_{\omega^\star}+5} n_{\omega^\star}}\Bigr)^{\frac{1}{2+\alpha}},1/4\Bigr\}, \quad r := (8\epsilon)^{1/\beta_{\omega^\star}} \cdot q, \quad u := \frac{\epsilon^\alpha}{m},\quad
a:=\epsilon^\alpha\\
\kappa&=1/(2\sqrt{d}),\quad 
b:=1/2.
\end{align*}
Fix $\tilde{j} \in \argmin_{j \in \{j' \in [d]:\omega^\star_{j'}=1\}} \gamma_j$, so that $\gamma_{\tilde{j}} = \gamma_{\omega^\star}$. 

Suppose initially that $n_{\omega^\star}  >  q_{0}^{-1/\rho_1}$, so that $q \geq 1$. Let $Q^\sigma\equiv Q^{(\Omega_{\star})}_{\kappa,r,\tilde{q},q,a,b,\tilde{j},\sigma}$ be the distribution of $(X,Y,O)$ on $\mathbb{R}^d\times\{0,1\}\times\{0,1\}^d$ with $X$-marginal distribution $\mu = \mu^{(\Omega_\star)}_{\kappa,r,\tilde{q},q,a,b,\tilde{j}}$, with regression function $\eta=\eta^{(\Omega_\star)}_{\epsilon,\tilde{q},q,r,\sigma}$, with $O\indep(X,Y)$ and with $O\sim U(\mathcal{O})$, for $\sigma \in\{-1,1\}^m$. We now show that $Q^\sigma \in \mathcal{Q}_{\mathrm{Miss}}'$, by applying in turn the results from the previous subsections. 
First, we verify that the set of Assumptions A2 in Lemma~\ref{lem:generalLB_L_def_satisfied} is satisfied. Using that $\rho-1 = \frac{-\gamma_{\omega^\star} \beta_{\omega^\star} (2+\alpha)}{\gamma_{\omega^\star}(2\beta_{\omega^\star} + d_{\omega^\star}) + \alpha\beta_{\omega^\star}}$ and that 
\[
m\leq q_0 n_{\omega^\star}^{\frac{\gamma_{\omega^\star} + \alpha \beta_{\omega^\star} (1-\gamma_{\omega^\star})}{\gamma_{\omega^\star} (2\beta_{\omega^\star} +d_{\omega^\star}) + \alpha\beta_{\omega^\star}}} \cdot n_{\omega^\star}^{\frac{(d_{\omega^\star}-1)\gamma_{\omega^\star}}{\gamma_{\omega^\star} (2\beta_{\omega^\star} +d_{\omega^\star}) + \alpha\beta_{\omega^\star}}} = q_0 n_{\omega^\star}^{\rho-1},
\]
it follows that 
\begin{align*} 
a & = \epsilon^{\alpha} \leq \Bigl(\frac{q_0 n_{\omega^\star}^{\rho}} {2^{d_{\omega^\star} + 5} n_{\omega^\star}} \Bigr)^{\frac{\alpha}{2+\alpha}} = \Bigl(\frac{q_0}{2^{5 + d_{\omega^\star}}}\Bigr)^{\alpha/(2+\alpha)} \cdot n_{\omega^\star}^{-\frac{\gamma_{\omega^\star}\alpha\beta_{\omega^\star}}{\gamma_{\omega^\star}(2\beta_{\omega^\star}+d_{\omega^\star}) + \alpha \beta_{\omega^\star}}} \leq  1,
\end{align*}
by the first term in the minimum in the definition of $q_0$. This further implies that $\bigl(\frac{m}{2^{d_{\omega^\star} + 5} n_{\omega^\star}} \bigr)^{\frac{1}{2+\alpha}}  < \frac{1}{4\tilde{q}}$, so $\epsilon < 1/(4\tilde{q}) \leq 1/4$. Moreover, if 
\[
n_{\omega^\star} > N_0 := \max\Bigl\{q_0^{-1/\rho_1},  \Bigl(\frac{8}{2^{\beta_{\omega^\star}}} \Bigl(\frac{1}{2^{2d_{\omega^\star}+5} }\Bigr)^{\frac{1}{2+\alpha}} q_0^{\frac{1 + 2\beta_{\omega^\star} + \alpha\beta_{\omega^\star}}{2+\alpha}}\Bigr)^{-\frac{\gamma_{\omega^\star}(2\beta_{\omega^\star}+d_{\omega^\star}) + \alpha \beta_{\omega^\star}}{\alpha\beta_{\omega^\star}^2(1-\gamma_{\omega^\star})}}\Bigr\},
\] 
then, using that $\gamma_{\omega^\star} < 1$, we have
\begin{align*}
r^{\beta_{\omega^\star}} &= 8 \epsilon q^{\beta_{\omega^\star}} \geq 4\Bigl(\frac{q_0 n_{\omega^\star}^{\rho}}{2^{2d_{\omega^\star}+5} n_{\omega^\star}}\Bigr)^{\frac{1}{2+\alpha}} q_0^{\beta_{\omega^\star}}  n_{\omega^\star}^{\beta_{\omega^\star}\rho_1} \\
&= \frac{8}{2^{\beta_{\omega^\star}}} \Bigl(\frac{1}{2^{2d_{\omega^\star}+5} }\Bigr)^{\frac{1}{2+\alpha}} q_0^{\frac{1 + 2\beta_{\omega^\star} + \alpha\beta_{\omega^\star}}{2+\alpha}} \cdot n_{\omega^\star}^{\frac{\alpha\beta_{\omega^\star}^2(1-\gamma_{\omega^\star})}{\gamma_{\omega^\star}(2\beta_{\omega^\star}+d_{\omega^\star}) + \alpha \beta_{\omega^\star}}} > 1,
\end{align*}
so $r >1$. Suppose now that $n_{\omega^\star} > N_0$. Then,
since $\alpha\beta_{\omega^\star} \leq d_{\omega^\star}$,
\begin{align*}
a^{1-\gamma_{\omega^\star}} r^{\gamma_{\omega^\star}} & = 2^{3\gamma_{\omega^\star}/\beta_{\omega^\star}} \epsilon^{\alpha(1-\gamma_{\omega^\star}) + \gamma_{\omega^\star}/\beta_{\omega^\star}} \cdot q^{\gamma_{\omega^\star}} \\
&\leq 2^{3\gamma_{\omega^\star}/\beta_{\omega^\star}} \cdot \Bigl(\frac{q_0 n_{\omega^\star}^{\rho-1}} {2^{d_{\omega^\star} + 5}} \Bigr)^{\frac{\alpha(1-\gamma_{\omega^\star})+\gamma_{\omega^\star}/\beta_{\omega^\star}}{2+\alpha}} \cdot q_0^{\gamma_{\omega^\star}} n_{\omega^\star}^{\frac{\gamma_{\omega^\star}\{\gamma_{\omega^\star} + \alpha \beta_{\omega^\star}(1-\gamma_{\omega^\star})\}}{\gamma_{\omega^\star}(2\beta_{\omega^\star} + d_{\omega^\star}) + \alpha\beta_{\omega^\star}} } \\
&= a_1 \cdot \Bigl(\frac{q_0} {2^{d_{\omega^\star} + 5}} \Bigr)^{\frac{\alpha(1-\gamma_{\omega^\star})+\gamma_{\omega^\star}/\beta_{\omega^\star}}{2+\alpha}} \cdot q_0^{\gamma_{\omega^\star}} \leq 1,
\end{align*} 
where we have used the second term in the minimum in the definition of $q_0$. Thus the Assumptions A2 in Lemma~\ref{lem:generalLB_L_def_satisfied} are satisfied such that we have $Q^\sigma \in \mathcal{Q}_{\mathrm{L}}(\boldsymbol{\gamma}, C_{\mathrm{L}},\mathcal{O})$. Further we have
\begin{align*} 
8\epsilon\cdot \tilde{q}^{\beta_{\omega^\star}} \leq 8 \Bigl(\frac{q_0 n_{\omega^\star}^{\rho}} {2^{d_{\omega^\star} + 5} n_{\omega^\star}} \Bigr)^{\frac{1}{2+\alpha}} \tilde{q}^{\beta_{\omega^\star}} = 8 \Bigl(\frac{q_0}{2^{5 + d_{\omega^\star}}}\Bigr)^{\frac{1}{2+\alpha}} 
\leq 1
\end{align*}
by the third term in the definition of $q_0$.
Finally, by the upper bound on $c_{\mathrm{E}}$, Lemma~\ref{lem:generalLB_E_and_S_def_satisfied}, the upper bound on $C_{\mathrm{M}}$ and Lemma~\ref{lem:generalLB_M_def_satisfied}, we deduce that $Q^{\sigma} \in \mathcal{Q}_{\mathrm{E}}(\Omega_\star, c_{\mathrm{E}},\mathcal{O})$ and that the $(X,Y)$ distribution $P_{Q^\sigma}$ falls into the class $\mathcal{P}_{\mathrm{S}}(\boldsymbol{\beta}, 1) \cap \mathcal{P}_{\mathrm{M}}(\alpha, C_{\mathrm{M}})$, since the set of Assumptions B2 in Lemma~\ref{lem:generalLB_E_and_S_def_satisfied} is satisfied as well as the Assumptions C2 in Lemma~\ref{lem:generalLB_M_def_satisfied}.

The penultimate step of the proof is to verify that the Assumptions~(i)-(vi) of Lemma~\ref{lem:assouad} are satisfied, where we set $\omega\equiv\omega^\star$: 
\begin{enumerate}
\item[(i)] We have, for both $\gamma_{\omega^\star}\geq1$ and $\gamma_{\omega^\star}<1$ that $2^{d_{\omega^\star}+5}n_{\omega^\star}\epsilon^2u=1$ by the definition of $\epsilon$ and $u$.
\item[(ii)] Letting $z_t:=z_t^{\omega^\star}$ for $t\in[m]$, we have that $\mu(\{s^{\omega^\star} \odot z_t^{\omega^\star}\})=u$ for all $t\in[m]$ and $s\in\{-1,1\}^d$ by the construction of the marginal measure $\mu = \mu^{(\Omega^\star)}_{\kappa,r,\tilde{q},q,a,b,\tilde{j}}$,
\item[(iii)]  
By the construction of the regression function $\eta^\sigma = \eta^{(\Omega^\star)}_{\epsilon,\tilde{q},q,r,\sigma}$, we have $\eta^\sigma(z^{\omega^\star}_t) = 1/2 + \sigma_t \epsilon$ for $t\in[m]$, and $\eta^\sigma(s^{\omega^\star} \odot z_t^{\omega^\star}) = 1/2 + \bigl(\prod_{j \in [d] :(s^{\omega^\star})_{j} =-1} (s^{\omega^\star})_{j} \bigr)\cdot\sigma_t\epsilon$, for all $t\in[m]$ and $s\in\{-1,1\}^d$.
\item[(iv)] For $\sigma,\sigma'\in\Sigma$, the support of $\mu^{(\Omega^\star)}$ is given by $\mathrm{supp}(\mu^{(\Omega^\star)})=\bigcup_{s\in\{-1,1\}^d}s\odot(\mathcal{T}^\omega \cup \bigcup_{j\in[d]:\omega^\star_j=1} \mathcal{R}^\omega_j \cup\mathcal{R}^{\pm}$ if $\gamma_{\omega^\star}\geq1$, and by $\mathrm{supp}(\mu^{(\Omega^\star)})=\bigcup_{s\in\{-1,1\}^d}s\odot(\mathcal{T}^\omega \cup \mathcal{R}_{\tilde{j}}^\omega) \cup\mathcal{R}^{\pm}$ if $\gamma_{\omega^\star}<1$. We have $\eta^\sigma=\eta^{\sigma'}$ on $\mathcal{R}^\omega\cup \bigcup_{j\in[d]:\omega^\star_j=1} \mathcal{R}^\omega_j \cup\mathcal{R}^{\pm}$ (if $\gamma_{\omega^\star}\geq1$), or on $\mathcal{R}_{\tilde{j}}^\omega\cup \mathcal{R}^{\pm}$ (if $\gamma_{\omega^\star}<1$); either way we indeed have $\eta^\sigma(x)=\eta^{\sigma'}(x)$ for all $x\in \mathrm{supp}(\mu) \setminus \bigcup_{t\in [m], s\in \{-1,1\}^d} \{s^{\omega^\star}\odot z_t^{\omega^\star}\}$,
\item[(v)] 
Again by construction, we have $\Bigl(\mathrm{supp}(\mu) \setminus \bigcup_{t\in [m], s\in \{-1,1\}^d} \{s^{\omega^\star}\odot z_t^{\omega^\star}\} \Bigr) \bigcap (-r-1,r+1)^d = \emptyset$,
\item[(vi)] Once more by construction, we have $O\indep (X,Y)$.
\end{enumerate} 
Then, for $n_{\omega^\star} > q_0$ if $\gamma_{\omega^\star}\geq1$ ($n_{\omega^\star} > N_0$ if $\gamma_{\omega^\star}<1$), since $\epsilon = \bigl(\frac{m}{2^{d_{\omega^\star}+5}n_{\omega^\star}}\bigr)^{\frac{1}{2+\alpha}}$ the lower bound in~\eqref{eq:lb_assouad} in Lemma~\ref{lem:assouad} gives
\begin{align*}
     \inf_{\hat C\in\mathcal C_n} & \sup_{Q}\ \mathbb{E}_{Q}\{\mathcal {E}_{P}(\hat C) | O_1 = o_1, \ldots, O_n = o_n\} \geq  \frac{m u \epsilon}{2} = \frac{\epsilon^{1+\alpha}}{2} 
    = \frac{1}{2} \left(\frac{m}{2^{d_{\omega^\star}+5}n_{\omega^\star}}\right)^{\frac{1+\alpha}{2+\alpha}} \\
    &\geq \frac{1}{2} \Bigl( \frac{q_0 n_{\omega^\star}^\rho} {2^{d_{\omega^\star}+6} n_{\omega^\star}} \Bigr)^{\frac{1+\alpha}{2+\alpha}} = 2^{-1-\frac{(d_{\omega^\star}+6)(1+\alpha)}{2+\alpha}} q_0^{\frac{1+\alpha}{2+\alpha}} n_{\omega^\star}^{\frac{(1+\alpha)(\rho-1)}{(2+\alpha)}}
    =: \tilde{c} \cdot  n_{\omega^\star}^{-\frac{\beta_{\omega^\star}\gamma_{\omega^\star}(1+\alpha)}{\gamma_{\omega^\star}(2\beta_{\omega^\star}+d_{\omega^\star})+\alpha\beta_{\omega^\star}}}.
\end{align*}
Finally, if $n_{\omega^\star} \leq q_0$ ($n_{\omega^\star} \leq N_0$), including possibly $n_{\omega^\star} = 0$, then using the fact that the excess risk is decreasing in $n_{\omega^\star}$, we conclude that the result follows with $c := \tilde{c} \cdot q_0^{\frac{\beta_{\omega^\star}\gamma_{\omega^\star}(1+\alpha)}{\gamma_{\omega^\star}(2\beta_{\omega^\star}+d_{\omega^\star})+\alpha\beta_{\omega^\star}}}$ ($c := \tilde{c} \cdot N_0^{\frac{\beta_{\omega^\star}\gamma_{\omega^\star}(1+\alpha)}{\gamma_{\omega^\star}(2\beta_{\omega^\star}+d_{\omega^\star})+\alpha\beta_{\omega^\star}}}$). 
\end{proof}

\section{Details of the upper bound proofs in Section~\ref{sec:proofs}
\label{sec:UBproofs}}

\subsection{Additional results for the proof of the upper bound in Theorem~\ref{thm:minmax_bounds} 
and proofs of the results in Section~\ref{subsec:proofUB} 
\label{sec:appendixUBproofs}}

Here we provide the remaining details for the proof of the upper bound in Theorem~\ref{thm:minmax_bounds}
.  First, recall that we are treating the missingness indicators $o_1, \ldots, o_n \in \mathcal{O}\subseteq\{0,1\}^d$ as fixed and all probability statements in this section should be interpreted as being conditional on $O_1 = o_1, \ldots, O_n = o_n$. Recall also the definitions of $\mathcal{X}$ and the events $E_1^{\delta}(x)$ and $E_2^{\delta}(x)$ from Section~\ref{subsec:proofUB}
. Our first two lemmas bound the probability of the these events. 

\begin{lemma}\label{lem:E_1}
Fix $\delta \in (0,1)$. Then, for every $x \in \mathcal{X}$, we have $\IP(E_1^\delta(x)^c)\leq\delta$.
\end{lemma}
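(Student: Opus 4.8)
The plan is to bound the probability that, for some observation pattern $\omega \in \mathcal{N}$, the distance from $x^\omega$ to its $\tilde{k}_\omega$-th nearest neighbour among the available cases for $\omega$ exceeds the threshold $\bigl(2\tilde k_\omega/(n_\omega\rho_\omega(x^\omega))\bigr)^{1/d_\omega}$. A union bound over $\omega \in \mathcal{N}$ reduces matters to controlling $\mathbb{P}\bigl(A_\omega^\delta(x)^c\bigr)$ for a single fixed $\omega$; since there are $|\mathcal{N}|$ patterns, it suffices to show each such probability is at most $\delta/|\mathcal{N}|$, and since the event is trivially satisfied when $\tilde{k}_\omega = 0$ (the ball of the stated radius can be made to contain $x^\omega = X_{(0)_\omega}^\omega(x^\omega)$), we may assume $\tilde{k}_\omega \geq 1$, i.e. $\lceil 4\log_+(|\mathcal{N}|/\delta)\rceil < n_\omega\rho_\omega(x^\omega)/2$.

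The core of the argument is a standard binomial/concentration step. Set $r_\star := \bigl(2\tilde k_\omega/(n_\omega\rho_\omega(x^\omega))\bigr)^{1/d_\omega}$, and note that $r_\star < 1$ because $\tilde{k}_\omega < n_\omega\rho_\omega(x^\omega)/2$ by construction of $\tilde{k}_\omega$. The event $A_\omega^\delta(x)^c$ is exactly the event that strictly fewer than $\tilde{k}_\omega$ of the available cases for $\omega$ fall in the ball $B_{r_\star}(x^\omega)$ (in the $\omega$-coordinates). Conditional on $O_1 = o_1,\ldots,O_n = o_n$, the $n_\omega$ available cases for $\omega$ have $X^\omega$-coordinates that are i.i.d.\ from $\mu_{\omega\mid \cdot}$ — more carefully, one uses the definition $\rho_\omega(x^\omega) = \min_{o\succeq\omega}\rho_{\mu_{\omega\mid o},d_\omega}(x^\omega)$ together with the fact that each available case $i$ has $X_i^\omega \sim \mu_{\omega\mid o_i}$, so that $\mathbb{P}\bigl(X_i^\omega \in B_{r_\star}(x^\omega)\bigr) = \mu_{\omega\mid o_i}\bigl(B_{r_\star}(x^\omega)\bigr) \geq \rho_{\mu_{\omega\mid o_i},d_\omega}(x^\omega)\, r_\star^{d_\omega} \geq \rho_\omega(x^\omega)\, r_\star^{d_\omega}$, using $r_\star \in (0,1)$ and the definition of the lower density in~\eqref{def:lower_density}. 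Hence the number of available cases in the ball stochastically dominates $\mathrm{Bin}(n_\omega, p)$ with $p := \rho_\omega(x^\omega) r_\star^{d_\omega} = 2\tilde k_\omega/n_\omega$, so $n_\omega p = 2\tilde{k}_\omega$.

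It then remains to apply a multiplicative Chernoff lower-tail bound: if $Z \sim \mathrm{Bin}(n_\omega, p)$ with mean $2\tilde{k}_\omega$, then $\mathbb{P}(Z < \tilde{k}_\omega) \leq \mathbb{P}(Z \leq \tilde{k}_\omega) \leq \exp\bigl(-\tilde{k}_\omega/4\bigr)$ (using $\mathbb{P}(Z \leq (1-\lambda)\mathbb{E}Z) \leq \exp(-\lambda^2 \mathbb{E}Z/2)$ with $\lambda = 1/2$, $\mathbb{E}Z = 2\tilde{k}_\omega$, giving $\exp(-\tilde{k}_\omega/4)$). Since $\tilde{k}_\omega \geq \lceil 4\log_+(|\mathcal{N}|/\delta)\rceil \geq 4\log_+(|\mathcal{N}|/\delta) \geq 4\log(|\mathcal{N}|/\delta)$, we get $\exp(-\tilde{k}_\omega/4) \leq \exp(-\log(|\mathcal{N}|/\delta)) = \delta/|\mathcal{N}|$. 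Summing over the (at most) $|\mathcal{N}|$ patterns $\omega \in \mathcal{N}$ via the union bound gives $\mathbb{P}\bigl(E_1^\delta(x)^c\bigr) \leq \delta$, as claimed.

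The main obstacle — really the only non-routine point — is being careful about the conditioning and the stochastic domination: one must justify that, conditional on the observed indicator values $o_1,\ldots,o_n$, the $X^\omega$-coordinates of the available cases for $\omega$ are independent with $X_i^\omega \sim \mu_{\omega\mid o_i}$, and that the ordering/reordering used to define $X_{(\tilde{k}_\omega)_\omega}^\omega$ interacts correctly with the counting event. Because the thresholds $k_\omega$ (hence $\tilde{k}_\omega$) are deterministic given $n_\omega$ and $\delta$, and the $\tilde{k}_\omega$ clipping at $\lceil n_\omega\rho_\omega(x^\omega)/2\rceil - 1$ is exactly engineered so that $p \leq 1$ and $r_\star < 1$, the binomial comparison goes through cleanly; this clipping is what makes the Chernoff bound applicable without further case analysis.
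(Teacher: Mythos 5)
Your proof is correct and takes essentially the same route as the paper: handle the trivial $\tilde k_\omega=0$ case, use the definition of the lower density to lower-bound $\mu_{\omega\mid o_i}(B_r(x^\omega))$ by $\rho_\omega(x^\omega)\,r^{d_\omega}=2\tilde k_\omega/n_\omega$, apply a multiplicative Chernoff lower tail to get $e^{-\tilde k_\omega/4}\le\delta/|\mathcal N|$, and union bound over $\omega\in\mathcal N$. The only stylistic difference is that you pass through a stochastic-domination comparison with $\mathrm{Bin}(n_\omega,p)$ before applying Chernoff, whereas the paper applies the multiplicative Chernoff bound directly to the sum of independent (non-identically-distributed) indicators; both yield the identical exponent. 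One small wording nit: the event $A_\omega^\delta(x)^c$ is \emph{contained in}, not \emph{equal to}, the event that strictly fewer than $\tilde k_\omega$ available cases lie in the open ball $B_{r_\star}(x^\omega)$ (points on the sphere of radius $r_\star$ are not counted in the open ball), but since you only use the inclusion in the direction needed for the bound, the argument is unaffected.
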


\begin{proof}[Proof of Lemma~\ref{lem:E_1}]
Fix $x \in \mathcal{X}$ and $\omega \in \mathcal{N}$. If $n_\omega \rho_\omega(x^\omega)/2 \leq \lceil4\log_+(|\mathcal N|/\delta)\rceil$, then $\tilde{k}_{\omega} = 0$ and $\mathbb{P}(A^{\delta}_{\omega}) = 1$.  Now suppose that $\lceil4\log_+(|\mathcal N|/\delta)\rceil < n_\omega \rho_\omega(x^\omega)/2$. 
Define
\begin{align*}
    r:=\biggl(\frac{2\tilde{k}_\omega}{n_\omega\rho_\omega(x^\omega)}\biggr)^{1/d_\omega} < 1.
\end{align*}
Therefore, by the definitions of the lower density and $\rho_\omega$, we have for $o \succeq \omega$ that 
\[
\mu_{\omega|o}(B_r(x^\omega))\geq \rho_{\mu_{\omega|o},d_{\omega}}(x^\omega)r^{d_\omega} \geq  \rho_\omega(x^\omega)r^{d_\omega}=\frac{2\tilde{k}_\omega}{n_\omega}.
\]
Then, using a multiplicative Chernoff bound, we obtain
\begin{align*}
  \IP\left(\lVert X_{(\tilde{k}_\omega)_\omega}^{\omega}(x^\omega)-x^\omega\rVert> r \right) &\leq\IP\biggl(\sum_{i\in 
N_\omega}\mathbbm{1}_{\{X_i^\omega\in B_r(x^{\omega})\}}< \tilde{k}_\omega\biggr)\\
&\leq\IP\biggl(\sum_{i\in N_\omega}\mathbbm{1}_{\{X_i^\omega\in B_r(x^\omega)\}}< \frac{\sum_{i \in N_{\omega}} \mu_{\omega|o_i}(B_r(x^\omega))}{2}\biggr)\\
&\leq e^{- \frac{1}{8}\sum_{i\in N_\omega} \mu_{\omega|o_i}(B_r(x^\omega))} \leq e^{-\tilde{k}_\omega/4} \leq \frac{\delta}{|\mathcal{N}| }.
\end{align*}
Taking a union bound over $\omega\in\mathcal N$ concludes the proof.
\end{proof}

\begin{lemma}\label{lem:E_2_prob}
Fix $\delta \in (0,1)$ and $x\in\IR^d$, then we have $\IP \bigl(E_{2}^\delta(x)^c \bigr)\leq\delta$.
\end{lemma}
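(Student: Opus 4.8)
The plan is to run the same union-bound argument that proves Lemma~\ref{lem:E_1}: fix $\omega \in \mathcal{N}$, establish $\mathbb{P}\bigl(B^\delta_\omega(x)^c\bigr) \le \delta/|\mathcal{N}|$ via a concentration inequality applied conditionally on the relevant feature information, and then take a union bound over the at most $|\mathcal{N}|$ patterns in $\mathcal{N}$.

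First I would fix $\omega \in \mathcal{N}$ and condition on the $\sigma$-algebra $\mathcal{G}_\omega$ generated by $\{X_i^{\omega} : i \in N_\omega\}$ (recall that, as throughout this section, everything is also conditional on $O_1 = o_1, \ldots, O_n = o_n$). Two observations hold under this conditioning. First, the nearest-neighbour reordering $(1)_\omega, \ldots, (n_\omega)_\omega$ of $N_\omega$ from Algorithm~\ref{alg:nonadaptive} is a (measurable, after fixing any tie-breaking rule) function of $\{X_i^{\omega} : i \in N_\omega\}$ and $x^\omega$, hence is $\mathcal{G}_\omega$-measurable, and so is each $X_{(i)_\omega}(x)$. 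Second, since the triples $(X_i, Y_i, O_i)$ are i.i.d., conditioning on $\{O_i = o_i\}_{i \in [n]}$ leaves the pairs $\{(X_i^{\omega}, Y_i)\}_{i \in N_\omega}$ mutually independent; moreover, for each $i \in N_\omega$ we have $\omega \preceq o_i$, so Definition~\ref{def:missingness} yields $\mathbb{P}_Q(Y_i = 1 \mid X_i^{\omega} = x^\omega, O_i = o_i) = \mathbb{P}_Q(Y_i = 1 \mid X_i^{\omega} = x^\omega) = \eta_\omega(x)$. Combining these, conditionally on $\mathcal{G}_\omega$ the random variables $Y_{(1)_\omega}(x), \ldots, Y_{(n_\omega)_\omega}(x)$ are independent, $\{0,1\}$-valued, with $\mathbb{E}\bigl[Y_{(i)_\omega}(x) \mid \mathcal{G}_\omega\bigr] = \eta_\omega(X_{(i)_\omega}(x))$.

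Next I would apply Hoeffding's inequality to the first $k_\omega \, (\le n_\omega)$ of these conditionally independent, $[0,1]$-valued summands with conditional means $\eta_\omega(X_{(i)_\omega}(x))$: with $t := \bigl(k_\omega \log_+(2|\mathcal{N}|/\delta)/2\bigr)^{1/2}$,
\[
\mathbb{P}\Bigl(\Bigl|\sum_{i=1}^{k_\omega} \bigl\{Y_{(i)_\omega}(x) - \eta_\omega(X_{(i)_\omega}(x))\bigr\}\Bigr| > t \Bigm| \mathcal{G}_\omega\Bigr) \le 2\exp\bigl(-2t^2/k_\omega\bigr) = 2\exp\bigl(-\log_+(2|\mathcal{N}|/\delta)\bigr) \le \frac{\delta}{|\mathcal{N}|},
\]
using $\log_+(u) \ge \log u$. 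Dividing the event by $k_\omega$ identifies it with $B^\delta_\omega(x)^c$, so $\mathbb{P}\bigl(B^\delta_\omega(x)^c \mid \mathcal{G}_\omega\bigr) \le \delta/|\mathcal{N}|$; taking expectations over $\mathcal{G}_\omega$ and then a union bound over $\omega \in \mathcal{N}$ gives $\mathbb{P}\bigl(E_2^\delta(x)^c\bigr) \le \delta$. (The pattern $\mathbf{0}_d$, where $k_{\mathbf{0}_d} = n$ and $\eta_{\mathbf{0}_d} \equiv \mathbb{P}_P(Y=1)$, is covered by the same argument.)

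The only genuinely delicate step is the conditional-mean identity: one must condition on the $\omega$-marginals $X_i^{\omega}$ rather than on the full observed vectors $X_i^{o_i}$, since it is precisely this choice, together with the defining property of $\mathcal{Q}_{\mathrm{Miss}}(\mathcal{O})$ in Definition~\ref{def:missingness}, that forces the conditional mean of $Y_i$ to equal $\eta_\omega(X_i^{\omega})$ (and not $\eta_{o_i}(X_i^{o_i})$); everything else is a routine Hoeffding bound and union bound, exactly paralleling the proof of Lemma~\ref{lem:E_1}.
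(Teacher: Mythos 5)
Your proof is correct and follows essentially the same route as the paper: condition on $(X_\ell^\omega)_{\ell\in N_\omega}$, observe that the reordered labels are conditionally independent with conditional means $\eta_\omega(X_{(i)_\omega}(x))$, apply Hoeffding's inequality, and union bound over $\mathcal{N}$. Your explicit appeal to Definition~\ref{def:missingness} to justify $\mathbb{E}[Y_i\mid X_i^\omega, O_i=o_i]=\eta_\omega(X_i^\omega)$ is left implicit in the paper's proof but is the right justification for that step.
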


\begin{proof}[Proof of Lemma~\ref{lem:E_2_prob}]
Fix $\omega \in \mathcal{N}$, and recall that $N_\omega = \{i \in [n] : \omega \preceq o_i\}$. First note that the labels $Y_{(1)_\omega}(x), \ldots, Y_{(n_\omega)_\omega}(x)$ are conditionally independent given $(X_\ell^{\omega})_{\ell \in N_{\omega}}$. Further, for $i \in [n_{\omega}]$, we have that 
\[
\mathbb{E}\bigl\{Y_{(i)_\omega}(x) \bigm| (X_\ell^{\omega})_{\ell \in N_{\omega}} \bigr\} =\mathbb{E}\bigl\{Y_{(i)_\omega}(x) \bigm| X_{(i)_{\omega}}^\omega(x) \bigr\} = \eta_{\omega}(X_{(i)_{\omega}}(x)).
\]
Then by Hoeffding's inequality, we have that
\begin{align*}
    \IP\Biggl(\Bigl|\frac{1}{k_\omega}\sum_{i=1}^{k_\omega} \bigl\{ Y_{(i)_\omega}(x)-\eta_\omega(X_{(i)_\omega}(x)) \bigr\}\Bigr| > \sqrt{\frac{\log_+(2 |\mathcal{N}|/\delta)}{2k_\omega}} \Biggm|(X_\ell^{\omega})_{\ell \in N_{\omega}}\Biggr) \leq \frac{\delta}{|\mathcal{N}|}.
\end{align*}
Taking expectation over $(X_\ell^{\omega})_{\ell \in N_{\omega}}$ gives $\IP\bigl((B_\omega^\delta(x))^c\bigr)\leq\delta/|\mathcal{N}|$ and the result follows via a union bound.
\end{proof}

Now, working on the events $E_1^{\delta}(x)$ and $E_2^{\delta}(x)$, we can bound the approximation error of $\hat{f}_{\omega}(x)$ which will put us in a position to prove the results in Lemmas~\ref{lem:f_is_not_zero} 
and~\ref{lem:hatC=BayesC}
. These results rely on the following intermediate lemma, which uses the constant $C_{\mathrm{B}}$ given in Proposition~\ref{prop:f_bounded}.  

\begin{lemma}\label{lem:f_Xx_bound}
Fix $\boldsymbol{\beta}=(\beta_1,\ldots,\beta_d) \in (0,1]^d$, $C_{\mathrm{S}} \geq 1$, $\omega \in \mathcal{N}$ and $P \in \mathcal{P}_{\mathrm{S}}(\boldsymbol{\beta},C_{\mathrm{S}})$.  Then, on the event $E_1^\delta(x)$, for any $x\in \mathcal{X}$ and any $\omega'\preceq\omega$, we have 
\begin{align}\label{bound_f_error_on_E1}
    \max_{i\in\{1,\dots,\tilde k_\omega\}}\bigl|f_{\omega'}(X_{(i)_\omega}(x))-f_{\omega'}(x)\bigr| \leq 2C_{\mathrm{B}}  
    C_{\mathrm{S}} \cdot \biggl(\frac{2\tilde k_{\omega}}{n_\omega\rho_\omega(x^{\omega})}\biggr)^{\beta_{\omega}/d_{\omega}}.
\end{align}
\end{lemma}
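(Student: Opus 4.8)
The statement is a Hölder-continuity bound for the individual anova components $f_{\omega'}$ with $\omega' \preceq \omega$, evaluated at the $\tilde{k}_\omega$ nearest neighbours of $x$ in the $\omega$-metric, with the neighbour distance controlled on the event $E_1^\delta(x)$. The plan is as follows. First I would unpack the definition of the event $E_1^\delta(x) = \bigcap_{\omega \in \mathcal{N}} A_\omega^\delta(x)$, so that on this event we have in particular $A_\omega^\delta(x)$, i.e.
\[
\bigl\|X_{(\tilde{k}_\omega)_\omega}^\omega(x^\omega) - x^\omega\bigr\| \leq \Bigl(\frac{2\tilde{k}_\omega}{n_\omega \rho_\omega(x^\omega)}\Bigr)^{1/d_\omega}.
\]
Since the reordering in Algorithm~\ref{alg:nonadaptive} is by increasing $\omega$-distance to $x^\omega$, for every $i \in \{1,\dots,\tilde{k}_\omega\}$ we have $\|X_{(i)_\omega}^\omega(x) - x^\omega\| \leq \|X_{(\tilde{k}_\omega)_\omega}^\omega(x) - x^\omega\|$, and hence the same upper bound holds for each such $i$.

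Next, I would invoke the smoothness assumption $P \in \mathcal{P}_{\mathrm{S}}(\beta, C_{\mathrm{S}})$ from Definition~\ref{def:smoothness}, which gives $|f_{\omega'}(x_1) - f_{\omega'}(x_2)| \leq C_{\mathrm{S}}\|x_1^{\omega'} - x_2^{\omega'}\|^\beta$ for all $x_1, x_2 \in \mathrm{supp}(\mu)$ and all $\omega' \in \{0,1\}^d$. Applying this with $x_1 = X_{(i)_\omega}(x)$ and $x_2 = x$ (both in $\mathrm{supp}(\mu)$), and using that for $\omega' \preceq \omega$ we have $\|X_{(i)_\omega}(x)^{\omega'} - x^{\omega'}\| \leq \|X_{(i)_\omega}(x)^\omega - x^\omega\|$ (since zeroing out more coordinates only decreases the Euclidean norm), we get
\[
\bigl|f_{\omega'}(X_{(i)_\omega}(x)) - f_{\omega'}(x)\bigr| \leq C_{\mathrm{S}} \bigl\|X_{(i)_\omega}^\omega(x) - x^\omega\bigr\|^\beta \leq C_{\mathrm{S}} \Bigl(\frac{2\tilde{k}_\omega}{n_\omega \rho_\omega(x^\omega)}\Bigr)^{\beta/d_\omega}.
\]
Taking the maximum over $i \in \{1,\dots,\tilde{k}_\omega\}$ yields the bound with constant $C_{\mathrm{S}}$, which is stronger than the claimed $2C_{\mathrm{B}}C_{\mathrm{S}}$ since $C_{\mathrm{B}} \geq 1$ by Proposition~\ref{prop:f_bounded}; I would either note this directly, or (to match the form used downstream) simply observe $C_{\mathrm{S}} \leq 2C_{\mathrm{B}}C_{\mathrm{S}}$.

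There is essentially no serious obstacle here: the lemma is a routine combination of the deterministic metric control packaged into $E_1^\delta(x)$ with the Hölder smoothness of Definition~\ref{def:smoothness}. The only mild subtlety to handle carefully is the reduction from the $\omega$-norm to the $\omega'$-norm for $\omega' \preceq \omega$, which I would justify by the elementary observation that $x \mapsto x^{\omega'}$ is a coordinate projection composed into $x^\omega$ when $\omega' \preceq \omega$, so $\|z^{\omega'}\| \leq \|z^\omega\|$ for all $z$; and the (equally elementary) point that the case $\tilde{k}_\omega = 0$ makes the left-hand maximum vacuous, so the inequality holds trivially. The presence of $C_{\mathrm{B}}$ in the stated constant is presumably just to keep a uniform constant across the lemmas in this subsection, and I would remark on this rather than chase a tighter bound.
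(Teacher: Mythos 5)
Your proof is correct, and it takes a genuinely cleaner route than the paper's. The paper splits into four cases according to the piecewise definition of $\tilde{k}_\omega$: the case $\tilde{k}_\omega = 0$ (trivial), the case $k_\omega \geq n_\omega\rho_\omega(x^\omega)/2$ (where it invokes the uniform bound $|f_{\omega'}(X_{(i)_\omega}(x)) - f_{\omega'}(x)| \leq 2C_{\mathrm{B}}$ from Proposition~\ref{prop:f_bounded}), and the two remaining cases (where it applies the smoothness argument). You instead observe that the event $A_\omega^\delta(x)$ directly supplies the bound $\|X_{(\tilde{k}_\omega)_\omega}^\omega(x^\omega) - x^\omega\| \leq (2\tilde{k}_\omega/(n_\omega\rho_\omega(x^\omega)))^{1/d_\omega}$ regardless of which branch of the $\tilde{k}_\omega$ definition is active, so monotonicity of the ordering plus the $\omega' \preceq \omega$ norm reduction plus Definition~\ref{def:smoothness} gives the result with constant $C_{\mathrm{S}}$ uniformly, the $\tilde{k}_\omega = 0$ case being vacuous. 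This is both simpler and sharper (constant $C_{\mathrm{S}}$ versus $2C_{\mathrm{B}}C_{\mathrm{S}}$). It also quietly sidesteps a delicate point in the paper's second case: there the paper asserts the right-hand side of~\eqref{bound_f_error_on_E1} exceeds $2C_{\mathrm{B}}$ "since $C_{\mathrm{S}} \geq 1$", but with $\tilde{k}_\omega = \lceil n_\omega\rho_\omega(x^\omega)/2\rceil - 1$ the factor $(2\tilde{k}_\omega/(n_\omega\rho_\omega(x^\omega)))^{\beta/d_\omega}$ is strictly below $1$, so this assertion is not literally correct when $C_{\mathrm{S}}$ is near $1$; your direct smoothness argument closes this gap. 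One small caveat on your parenthetical "(both in $\mathrm{supp}(\mu)$)": the nearest-neighbour features $X_{(i)_\omega}(x)$ are the masked points $X_j^{o_j}$, which need not lie in $\mathrm{supp}(\mu)$; the fix is to note $f_{\omega'}(X_j^{o_j}) = f_{\omega'}(X_j)$ and $(X_j^{o_j})^{\omega'} = X_j^{\omega'}$ because $\omega' \preceq o_j$, then apply Definition~\ref{def:smoothness} to the unmasked $X_j \in \mathrm{supp}(\mu)$. The paper glosses over this in exactly the same way, so it does not count against you, but it is worth making explicit.
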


\begin{proof}[Proof of Lemma~\ref{lem:f_Xx_bound}]
Fix $x\in \mathcal{X}$. If $n_\omega\rho_\omega(x^\omega)/2\leq\lceil 4\log_{+}(|\mathcal{N}|/\delta)\rceil$ then both sides of \eqref{bound_f_error_on_E1} are zero by the definition of $\tilde k_\omega$. If $k_\omega\geq n_\omega\rho_\omega(x^\omega)/2>\lceil 4\log_{+}(|\mathcal{N}|/\delta)\rceil$, then the right hand side of \eqref{bound_f_error_on_E1} is greater than $2C_\mathrm{B}$, since $C_S\geq1$, so the result follows from the bound on $f_{\omega'}$ given in Proposition~\ref{prop:f_bounded}. If $\lceil 4\log_{+}(|\mathcal{N}|/\delta)\rceil\leq k_\omega< n_\omega\rho_\omega(x^\omega)/2$, then $\tilde k_\omega=k_\omega$, and from the definition of $E_1^\delta$ and the smoothness of $f_{\omega'}$ (see~\eqref{def:smoothness_eq}
), we have
\begin{align*}
    \max_{i\in\{1,\dots,k_{\omega}\}}\left|f_{\omega'}(X_{(i)_\omega}(x))\!-\!f_{\omega'}(x)\right| &\leq C_{\mathrm{S}} \max_{i\in\{1,\dots,k_{\omega}\}}\|X_{(i)_\omega}^{\omega}\!-\!x^{\omega}\|^{\beta_{\omega'}} \\
    &\leq C_S\left(\frac{2k_{\omega}}{n_\omega\rho_\omega(x^\omega)}\right)^{\beta_{\omega'}/d_{\omega}}\leq C_S\left(\frac{2k_{\omega}}{n_\omega\rho_\omega(x^\omega)}\right)^{\beta_{\omega}/d_{\omega}}.
\end{align*} 
Finally, if $k_\omega< \lceil 4\log_+(|\mathcal{N}|n_{\omega}/\delta) \rceil < n_{\omega} \rho_{\omega}(x^{\omega})/2$, then similarly we have
\begin{align*}
\max_{i\in\{1,\dots,\tilde k_{\omega}\}}\left|f_{\omega'}(X_{(i)_\omega}(x))-f_{\omega'}(x)\right|
    &=\max_{i\in\{1,\dots,\lceil4\log_+(|\mathcal{N}|/\delta)\rceil\}}\left|f_{\omega'}(X_{(i)_\omega}(x))-f_{\omega'}(x)\right|\\
    &\leq C_S\left(\frac{2\lceil4\log_+(|\mathcal{N}|/\delta)\rceil}{n_\omega\rho_\omega(x^\omega)}\right)^{\beta_{\omega'}/d_{\omega}}\\
    &\leq C_S\left(\frac{2\lceil4\log_+(|\mathcal{N}|/\delta)\rceil}{n_\omega\rho_\omega(x^\omega)}\right)^{\beta_{\omega}/d_{\omega}}.
\end{align*}
The result follows since $C_{\mathrm{B}} \geq 1$.
\end{proof}

\begin{proof}[Proof of Lemma~\ref{lem:f_is_not_zero}
]
First note that for $\omega = \mathbf{0}_d$ and for $x \in \mathcal{X}$, on the event $B_{\mathbf{0}_d}^{\delta}(x)$ we have that
\begin{align*}
\bigl|\hat{f}_{\mathbf{0}_d}(x)-f_{\mathbf{0}_d}(x)\bigr| =\Bigl|\frac{1}{n} \sum_{i=1}^n Y_i -\mathbb{P}_P(Y=1)\Bigr| \leq \sqrt{\frac{\log_+(2 |\mathcal{N}|/\delta)}{2n}} = R_{\mathbf{0}_d,2}.
\end{align*}
Thus the result holds for $\omega = \mathbf{0}_d$.  

Recall from Algorithm~\ref{alg:nonadaptive} 
that for $\omega \in \mathcal{N} \setminus\{\mathbf{0}_d\}$ we have
\begin{align}\label{def:f_tilde}
    \hat f_\omega(x) = \frac{1}{k_\omega} \sum_{i=1}^{k_\omega}Y_{(i)_\omega}(x) -\frac{1}{2} - \sum_{\omega'\prec\omega} \hat{f}_{\omega'}(x).
\end{align}
Then using also the fact that $\eta_\omega(x) = 1/2 + \sum_{\omega' \preceq \omega} f_{\omega'}(x)$, we have that 
\begin{align}
\nonumber
    \bigl|\hat f_\omega(x)\!-\!f_\omega(x)\bigr| & = \Bigl|\hat{f}_\omega(x)\! -\! \frac{1}{k_\omega}\sum_{i=1}^{k_\omega} \eta_\omega(X_{(i)_\omega}(x)) + \frac{1}{2} + \sum_{\omega'\preceq\omega}\frac{1}{k_\omega}\sum_{i=1}^{k_\omega} f_{\omega'}(X_{(i)_\omega}(x))\!-\!f_\omega(x)\Bigr| \\
    \begin{split}\label{ineq:tilde_f_bound}&\leq \Bigl|\frac{1}{k_\omega}\sum_{i=1}^{k_\omega}f_\omega(X_{(i)_\omega}(x))-f_\omega(x)\Bigr| + \Bigl|\frac{1}{k_\omega}\sum_{i=1}^{k_\omega}\bigl\{ Y_{(i)_\omega}(x)-\eta_\omega(X_{(i)_\omega}(x))\bigr\}\Bigr|
    \\ & \hspace{60pt}+\Bigl|\sum_{\omega'\prec\omega}\Bigl\{\frac{1}{k_\omega}\sum_{i=1}^{k_\omega} f_{\omega'}(X_{(i)_\omega}(x))- \hat{f}_{\omega'}(x)  \Bigr\}\Bigr|.\end{split} 
\end{align}     

On the event $E_1^{\delta}(x)$ the first term in \eqref{ineq:tilde_f_bound} is bounded by $R_{\omega, 1}(x)$ by Lemma~\ref{lem:f_Xx_bound}, and on the event $E_2^{\delta}(x)$ the second term in \eqref{ineq:tilde_f_bound} is bounded by $R_{\omega,2}$. For the third term, on $E_1^{\delta}(x) \cap E_2^{\delta}(x)$, first by Lemma~\ref{lem:f_Xx_bound} we have 
\begin{align*}   
\Bigl|\frac{1}{k_\omega}\sum_{i=1}^{k_\omega} f_{\omega'}(X_{(i)_\omega}(x)) - \hat{f}_{\omega'}(x)  \Bigr| & \leq  \Bigl|\frac{1}{k_\omega}\sum_{i=1}^{k_\omega}f_{\omega'}(X_{(i)_\omega}(x)) -f_{\omega'}(x)\Bigr|+ \bigl|f_{\omega'}(x)- \hat f_{\omega'}(x)\bigr|\\
& \leq R_{\omega, 1}(x) + \bigl|f_{\omega'}(x)- \hat f_{\omega'}(x)\bigr|,
\end{align*} 
for $\omega' \prec \omega$. Suppose now for induction, that~\eqref{ineq:f_bound} 
holds for all $\omega' \prec \omega$. Then using the facts that $R_{\omega', 1}(x) \leq R_{\omega, 1}(x)$ and $R_{\omega',2} \leq R_{\omega,2}$ for all $\omega' \prec \omega$, we deduce that  
\begin{align*}
    \bigl|\hat f_\omega(x)-f_\omega(x)\bigr|
    &\leq 2^{d_\omega} \cdot  R_{\omega, 1}(x)  + R_{\omega,2} + \sum_{\omega'\prec\omega}\bigl|f_{\omega'}(x)- \hat f_{\omega'}(x)\bigr|  \\
      &\leq (1+\sum_{\omega'\prec\omega}\kappa_{\omega'}) \cdot\Bigl(2^{d_\omega}\cdot R_{\omega,1}(x) + R_{\omega,2}\Bigr).
\end{align*}
The constant $\kappa_\omega$ can then be calculated using $\kappa_{\textbf{0}} = 1$ and the relation $\kappa_\omega := 1+\sum_{\omega'\prec\omega}\kappa_{\omega'}$, for $d_\omega > 1$. As in Proposition~\ref{prop:f_bounded}, this sequence is related to the ordered Bell numbers via $\kappa_{\omega} = 2B_{d_\omega} := \sum_{j=0}^{\infty} \frac{j^{d_\omega}}{2^j} \leq 2(d_\omega+1)^{d_\omega}$, for $d_\omega> 1$ \citep[see e.g.][]{zou2018log}. 
\end{proof}

\begin{proof}[Proof of Lemma~\ref{lem:hatC=BayesC}
]
First let  $\epsilon:=  C \cdot R_{\Omega}^{\delta}(x)$, so that $x \in \mathcal{X}$ satisfies $|\eta(x)-1/2|\geq\epsilon$.   We claim that on the event $E_1^\delta (x) \cap E_2^\delta(x)$, we have 
\begin{equation}
\label{eq:toprove}
|\hat{f}_{\omega}(x) - f_{\omega}(x)| \leq \frac{\epsilon}{2|\Omega\cup L(\Omega)|}
\end{equation}
for all $\omega\in \Omega\cup L(\Omega)$.  Then, if $\eta(x)-1/2 \geq \epsilon$, using that $\eta(x) = 1/2 + \sum_{\omega \in \Omega_{\star}\cup L(\Omega_\star)} f_{\omega}(x) = 1/2+ \sum_{\omega \in \Omega\cup  L(\Omega)} f_{\omega}(x)$, we have
\begin{align*}
    \hat\eta_{\Omega}(x)-1/2 = \sum_{\omega \in \Omega\cup  L(\Omega)} \bigl\{\hat{f}_{\omega}(x) - f_{\omega}(x)\bigr\} + \eta(x)- 1/2  \geq -\epsilon/2+\epsilon >0.
\end{align*}
By symmetry, if  $1/2 - \eta(x) \geq \epsilon$, we have that $1/2 - \hat\eta_\Omega(x) \geq \epsilon/2 > 0$.

It remains that to show that \eqref{eq:toprove} holds. First note that since $\eta(x) \in [0,1]$, we have that $\epsilon \leq 1/2$. 
Thus, for $\omega \in \Omega \cup L(\Omega)$, we have that 
\[
\lceil 4\log_+(|\mathcal N|/\delta)\rceil  \leq 4\log_+(2|\mathcal N|/\delta) \leq \frac{n_\omega^{\frac{2\beta_{\omega}\gamma_{\omega}}{\gamma_{\omega}(2\beta_{\omega}+d_{\omega}) + \alpha \beta_{\omega}}}}{C^{2}}  \leq k_\omega.
\]
Moreover, for $\omega \in \Omega\cup L(\Omega)$,

\[
\frac{n_{\omega} \rho_\omega(x^\omega)}{2} \geq 2^{d_{\omega}/\beta_{\omega}-1} C^{d_{\omega}/\beta_{\omega}}  n_{\omega}^{\frac{2\beta_{\omega} \gamma_{\omega}}{\gamma_{\omega}(2\beta_{\omega}+d_{\omega}) + \alpha \beta_{\omega}}} \geq 1 + \lfloor n_{\omega}^{\frac{2\beta_{\omega}\gamma_{\omega}}{\gamma_{\omega}(2\beta_{\omega}+d_{\omega}) + \alpha \beta_{\omega}}}\rfloor =  k_{\omega},
\]
since $C > 2$. Therefore $\tilde{k}_{\omega} = k_{\omega}$, for $\omega \in \Omega\cup L(\Omega)$. 
It follows that
\[
\Bigl(\frac{2\tilde{k}_\omega}{n_\omega\rho_\omega(x^\omega)}\Bigr)^{\beta_{\omega}/d_\omega} = \Bigl(\frac{2k_\omega}{n_\omega\rho_\omega(x^\omega)}\Bigr)^{\beta_{\omega}/d_\omega}
    \leq\Bigl(\frac{4 n_\omega^{\frac{2\beta_{\omega}\gamma_{\omega}}{\gamma_{\omega}(2\beta_{\omega}+d_\omega) + \alpha\beta_{\omega}}}}{n_\omega\rho_\omega(x^\omega)}\Bigr)^{\beta_{\omega}/d_\omega}
   \leq \frac{4n_{\omega}^{-\frac{\beta_{\omega}\gamma_{\omega} + \alpha\beta_{\omega}^2/d_{\omega}}{\gamma_{\omega}(2\beta_{\omega}+d_\omega) + \alpha \beta_{\omega}}}}{\rho_\omega^{\beta_{\omega}/d_\omega}(x^\omega)}.
\]
Then, recalling the definitions of $R_{\omega,1}(x)$ and $R_{\omega,2}$ from~\eqref{eq:biasvariance}
,  we have 
\begin{align}
    \label{eq:bound_bias_R} R_{\omega,1}(x) & = 2C_{\mathrm{B}}C_{\mathrm{S}} \cdot \max_{\omega' \preceq \omega} \Bigl(\frac{2\tilde{k}_{\omega'}}{n_{\omega'}\rho_{\omega'}(x^{\omega'})}\Bigr)^{\beta_{\omega'}/d_{\omega'}} \leq 8C_{\mathrm{B}}C_{\mathrm{S}} \cdot R_{\Omega}^\delta(x),
\end{align}
for $\omega \in \Omega\cup L(\Omega)$. 
Further
\begin{equation}
\label{eq:bound_var_R} 
R_{\omega,2}  = \sqrt{\frac{\log_+(2|\mathcal N|/\delta)}{2k_\omega}}
    \leq \frac{\log_+^{1/2}(2|\mathcal N|/\delta)}{2^{1/2}n_\omega^{\frac{\beta_{\omega}\gamma_{\omega}}{\gamma_{\omega}(2\beta_{\omega}+d_\omega) + \alpha\beta_{\omega}}}} 
   \leq \frac{1}{2^{1/2}} R_\omega^\delta(x)\leq \frac{1}{2^{1/2}} R_{\Omega}^\delta(x).
\end{equation}
Finally, by Lemma~\ref{lem:f_is_not_zero}
, along with \eqref{eq:bound_bias_R} and \eqref{eq:bound_var_R}, for $\omega\in\Omega\cup L(\Omega)$, we have that
\begin{align*}
    |\hat{f}_{\omega}(x) - f_{\omega}(x)|&\leq \kappa_{\omega} \cdot (2^{d_{\omega}} R_{\omega,1}(x) + R_{\omega,2} ) \\  
   & \leq \kappa_{\omega} \cdot \Bigl(2^{3 + d_{\omega}} C_{\mathrm{B}} C_{\mathrm{S}} + \frac{1}{2^{1/2}}\Bigr) \cdot R^\delta(x) \\
   &\leq \frac{C}{2|\Omega\cup L(\Omega)|} \cdot R^\delta(x) = \frac{\epsilon}{2|\Omega\cup L(\Omega)|},
\end{align*}
where the last equality holds since $C \geq 2 \kappa_{\omega} |\Omega\cup L(\Omega)|\bigl(2^{3 + d_{\omega}}C_{\mathrm{B}} C_{\mathrm{S}}+\sqrt{1/2}\bigr)$. This establishes the claim in~\eqref{eq:toprove} and completes the proof. 
\end{proof}

Our events $E_1^\delta(x)$ and $E_2^{\delta}(x)$ give us control of the properties of our estimators at each fixed point $x \in\mathcal{X}$.  Ideally we'd have this control simultaneously for all $x \in \mathcal{X}$, but this will typically not be possible. Instead, we only require this control at a large portion of the feature domain in order to control the excess risk.  We then show that the remaining portion of the feature domain is relatively small with high probability.  More precisely, for $\delta \in (0,1)$, $\Omega\subseteq\{0,1\}^d$ and $\alpha\in[0,\infty)$, let $\delta' := \frac{\delta}{2} \max_{\omega \in \Omega}\{1/n_{\omega}^{1+\alpha}\}$, 
and define
\[
\mathcal{X}_{\delta} = \mathcal{X}_{\delta}(D_n) := \Bigl\{x \in \mathcal{X} : E_{1}^{\delta'}(x) \cap E_{2}^{\delta'}(x) \ \text{holds} \Bigr\}.
\]

\begin{lemma}\label{lem:bound_prob_mu_big}
We have $\IP\Bigl\{\mu(\mathcal{X}_\delta^c)\geq \max_{\omega\in \Omega}(1/n_\omega^{1+\alpha})\Bigr\}\leq \delta$.
\end{lemma}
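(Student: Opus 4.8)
The plan is to bound the expected value of $\mu(\mathcal{X}_\delta^c)$ and then apply Markov's inequality with the threshold $\max_{\omega\in\Omega}(1/n_\omega^{1+\alpha})$. The key observation is that by Fubini's theorem, $\mathbb{E}\{\mu(\mathcal{X}_\delta^c)\} = \int_{\mathcal{X}} \mathbb{P}\bigl(x \notin \mathcal{X}_\delta\bigr)\, d\mu(x) + \mu(\mathcal{X}^c)$, where the probability is over the draw of $D_n$ (conditional on $O_1 = o_1, \ldots, O_n = o_n$). For $x \in \mathcal{X}$, the event $\{x \notin \mathcal{X}_\delta\}$ is exactly $\{E_1^{\delta'}(x)^c \cup E_2^{\delta'}(x)^c\}$, whose probability is at most $2\delta'$ by Lemmas~\ref{lem:E_1} and~\ref{lem:E_2_prob} (applied with $\delta'$ in place of $\delta$) together with a union bound. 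For $x \notin \mathcal{X}$, we need $\mu(\mathcal{X}^c) = 0$: this holds because $\mathcal{X}^c = \bigcup_{\omega \in \mathcal{N}} \{x : \rho_\omega(x^\omega) = 0\}$, and Definition~\ref{def:lowerdensity} (equation~\eqref{eq:lowerdensity}) with $\xi \downarrow 0$ forces $\mu_\omega(\{x : \rho_\omega(x^\omega) = 0\}) = 0$ for each $\omega \in \mathcal{N} \setminus \{\mathbf{0}_d\}$ (using $C_{\mathrm{L}} \cdot \xi^{\gamma_\omega} \to 0$ when $\gamma_\omega > 0$; and the case $\gamma_\omega = 0$ or $\omega = \mathbf{0}_d$ needs a small separate remark, since $\rho_{\mathbf{0}_d} \equiv 1 > 0$ so those $\omega$ contribute nothing to $\mathcal{X}^c$). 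Hence $\mathbb{E}\{\mu(\mathcal{X}_\delta^c)\} \leq 2\delta' = \delta \cdot \max_{\omega\in\Omega}(1/n_\omega^{1+\alpha})$.

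Then by Markov's inequality,
\[
\mathbb{P}\Bigl\{\mu(\mathcal{X}_\delta^c) \geq \max_{\omega\in\Omega}(1/n_\omega^{1+\alpha})\Bigr\} \leq \frac{\mathbb{E}\{\mu(\mathcal{X}_\delta^c)\}}{\max_{\omega\in\Omega}(1/n_\omega^{1+\alpha})} \leq \delta,
\]
which is the claim.

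I expect the only genuine subtlety to be the measurability and Fubini step: one must check that $(x, D_n) \mapsto \mathbbm{1}\{x \in \mathcal{X}_\delta(D_n)\}$ is jointly measurable so that interchanging $\mathbb{E}$ and $\int \, d\mu$ is justified, and that $\mathcal{X}$ itself is measurable (it is, being a countable intersection over $\omega \in \mathcal{N}$ of sets defined through the lower-semicontinuous-type functions $\rho_\omega$). The events $E_1^{\delta'}(x)$ and $E_2^{\delta'}(x)$ depend on $x$ only through finitely many order statistics and the value $\rho_\omega(x^\omega)$, so joint measurability is routine but should be asserted. Everything else is a direct application of the two probability bounds already established plus Markov; no new calculation is needed. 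The handling of the $\mathcal{X}^c$ null-set claim should probably be stated as a one-line consequence of Definition~\ref{def:lowerdensity}, noting that it is here that we use $\mathcal{X}$ rather than all of $\mathbb{R}^d$ as the domain of integration.
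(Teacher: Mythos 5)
Your proof takes essentially the same route as the paper's: Markov's inequality on $\mu(\mathcal{X}_\delta^c)$, Fubini to turn $\mathbb{E}\{\mu(\mathcal{X}_\delta^c)\}$ into an integral of a probability, and a union bound over Lemmas~\ref{lem:E_1} and~\ref{lem:E_2_prob} applied with $\delta'$. The one genuine difference is that you split off the $\mu(\mathcal{X}^c)$ term and insist on showing it vanishes, whereas the paper integrates over all of $\mathbb{R}^d$ and passes directly to $\int_{\mathbb{R}^d}\{\IP(E_1^{\delta'}(x)^c) + \IP(E_2^{\delta'}(x)^c)\}\,d\mu(x)$. That step in the paper is only valid pointwise for $x \in \mathcal{X}$: for $x \notin \mathcal{X}$ we have $\mathbbm{1}_{\{x\in\mathcal{X}_\delta^c\}}=1$ deterministically (since $\mathcal{X}_\delta\subseteq\mathcal{X}$), while $E_1^{\delta'}(x)$ is not even defined there (the quantity $\tilde{k}_\omega(x)$ is only introduced for $x\in\mathcal{X}$). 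So the paper is implicitly using $\mu(\mathcal{X}^c)=0$, and your version makes that explicit; this is the correct reading of the argument and a modest improvement in rigour.

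Where your proposal falls a little short is in the justification of $\mu(\mathcal{X}^c)=0$ itself. Your remark groups the case $\gamma_\omega=0$ with $\omega=\mathbf{0}_d$ and dismisses both via ``$\rho_{\mathbf{0}_d}\equiv 1>0$'', but that observation only handles $\omega=\mathbf{0}_d$. For $\omega\neq\mathbf{0}_d$ with $\gamma_\omega=0$, \eqref{eq:lowerdensity} reads $\mu_\omega(\{\rho_\omega<\xi\})\leq C_{\mathrm L}$, which is vacuous and does not give $\mu_\omega(\{\rho_\omega=0\})=0$. The hypothesis in Proposition~\ref{prop:rate_if_Omega_correct}, $\min_{\omega\in\Omega}\gamma_\omega>\max_{\omega\in\Omega}\beta/d_\omega>0$, ensures $\gamma_\omega>0$ for $\omega\in\Omega$, and hence for $\omega\in L(\Omega)$ by the monotonicity $\gamma_\omega\geq\gamma_{\omega'}$ when $\omega\preceq\omega'$; but $\mathcal{X}$ and the events $E_1^\delta$, $E_2^\delta$ as defined intersect over all $\omega\in\mathcal{N}$, which may contain patterns with $\gamma_\omega=0$. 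Either the definitions of $\mathcal{X}$ and $E_1^\delta,E_2^\delta$ should be restricted to $\omega\in\Omega\cup L(\Omega)$ (which is all that Lemma~\ref{lem:hatC=BayesC} actually uses), or one should note the additional hypothesis $\gamma_\omega>0$ for all $\omega\in\mathcal{N}\setminus\{\mathbf{0}_d\}$. You should acknowledge that this is a real hypothesis rather than a ``small separate remark''---though, since the paper's own proof also elides it, this is a shared rather than a new gap.
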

\begin{proof}[Proof of Lemma~\ref{lem:bound_prob_mu_big}]
By Markov's inequality and Lemmas~\ref{lem:E_1} and \ref{lem:E_2_prob}, we have 
\begin{align*}
    \mathbb{P}\Bigl\{\mu(\mathcal{X}_\delta^c)\geq\max_{\omega\in \Omega} (1/n_\omega^{1+\alpha}) \Bigr\} &\leq \frac{1}{\max_{\omega\in \Omega}\{1/n_\omega^{1+\alpha}\}}\cdot \IE\{\mu(\mathcal{X}_\delta^c)\} \\
    & = \frac{1}{\max_{\omega\in \Omega}\{1/n_\omega^{1+\alpha}\}}\cdot \mathbb{E} \Bigl(\int_{\mathbb{R}^d}\mathbbm{1}_{\{x\in \mathcal{X}_\delta^c\}} \, d\mu(x) \Bigr)\\
    &\leq \frac{1}{\max_{\omega\in \Omega}\{1/n_\omega^{1+\alpha}\}} \int_{\mathbb{R}^d} \Bigl\{ \IP(E_1^{\delta'}(x)^c) + \IP(E_2^{\delta'}(x)^c) \Bigr\} \, d\mu(x) \\& \leq \delta.
\end{align*}
\end{proof}

We are now in a position to provide the proofs of Proposition~\ref{prop:rate_if_Omega_correct} 
and subsequently the upper bound in Theorem~\ref{thm:minmax_bounds}
.

\begin{proof}[Proof of Proposition~\ref{prop:rate_if_Omega_correct}
]
Recall the constant $C$ from the statement of Lemma~\ref{lem:hatC=BayesC}
. For $\omega \in \mathcal{N}$, let 
\[
\lambda_{\omega} := \Bigl(\frac{1}{n_\omega}\Bigr)^{\frac{\beta_{\omega}\gamma_{\omega} + \alpha\beta_{\omega}^2/d_{\omega}}{\gamma_{\omega}(2\beta_{\omega}+d_\omega) + \alpha \beta_{\omega}}}, \quad \text{and} \quad 
    t_\omega := \lambda_{\omega}^{\frac{\alpha d_\omega}{\gamma_\omega d_{\omega}+\alpha\beta_{\omega}}}
\]
Define the partition $(\mathcal{X}_{\delta, j})_{j=0}^\infty$ of $\mathcal{X}_\delta$ given by
\begin{align*}
    \mathcal{X}_{\delta,0}&:= \{x\in \mathcal{X}_\delta:~ t_{\omega} \leq \rho_{\omega}(x^{\omega}), \ \text{for all} \ \omega \in \Omega\}\\
    \mathcal{X}_{\delta,j}&:=\Bigl\{x\in \mathcal{X}_\delta:~2^{-j}t_{\omega} \leq \rho_{\omega}(x^\omega), \ \text{for all}  \ \omega \in \Omega \Bigr\} \setminus \bigcup_{j' \in [j-1]\cup\{0\}} \mathcal{X}_{\delta,j'}.
\end{align*}
Then, by Lemma~\ref{lem:hatC=BayesC}
, for any $j\in \mathbb{N} \cup \{0\}$ and any $x\in \mathcal{X}_{\delta,j}$, we have that
\begin{align*}
|2\eta(x)-1|\cdot&\mathbbm{1}_{\{x\in \mathcal{X}_{\delta,j}:\hat C_\Omega(x)\neq C^{\mathrm{Bayes}}(x)\}} \\
& < 2 C \cdot R^{\delta'}_\Omega(x) \cdot \mathbbm{1}_{\{x\in \mathcal{X}_{\delta,j}\}}\\
& = 2 C \max_{\omega \in \Omega} \biggl\{ \biggl(\frac{n_{\omega}^{-\frac{\alpha\beta_{\omega}^2/d_{\omega}}{\gamma_{\omega}(2\beta_{\omega}+d_{\omega}) + \alpha \beta_{\omega}}}}{\rho_{\omega}^{\beta_{\omega}/d_{\omega}}(x^{\omega}) }  + \log^{1/2}_+(2|\mathcal{N}|/\delta') \biggr) n_{\omega}^{-\frac{\beta_{\omega}\gamma_{\omega}}{\gamma_{\omega}(2\beta_{\omega}+d_{\omega}) + \alpha \beta_{\omega}}} \biggl\} \cdot \mathbbm{1}_{\{x\in \mathcal{X}_{\delta,j}\}}\\ 
& \leq 2 C \max_{\omega \in \Omega} \biggl\{ \biggl(2^{j\beta_{\omega}/d_{\omega}} + \log^{1/2}_+(2|\mathcal{N}|/\delta') \Bigr)\cdot\frac{\lambda_{\omega}}{t_{\omega}^{\beta_{\omega}/d_{\omega}}} \biggl\} \cdot \mathbbm{1}_{\{x\in \mathcal{X}_{\delta,j}\}},
\end{align*} 
since $\hat C_{\Omega}(x)=C^{\mathrm{Bayes}}(x)$ on the event $E_1^{\delta'}(x)\cap E_2^{\delta'}(x)$.  Now, looking at the separate elements of the partition in turn, first since $P \in \mathcal{P}_{\mathrm{M}}(\alpha, C_{\mathrm{M}})$, we have that
\begin{align}
\label{eq:X0bit}
  \nonumber\int_{\mathbb{R}^d} & |2\eta(x)-1| \mathbbm{1}_{\{x\in \mathcal{X}_{\delta,0}:\hat C_\Omega(x)\neq C^{\mathrm{Bayes}}(x)\}} \, d\mu(x)\\ 
   \nonumber &\leq  \mu\bigl(\{x\in \mathcal{X}_{\delta,0}:\hat C_\Omega(x)\neq C^{\mathrm{Bayes}}(x)\}\bigr) \cdot  2 C  \max_{\omega \in \Omega} \biggl\{\Bigl(1 + \log_+^{1/2}(2|\mathcal N|/\delta') \Bigr) \frac{\lambda_{\omega}}{t_{\omega}^{\beta_{\omega}/d_{\omega}} } \biggr\}\\  
   \nonumber  &\leq \mu\biggl(\biggl\{x\in \mathbb{R}^d : |\eta(x)-1/2|<  C  \max_{\omega \in \Omega} \Bigl\{\Bigl(1 + \log_+^{1/2}(2|\mathcal N|/\delta') \Bigr) \frac{\lambda_{\omega}}{t_{\omega}^{\beta_{\omega}/d_{\omega}} } \Bigr\}\biggr\}\biggr) \\ 
   \nonumber & \hspace{120pt}   \cdot  2 C \max_{\omega \in \Omega} \biggl\{\Bigl(1 + \log_+^{1/2}(2|\mathcal N|/\delta') \Bigr) \frac{\lambda_{\omega}}{t_{\omega}^{\beta_{\omega}/d_{\omega}} } \biggr\}\\ 
    & \leq 2 C_{\mathrm{M}} C^{1+\alpha} \Bigl(1 + \log_+^{1/2}(2|\mathcal N|/\delta') \Bigr)^{1+\alpha} \cdot \Bigl(\max_{\omega \in \Omega} \frac{\lambda_{\omega}}{t_{\omega}^{\beta_{\omega}/d_{\omega}}}\Bigr)^{1+\alpha} . 
\end{align}
Further, for $j\geq1$, since $Q \in \mathcal{Q}_{\mathrm{L}}(\boldsymbol{\gamma},C_{\mathrm{L}},\mathcal{O})$, we have 
\begin{align}
\label{eq:Xjbit}
\nonumber &\int_{\mathbb{R}^d} |2\eta(x)-1|\mathbbm{1}_{\{x \in  \mathcal{X}_{\delta,j} :\hat C_\Omega(x)\neq C^{\mathrm{Bayes}}(x)\}} \, d\mu(x) \\
 \nonumber& \leq \mu\bigl(\mathcal{X}_{\delta,j}\bigr) \cdot  2 C \max_{\omega \in \Omega} \biggl\{\Bigl(2^{j\beta_{\omega}/d_{\omega}} + \log_+^{1/2}(2|\mathcal N|/\delta') \Bigr) \frac{\lambda_{\omega}}{t_{\omega}^{\beta_{\omega}/d_{\omega}} } \biggr\}\\
  \nonumber & = \mu\Bigl(\bigcup_{\omega \in \Omega} \{\rho_{\omega}(x^\omega) <  2^{-(j-1)} t_{\omega}\}\Bigr) \cdot 2 C  \max_{\omega \in \Omega} \biggl\{\Bigl(2^{j\beta_{\omega}/d_{\omega}} + \log_+^{1/2}(2|\mathcal N|/\delta') \Bigr) \frac{\lambda_{\omega}}{t_{\omega}^{\beta_{\omega}/d_{\omega}} } \biggr\}\\
 \nonumber & \leq \sum_{\omega \in \Omega} \mu_{\omega}\bigl(\{\rho_{\omega}(x^\omega) <  2^{-(j-1)} t_{\omega}\}\bigr) \cdot 2 C  \max_{\omega \in \Omega} \biggl\{\Bigl(2^{j\beta_{\omega}/d_{\omega}} + \log_+^{1/2}(2|\mathcal N|/\delta') \Bigr) \frac{\lambda_{\omega}}{t_{\omega}^{\beta_{\omega}/d_{\omega}} } \biggr\}\\
 \nonumber  & \leq   2 C C_{\mathrm{L}}  |\Omega| \cdot \max_{\omega \in \Omega} \{(2^{-(j-1)} t_{\omega})^{\gamma_{\omega}} \} \cdot  \max_{\omega \in \Omega} \biggl\{\Bigl(2^{j\beta_{\omega}/d_{\omega}} + \log_+^{1/2}(2|\mathcal N|/\delta') \Bigr) \frac{\lambda_{\omega}}{t_{\omega}^{\beta_{\omega}/d_{\omega}} } \biggr\} \\
 & \leq  2 C  C_{\mathrm{L}} |\Omega|  2^{-(j-1)\min_{\omega \in \Omega} \gamma_{\omega}} \cdot \Bigl(2^{j\max_{\omega\in\Omega}\{\beta_{\omega}/d_\omega\}}\! +\! \log_+^{1/2}(2|\mathcal N|/\delta') \Bigr)   \cdot  \Bigl(\max_{\omega \in \Omega} \lambda_{\omega}^{\frac{d_{\omega} \gamma_{\omega}} {d_{\omega}\gamma_{\omega} + \alpha \beta_{\omega}}}  \Bigr)^{1+\alpha}.
\end{align}
Here we have used that $t_{\omega}^{\gamma_{\omega}} = \lambda_{\omega}^{\frac{d_{\omega} \gamma_{\omega} \alpha}{d_{\omega} \gamma_{\omega} +  \alpha \beta_{\omega}}}$. Observe further that
\begin{align}
\label{eq:sumconverges}
\nonumber &\sum_{j=1}^{\infty} 2^{-j \min_{\omega\in \Omega} \gamma_{\omega}} \cdot \Bigl(2^{j\max_{\omega\in\Omega}\{\beta_{\omega}/d_\omega\}} + \log_+^{1/2}(2|\mathcal N|/\delta') \Bigr) \\ 
\nonumber & \hspace{120pt}  = \frac{1}{2^{\min_{\omega\in \Omega} \gamma_{\omega}-\max_{\omega\in\Omega}\{\beta_{\omega}/d_\omega\}}-1}+\frac{\log_+^{1/2}(2|\mathcal N|/\delta')}{2^{\min_{\omega\in \Omega} \gamma_{\omega}}-1} \\
& \hspace{120pt} \leq \frac{2\log_+^{1/2}(2|\mathcal N|/\delta')}{2^{\min_{\omega\in \Omega} \gamma_{\omega}-\max_{\omega\in\Omega}\{\beta_{\omega}/d_\omega\}}-1}
\end{align}
since $\min_{\omega \in \Omega} \gamma_{\omega} > \max_{\omega\in\Omega} \beta_{\omega}/d_{\omega}$. 
It follows from~\eqref{eq:X0bit},~\eqref{eq:Xjbit} and~\eqref{eq:sumconverges} that
\begin{align*}
    \mathcal{E}_{P}(\hat{C}_{\Omega}) &= \int_{\{x\in\mathbb{R}^d:\hat{C}_{\Omega}(x)\neq C^{\mathrm{Bayes}}(x)\}}|2\eta(x)-1|  \, d\mu(x)\\
    &= \mu(\mathcal{X}_\delta^c) + \sum_{j=0}^\infty\int_{\mathbb{R}^d}|2\eta(x)-1| \cdot \mathbbm{1}_{\{x \in  \mathcal{X}_j:\hat C_\Omega(x)\neq C^{\mathrm{Bayes}}(x)\}}\, d\mu(x)\\
    & \leq \mu(\mathcal{X}_\delta^c) 
    \\ & \hspace{-38pt}\! +\! \Bigl\{ 2 C_{\mathrm{M}} C^{1+\alpha} \log_+^{(1+\alpha)/2}(2|\mathcal N|/\delta') \!+\!  \frac{4CC_{\mathrm{L}} |\Omega| \cdot \log_+^{1/2}(2|\mathcal N|/\delta')}{2^{-\max_{\omega\in\Omega}\{\beta_{\omega}/d_\omega\}}-2^{-\min_{\omega \in \Omega} \gamma_{\omega}}} \Bigr\}\Bigl(\max_{\omega \in \Omega} \lambda_{\omega}^{\frac{d_{\omega} \gamma_{\omega}} {d_{\omega} \gamma_{\omega} + \alpha \beta_{\omega}} }\Bigr)^{1+\alpha}.
\end{align*}
Finally observe that  
\[
\lambda_{\omega}^{\frac{d_\omega\gamma_{\omega} (1+\alpha)} {d_\omega\gamma_{\omega} + \alpha \beta_{\omega}}}
= n_\omega^{-\frac{\beta_{\omega} \gamma_{\omega} (1+\alpha)}{\gamma_{\omega}(2\beta_{\omega}+d_\omega) + \alpha \beta_{\omega}}} \geq n_{\omega}^{-(1+\alpha)} .
\]
Thus, by Lemma~\ref{lem:bound_prob_mu_big}, we have
\[
\mathbb{P}\Bigl(\mu(\mathcal{X}_\delta^c) \geq \max_{\omega \in \Omega} \lambda_{\omega}^{\frac{\gamma_{\omega} d_\omega(1+\alpha)} {\gamma_{\omega}d_{\omega} + \alpha \beta_{\omega}}} \Bigr) \leq  \mathbb{P}\Bigl(\mu(\mathcal{X}_\delta^c) \geq \max_{\omega \in \Omega} n_{\omega}^{-(1+\alpha)}\Bigr) \leq \delta.
\]
The conclusion then follows with 
\[
K_{\Omega} :=1 +  2 C_{\mathrm{M}} C^{1+\alpha} +   \frac{4CC_{\mathrm{L}}|\Omega|}{2^{-\max_{\omega \in \Omega}\{\beta_{\omega}/d_{\omega}\}}-2^{-\min_{\omega \in \Omega} \gamma_{\omega}}} .
\]
\end{proof}

\begin{proof}[Proof of the upper bound in Theorem~\ref{thm:minmax_bounds}
]
Fix $\Omega \in \mathcal{I}(\{0,1\}^d)$. Let 
\[
\delta_0 := \max_{\omega \in \Omega} n_\omega^{-\frac{\beta_{\omega}\gamma_\omega(1+\alpha)}{\gamma_\omega(2\beta_{\omega}+d_\omega)+\alpha\beta_{\omega}}} \cdot \max_{\omega \in \Omega}\{n_{\omega}^{-(1+\alpha)}\} \leq \max_{\omega \in \Omega} n_\omega^{-\frac{3\beta_{\omega}\gamma_\omega(1+\alpha)}{\gamma_\omega(2\beta_{\omega}+d_\omega)+\alpha\beta_{\omega}}} 
\]
and let 
\[
t_0 := K_{\Omega} \cdot \log_+^{\frac{1+\alpha}{2}}\Bigl(\frac{2|\mathcal N|}{\delta_0}\Bigr) \cdot \max_{\omega \in \Omega} n_\omega^{-\frac{\beta_{\omega}\gamma_\omega(1+\alpha)}{\gamma_\omega(2\beta_{\omega}+d_\omega)+\alpha\beta_{\omega}}} 
\] 
Note also that $\delta_0 \geq  \max_{\omega \in \Omega} n_\omega^{-3(1+\alpha)/2}$, and thus 
\[
\log_+^{\frac{1+\alpha}{2}}\Bigl(\frac{2|\mathcal N|}{\delta_0}\Bigr) \leq (3(1+\alpha)/2)^{(1+\alpha)/2} \log_+^{\frac{1+\alpha}{2}}\Bigl(2|\mathcal N| \min_{\omega \in \Omega} n_{\omega}\Bigr).
\]
By Proposition~\ref{prop:rate_if_Omega_correct}
, it follows that 
\begin{align*}
    \mathbb{E}_Q&\bigl\{\mathcal{E}_P(\hat C_{\Omega}) \bigm| O_1=o_1,\ldots,O_n=o_n\bigr\} 
    \\ & \leq t_0 + \mathbb{P}_Q\bigl\{\mathcal{E}_P(\hat C_{\Omega}) > t_0 \mid O_1=o_1,\ldots,O_n=o_n\bigr\} 
    \\ & \leq K_{\Omega} \cdot \log_+^{\frac{1+\alpha}{2}}\Bigl(\frac{2|\mathcal N|}{\delta_0}\Bigr) \cdot \max_{\omega \in \Omega} n_\omega^{-\frac{\beta_{\omega}\gamma_\omega(1+\alpha)}{\gamma_\omega(2\beta_{\omega}+d_\omega)+\alpha\beta_{\omega}}} + \max_{\omega \in \Omega} n_\omega^{-\frac{\beta_{\omega}\gamma_\omega(1+\alpha)}{\gamma_\omega(2\beta_{\omega}+d_\omega)+\alpha\beta_{\omega}}}
    \\ & \leq \Bigl\{1 + 2 K_{\Omega} \cdot \Bigl(\frac{3(1+\alpha)}{2}\Bigr)^{\frac{1+\alpha}{2}} \log_+^{\frac{1+\alpha}{2}}(2^{d+1}) \Bigr\} \cdot \log_+^{\frac{1+\alpha}{2}}\Bigl( \min_{\omega \in \Omega} n_{\omega} \Bigr) \cdot \max_{\omega \in \Omega} n_\omega^{-\frac{\beta_{\omega}\gamma_\omega(1+\alpha)}{\gamma_\omega(2\beta_{\omega}+d_\omega)+\alpha\beta_{\omega}}} .
\end{align*}
The upper bound in Theorem~\ref{thm:minmax_bounds} 
follows by taking $\Omega = \Omega_{\star}$ if $\Omega_\star \subseteq \mathcal{N}$, and otherwise using the trivial bound that $\mathbb{E}_Q\{\mathcal{E}_P(\hat C_{\Omega}) \mid O_1=o_1,\ldots,O_n=o_n\} \leq 1$.
\end{proof}

\subsection{Proof of Theorem~\ref{thm:nonadaptiveUBnew}
\label{subsec:proofnonadaptiveUBnew}}
The proof of Theorem~\ref{thm:nonadaptiveUBnew} 
involves controlling the additional error that we incur from choosing which of the $\hat{f}_{\omega}$ to set to zero based on the training data.  Recall the definitions of $\hat{\sigma}_{\omega}^2$,  $\tau_{\omega}$ and the set $\hat{\Omega} \in \mathcal{I}(\{0,1\}^d)$ from Algorithm~\ref{alg:nonadaptive}
. As in Sections~\ref{subsec:proofUB}
, \ref{subsec:proof2} 
and~\ref{sec:appendixUBproofs}, we treat the missingness indications $o_1, \ldots, o_n$ as fixed and all probability statements in this section should also be interpreted as being conditional on $O_1 = o_1, \ldots, O_n = o_n$. 

To prove the key Proposition~\ref{lem:correct_thresholding} 
in Section~\ref{subsec:proofUB}
, we introduce four high-probability events. The first event, $E_3^{\delta}$, gives us control of the average signal at the training points for every observation pattern $\omega\in\mathcal{N}$, i.e. it ensures that $1/n_\omega\cdot \sum f_\omega^2(X_i)$ is sufficiently large, and the second event, $E_4^\delta$, bounds the number of training data points falling into the tail of the marginal distributions. The third and fourth events, $E_5^\delta$ and $E_6^\delta$, respectively, are similar to the events $E_1^\delta(x)$ and $E_2^\delta(x)$ from the Subsection~\ref{subsec:proofUB}
, and allow us to estimate the error between $\hat{f}_\omega$ and $f_\omega$ at each of the training points in the body of the distribution, that is, those which are not in the tails. Lemmas~\ref{lem:E_3_prob} to~\ref{lem:E_6} bound the probability of the complement of $E_\circ^\delta:=E_3^{\delta} \cap E_4^{\delta} \cap E_5^{\delta} \cap E_{6}^{\delta}$.  Then, working on this event, Lemmas~\ref{lem:f_Xx_bound_training_points} and~\ref{lem:f_bound_at_training_points} provide the rate at which we control $|\hat{f}_\omega(X_j)-f_\omega(X_j)|$ uniformly over all training points $X_j$s in the body of the distribution. Finally, the conclusion of these results is that $\hat{\Omega}=\Omega_\star$ on the event $E_\circ^\delta$, which proves Proposition~\ref{lem:correct_thresholding}
.  
 
Recall that $N_{\omega} = \{i \in [n] : \omega \preceq o_i \}$, then for $\omega \in \mathcal{N}\cap\Omega_\star$ let
\begin{align*}
    D^\delta_\omega:=\biggl\{\frac{1}{n_\omega}\sum_{i \in N_{\omega}} f_\omega^2(X_i) > \sigma_\omega^2 - C^2_{\mathrm{B}} \sqrt{\frac{\log_+(|\mathcal{N}|/\delta)}{2n_{\omega}}}\biggr\},
\end{align*}
and let $E_{3}^\delta:= \bigcap_{\omega \in  \mathcal{N} \cap \Omega_{\star} }D_{\omega}^\delta$.

\begin{lemma}\label{lem:E_3_prob}
For $\delta\in(0,1)$, we have $\IP\bigl((E_{3}^\delta)^c \bigr)\leq\delta.$
\end{lemma}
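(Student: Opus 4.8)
The plan is to control the deviation of the empirical second moment $\frac{1}{n_\omega}\sum_{i\in N_\omega} f_\omega^2(X_i)$ from its conditional mean for each fixed $\omega\in\mathcal{N}\cap\Omega_\star$, and then take a union bound over the (at most $|\mathcal{N}|$) patterns in $\Omega_\star$. The key observation is that, conditional on $O_1=o_1,\ldots,O_n=o_n$, the random variables $\{f_\omega^2(X_i) : i\in N_\omega\}$ are independent, since the $X_i$ are i.i.d.\ given the observation indicators (this uses that the $O_i$ are fixed and the triples are i.i.d.). Moreover, by the missingness condition in Definition~\ref{def:missingness}, conditioning on $O_i=o_i$ with $\omega\preceq o_i$ does not alter the relevant conditional law, so $\IE_Q\{f_\omega^2(X_i)\mid O_i=o_i\} = \IE_Q\{f_\omega^2(X)\mid O=o_i\} \geq \min_{o\in\mathcal{O}:o\succeq\omega}\IE_Q\{f_\omega^2(X)\mid O=o\} = \sigma_\omega^2$, by the definition~\eqref{def:sigmaomega}. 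Hence the conditional mean of $\frac{1}{n_\omega}\sum_{i\in N_\omega} f_\omega^2(X_i)$ is at least $\sigma_\omega^2$.

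First I would invoke Proposition~\ref{prop:f_bounded}, which gives $\|f_\omega\|_\infty\leq C_{\mathrm{B}}$, so that each summand $f_\omega^2(X_i)$ takes values in $[0,C_{\mathrm{B}}^2]$. Then I would apply Hoeffding's inequality to the independent bounded random variables $f_\omega^2(X_i)$, $i\in N_\omega$: with probability at least $1-\delta/|\mathcal{N}|$,
\[
\frac{1}{n_\omega}\sum_{i\in N_\omega} f_\omega^2(X_i) \;>\; \frac{1}{n_\omega}\sum_{i\in N_\omega}\IE_Q\{f_\omega^2(X_i)\mid O_i=o_i\} - C_{\mathrm{B}}^2\sqrt{\frac{\log_+(|\mathcal{N}|/\delta)}{2n_\omega}} \;\geq\; \sigma_\omega^2 - C_{\mathrm{B}}^2\sqrt{\frac{\log_+(|\mathcal{N}|/\delta)}{2n_\omega}},
\]
using the lower bound on the conditional means from the previous paragraph. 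This is precisely the event $D_\omega^\delta$. (The factor $C_{\mathrm{B}}^2$ rather than the naive Hoeffding constant $C_{\mathrm{B}}^2/\sqrt{2}$ or similar comes from the range being $C_{\mathrm{B}}^2$; one checks the constants align with the statement, and $\log_+$ handles the case of small $|\mathcal{N}|/\delta$.)

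Finally I would take a union bound over $\omega\in\mathcal{N}\cap\Omega_\star$, of which there are at most $|\mathcal{N}|$, to obtain $\IP\bigl((E_3^\delta)^c\bigr) = \IP\bigl(\bigcup_{\omega\in\mathcal{N}\cap\Omega_\star}(D_\omega^\delta)^c\bigr)\leq |\mathcal{N}|\cdot\delta/|\mathcal{N}| = \delta$, all probabilities being conditional on $O_1=o_1,\ldots,O_n=o_n$. The only mild subtlety — and the one point I would be careful about — is the justification that conditioning on $O_i=o_i$ leaves the distribution of $f_\omega^2(X_i)$ equal to the law of $f_\omega^2(X)$ under $X\mid\{O=o_i\}$, and that~\eqref{def:sigmaomega} then gives the stated lower bound; this is where Definition~\ref{def:missingness} (via $\mu_{\omega\mid o}$) and the conditioning convention of the section enter, but it is essentially bookkeeping rather than a genuine obstacle.
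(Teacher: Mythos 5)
Your proof is correct and follows essentially the same route as the paper's: bound $f_\omega^2(X_i)$ by $C_{\mathrm{B}}^2$ via Proposition~\ref{prop:f_bounded}, lower-bound the conditional mean by $\sigma_\omega^2$ using Definition~\ref{def:missingness} and~\eqref{def:sigmaomega}, apply Hoeffding to the independent summands $\{f_\omega^2(X_i)\}_{i\in N_\omega}$, and conclude with a union bound over $\omega\in\mathcal{N}\cap\Omega_\star$. The paper's proof is just a terser version of the same argument.
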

\begin{proof}[Proof of Lemma~\ref{lem:E_3_prob}]
Fix $\omega \in \mathcal{N} \cap \Omega_{\star}$, then $\{f^2_{\omega}(X_i)\}_{i \in N_{\omega}}$ are independent random variables with mean $\mathbb{E}_Q\{f_{\omega}^2(X)| O = o_i\} \geq \sigma_\omega^2$, which take values in $[0, C_{\mathrm{B}}^2]$ by Proposition \ref{prop:f_bounded}. Then by Hoeffding's inequality, we have 
\begin{align*}
   & \IP\Biggl(\frac{1}{n_\omega}\sum_{i \in N_{\omega}} f_\omega^2(X_i)-\sigma_\omega^2 \leq - C^2_{\mathrm{B}} \sqrt{\frac{\log_+(|\mathcal{N}|/\delta)}{2n_{\omega}}}\Biggr)
    \\ &\leq \IP\Biggl(\frac{1}{n_\omega}\sum_{i \in N_{\omega}} f_\omega^2(X_i)-\frac{1}{n_\omega}\sum_{i \in N_{\omega}} \mathbb{E}_Q\{f_\omega^2(X_i) \mid O_i = o_i\} \leq - C^2_{\mathrm{B}} \sqrt{\frac{\log_+(|\mathcal{N}|/\delta)}{2n_{\omega}}}\Biggr)
    \leq \frac{\delta}{|\mathcal{N}|}.
\end{align*}
Taking a union bound gives the desired result.
\end{proof}

We will also require control on the number of training data points that lie in the tail of the marginal feature distribution. As in the previous subsection, for $\omega \in \{0,1\}^d$, we write $\rho_{\omega}(x)$ in place of $\min_{o\in\mathcal{O}:o\succeq\omega}\rho_{\mu_{\omega|o}, d_{\omega}}(x)$. Then, for $\omega \in \{\Omega_{\star} \cup L(\Omega_{\star})\} \cap \mathcal{N} $ and $r_{\omega} \in (0,1)$, let
\begin{align*}
    F^\delta_\omega = F^{\delta}_\omega( r_{\omega}) :=\biggl\{\frac{1}{n_\omega}\sum_{i\in N_\omega}\mathbbm{1}_{\{\rho_{\omega}(X_i^{\omega})<r_\omega\}} < 2C_{\mathrm{L}}r_{\omega}^{\gamma_\omega}\biggr\},
\end{align*}
and let $E_{4}^\delta:= \bigcap_{\omega \in \{\Omega_{\star} \cup L(\Omega_{\star})\} \cap \mathcal{N}} F_{\omega}^\delta$.

\begin{lemma}\label{lem:E_4_prob}
Fix $\mathcal{O} \subseteq \{0,1\}^d$, $\delta \in (0,1)$, $\boldsymbol{\gamma} \in [0,\infty)^d$, $C_{\mathrm{L}} \geq 1$ and $Q \in \mathcal{Q}^+_{\mathrm{L}}(\boldsymbol{\gamma}, C_{\mathrm{L}},\mathcal{O})$. Then if $r_\omega \geq  \Bigl( \frac{4\log_+(|\mathcal{N}|/\delta)}{C_{\mathrm{L}}n_{\omega}}\Bigr)^{1/\gamma_\omega}$ for all $\omega \in  \{\Omega_{\star} \cup L(\Omega_{\star})\} \cap \mathcal{N}$, we have $\IP\bigl((E_{4}^\delta)^c \bigr)\leq\delta.$
\end{lemma}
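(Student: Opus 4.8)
The plan is to bound the probability of each bad event $(F_\omega^\delta)^c$ for $\omega \in \{\Omega_\star \cup L(\Omega_\star)\} \cap \mathcal{N}$ separately via a concentration inequality, and then conclude by a union bound over the at most $|\mathcal{N}|$ such patterns. Fix such an $\omega$. Conditional on $O_1 = o_1, \ldots, O_n = o_n$, the indicators $\{\mathbbm{1}_{\{\rho_\omega(X_i^\omega) < r_\omega\}}\}_{i \in N_\omega}$ are independent Bernoulli random variables (since the $X_i$ are i.i.d.\ and, for $i \in N_\omega$, $X_i^\omega$ has distribution $\mu_{\omega \mid o_i}$). Their means are $p_i := \mu_{\omega \mid o_i}\bigl(\{x : \rho_\omega(x^\omega) < r_\omega\}\bigr)$, and the key point is that the defining inequality of the class $\mathcal{Q}^+_{\mathrm{L}}(\boldsymbol{\gamma}, C_{\mathrm{L}}, \mathcal{O})$ in Definition~\ref{def:lowerdensity+} gives exactly $p_i \leq C_{\mathrm{L}} r_\omega^{\gamma_\omega}$ for every $i \in N_\omega$ (here we use the $\mu_{\omega \mid \tilde o}$ version of the tail bound, which is the extra content of $\mathcal{Q}^+_{\mathrm{L}}$ over $\mathcal{Q}_{\mathrm{L}}$; this is precisely why the lemma is stated for $\mathcal{Q}^+_{\mathrm{L}}$). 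Hence $\bar p_\omega := n_\omega^{-1}\sum_{i \in N_\omega} p_i \leq C_{\mathrm{L}} r_\omega^{\gamma_\omega}$.

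The event $(F_\omega^\delta)^c$ says $n_\omega^{-1}\sum_{i \in N_\omega}\mathbbm{1}_{\{\rho_\omega(X_i^\omega) < r_\omega\}} \geq 2C_{\mathrm{L}} r_\omega^{\gamma_\omega} \geq 2\bar p_\omega$, i.e.\ the empirical mean exceeds twice its own mean (or more). I would apply a multiplicative Chernoff bound for sums of independent $[0,1]$-valued random variables: $\mathbb{P}\bigl(\sum_{i \in N_\omega} \mathbbm{1}_{\{\rho_\omega(X_i^\omega) < r_\omega\}} \geq 2 n_\omega \bar p_\omega\bigr) \leq e^{-n_\omega \bar p_\omega / 3}$. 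Since the left-hand event in the lemma requires $\sum_{i} \mathbbm{1} \geq 2 C_{\mathrm{L}} n_\omega r_\omega^{\gamma_\omega} \geq 2 n_\omega \bar p_\omega$, this yields $\mathbb{P}\bigl((F_\omega^\delta)^c\bigr) \leq e^{-n_\omega \bar p_\omega / 3}$ — but this is the wrong direction: $\bar p_\omega$ could be tiny, making the bound useless. The correct route is to use the threshold $2C_{\mathrm{L}} r_\omega^{\gamma_\omega}$ directly as the deviation level rather than $2\bar p_\omega$. Writing $S := \sum_{i \in N_\omega}\mathbbm{1}_{\{\rho_\omega(X_i^\omega) < r_\omega\}}$ with $\mathbb{E} S = n_\omega \bar p_\omega =: \lambda \leq C_{\mathrm{L}} n_\omega r_\omega^{\gamma_\omega} =: M$, a standard upper-tail Chernoff bound gives, for any $t \geq M$, $\mathbb{P}(S \geq 2t) \leq \mathbb{P}(S \geq 2M) \cdot (\text{monotonicity arguments are delicate here})$; cleaner is the bound $\mathbb{P}(S \geq 2M) \leq \exp\bigl(-M/3\bigr)$ valid whenever $\mathbb{E} S \leq M$ (this follows from the Chernoff bound $\mathbb{P}(S \geq (1+\delta')\mathbb{E} S) \leq e^{-\delta'^2 \mathbb{E} S /(2+\delta')}$ applied after noting that the upper tail probability at level $2M$ is maximised, over distributions with $\mathbb{E} S \le M$, when $\mathbb{E} S = M$). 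Thus $\mathbb{P}\bigl((F_\omega^\delta)^c\bigr) \leq e^{-C_{\mathrm{L}} n_\omega r_\omega^{\gamma_\omega}/3}$.

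Finally I would plug in the hypothesis $r_\omega \geq \bigl(4\log_+(|\mathcal{N}|/\delta)/(C_{\mathrm{L}} n_\omega)\bigr)^{1/\gamma_\omega}$, which gives $C_{\mathrm{L}} n_\omega r_\omega^{\gamma_\omega} \geq 4\log_+(|\mathcal{N}|/\delta)$, hence $\mathbb{P}\bigl((F_\omega^\delta)^c\bigr) \leq e^{-(4/3)\log_+(|\mathcal{N}|/\delta)} \leq e^{-\log_+(|\mathcal{N}|/\delta)} \leq \delta/|\mathcal{N}|$. A union bound over the at most $|\mathcal{N}|$ patterns $\omega \in \{\Omega_\star \cup L(\Omega_\star)\} \cap \mathcal{N}$ then gives $\mathbb{P}\bigl((E_4^\delta)^c\bigr) \leq \delta$, as required. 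The main obstacle is the probabilistic subtlety around the multiplicative Chernoff bound: getting a clean one-line upper-tail estimate $\mathbb{P}(S \geq 2M) \leq e^{-M/3}$ valid under the one-sided constraint $\mathbb{E} S \leq M$ (rather than $\mathbb{E} S = M$) — one should either invoke a standard ``Chernoff for $[0,1]$ variables with mean at most $\mu$'' lemma, or explicitly dominate $S$ stochastically by a Binomial with mean exactly $M$ before applying the textbook bound. Everything else (independence of the indicators given the $o_i$, the bound $p_i \leq C_{\mathrm{L}} r_\omega^{\gamma_\omega}$ from Definition~\ref{def:lowerdensity+}, and the arithmetic with $\log_+$) is routine.
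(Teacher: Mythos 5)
Your proof is correct and follows essentially the same route as the paper's: for each $\omega$, the indicators $\mathbbm{1}_{\{\rho_\omega(X_i^\omega) < r_\omega\}}$ for $i \in N_\omega$ are independent Bernoullis whose means are bounded by $C_{\mathrm{L}} r_\omega^{\gamma_\omega}$ via the $\mathcal{Q}_{\mathrm{L}}^+$ condition, a multiplicative Chernoff bound gives $\mathbb{P}((F_\omega^\delta)^c) \leq e^{-c\, n_\omega C_{\mathrm{L}} r_\omega^{\gamma_\omega}}$ (the paper uses $c = 1/4$, you derive the slightly sharper $c = 1/3$; both suffice under the hypothesis on $r_\omega$), and a union bound over $\omega$ finishes the proof. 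The "mean at most $M$" subtlety you flag is real but minor — the paper resolves it implicitly by citing a textbook Chernoff bound and plugging in the mean upper bound directly in the exponent, which is exactly the stochastic-domination shortcut you describe.
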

\begin{proof}[Proof of Lemma~\ref{lem:E_4_prob}]
Fix $\omega \in \mathcal{N}$, then for $i \in N_{\omega}$, we have that $\mathbbm{1}_{\{\rho_{\omega}(X_i^{\omega}) < r_\omega\}}$ are independent Bernoulli random variables with mean $\mu_{\omega | o_i} (\{x \in\mathbb{R}^d : \rho_{\omega}(x) < r_\omega\}) \leq C_{\mathrm{L}} r_\omega^{\gamma_{\omega}}$, since $Q \in \mathcal{Q}^+_{\mathrm{L}}(\boldsymbol{\gamma}, C_{\mathrm{L}},\mathcal{O})$. Then by a multiplicative Chernoff bound \citep[Theorem 9.3]{billingsley2008probability}, we have 
\begin{align*}
    \IP\biggl(\frac{1}{n_\omega}\sum_{i\in N_\omega}\mathbbm{1}_{\{\rho_{\omega}(X_i^{\omega})<r_\omega\}} \geq  2C_{\mathrm{L}}r_\omega^{\gamma_\omega}\biggr) \leq \exp(-n_\omega C_{\mathrm{L}}r_\omega^{\gamma_{\omega}} /4) \leq \frac{\delta}{|\mathcal{N}|},
\end{align*}
since $r_{\omega} > \bigl( \frac{4\log_+(|\mathcal{N}|/\delta)}{C_{\mathrm{L}}n_{\omega}}\bigr)^{1/\gamma_\omega}$. Taking a union bound over $\omega \in  \{\Omega_{\star} \cup L(\Omega_{\star})\} \cap \mathcal{N}$ gives the desired result.
\end{proof}

Our final two events provide control of the variance and bias of our estimates of $f_\omega$, for $\omega \in \mathcal{N}$, evaluated at the appropriate training data points. For $\delta \in (0,1)$ and $\omega \in \mathcal{N}$ let
\begin{align*}
    G^\delta_\omega := \bigcap_{j\in N_\omega} \Biggl\{\biggl|\frac{1}{k_\omega}\sum_{i=1}^{k_\omega} \Bigl\{Y_{(i)_\omega}(X_j) - \eta_\omega(X_{(i)_\omega}(X_j))\Bigr\} \biggr| \leq \sqrt{\frac{\log_+(2 |\mathcal{N}|n_\omega/\delta)}{2k_\omega}}\Biggr\},
\end{align*}
and let $E_{5}^\delta:= \bigcap_{\omega \in \mathcal{N}} G_{\omega}^\delta$. Moreover, for any $\omega\in\{\Omega_{\star} \cup L(\Omega_{\star})\} \cap \mathcal{N}$, $\delta \in (0,1)$, $n_\omega\geq2$, and $r_{\omega} \in (0,1)$, let
\begin{align*}
    H_{\omega}^\delta&:= \bigcap_{j\in N_\omega}\! \biggl\{\lVert X_{({k}_\omega)_\omega}^{\omega}(X_j^\omega)\!-\!X_j^\omega\rVert\! \cdot\! \mathbbm{1}_{\{\rho_{\omega}(X_j^\omega)\geq r_{\omega}\}} \leq \Bigl(\frac{2\max\{\lceil4\log_{+}(|\mathcal{N}|n_\omega/\delta)\rceil, k_\omega \}}{(n_\omega-1)r_{\omega}}\Bigr)^{\frac{1}{d_\omega}}\biggr\}
\end{align*}
and let $E_{6}^\delta:= \bigcap_{\omega \in \{\Omega_{\star} \cup L(\Omega_{\star})\} \cap \mathcal{N}} H_{\omega}^\delta$.

\begin{lemma}\label{lem:E_5_prob}
For $\delta\in(0,1)$, we have $\IP\bigl((E_{5}^\delta)^c \bigr)\leq\delta.$
\end{lemma}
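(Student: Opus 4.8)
The statement to prove is that $\IP\bigl((E_5^\delta)^c\bigr) \leq \delta$, where $E_5^\delta = \bigcap_{\omega\in\mathcal{N}} G_\omega^\delta$ and $G_\omega^\delta$ is the event that, simultaneously over all training indices $j \in N_\omega$, the empirical average of the $k_\omega$ nearest-neighbour labels of $X_j^\omega$ deviates from the corresponding average of $\eta_\omega$ evaluated at those neighbours by at most $\sqrt{\log_+(2|\mathcal{N}|n_\omega/\delta)/(2k_\omega)}$. This is the training-data analogue of Lemma~\ref{lem:E_2_prob}, and I would follow essentially the same argument, with an extra union bound over the training points and a careful treatment of the dependence that arises because the query point $X_j$ is itself one of the data points.

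\textbf{Key steps.} First I would fix $\omega \in \mathcal{N}$ and $j \in N_\omega$, and condition on the entire collection of feature vectors $(X_\ell^\omega)_{\ell \in N_\omega}$ (including $X_j^\omega$ itself). Conditional on these features, the reordering that defines $X_{(1)_\omega}^\omega(X_j),\ldots,X_{(n_\omega)_\omega}^\omega(X_j)$ and hence the identity of the $k_\omega$ nearest neighbours of $X_j^\omega$ is determined, so the relevant labels $Y_{(1)_\omega}(X_j),\ldots,Y_{(k_\omega)_\omega}(X_j)$ are conditionally independent Bernoulli variables; moreover, since by Definition~\ref{def:missingness} the conditional regression function satisfies $\IP_Q(Y=1\mid X^\omega = x^\omega, O = o) = \IP_Q(Y=1\mid X^\omega = x^\omega)$ for $\omega \preceq o$, we have $\IE\{Y_{(i)_\omega}(X_j) \mid (X_\ell^\omega)_{\ell\in N_\omega}\} = \eta_\omega(X_{(i)_\omega}(X_j))$ exactly as in the proof of Lemma~\ref{lem:E_2_prob}. (One subtlety worth a sentence: the label $Y_j$ attached to the query point may coincide with one of the neighbour labels — indeed $X_j^\omega$ is its own zeroth nearest neighbour — but the centred sum $\frac{1}{k_\omega}\sum_{i=1}^{k_\omega}\{Y_{(i)_\omega}(X_j) - \eta_\omega(X_{(i)_\omega}(X_j))\}$ is still an average of $k_\omega$ conditionally independent, mean-zero, $[-1,1]$-bounded terms given the features, so Hoeffding applies verbatim.) Applying Hoeffding's inequality conditionally gives
\[
\IP\biggl(\Bigl|\frac{1}{k_\omega}\sum_{i=1}^{k_\omega}\bigl\{Y_{(i)_\omega}(X_j) - \eta_\omega(X_{(i)_\omega}(X_j))\bigr\}\Bigr| > \sqrt{\frac{\log_+(2|\mathcal{N}|n_\omega/\delta)}{2k_\omega}} \biggm| (X_\ell^\omega)_{\ell\in N_\omega}\biggr) \leq \frac{\delta}{|\mathcal{N}|n_\omega}.
\]
Taking expectation over the features removes the conditioning, then a union bound over $j \in N_\omega$ (there are $n_\omega$ of them) gives $\IP\bigl((G_\omega^\delta)^c\bigr) \leq \delta/|\mathcal{N}|$, and a further union bound over $\omega \in \mathcal{N}$ yields $\IP\bigl((E_5^\delta)^c\bigr) \leq \delta$.

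\textbf{Main obstacle.} The only genuinely nontrivial point — and the one I would take care to state explicitly — is the independence/measurability bookkeeping: one must condition on all of $(X_\ell^\omega)_{\ell\in N_\omega}$ at once (not just on $X_j^\omega$), so that the near-neighbour ordering is frozen and the neighbour labels become conditionally independent; and one must check that including $X_j$ itself among the data points causes no problem, which it does not because we work with the labels $Y_{(i)_\omega}(X_j)$ only through their conditional means $\eta_\omega(\cdot)$. Everything else is the same routine Hoeffding-plus-union-bound argument as Lemma~\ref{lem:E_2_prob}, with the $2|\mathcal{N}|/\delta$ there replaced by $2|\mathcal{N}|n_\omega/\delta$ precisely to absorb the extra union bound over the $n_\omega$ training points.
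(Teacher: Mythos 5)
Your proposal is correct and takes essentially the same route as the paper: condition on the full feature collection $(X_\ell^\omega)_{\ell\in N_\omega}$ to freeze the nearest-neighbour ordering, observe that the reordered labels are conditionally independent with conditional means $\eta_\omega(X_{(i)_\omega}(X_j))$, apply Hoeffding, then union-bound over $j\in N_\omega$ and $\omega\in\mathcal{N}$. The extra remark you make about the query point being its own nearest neighbour is a genuine subtlety that the paper's proof passes over silently, and your explanation of why it is harmless (all $k_\omega$ terms remain conditionally independent, mean-zero, and bounded) is exactly right.
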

\begin{proof}[Proof of Lemma~\ref{lem:E_5_prob}]
Fix $\omega \in \mathcal{N}$ and $j \in N_{\omega}$. The labels $Y_{(1)_\omega}(X_j), \ldots, Y_{(n_\omega)_\omega}(X_j)$ are conditionally independent given $(X_\ell^{\omega})_{\ell \in N_{\omega}}$. Further, for $i \in [n_{\omega}]$, we have that 
\[
\mathbb{E}\bigl\{Y_{(i)_\omega}(X_j) \big| (X_\ell^{\omega})_{\ell \in N_{\omega}} \bigr\} =\mathbb{E}\bigl\{Y_{(i)_\omega}(X_j) \big| X_{(i)_{\omega}}^\omega(X_j) \bigr\} = \eta_{\omega}(X_{(i)_{\omega}}(X_j)).
\]
Then by Hoeffding's inequality, we have that
\begin{align*}
    \IP\Biggl(\Bigl|\frac{1}{k_\omega}\sum_{i=1}^{k_\omega} \bigl\{ Y_{(i)_\omega}(X_j)-\eta_\omega(X_{(i)_\omega}(X_j)) \bigr\}\Bigr| > \sqrt{\frac{\log_+(2 |\mathcal{N}|n_\omega/\delta)}{2k_\omega}} \Biggm|(X_\ell^{\omega})_{\ell \in N_{\omega}}\Biggr) \leq \frac{\delta}{|\mathcal{N}|n_\omega}.
\end{align*}
Taking expectation over $(X_\ell^{\omega})_{\ell \in N_{\omega}}$ and using a union bound first over $j\in N_\omega$ and then over $\omega\in\mathcal{N}$ gives the result.
\end{proof}

\begin{lemma}\label{lem:E_6}
Fix $\delta \in (0,1)$ and suppose that $n_{\omega} \geq 2$ for every $\omega \in \{\Omega_{\star} \cup L(\Omega_{\star})\} \cap \mathcal{N}$. If $r_{\omega} > 2(n_{\omega}-1)^{-1}\max\{\lceil4\log_+(|\mathcal{N}|n_{\omega}/\delta) \rceil, k_{\omega}\}$ for all $\omega \in \{\Omega_{\star} \cup L(\Omega_{\star})\} \cap \mathcal{N}$, then $\IP((E_6^\delta)^c)\leq\delta$.
\end{lemma}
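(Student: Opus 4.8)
\textbf{Proof proposal for Lemma~\ref{lem:E_6}.}
The plan is to bound $\mathbb{P}((H_\omega^\delta)^c)$ for a single $\omega \in \{\Omega_\star \cup L(\Omega_\star)\} \cap \mathcal{N}$ and then take a union bound over the (at most $|\mathcal{N}|$) such patterns. Fix $\omega$ and fix an index $j \in N_\omega$. On the event $\{\rho_\omega(X_j^\omega) \geq r_\omega\}$ we want to control the $k_\omega$th nearest neighbour distance from $X_j^\omega$ among the \emph{other} available cases for $\omega$, i.e. among $\{X_i^\omega : i \in N_\omega \setminus \{j\}\}$, which are $n_\omega - 1$ i.i.d.\ draws (conditionally on $X_j$ and the observation indicators). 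Write $\tilde k := \max\{\lceil 4\log_+(|\mathcal{N}| n_\omega/\delta)\rceil, k_\omega\}$ and set $r := \bigl(\tfrac{2\tilde k}{(n_\omega-1) r_\omega}\bigr)^{1/d_\omega}$; the hypothesis $r_\omega > 2(n_\omega-1)^{-1}\tilde k$ guarantees $r < 1$, so the lower density bound applies. By the definition of the lower density and of $\rho_\omega$, for each $i \in N_\omega \setminus\{j\}$ we have, conditionally on $X_j^\omega$ with $\rho_\omega(X_j^\omega) \geq r_\omega$,
\[
\mu_{\omega \mid o_i}\bigl(B_r(X_j^\omega)\bigr) \geq \rho_{\mu_{\omega\mid o_i}, d_\omega}(X_j^\omega)\, r^{d_\omega} \geq \rho_\omega(X_j^\omega)\, r^{d_\omega} \geq r_\omega r^{d_\omega} = \frac{2\tilde k}{n_\omega - 1}.
\]

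Next I would run the same multiplicative Chernoff argument as in the proof of Lemma~\ref{lem:E_1}. The $k_\omega$th nearest neighbour distance exceeds $r$ precisely when fewer than $k_\omega$ of the $n_\omega-1$ points land in $B_r(X_j^\omega)$, so
\[
\mathbb{P}\Bigl(\lVert X_{(k_\omega)_\omega}^\omega(X_j^\omega) - X_j^\omega \rVert > r \,\Bigm|\, X_j^\omega,\ \rho_\omega(X_j^\omega) \geq r_\omega\Bigr) \leq \mathbb{P}\Bigl(\textstyle\sum_{i \in N_\omega \setminus\{j\}} \mathbbm{1}_{\{X_i^\omega \in B_r(X_j^\omega)\}} < k_\omega \leq \tilde k\Bigr),
\]
and since $\sum_{i \in N_\omega\setminus\{j\}} \mu_{\omega\mid o_i}(B_r(X_j^\omega)) \geq 2\tilde k$, the sum has conditional mean at least $2\tilde k$, so a one-sided multiplicative Chernoff bound gives an upper bound of $e^{-\tilde k/4} \leq \delta/(|\mathcal{N}| n_\omega)$, using $\tilde k \geq \lceil 4\log_+(|\mathcal{N}|n_\omega/\delta)\rceil$. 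Taking expectation over $X_j^\omega$ (the bound is trivial on $\{\rho_\omega(X_j^\omega) < r_\omega\}$ because the indicator kills that term) preserves the inequality. A union bound first over $j \in N_\omega$ (there are $n_\omega$ of them) and then over $\omega \in \{\Omega_\star \cup L(\Omega_\star)\} \cap \mathcal{N}$ (at most $|\mathcal{N}|$ of them) yields $\mathbb{P}((E_6^\delta)^c) \leq \delta$, as required.

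The only mild subtlety — and the step I would be most careful about — is the conditioning structure: the nearest-neighbour reordering used in Algorithm~\ref{alg:nonadaptive} for the test point $x = X_j$ ranks all of $\{X_i^\omega : i \in N_\omega\}$, which includes $X_j$ itself (at distance $0$, hence it is $X_{(0)_\omega}^\omega$), so the $k_\omega$th neighbour of $X_j^\omega$ is really the $k_\omega$th \emph{among the remaining $n_\omega - 1$} points, and conditionally on $X_j$ those remaining points are i.i.d.\ with marginals $\mu_{\omega\mid o_i}$; this is exactly why the denominator is $n_\omega - 1$ rather than $n_\omega$ in the statement, and why the hypotheses are phrased with $n_\omega - 1$. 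Everything else is a direct transcription of the argument behind Lemma~\ref{lem:E_1}, with $\delta$ replaced by $\delta/n_\omega$ to absorb the extra union bound over training points.
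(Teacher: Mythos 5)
Your proposal is correct and follows essentially the same route as the paper: trivialise the event when $\rho_\omega(X_j^\omega) < r_\omega$, otherwise choose $r = \bigl(\tfrac{2\tilde k}{(n_\omega-1)r_\omega}\bigr)^{1/d_\omega} < 1$, lower-bound $\mu_{\omega\mid o_i}(B_r(X_j^\omega)) \geq r_\omega r^{d_\omega}$ via the lower density, apply a multiplicative Chernoff bound to the count of the other $n_\omega - 1$ points in the ball (with mean $\geq 2\tilde k$), and union bound over $j$ and $\omega$.

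One small slip in the explanatory remark at the end: in Algorithm~\ref{alg:nonadaptive} the reordering is indexed from $1$ to $n_\omega$, so when $x = X_j$ the point $X_j$ occupies index $1$ (distance $0$), not index $0$ (the $X_{(0)_\omega}^\omega(x^\omega) = x^\omega$ convention is a separate notational shortcut used in Section~\ref{subsec:proofUB}). Consequently $X_{(k_\omega)_\omega}^\omega(X_j^\omega)$ is the $(k_\omega-1)$th nearest, not the $k_\omega$th nearest, among the remaining $n_\omega-1$ points — the paper's proof makes this explicit by writing the event as $\{\sum_{i\in N_\omega\setminus\{j\}}\mathbbm{1}_{\{X_i^\omega\in B_r\}} < k_\omega - 1\}$. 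Your inequality chain remains valid because it is written with $\leq$ (dropping from $k_\omega - 1$ to $k_\omega$ to $\tilde k$ only loosens the bound), so the final estimate $e^{-\tilde k/4} \leq \delta/(|\mathcal{N}|n_\omega)$ and the conclusion $\mathbb{P}((E_6^\delta)^c) \leq \delta$ are unaffected.
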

\begin{proof}[Proof of Lemma~\ref{lem:E_6}]
Fix $\omega \in \{\Omega_{\star} \cup L(\Omega_{\star})\} \cap \mathcal{N}$, $j \in N_{\omega}$ and $x_j \in \mathcal{X}$. Then, if $\rho_{\omega}(x_j^\omega) < r_{\omega}$, we have
\[
\mathbb{P}\Biggl( \bigl\|  X_{({k}_\omega)_\omega}^{\omega}(X_j^\omega)\!-\!X_j^\omega\bigr\|\! \cdot\! \mathbbm{1}_{\{\rho_{\omega}(X_j^\omega)\geq r_{\omega}\}}  > \biggl(\frac{2\{\lceil4\log_{+}(|\mathcal{N}|n_\omega/\delta)\rceil\! \vee\! k_\omega \}}{(n_\omega-1)r_{\omega}}\biggr)^{\frac{1}{d_\omega}} \Biggm| X_j\! =\! x_j\Biggr)\! =\! 0,
\]
since the first term inside the probability is $0$.
On the other hand, if $\rho_{\omega}(x_j^\omega)\geq r_{\omega}$, then let 
\begin{align*}
    r:=\biggl(\frac{2\max\{\lceil4\log_{+}(|\mathcal{N}|n_\omega/\delta)\rceil, k_\omega \}}{(n_\omega-1)r_{\omega}}\biggr)^{1/d_\omega} < 1,
\end{align*}
where the inequality holds by the assumption on $r_{\omega}$. Therefore, by the definition of $\rho_{\omega}$, we have for $i \in N_{\omega}$, 
\begin{align*}
\mu_{\omega|o_i}(B_r(x_j^\omega)) & \geq \rho_{\mu_{\omega|o_i},d_\omega}(x_j^\omega)r^{d_\omega}
\geq \rho_{\omega}(x_j^\omega)r^{d_\omega}\geq r_{\omega}r^{d_\omega} \\
&= \frac{2\max\{\lceil4\log_{+}(|\mathcal{N}|n_\omega/\delta)\rceil, k_\omega \}}{(n_\omega-1)}\geq\frac{8\log_{+}(|\mathcal{N}|n_\omega/\delta)}{(n_\omega-1)}.
\end{align*}
Then, using a multiplicative Chernoff bound, we obtain
\begin{align*}
\IP\Bigl( \bigl\| X_{(k_\omega)_\omega}^{\omega}(X_j^\omega)-X_j^\omega\bigr\| &> r \Bigm| X_j = x_j \Bigr) \leq\IP\Bigl(\sum_{i \in N_{\omega}} \mathbbm{1}_{\{X_i^\omega\in B_r(X_j^{\omega})\}}< k_\omega \Big|X_j = x_j \Bigr)\\
&= \IP\Bigl(\sum_{i \in N_{\omega}\setminus\{j\}} \mathbbm{1}_{\{X_i^\omega\in B_r(x_j^{\omega})\}}< k_\omega-1  \Bigr)\\
&\leq\IP\biggl(\sum_{i \in N_\omega\setminus\{j\}}\mathbbm{1}_{\{X_i^\omega\in B_r(x_j^{\omega})\}}< \max\{\lceil4\log_{+}(|\mathcal{N}|n_\omega/\delta)\rceil, k_\omega \}\biggr)\\
&\leq\IP\biggl(\sum_{i\in N_\omega\setminus\{j\}}\mathbbm{1}_{\{X_i^\omega\in B_r(x_j^\omega)\}}< \frac{\sum_{i \in N_{\omega} \setminus \{j\}} \mu_{\omega | o_i} (B_r(x_j^\omega))}{2} \biggr)\\
&\leq e^{-\sum_{i \in N_{\omega} \setminus \{j\}} \mu_{\omega | o_i} (B_r(x_j^\omega))/8} \leq  \frac{\delta}{|\mathcal{N}|n_\omega }.
\end{align*}
Taking expectations over $X_j$, a union bound over $j\in N_\omega$, and another union bound over $\omega\in\{\Omega_{\star} \cup L(\Omega_{\star})\} \cap \mathcal{N}$ concludes the proof.
\end{proof}

\begin{lemma}\label{lem:f_Xx_bound_training_points}
Fix $\omega \in \{\Omega_{\star} \cup L(\Omega_{\star})\} \cap \mathcal{N}$, $r_{\omega} \in (0,1)$, $\boldsymbol{\beta}\in(0,1]^d$, and $C_{\mathrm{S}}\geq1$. Let $P \in \mathcal{P}_{\mathrm{S}}(\boldsymbol{\beta},C_{\mathrm{S}})$ and suppose that $n_{\omega} \geq 2$. Then on the event $E_6^\delta$, for any $\omega'\preceq\omega$, we have that
\begin{align}\label{bound_f_error_on_E6}
    & \max_{j\in N_{\omega}} \max_{i\in [k_\omega]}\left|f_{\omega'}(X_{(i)_\omega}(X_j))-f_{\omega'}(X_j)\right| \cdot\mathbbm{1}_{\{\rho_\omega(X_j^\omega)\geq r_{\omega}\}} \nonumber
    \\ & \hspace{90pt} \leq 2C_{\mathrm{B}}  
    C_{\mathrm{S}} \cdot \left(\frac{2\max\{\lceil4\log_{+}(|\mathcal{N}|n_\omega/\delta)\rceil, k_\omega \}}{(n_\omega-1)r_{\omega}}\right)^{\beta_{\omega}/d_{\omega}}.
\end{align}
\end{lemma}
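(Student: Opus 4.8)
The plan is to mirror the proof of Lemma~\ref{lem:f_Xx_bound} almost line for line, but with the point $x$ replaced by a training point $X_j$ and with the event $E_6^\delta$ playing the role of $E_1^\delta(x)$. First I would fix $\omega \in \{\Omega_{\star} \cup L(\Omega_{\star})\} \cap \mathcal{N}$, $\omega' \preceq \omega$, and $j \in N_\omega$, and observe that the indicator $\mathbbm{1}_{\{\rho_\omega(X_j^\omega) \geq r_\omega\}}$ makes the bound trivial unless $\rho_\omega(X_j^\omega) \geq r_\omega$, so we may assume this holds. On the event $H_\omega^\delta \subseteq E_6^\delta$ we then have
\[
\max_{i \in [k_\omega]} \|X_{(i)_\omega}^\omega(X_j^\omega) - X_j^\omega\| \leq \Bigl(\frac{2\max\{\lceil 4\log_+(|\mathcal{N}|n_\omega/\delta)\rceil, k_\omega\}}{(n_\omega - 1)r_\omega}\Bigr)^{1/d_\omega},
\]
since $X_{(k_\omega)_\omega}^\omega(X_j^\omega)$ is the farthest of the $k_\omega$ nearest neighbours.

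Next I would split into cases according to whether the right-hand side of~\eqref{bound_f_error_on_E6} exceeds $2C_{\mathrm{B}}$. If it does, the bound follows immediately from $\|f_{\omega'}\|_\infty \leq C_{\mathrm{B}}$ given by Proposition~\ref{prop:f_bounded} (so $|f_{\omega'}(X_{(i)_\omega}(X_j)) - f_{\omega'}(X_j)| \leq 2C_{\mathrm{B}}$), using also $C_{\mathrm{S}} \geq 1$. Otherwise, I would apply the $\beta$-Hölder smoothness of $f_{\omega'}$ from~\eqref{def:smoothness_eq}: for $\omega' \preceq \omega$ we have $\|X_{(i)_\omega}^{\omega'}(X_j) - X_j^{\omega'}\| \leq \|X_{(i)_\omega}^{\omega}(X_j^\omega) - X_j^\omega\|$ (projecting onto a subset of coordinates can only decrease the norm), so
\[
|f_{\omega'}(X_{(i)_\omega}(X_j)) - f_{\omega'}(X_j)| \leq C_{\mathrm{S}} \|X_{(i)_\omega}^\omega(X_j^\omega) - X_j^\omega\|^\beta \leq C_{\mathrm{S}} \Bigl(\frac{2\max\{\lceil 4\log_+(|\mathcal{N}|n_\omega/\delta)\rceil, k_\omega\}}{(n_\omega - 1)r_\omega}\Bigr)^{\beta/d_\omega},
\]
which is at most $2C_{\mathrm{B}}C_{\mathrm{S}}$ times the same quantity since $C_{\mathrm{B}} \geq 1$. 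Taking the maximum over $i \in [k_\omega]$ and then over $j \in N_\omega$ gives the claim.

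The main obstacle is really just bookkeeping: one must be careful that the bad event on which the nearest-neighbour distance is not controlled has already been excluded by restricting to $E_6^\delta$, and that the $\max\{\lceil 4\log_+(|\mathcal{N}|n_\omega/\delta)\rceil, k_\omega\}$ term correctly accounts for the possibility that $k_\omega$ is smaller than the logarithmic floor used to guarantee enough neighbours in the ball (this is the training-point analogue of the $\tilde{k}_\omega$ truncation in Lemma~\ref{lem:f_Xx_bound}, but here it is simpler because we do not further truncate below $n_\omega \rho_\omega/2$). Since $E_6^\delta$ is defined precisely so that the relevant Chernoff bound in Lemma~\ref{lem:E_6} went through with $\max\{\lceil 4\log_+(|\mathcal{N}|n_\omega/\delta)\rceil, k_\omega\}$ neighbours, no additional estimate is needed here; the lemma is a deterministic consequence of being on that event together with the smoothness and boundedness of $f_{\omega'}$.
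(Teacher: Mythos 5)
Your proposal is correct and takes essentially the same approach as the paper's proof: exclude the trivial case via the indicator, invoke $H_\omega^\delta \subseteq E_6^\delta$ to bound the $k_\omega$-th nearest-neighbour distance, and then apply the $\beta$-H\"older smoothness (with the boundedness $\|f_{\omega'}\|_\infty \leq C_{\mathrm{B}}$ covering the degenerate case where the right-hand side exceeds $2C_{\mathrm{B}}$). The only cosmetic difference is that the paper splits the non-trivial regime into two subcases according to whether $k_\omega$ or $\lceil 4\log_+(|\mathcal{N}|n_\omega/\delta)\rceil$ attains the maximum, whereas you handle both at once directly from the form of $H_\omega^\delta$; this is a minor streamlining, not a different argument, and your explicit remark about the projection inequality $\|x^{\omega'}\| \leq \|x^\omega\|$ for $\omega' \preceq \omega$ is a useful clarification that the paper leaves implicit.
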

\begin{proof}[Proof of Lemma~\ref{lem:f_Xx_bound_training_points}]
Fix $j\in N_\omega$. If $\rho_\omega(X_j^\omega)< r_{\omega}$, then the left hand side of \eqref{bound_f_error_on_E6} is $0$, so the result follows trivially. Suppose then for the remainder of the proof that $\rho_\omega(X_j^\omega)\geq r_{\omega}$. If $(n_\omega-1)r_{\omega} \leq 2\max\{\lceil 4\log_{+}(2|\mathcal{N}|n_\omega/\delta)\rceil, k_\omega\}$, then the right hand side is at least $2C_{\mathrm{B}}$, so the result follows from Proposition~\ref{prop:f_bounded} since $C_S\geq1$. If $\lceil 4\log_{+}(|\mathcal{N}|n_\omega/\delta)\rceil\leq k_\omega< (n_\omega-1)r_{\omega}/2$, then from the definition of $E_6^\delta$ and the smoothness of $f_{\omega'}$ (since $P \in \mathcal{P}_{\mathrm{S}}(\boldsymbol{\beta}, C_{\mathrm{S}})$, see~\eqref{def:smoothness_eq}
), we have
\begin{align*}
    \max_{i\in[k_{\omega}]}\left|f_{\omega'}(X_{(i)_\omega}(X_j))\!-\!f_{\omega'}(X_j)\right| &\leq C_{\mathrm{S}} \max_{i\in[k_{\omega}]}\|X_{(i)_\omega}^{\omega}(X_j)\!-\!X_j^{\omega}\|^{\beta_{\omega'}}  \\
    &\leq C_S\left(\frac{2k_{\omega}}{(n_\omega\!-\!1)r_{\omega}}\right)^{\beta_{\omega'}/d_{\omega}}\leq C_S\left(\frac{2k_{\omega}}{(n_\omega\!-\!1)r_{\omega}}\right)^{\beta_{\omega}/d_{\omega}}.
\end{align*} 
Finally, if $k_\omega< \lceil 4\log_+(|\mathcal{N}|n_{\omega}/\delta) \rceil < (n_{\omega}-1) r_{\omega}/2$, then similarly we have
\begin{align*}
\max_{i\in[k_{\omega}]}\left|f_{\omega'}(X_{(i)_\omega}(X_j))-f_{\omega'}(X_j)\right| &    \leq \max_{i\in[\lceil4\log_+(|\mathcal{N}|n_\omega/\delta)\rceil]}\left|f_{\omega'}(X_{(i)_\omega}(X_j))-f_{\omega'}(X_j)\right| \\
    &\leq  C_S\left(\frac{2\lceil4\log_+(|\mathcal{N}|n_\omega/\delta)\rceil}{(n_\omega-1)r_{\omega}}\right)^{\beta_{\omega'}/d_{\omega}}\\
    &\leq  C_S\left(\frac{2\lceil4\log_+(|\mathcal{N}|n_\omega/\delta)\rceil}{(n_\omega-1)r_{\omega}}\right)^{\beta_{\omega}/d_{\omega}}.
\end{align*}
The result follows since $C_{\mathrm{B}} \geq 1$.
\end{proof}

We are now in a position to control the bias and variance of the estimators $\hat{f}_{\omega}$, for $\omega \in \mathcal{N}$, evaluated at the feature observations in training data set.  We first provide control of $\hat{f}_{\omega}$ at every training point in the body of the distribution (i.e.~those for which $\rho_\omega(X_j^\omega)\geq r_{\omega}$) on the event $E_5^{\delta} \cap E_6^{\delta}$. Let $R_{\mathbf{0},\mathrm{T}} = \sqrt{\frac{\log_+(2 |\mathcal{N}|n/\delta)}{2n}}$, and for $\omega \in \{\Omega_{\star} \cup L(\Omega_\star) \} \cap \mathcal{N} \setminus \{\mathbf{0}\}$, let 
\[
    R_{\omega,\mathrm{T}}:= \biggl[ 20C_{\mathrm{B}}C_{\mathrm{S}}\cdot \max_{0 \prec \omega' \preceq \omega}\Bigl\{  \log_{+}
    (2|\mathcal{N}|n_{\omega'}/\delta) \cdot \Bigl(r_{\omega'}(n_{\omega'}\!-\!1)^{\frac{\gamma_{\omega'}d_{\omega'} + \alpha \beta_{\omega'}}{\gamma_{\omega'}(2\beta_{\omega'}+d_{\omega'})+\alpha \beta_{\omega'}}}\Bigr)^{-\beta_{\omega'}/d_{\omega'}}\Bigr\} \biggr]\! \vee\! R_{\mathbf{0},\mathrm{T}}.
\]

\begin{lemma}\label{lem:f_bound_at_training_points}
Fix $\boldsymbol{\beta} \in (0,1]^d$, $C_{\mathrm{S}}\geq 1$, and suppose that $P \in \mathcal{P}_{\mathrm{S}}(\boldsymbol{\beta},C_{\mathrm{S}})$.  Suppose that $n_\omega \geq 2$ and $ r_{\omega} \geq 2(n_{\omega}-1)^{-1}\max\{\lceil 4\log_+(|\mathcal N|n_{\omega}/\delta)\rceil,k_{\omega}\}$ for all $\omega \in \{\Omega_{\star} \cup L(\Omega_\star) \} \cap \mathcal{N}$, then on the event $E_5^{\delta} \cap E_{6}^{\delta}$, the following statements hold:

(i) For $\omega \in \{\Omega_{\star} \cup L(\Omega_\star) \} \cap \mathcal{N}$, we have that 
\begin{align}\label{ineq:f_bound_training_points}
  \max_{j \in N_{\omega}}  \bigl|\hat{f}_{\omega}(X_j)-f_{\omega}(X_j)\bigr|\cdot\mathbbm{1}_{\{\rho_\omega(X_j^\omega)\geq  r_{\omega}\}} \leq \kappa_{\omega,T} \cdot R_{\omega,T},
\end{align}
where $1 \leq \kappa_{\omega,T} \leq 2^{d_\omega(d_\omega+1)}$ is given explicitly in the proof. 

(ii) For $\omega \in U(\Omega_{\star}) \cap \mathcal{N}$, we have that
\begin{align*}
  \max_{j \in N_{\omega}}  \bigl|\hat{f}_{\omega}(X_j)\bigr| \leq \max_{\stackrel{\omega' \in \{\Omega_{\star} \cup L(\Omega_{\star})\} \cap \mathcal{N}}{\omega' \preceq \omega}} \kappa_{\omega',T} \cdot R_{\omega',T} + \max_{\stackrel{\omega' \in U(\Omega_{\star}) \cap \mathcal{N}}{\omega' \preceq \omega}} \kappa_{\omega',T} \cdot \sqrt{\frac{\log_+(2 |\mathcal{N}|n_{\omega'}/\delta)}{2k_{\omega'}}}.
\end{align*}
\end{lemma}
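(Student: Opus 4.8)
The plan is to prove both parts of Lemma~\ref{lem:f_bound_at_training_points} by induction on $d_\omega$, closely mirroring the structure of the proof of Lemma~\ref{lem:f_is_not_zero} but with the point $x$ replaced by a training feature $X_j$ and with the high-probability events $E_1^\delta(x),E_2^\delta(x)$ replaced by the uniform-over-training-points events $E_5^\delta,E_6^\delta$. The starting point is the decomposition, analogous to \eqref{ineq:tilde_f_bound}, that on $E_5^\delta\cap E_6^\delta$, for $\omega\in\mathcal N\setminus\{\mathbf 0\}$ and $j\in N_\omega$,
\begin{align*}
\bigl|\hat f_\omega(X_j)-f_\omega(X_j)\bigr| &\leq \Bigl|\frac1{k_\omega}\sum_{i=1}^{k_\omega}f_\omega(X_{(i)_\omega}(X_j))-f_\omega(X_j)\Bigr| + \Bigl|\frac1{k_\omega}\sum_{i=1}^{k_\omega}\bigl\{Y_{(i)_\omega}(X_j)-\eta_\omega(X_{(i)_\omega}(X_j))\bigr\}\Bigr|\\
&\qquad + \sum_{\omega'\prec\omega}\Bigl|\frac1{k_\omega}\sum_{i=1}^{k_\omega}f_{\omega'}(X_{(i)_\omega}(X_j))-\hat f_{\omega'}(X_j)\Bigr|.
\end{align*}
The middle term is bounded by $\sqrt{\log_+(2|\mathcal N|n_\omega/\delta)/(2k_\omega)}$ on $E_5^\delta$ for every $j\in N_\omega$ directly from the definition of $G_\omega^\delta$.

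For part~(i), restrict attention to those $j$ with $\rho_\omega(X_j^\omega)\geq r_\omega$. On $E_6^\delta$, Lemma~\ref{lem:f_Xx_bound_training_points} applied with each $\omega'\preceq\omega$ controls the first term and the ``reordering'' piece of the third term by $2C_{\mathrm B}C_{\mathrm S}\bigl(2\max\{\lceil4\log_+(|\mathcal N|n_\omega/\delta)\rceil,k_\omega\}/((n_\omega-1)r_\omega)\bigr)^{\beta/d_\omega}$; plugging in $k_\omega=1+\lfloor n_\omega^{2\beta\gamma_\omega/(\gamma_\omega(2\beta+d_\omega)+\alpha\beta)}\rfloor$ and simplifying the $\max$ shows this is at most a constant multiple of $\log_+(2|\mathcal N|n_\omega/\delta)\cdot\bigl(r_\omega(n_\omega-1)^{(\gamma_\omega d_\omega+\alpha\beta)/(\gamma_\omega(2\beta+d_\omega)+\alpha\beta)}\bigr)^{-\beta/d_\omega}$, which is exactly the $\omega'=\omega$ term in the definition of $R_{\omega,\mathrm T}$; similarly the variance term is dominated by the same quantity (using $k_\omega\geq n_\omega^{2\beta\gamma_\omega/\phi_\omega}$). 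The subtle point is the third sum: one needs the inductive bound $\max_{j\in N_{\omega'}}|\hat f_{\omega'}(X_j)-f_{\omega'}(X_j)|\mathbbm1_{\{\rho_{\omega'}(X_j^{\omega'})\geq r_{\omega'}\}}\leq\kappa_{\omega',\mathrm T}R_{\omega',\mathrm T}$ to be usable at the points $X_j$ with $j\in N_\omega$; this works because $\omega'\prec\omega$ implies $N_\omega\subseteq N_{\omega'}$, and because $\rho_\omega(X_j^\omega)\geq r_\omega$ together with monotonicity of the lower density in the pattern (and the monotonicity $r_{\omega'}\le r_\omega$ built into the hypotheses on $r_\cdot$) forces $\rho_{\omega'}(X_j^{\omega'})\geq r_{\omega'}$, so the indicator is satisfied. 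Then the recursion $\kappa_{\omega,\mathrm T}:=1+\sum_{\omega'\prec\omega}\kappa_{\omega',\mathrm T}$ with $\kappa_{\mathbf 0,\mathrm T}=1$ (the same ordered-Bell-number recursion as in Proposition~\ref{prop:f_bounded} and Lemma~\ref{lem:f_is_not_zero}) together with the facts $R_{\omega',\mathrm T}\leq R_{\omega,\mathrm T}$ for $\omega'\prec\omega$ (immediate from the $\max$ over $\omega'\preceq\omega$ in the definition) closes the induction and gives $\kappa_{\omega,\mathrm T}\leq 2(d_\omega+1)^{d_\omega}\leq 2^{d_\omega(d_\omega+1)}$. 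The base case $\omega=\mathbf 0$ is $|\hat f_{\mathbf 0}-f_{\mathbf 0}|\leq\sqrt{\log_+(2|\mathcal N|n/\delta)/(2n)}=R_{\mathbf 0,\mathrm T}$ from $G_{\mathbf 0}^\delta$, and the cases $\omega\in\{\Omega_\star\cup L(\Omega_\star)\}$ with $\omega'\prec\omega$ but $\omega'\notin\Omega_\star\cup L(\Omega_\star)$ do not occur since $\Omega_\star\cup L(\Omega_\star)$ is downward closed below $\Omega_\star$, so every $\omega'\prec\omega$ lies in it too.

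For part~(ii), fix $\omega\in U(\Omega_\star)\cap\mathcal N$ and an arbitrary $j\in N_\omega$ (no restriction on $\rho_\omega(X_j^\omega)$, since $f_\omega\equiv0$ contributes no smoothness requirement and we only need a crude bound). Using $f_\omega\equiv0$ for $\omega\in U(\Omega_\star)$, Proposition~\ref{prop:eta_decomposition} gives $\eta_\omega=1/2+\sum_{\omega'\preceq\omega,\,\omega'\notin U(\Omega_\star)}f_{\omega'}$, so writing $\hat f_\omega(X_j)=\frac1{k_\omega}\sum_i Y_{(i)_\omega}(X_j)-\frac12-\sum_{\omega'\prec\omega}\hat f_{\omega'}(X_j)$ and telescoping exactly as above, the bias term vanishes for every $\omega'\preceq\omega$ with $\omega'\in U(\Omega_\star)$ (those $f_{\omega'}$ are zero) and is controlled by Lemma~\ref{lem:f_Xx_bound_training_points} for the remaining $\omega'\in\Omega_\star\cup L(\Omega_\star)$; the variance terms at each level $\omega'\preceq\omega$ give $\sqrt{\log_+(2|\mathcal N|n_{\omega'}/\delta)/(2k_{\omega'})}$; and the recursive terms are bounded by part~(i) for $\omega'\in\Omega_\star\cup L(\Omega_\star)$ and by the inductive hypothesis of part~(ii) for $\omega'\in U(\Omega_\star)$. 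Collecting the contributions and absorbing the combinatorial factors into $\kappa_{\omega',\mathrm T}$ yields the stated two-term bound. The main obstacle I anticipate is purely bookkeeping: keeping the induction hypotheses for parts~(i) and~(ii) correctly coupled across the poset $\{0,1\}^d$ (part~(ii) at level $\omega$ calls part~(i) at levels in $\Omega_\star\cup L(\Omega_\star)$ and part~(ii) at lower levels in $U(\Omega_\star)$), and verifying that the indicator $\mathbbm1_{\{\rho_{\omega'}(X_j^{\omega'})\geq r_{\omega'}\}}$ is automatically active whenever needed, which rests on the monotonicity of the conditional lower densities $\rho_\cdot$ under coordinate inclusion and on the monotone choice of the radii $r_\cdot$ — a fact that should be checked explicitly but is routine given the definitions.
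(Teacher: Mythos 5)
Your overall strategy — induction over the poset $\{0,1\}^d$, mirroring Lemma~\ref{lem:f_is_not_zero} with $x$ replaced by $X_j$ and the pointwise events replaced by the uniform events $E_5^\delta,E_6^\delta$, applying Lemma~\ref{lem:f_Xx_bound_training_points} for the bias and the Hoeffding event $G_\omega^\delta$ for the variance — is exactly the paper's route, and your part (ii) sketch (using $f_\omega\equiv 0$ so the bias vanishes, then collecting the recursive contributions from $\Omega_\star\cup L(\Omega_\star)$ and $U(\Omega_\star)$ separately) likewise matches. Two minor bookkeeping differences: the paper's recursion is $\kappa_{\omega,\mathrm T}=1+2^{d_\omega}+\sum_{\omega'\prec\omega}\kappa_{\omega',\mathrm T}$, not $1+\sum_{\omega'\prec\omega}\kappa_{\omega',\mathrm T}$ as in Lemma~\ref{lem:f_is_not_zero} (here the $2^{d_\omega}$ factor is absorbed into $\kappa_{\omega,\mathrm T}$ rather than into a separate $R_{\omega,1}$-type quantity), and $R_{\omega',\mathrm T}\leq R_{\omega,\mathrm T}$ holds by construction because $R_{\omega,\mathrm T}$ is already a max over $\omega''\preceq\omega$, so no separate argument is needed there.

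The genuine issue is in the passage you correctly flag as the ``subtle point'' but whose resolution you then assert is routine. To invoke the inductive bound $|\hat f_{\omega'}(X_j)-f_{\omega'}(X_j)|\le\kappa_{\omega',\mathrm T}R_{\omega',\mathrm T}$ for $\omega'\prec\omega$ one must know that the indicator $\mathbbm 1_{\{\rho_{\omega'}(X_j^{\omega'})\ge r_{\omega'}\}}$ is active whenever $\mathbbm 1_{\{\rho_\omega(X_j^\omega)\ge r_\omega\}}$ is. You claim this follows from ``the monotonicity $r_{\omega'}\le r_\omega$ built into the hypotheses on $r_\cdot$'' — but no such hypothesis appears in the lemma statement: the stated condition only lower bounds each $r_\omega$ individually, and in the application (Proposition~\ref{lem:correct_thresholding}) one takes $r_\omega=C_\rho n_\omega^{-\beta/(2\phi_\omega)}$, for which $r_{\omega'}\le r_\omega$ is not obviously monotone across $\omega'\prec\omega$ since $n_{\omega'}\geq n_\omega$ and $\phi_{\omega'}$ versus $\phi_\omega$ have competing contributions from $\gamma_{\omega'}\geq\gamma_\omega$ and $d_{\omega'}<d_\omega$. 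Separately, the claimed ``monotonicity of the lower density in the pattern'' is not automatic here because $\rho_\omega(\cdot)$ is defined as a minimum over $o\in\mathcal O$ with $o\succeq\omega$, so $\rho_{\omega'}$ minimises over the strictly larger set $\{o:o\succeq\omega'\}$ and can in principle drop below $\rho_\omega$ even though the lower density of each fixed conditional law is monotone under coordinate projection. To be fair, the paper's own proof applies the inductive hypothesis without flagging this step at all, so your proposal is at least as careful, but the resolution you propose does not hold as written and the obstacle is not purely routine bookkeeping.
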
 
\begin{proof}[Proof of Lemma~\ref{lem:f_bound_at_training_points}]
First, for $\omega = \mathbf{0}_d$, on $E_5^{\delta}$ we have that for $j \in [n]$ that
\begin{align*}
\bigl|\hat{f}_{\mathbf{0}_d}(X_j)-f_{\mathbf{0}_d}(X_j)\bigr| =\Bigl|\frac{1}{n} \sum_{i=1}^n Y_i - \mathbb{P}_P(Y = 1)\Bigr| \leq \sqrt{\frac{\log_+(2 |\mathcal{N}|n/\delta)}{2n}} = R_{\mathbf{0},T}.
\end{align*}
Thus the result holds for $\omega = \mathbf{0}_d$.  

Now, for $\omega \in \{\Omega_{\star} \cup L(\Omega_{\star})\} \cap \mathcal{N}\setminus \{\mathbf{0}_d\}$, fix $j\in N_\omega$. If $\rho_\omega(X_j^\omega)<  r_{\omega}$, then the result follows trivially, since the right hand side of \eqref{ineq:f_bound_training_points} is positive. We thus assume for the remainder of the proof that $\rho_\omega(X_j^{\omega})\geq  r_{\omega}$.  Now recall from Algorithm~\ref{alg:nonadaptive} 
that for $\omega \in \mathcal{N} \setminus\{\mathbf{0}_d\}$ we have 
\begin{align*}
    \hat f_\omega(X_j) = \frac{1}{k_\omega} \sum_{i=1}^{k_\omega}Y_{(i)_\omega}(X_j) - \frac{1}{2} - \sum_{\omega'\prec\omega} \hat{f}_{\omega'}(X_j).
\end{align*}
Then, since $\eta_\omega(X_j) = 1/2 + \sum_{\omega' \preceq \omega} f_{\omega'}(X_j)$, we have that 
\begin{align}
\nonumber
    \bigl| &\hat f_\omega(X_j)-f_\omega(X_j)\bigr| 
    \\ & = \Bigl|\hat{f}_\omega(X_j) - \frac{1}{k_\omega}\sum_{i=1}^{k_\omega} \eta_\omega(X_{(i)_\omega}(X_j)) + \frac{1}{2} + \sum_{\omega'\preceq\omega}\frac{1}{k_\omega}\sum_{i=1}^{k_\omega} f_{\omega'}(X_{(i)_\omega}(X_j)) -f_\omega(X_j)\Bigr| \nonumber\\ 
    &\leq \Bigl|\frac{1}{k_\omega}\sum_{i=1}^{k_\omega}f_\omega(X_{(i)_\omega}(X_j))-f_\omega(X_j)\Bigr| + \Bigl|\frac{1}{k_\omega}\sum_{i=1}^{k_\omega}\bigl\{ Y_{(i)_\omega}(X_j)-\eta_\omega(X_{(i)_\omega}(X_j))\bigr\}\Bigr|\nonumber
    \\ & \hspace{60pt}+\Bigl|\sum_{\omega'\prec\omega}\Bigl\{\frac{1}{k_\omega}\sum_{i=1}^{k_\omega} f_{\omega'}(X_{(i)_\omega}(X_j))- \hat{f}_{\omega'}(X_j)  \Bigr\}\Bigr|. \label{ineq:tilde_f_bound_training_points}
\end{align}     

For the first term in \eqref{ineq:tilde_f_bound_training_points}, by Lemma~\ref{lem:f_Xx_bound_training_points}, on the event $E_6^{\delta}$ we have 
\begin{align*}
    \Bigl|\frac{1}{k_\omega}\sum_{i=1}^{k_\omega}f_\omega(X_{(i)_\omega}(X_j))-f_\omega(X_j)\Bigr|
    &\leq 2C_{\mathrm{B}}  C_{\mathrm{S}} \cdot \left(\frac{2\max\{\lceil4\log_{+}(|\mathcal{N}|n_\omega/\delta)\rceil, k_\omega \}}{(n_\omega-1) r_{\omega}}\right)^{\beta_{\omega}/d_{\omega}}\\
    &\leq  4C_{\mathrm{B}}  C_{\mathrm{S}} \cdot  \frac{1}{ r_{\omega}^{\beta_{\omega}/d_{\omega}}} \cdot \left(\frac{\max\{5\log_{+}(|\mathcal{N}|n_\omega/\delta), k_\omega \}}{(n_\omega-1)}\right)^{\beta_{\omega}/d_{\omega}}\\
    &\leq  20C_{\mathrm{B}}    C_{\mathrm{S}} \cdot  \frac{1}{ r_{\omega}^{\beta_{\omega}/d_{\omega}}} \cdot \frac{\log_{+}(2|\mathcal{N}|n_\omega/\delta)}{(n_{\omega}-1)^{\frac{\beta_{\omega}\gamma_{\omega}+ \alpha \beta_{\omega}^2/d_{\omega}}{\gamma_{\omega}(2\beta_{\omega} + d_{\omega}) + \alpha \beta_{\omega}}}} \leq R_{\omega,\mathrm{T}}.
\end{align*}
since $k_{\omega} \leq 4\log_+(2|\mathcal{N}|n_\omega/\delta) (n_{\omega}-1)^{\frac{2\beta_{\omega}\gamma_{\omega}}{\gamma_{\omega}(2\beta_{\omega} + d_{\omega}) + \alpha \beta_{\omega}}}$.
For the second term in \eqref{ineq:tilde_f_bound_training_points}, on the event $E_5^{\delta}$ we have 
\begin{align*}
    \Bigl|\frac{1}{k_\omega}\sum_{i=1}^{k_\omega}\bigl\{ Y_{(i)_\omega}(X_j)-\eta_\omega(X_{(i)_\omega}(X_j))\bigr\}\Bigr|
    &\leq \sqrt{\frac{\log_+(2 |\mathcal{N}|n_\omega/\delta)}{2k_\omega}} \leq \frac{\log_+(2 |\mathcal{N}|n_\omega/\delta)}{(n_\omega-1)^{\frac{\beta_{\omega}\gamma_{\omega}}{\gamma_{\omega}(2\beta_{\omega}+d_{\omega}) + \alpha\beta_{\omega}}}}
    \\ & \leq 20C_{\mathrm{B}}  
    C_{\mathrm{S}} \cdot  \frac{1}{ r_{\omega}^{\beta_{\omega}/d_{\omega}}} \cdot  \frac{\log_+(2 |\mathcal{N}|n_\omega/\delta)}{(n_\omega-1)^{\frac{\beta_{\omega}\gamma_{\omega} + \alpha\beta_{\omega}^2/ d_{\omega}}{\gamma_{\omega}(2\beta_{\omega}+d_{\omega}) + \alpha\beta_{\omega}}}}\leq R_{\omega,\mathrm{T}},
\end{align*}
since $k_\omega \geq (n_{\omega}-1)^{\frac{2\beta_{\omega}\gamma_{\omega}}{\gamma_{\omega}(2\beta_{\omega} + d_{\omega}) + \alpha \beta_{\omega}}}$ and $r_{\omega} \geq 2(n_{\omega}-1)^{-1} k_{\omega}$. For the third term in~\eqref{ineq:tilde_f_bound_training_points}, 
similarly to above, by Lemma~\ref{lem:f_Xx_bound_training_points} on $E_6^{\delta}$ we have 
\begin{align*}   
\Bigl|\frac{1}{k_\omega}\sum_{i=1}^{k_\omega} f_{\omega'}(X_{(i)_\omega}(X_j)) &- \hat{f}_{\omega'}(X_j)  \Bigr|\\
&\leq  \Bigl|\frac{1}{k_\omega}\sum_{i=1}^{k_\omega}f_{\omega'}(X_{(i)_\omega}(X_j)) -f_{\omega'}(X_j)\Bigr| + \bigl|f_{\omega'}(X_j)- \hat f_{\omega'}(X_j)\bigr| \\
&\leq R_{\omega, \mathrm{T}} + \bigl|f_{\omega'}(X_j)- \hat f_{\omega'}(X_j)\bigr|,
\end{align*} 
for $\omega' \prec \omega$, where we have used the lower bound on $ r_{\omega}$.  To finish the proof of part (i), suppose for induction that \eqref{ineq:f_bound_training_points} holds for all $\omega' \prec \omega$. Then using the fact that $R_{\omega', \mathrm{T}} \leq R_{\omega, \mathrm{T}}$  for all $\omega' \prec \omega$, we deduce that  
\begin{align*}
    \bigl|\hat f_\omega(X_j)-f_\omega(X_j)\bigr|
    &\leq (1 + 2^{d_\omega}) R_{\omega, \mathrm{T}}  + \sum_{\omega'\prec\omega}\bigl|f_{\omega'}(X_j)- \hat f_{\omega'}(X_j)\bigr|  \\
      &\leq (1+2^{d_\omega}+\sum_{\omega'\prec\omega}\kappa_{\omega', \mathrm{T}}) \cdot R_{\omega,\mathrm{T}}.
\end{align*}
The constant $\kappa_{\omega,\mathrm{T}}$ can then be calculated using $\kappa_{\textbf{0}, \mathrm{T}} = 1$ and the relation $\kappa_{\omega,\mathrm{T}} := 1 + 2^{d_\omega}+\sum_{\omega'\prec\omega}\kappa_{\omega',\mathrm{T}}$, for $d_\omega > 1$.

Now for part (ii), fix $\omega \in U(\Omega_{\star}) \cap \mathcal{N}$. Then $f_{\omega}(X_j) = 0$ and, on the event $E_{5}^\delta$, we have
\begin{align*}
    \bigl|\hat{f}_\omega &(X_j)\bigr|  = \Bigl|\hat{f}_\omega(X_j) - \frac{1}{k_\omega}\sum_{i=1}^{k_\omega} \eta_\omega(X_{(i)_\omega}(X_j)) + \frac{1}{2} + \sum_{\omega'\preceq\omega}\frac{1}{k_\omega}\sum_{i=1}^{k_\omega} f_{\omega'}(X_{(i)_\omega}(X_j))\Bigr| \\
    &\hspace{-8pt}\leq \Bigl|\frac{1}{k_\omega}\sum_{i=1}^{k_\omega}\bigl\{ Y_{(i)_\omega}(X_j)-\eta_\omega(X_{(i)_\omega}(X_j))\bigr\}\Bigr| + \Bigl|\sum_{\omega'\prec\omega}\Bigl\{\frac{1}{k_\omega}\sum_{i=1}^{k_\omega} f_{\omega'}(X_{(i)_\omega}(X_j))- \hat{f}_{\omega'}(X_j)  \Bigr\}\Bigr|. 
    \\ & \leq \sqrt{\frac{\log_+(2 |\mathcal{N}|n_\omega/\delta)}{2k_\omega}}  + \Bigl|\sum_{\omega'\prec\omega}\Bigl\{\frac{1}{k_\omega}\sum_{i=1}^{k_\omega} f_{\omega'}(X_{(i)_\omega}(X_j))- \hat{f}_{\omega'}(X_j)  \Bigr\}\Bigr|. 
\end{align*} 
The proof is completed again via induction. 
\end{proof}

\begin{proof}[Proof of Proposition~\ref{lem:correct_thresholding}
]
Let $C_{\rho} := (32C^2_{\mathrm{B}}C_{\mathrm{L}})^{-\frac{1}{\min_{\omega\in \mathcal{N}}\gamma_\omega}}$, where $C_{\mathrm{B}}$ is the constant from Proposition~\ref{prop:f_bounded} and recall $\phi_{\omega} = \gamma_{\omega}(2\beta_{\omega} + d_{\omega}) + \alpha\beta_{\omega}$. Then, for $\omega \in \{\Omega_{\star} \cup L(\Omega_\star)\} \cap \mathcal{N}$, let $r_\omega:= C_{\rho} \cdot n_\omega^{-\frac{\beta_{\omega}}{2\phi_{\omega}}}$,
and consider the event $E_{\circ}^{\delta} = E^\delta_3\cap E^\delta_4\cap E^\delta_5\cap E^\delta_6$ with these choices of $r_{\omega}$. 
First note that, since
\begin{align*}
n_{\omega} &\geq \Bigl(\frac{32\log_+(|\mathcal N|n_{\omega}/\delta)}{C_\rho^{1 \vee \gamma_{\omega}}}\Bigr)^{\frac{2\phi_{\omega}}{2\gamma_\omega d_\omega+2\alpha\beta_{\omega}-\beta_{\omega}}} 
\\&\geq \max\Bigl\{\Bigl(\frac{4\log_+(|\mathcal N|/\delta)}{C_\rho^{\gamma_{\omega}} C_{\mathrm{L}}}\Bigr)^{\frac{2\phi_{\omega}}{2\phi_{\omega}-\beta_{\omega}\gamma_\omega}},\Bigl(\frac{32\log_+(|\mathcal N|n_{\omega}/\delta)}{C_\rho}\Bigr)^{\frac{2\phi_{\omega}}{2\phi_{\omega}-\beta_{\omega}}},\Bigl(\frac{8}{C_\rho}\Bigr)^{\frac{2\phi_{\omega}}{2\phi_{\omega} - 4\gamma_{\omega}\beta_{\omega} -\beta_{\omega}}} \Bigr\}
\end{align*}
for $\omega \in \{\Omega_{\star} \cup L(\Omega_\star)\} \cap \mathcal{N}$, we have that
\begin{align*}
r_{\omega} \geq \max\Bigl\{ \Bigl(\frac{ 4\log_+(|\mathcal{N}|/\delta)}{C_{\mathrm{L}}n_{\omega}}\Bigr)^{1/\gamma_\omega}, \frac{2\max\{\lceil 4\log_+(|\mathcal N|n_\omega/\delta)\rceil,k_\omega\}}{n_{\omega}-1} \Bigr\}.
\end{align*}
Therefore, by a union bound and Lemmas~\ref{lem:E_3_prob}, \ref{lem:E_4_prob}, \ref{lem:E_5_prob} and~\ref{lem:E_6}, we have 
\[
\mathbb{P}_Q\bigl\{\bigl(E_\circ^{\delta}\bigr)^c \bigm| O_1 = o_1, \ldots, O_n = o_n\bigr\} \leq  4\delta. 
\]

In the remainder of the proof we will show that, on the event $E_{\circ}^{\delta}$, we have $\hat \sigma^2_\omega \geq \tau_\omega$ for every $\omega\in\Omega_\star$, and $\hat \sigma_\omega^2<\tau_\omega$ for every $\omega\in \mathcal{N} \cap U(\Omega_\star)$, and therefore that $\hat{\Omega} = \Omega_{\star}$.  First consider $\omega \in \Omega_{\star}$. On the event $E^\delta_\circ$, by Lemma~\ref{lem:f_bound_at_training_points} and Proposition~\ref{prop:f_bounded}, we have that
\begin{align*} 
\hat\sigma^2_\omega - \sigma^2_{\omega} & = \frac{1}{n_\omega} \sum_{i \in N_\omega} \hat{f}^2_{\omega}(X_i) - \sigma^2_{\omega}\\
&= \frac{1}{n_\omega}\sum_{i \in N_{\omega}}\mathbbm{1}_{\{\rho_{\omega}(X_i^{\omega})\geq r_{\omega}\}}\{\hat f_\omega^2(X_i)-f_\omega^2(X_i)\} 
\\ & \hspace{30pt} + \frac{1}{n_\omega}\sum_{i \in N_{\omega}}\mathbbm{1}_{\{\rho_{\omega}(X_i^{\omega})< r_{\omega}\}}\{\hat f_\omega^2(X_i)-f_\omega^2(X_i)\}  + \frac{1}{n_\omega}\sum_{i \in N_{\omega}}f_\omega^2(X_i) - \sigma_\omega^2\\
&\geq -2C_{\mathrm{B}} \kappa_{\omega,\mathrm{T}}\cdot R_{\omega,\mathrm{T}} -8C_{\mathrm{B}}^2 C_{\mathrm{L}}  r_{\omega}^{\gamma_\omega} -C_{\mathrm{B}}^2 \sqrt{\frac{\log_+(|\mathcal N|/\delta)}{2n_\omega}}.
\end{align*}
Then, we have $\sigma_\omega^2 \geq c_{\mathrm{E}}$ since $\omega \in \Omega_{\star}$, and using the fact that 
$\tau_\omega=2^{-4}\cdot n_\omega^{-\frac{\beta_{\omega}\gamma_{\omega}}{2\gamma_{\omega}(2\beta_{\omega}+d_\omega) + 2\alpha\beta_{\omega}}} = 2^{-4}\cdot n_{\omega}^{-1/\kappa_{\omega,\circ}}$, we deduce that 
\[
\hat{\sigma}_\omega^2 - \tau_\omega \geq c_{\mathrm{E}} - \tau_\omega  -2C_{\mathrm{B}} \kappa_{\omega,\mathrm{T}}\cdot R_{\omega,\mathrm{T}} -8C_{\mathrm{B}}^2 C_{\mathrm{L}}  r_{\omega}^{\gamma_\omega} -C_{\mathrm{B}}^2 \sqrt{\frac{\log_+(|\mathcal N|/\delta)}{2n_\omega}}. 
\]
Therefore, we have $\hat{\sigma}_\omega^2 \geq \tau_{\omega}$ for all $\omega \in \Omega_{\star}$, if
\begin{equation}
    \label{eq:cond1}
\max\biggl\{2^{-4}\cdot n_\omega^{-\frac{\beta_{\omega}\gamma_{\omega}}{2\gamma_{\omega}(2\beta_{\omega}+d_\omega) + 2\alpha\beta_{\omega}}}
, 2C_{\mathrm{B}} \kappa_{\omega,\mathrm{T}} R_{\omega,\mathrm{T}}, 8C_{\mathrm{B}}^2 C_{\mathrm{L}}  r_{\omega}^{\gamma_\omega},C_{\mathrm{B}}^2 \sqrt{\frac{\log_+(|\mathcal N|/\delta)}{2n_\omega}}\biggr\} \leq c_{\mathrm{E}}/4
\end{equation}
for all $\omega \in \Omega_{\star}$.  To see that \eqref{eq:cond1} holds, first note that, for $\mathbf{0}_d \prec \omega' \preceq \omega$, we have 
\begin{align*}
\Bigl(n_{\omega'}^{-\frac{\beta_{\omega'}}{2\phi_{\omega'}}}(n_{\omega'}-1)^{\frac{\gamma_{\omega'}d_{\omega'} + \alpha \beta_{\omega'}}{\phi_{\omega'}}}\Bigr)^{-\beta_{\omega'}/d_{\omega'}} & \leq \Bigl(n_{\omega'}^{-\frac{\beta_{\omega'}}{2\phi_{\omega'}}}(n_{\omega'}/2)^{\frac{\gamma_{\omega'}d_{\omega'} + \alpha \beta_{\omega'}}{\phi_{\omega'}}}\Bigr)^{-\beta_{\omega'}/d_{\omega'}} 
 \\ & \leq  2^{\frac{\beta_{\omega'}(\gamma_{\omega'}d_{\omega'} + \alpha \beta_{\omega'})}{\phi_{\omega'}}} \cdot n_{\omega'}^{-\frac{\beta_{\omega'}(2\gamma_{\omega'}d_{\omega'} + 2\alpha \beta_{\omega'} - \beta_{\omega'})}{2\phi_{\omega'}d_{\omega'}}} \\
& <  2 \cdot n_{\omega'}^{-\frac{\beta_{\omega'}\gamma_{\omega'}}{2\phi_{\omega'}}} \leq 2 \cdot n_{\omega'}^{-\frac{\beta_{\omega}\gamma_{\omega}}{2\phi_{\omega}}}.
\end{align*} 
Moreover, since $n \geq n_{\omega'} \geq n_{\omega}$, we have 
\[
\max\Bigl\{\sqrt{\frac{\log(2|\mathcal{N}|n/\delta)}{2n}}, 
\log(2|\mathcal{N}|n_{\omega'}/\delta) n_{\omega'}^{-\frac{\beta_{\omega}\gamma_{\omega}}{2\phi_{\omega}}} \Bigr\} \leq  \kappa_\circ \log(2|\mathcal{N}|n_{\omega}/\delta) \cdot n_{\omega}^{-\frac{\beta_{\omega}\gamma_{\omega}}{2\phi_{\omega}}}, 
\]
for $\kappa_{\omega,\circ} = \frac{2\phi_{\omega}}{\beta_{\omega} \gamma_{\omega}}$.  It follows that
\begin{align*}
R_{\omega,\mathrm{T}} &\! =\! \biggl[ 20C_{\mathrm{B}}C_{\mathrm{S}} \max_{0 \prec \omega' \preceq \omega}\Bigl\{  \log_{+}(2|\mathcal{N}|n_{\omega'}/\delta) \!\cdot\! \Bigl(C_{\rho} \!\cdot\! n_{\omega'}^{-\frac{\beta_{\omega'}}{2\phi_{\omega'}}}(n_{\omega'}\!-\!1)^{\frac{\gamma_{\omega'}d_{\omega'} + \alpha \beta_{\omega'}}{\phi_{\omega'}}}\Bigr)^{-\frac{\beta_{\omega'}}{d_{\omega'}}}\Bigr\} \biggr] \!\vee\! R_{\mathbf{0},\mathrm{T}} \nonumber
\\ & < 40 C_{\mathrm{B}}C_{\mathrm{S}} C_{\rho}^{-\beta_{\omega}} \kappa_{\omega,\circ} \cdot  \log_{+}(2|\mathcal{N}|n_{\omega}/\delta) \cdot n_{\omega}^{-\frac{\beta_{\omega}\gamma_{\omega}}{2\phi_{\omega}}}.
\end{align*} 
Therefore, using that
\begin{align*}
n_{\omega} & \geq \Bigl(\frac{32 C_{B}^2 C_{\mathrm{L}}}{ c_{\mathrm{E}}} \Bigr)^{\frac{\phi_{\omega}(2\vee\alpha^{-1})}{\beta_{\omega}\gamma_{\omega}}} \Bigl(\frac{5 C_{\mathrm{S}} \kappa_{\omega,\mathrm{T}} \kappa_{\omega,\circ} \cdot \log_{+}(2|\mathcal{N}|n_{\omega}/\delta)}{C_{\rho}^{\beta_{\omega}}} \Bigr)^{\frac{2\phi_{\omega}}{\beta_{\omega}\gamma_{\omega}}}
\\ & \geq \max\Bigl\{\Bigl(\frac{1}{4c_{\mathrm{E}}}\Bigr)^{\frac{2\phi_{\omega}}{\beta_{\omega}\gamma_{\omega}}},\Bigl(\frac{160 \cdot  C_{B}^2 C_{\mathrm{S}} \kappa_{\omega,\mathrm{T}}\kappa_{\omega,\circ} \cdot \log_{+}(2|\mathcal{N}|n_{\omega}/\delta)}{C_{\rho}^{\beta_{\omega}} c_{\mathrm{E}}} \Bigr)^{\frac{2\phi_{\omega}}{\beta_{\omega}\gamma_{\omega}}}, 
\\ &\hspace{210 pt}  \Bigl(\frac{32C^2_{\mathrm{B}}C_{\mathrm{L}}C_\rho^{\gamma_\omega}}{c_{\mathrm{E}}}\Bigr)^{\frac{\phi_{\omega}}{\alpha\beta_{\omega}\gamma_\omega}}, \frac{8\log_+(|\mathcal N|/\delta)}{c_{\mathrm{E}}^2C_{\mathrm{B}}^4} \Bigr\}
\end{align*} 
we deduce that \eqref{eq:cond1} holds.  Note further that, since 
\[ 
n_{\omega} \geq \Bigl(\frac{320 \cdot  C_{B} C_{\mathrm{S}} \kappa_{\omega,\mathrm{T}}\kappa_{\omega,\circ} \cdot \log_{+}(2|\mathcal{N}|n_{\omega}/\delta)}{C_{\rho}^{\beta_{\omega}}} \Bigr)^{\frac{4\phi_{\omega}}{\beta_{\omega}\gamma_{\omega}}}, 
\]
we have $\kappa^2_{\omega,T} \cdot R^2_{\omega, T} \leq \tau_{\omega}/4$ for all $\omega \in \Omega_{\star}$, which we will use in \eqref{eq:sigmaU_bound} below.

For $\omega \in U(\Omega_\star)\cap\mathcal{N}$, we have $f_{\omega} \equiv 0$.  Thus, on the event $E^\delta_4\cap E^\delta_5\cap E^\delta_6$, by Lemma~\ref{lem:f_bound_at_training_points} we have
\begin{align}
\nonumber \hat{\sigma}_\omega^2 &= \frac{1}{n_\omega}\sum_{i\in N_{\omega}}\hat f^2_\omega(X_i) \\
\nonumber &\leq  \Biggl( \max_{\stackrel{\omega' \in \{\Omega_{\star} \cup L(\Omega_{\star})\} \cap \mathcal{N}}{\omega' \preceq \omega}} \kappa_{\omega',T} \cdot R_{\omega',T} + \!  \max_{\stackrel{\omega' \in U(\Omega_{\star}) \cap \mathcal{N}}{\omega' \preceq \omega}} \kappa_{\omega',T} \cdot \sqrt{\frac{\log_+(2 |\mathcal{N}|n_{\omega'}/\delta)}{2k_{\omega'}}} \Biggr)^2 
\\\label{eq:sigmaU_bound} & \leq  \max_{\omega' \in \Omega_{\star}, \omega' \preceq \omega} \tau_{\omega'}/2 + \max_{\stackrel{\omega' \in U(\Omega_{\star}) \cap \mathcal{N}}{\omega' \preceq \omega}}  \kappa_{\omega',T}^2 \cdot \log_+(2 |\mathcal{N}|n_{\omega'}/\delta) \cdot n_{\omega'}^{-\frac{2}{\kappa_{\omega',\circ}}}
\\\nonumber  & \leq  \frac{\tau_{\omega}}{2} +  \frac{\kappa_{\omega,\circ} \kappa_{\omega,T}^2 }{2} \cdot \log_+(2 |\mathcal{N}|n_{\omega}/\delta) \cdot n_{\omega}^{-\frac{2}{\kappa_{\omega,\circ}}}  ,
\end{align}
where in the last inequality we used that for $\omega'\preceq\omega$ we have $\tau_{\omega'} \leq \tau_{\omega}$, $\kappa_{\omega',\circ} \leq \kappa_{\omega,\circ}$ and $n_{\omega'} \geq n_{\omega}$, which also implies that  
\[
\max_{\stackrel{\omega' \in U(\Omega_{\star}) \cap \mathcal{N}}{\omega' \preceq \omega}}  \frac{\log(2|\mathcal{N}|n_{\omega'}/\delta)}{n_{\omega'}^{2/\kappa_{\omega',\circ}}} 
\leq \max_{\stackrel{\omega' \in U(\Omega_{\star}) \cap \mathcal{N}}{\omega' \preceq \omega}}  \frac{\log(2|\mathcal{N}|n_{\omega'}/\delta)}{n_{\omega'}^{2/\kappa_{\omega,\circ}}} 
\leq  \frac{\kappa_{\omega,\circ}}{2} \cdot\frac{\log(2|\mathcal{N}|n_{\omega}/\delta) }{n_{\omega}^{2/\kappa_{\omega,\circ}} }.
\]
 
Finally using that, for $\omega\in U(\Omega_\star)\cap\mathcal{N}$, 
\[
n_{\omega} \geq \Bigl(16\kappa_{\omega,\circ} \kappa_{\omega,T}^2  \cdot \log_+(2|\mathcal{N}|n_{\omega}/\delta) \Bigr)^{\kappa_{\omega,\circ}},
\]
we see that $\hat{\sigma}_{\omega}^2 \leq \tau_{\omega}$. This completes the proof.
\end{proof}

\bigskip

\begin{proof}[Proof of Theorem~\ref{thm:nonadaptiveUBnew}
]
Set 
\[
\delta = \delta_{n} := \max_{\omega \in \Omega_{\star}} n_\omega^{-\frac{\beta_{\omega}\gamma_\omega(1+\alpha)}{\gamma_\omega(2\beta_{\omega}+d_\omega)+\alpha\beta_{\omega}}} = \Bigl(\min_{\omega \in \Omega_{\star}} n_\omega^{\frac{\beta_{\omega}\gamma_\omega(1+\alpha)}{\gamma_\omega(2\beta_{\omega}+d_\omega)+\alpha\beta_{\omega}}} \Bigr)^{-1} 
\geq \bigl(\min_{\omega \in \Omega_{\star}} n_\omega \bigr)^{-1},
\]
since $\alpha\beta_{\omega} \leq d_{\omega}$, for $\omega \in \Omega_{\star}$.  Let
\[
C_{\mathrm{N},\omega} := \Bigl(\frac{2^7 C_{B}^2 C_{\mathrm{L}}}{ c_{\mathrm{E}}} \Bigr)^{\kappa_{\omega,\circ}(1\vee\frac{1}{2\alpha})} \Bigl(\frac{20 C_{\mathrm{S}} \kappa_{\omega,\mathrm{T}} \kappa_{\omega,\circ} }{C_{\rho}^{1 \vee \gamma_{\omega}}} \Bigr)^{2\kappa_{\omega,\circ}} 
\]
for $\omega \in \Omega_{\star}$ and $C_{\mathrm{N},\omega}:=  (16\kappa_{\omega,\circ} \kappa_{\omega,T}^2  \bigr)^{\kappa_{\omega,\circ}}$ for $\omega \in U(\Omega_{\star})$.

Assume initially that, for $\omega \in \Omega_{\star}$, we have $n_{\omega} \geq n_{\omega,\circ}$, where
\begin{equation}
\label{eq:boundnomega}
n_{\omega,\circ} := \max\biggl\{2^{d+1} , C_{\mathrm{N},\omega}^2 \cdot (8/e)^{4\kappa_{\omega,\circ}} \bigl(\kappa_{\omega,\circ} + 1+\alpha\bigr)^{4\kappa_{\omega,\circ}} , \frac{1}{4}e^{\max_{\omega \in U(\Omega_\star)} \{C_{N,\omega}^{1/\kappa_{\omega,\circ}}(2\kappa_{\omega,\circ}/e + 1)\} } \biggr\}. 
\end{equation}
We then claim that the sample size conditions in Proposition~\ref{lem:correct_thresholding} 
are satisfied. To see this, first fix $\omega\in\Omega_\star$ and note that by Lemma~\ref{lem:log_n} applied with $m = n_\omega$ and $\epsilon=1/(4\kappa_{\omega,\circ})$ we have
\[
\log^{2\kappa_{\omega,\circ}}(n_\omega)\leq \Bigl(\frac{4\kappa_{\omega,\circ}}{e}\Bigr)^{2\kappa_{\omega,\circ}} \cdot n_\omega^{\frac{1}{2}}.
\]
Therefore, using the first and second terms in the maximum in~\eqref{eq:boundnomega}, we have
\begin{align*}
    C_{\mathrm{N},\omega} \cdot \log^{2\kappa_{\omega,\circ}}_{+}(2|\mathcal{N}|n_{\omega}/\delta_n)
    &\leq C_{\mathrm{N},\omega} \cdot \log^{2\kappa_{\omega,\circ}}\Bigl(2|\mathcal{N}|n_{\omega}^{1+\frac{\beta_{\omega}\gamma_\omega(1+\alpha)}{\gamma_\omega(2\beta_{\omega}+d_\omega)+\alpha\beta_{\omega}}}\Bigr)\\
    &= C_{\mathrm{N},\omega} \cdot \biggl\{\log(2|\mathcal{N}|)+ \Bigl(1+\frac{2(1+\alpha)}{\kappa_{\omega,\circ}}\Bigr) \cdot \log(n_\omega) \biggr\}^{2\kappa_{\omega,\circ}} \\ 
    &\leq C_{\mathrm{N},\omega} \cdot \Bigl(2+\frac{2(1+\alpha)}{\kappa_{\omega,\circ}}\Bigr)^{2\kappa_{\omega,\circ}} \cdot \Bigl(\frac{4\kappa_{\omega,\circ}}{e}\Bigr)^{2\kappa_{\omega,\circ}} \cdot n_\omega^{\frac{1}{2}} \leq  n_\omega,
\end{align*}
for all $\omega\in\Omega_\star$.  Now, for $\omega \in U(\Omega_\star)\cap\mathcal{N}$, again by Lemma~\ref{lem:log_n} applied with $m=n_\omega$, but $\epsilon=1/(2\kappa_{\omega,\circ})$, we have
\[
\log(n_\omega)\leq \Bigl(\frac{2\kappa_{\omega,\circ}}{e}\Bigr) \cdot n_\omega^{\frac{1}{2\kappa_{\omega,\circ}}}.
\]
Then, for $\omega\in U(\Omega_\star)\cap\mathcal{N}$, using the condition on $n_{\omega'}$ in the statement and the third term in~\eqref{eq:boundnomega}, we have
\[
n_\omega \geq  \log_+^{2\kappa_{\omega,\circ}} \bigl(2|\mathcal{N}| \min_{\omega' \in \Omega_{\star}} n_{\omega'}\bigr)  \geq  C_{\mathrm{N},\omega}^2 \cdot \Bigl(\frac{2\kappa_{\omega,\circ}}{e} + 1 \Bigr)^{2\kappa_{\omega,\circ}}
.\]
Therefore 
\begin{align*}
    C_{\mathrm{N},\omega} \cdot \log_{+}^{\kappa_{\omega,\circ}}(2|\mathcal{N}|n_{\omega}/\delta_n)
    &\leq  C_{\mathrm{N},\omega} \cdot \Bigr(\log n_{\omega} +  \log\bigl(2|\mathcal{N}| \min_{\omega' \in \Omega_{\star}} n_{\omega'} \bigr)\Bigr)^{\kappa_{\omega,\circ} }\\
    &\leq  C_{\mathrm{N},\omega} \cdot \Bigr(\frac{2\kappa_{\omega,\circ}}{e}\cdot n_\omega^{\frac{1}{2\kappa_{\omega,\circ}}} + n_\omega^{\frac{1}{2\kappa_{\omega,\circ}}} \Bigr)^{\kappa_{\omega,\circ} }\\
    &\leq  C_{\mathrm{N},\omega} \cdot \Bigr(\frac{2\kappa_{\omega,\circ}}{e} +  1 
    \Bigr)^{\kappa_{\omega,\circ} }  n_\omega^{1/2} \leq n_\omega
\end{align*}
for all $\omega\in U(\Omega_\star)\cap \mathcal{N}$. 
It follows that, by the upper bound in Theorem~\ref{thm:minmax_bounds} 
and Proposition~\ref{lem:correct_thresholding}
, if $n_{\omega} \geq n_{\omega,\circ}$ for $\omega \in \Omega_\star$, then we have 
\begin{align*} 
& \mathbb{E}_Q\Bigl\{\mathcal{E}_P(\hat{C}_{\mathrm{HAM}}) \Bigm| O_1 = o_1, \ldots, O_n = o_n\Bigr\} 
\\ & \hspace{30pt} \leq \mathbb{E}_Q\bigl\{\mathcal{E}_P(\hat{C}_{\Omega_{\star}}) \bigm| O_1 = o_1, \ldots, O_n = o_n \bigr\} + \mathbb{P}_{Q}\bigl(\hat{\Omega} \neq \Omega_{\star} \bigm| O_1 = o_1, \ldots, O_n = o_n\bigr)
\\ & \hspace{30pt} \leq C \cdot \log_+^{\frac{1+\alpha}{2}}\Bigl( \min_{\omega \in \Omega_{\star}} n_{\omega} \Bigr) \cdot \max_{\omega \in \Omega_{\star}} n_\omega^{-\frac{\beta_{\omega}\gamma_\omega(1+\alpha)}{\gamma_\omega(2\beta_{\omega}+d_\omega)+\alpha\beta_{\omega}}} + 4\delta_n,
\end{align*} 
where $C :=  1 + 2 K_{\Omega_{\star}} \cdot \Bigl(\frac{3(1+\alpha)}{2}\Bigr)^{\frac{1+\alpha}{2}} \log_+^{\frac{1+\alpha}{2}}(2^{d+1})$. 

Finally, if $n_\omega < n_{\omega,\circ}$ for some $\omega\in\Omega_\star$, then we have that
\begin{align*} 
\delta_{n} &=  \max_{\omega \in \Omega_{\star}} n_\omega^{-\frac{2(1+\alpha)}{\kappa_{\omega,\circ}}} \geq \max_{\omega \in \Omega_{\star}} \Bigl\{\bigl(\frac{n_{\omega,\circ}}{n_{\omega}}\bigr)^{\frac{2(1+\alpha)}{\kappa_{\omega,\circ}}} \Bigr\} \cdot \min_{\omega \in \Omega_{\star}} n_{\omega,\circ}^{-\frac{2(1+\alpha)}{\kappa_{\omega,\circ}}} 
 \geq \min_{\omega \in \Omega_{\star}} \Bigl\{n_{\omega,\circ}^{-\frac{2(1+\alpha)}{\kappa_{\omega,\circ}}}\Bigr\}.
\end{align*} 
Thus
\begin{align*} 
& \mathbb{E}_Q\Bigl\{\mathcal{E}_P(\hat{C}_{\mathrm{HAM}}) \Bigm| O_1 = o_1, \ldots, O_n = o_n\Bigr\} \leq 1 \leq \frac{\delta_n} {\min_{\omega \in \Omega_{\star}} \Bigl\{n_{\omega,\circ}^{-\frac{2(1+\alpha)}{\kappa_{\omega,\circ}}}\Bigr\}} .
\end{align*}
We therefore conclude that the overall result holds with 
\[
C_{\mathrm{U}} := \max\biggl\{ C+4, \max_{\omega \in \Omega_{\star}} \Bigl(n_{\omega,\circ}^{\frac{2(1+\alpha)}{\kappa_{\omega,\circ}}} \Bigr)  \biggr\}.
\]
\end{proof}

\section{Additional numerical results\label{sec:additional_numerical_results}}

\subsection{Cross-validation version of the HAM algorithm\label{subsec:cvHAM}}
If the parameters $\alpha$, $\boldsymbol{\beta}$, and $\boldsymbol{\gamma}$ are unknown, we propose to use a cross-validation approach to estimate these.  The empirical performance of this approach is illustrated in the numerical simulations in Sections~\ref{sec:numericalresults} 
and~\ref{sec:simulationsappendix}.  Our proposed procedure requires the user to choose a set of possible values for these parameters and employs $5$-fold cross-validation. For each of the $5$ folds and each combination of parameter choices, the method calculates the empirical test error on the left out fold using the other $4$ folds as training data.  It returns whichever combination of parameter values minimises the empirical test error averaged over all folds, which can then be used in the HAM algorithm.  The cross-validation procedure is given in Algorithm~\ref{alg:cv}.  The practitioner is left with choosing the range of possible values for the parameters.  In our experiments, we have taken $A=\{0,1/2,1,d\}$ as possible values for $\alpha$, $B=\{1/4,2/4,3/4,1\}\cdot \mathbf{1}_d$ as candidate values for $\boldsymbol{\beta}$, and $G=\{1/2,1,2,\infty\}\cdot \mathbf{1}_d$ as candidate values for $\boldsymbol{\gamma}$, and we recommend these as suitable default values.  

\begin{algorithm}[ht]
\caption{A cross-validation procedure to choose $\alpha$, $\boldsymbol{\beta}$ and $\boldsymbol{\gamma}$.}\label{alg:cv}
\begin{algorithmic}[1]
\Input:~Data $D_n= ((X_1^{o_1}, Y_1, o_1), \ldots, (X_n^{o_n}, Y_n,o_n)) \in (\mathbb{R}^d \times \{0,1\} \times \{0,1\}^d)^n$, and finite sets $A\subseteq [0,\infty)$, $B\subseteq (0,1]^d$ and $G\subseteq  [0,\infty)^d$.
\EndInput 

\State Split $D_n$ into $5$ equally sized subsets uniformly at random, denote these $D_n^{(1)}$, \ldots, $D_n^{(5)}$.
\For{$(a,\mathbf{b},\mathbf{g})\in A\times B\times G$}
\State $E_{(a,\mathbf{b},\mathbf{g})}=0$
\For{$l\in[5]$}
\State $\hat{f}_{\mathbf{0}}(\cdot) := \frac{1}{|D_n|-|D_n^{(l)}|}\sum_{i\in[n]:(X_i^{o_i}, Y_i,o_i) \notin D_n^{(l)}} Y_i -\frac{1}{2}$
\For{$(x^o,y,o)\in D_n^{(l)}$}
\State $x:=x^o$
\For{$d'=1,\ldots,d_o$}
\For{$\omega\preceq o$ such that $d_\omega=d'$}
\State $b_\omega := \min\{b_j : \omega_j = 1\}$,  $g_\omega := \min\{g_j : \omega_j = 1\}$
\State $m_\omega  := \sum_{i\in[n]:(X_i^{o_i}, Y_i,o_i) \notin D_n^{(l)}} \mathbbm{1}_{\{\omega \preceq o_i\}}$
\State $k_{\omega} := 1 + \lfloor m_{\omega}^{\frac{2b_{\omega}g_{\omega}}{g_{\omega}(2b_{\omega}+d_{\omega}) + a b_{\omega}}}\rfloor$; 
$M_{\omega} := \{i \in [n] : \omega \preceq o_i, (X_i^{o_i}, Y_i,o_i) \notin D_n^{(l)}\}$
\State Let $(X_{(1)_{\omega}}^{\omega}(x),Y_{(1)_{\omega}}(x)),\ldots,(X_{(m_{\omega})_{\omega}}^{\omega}(x),Y_{(m_{\omega})_{\omega}}(x))$ be a reordering of the pairs $ \{(X_i^{o_i}, Y_i) : i \in M_{\omega}, (X_i^{o_i}, Y_i,O_i) \notin D_n^{(l)}\}$ such that $\|X_{(1)_{\omega}}^{\omega}(x) - x^{\omega}\| \leq \cdots \leq \|X_{(m_{\omega})_{\omega}}^{\omega}(x) - x^{\omega}\|$
\State $\hat f_\omega(x): = \frac{1}{k_{\omega}}\sum_{j=1}^{k_{\omega}} Y_{(j)_\omega}(x) -\frac{1}{2} - \sum_{\omega' \prec \omega} \hat{f}_{\omega'}(x)$
\EndFor
\EndFor
\State $\hat{\eta}(x):=\frac{1}{2} + \sum_{\omega\preceq o}\hat f_{\omega}(x)$
\State $\hat{Y} := \mathbbm{1}_{\{\hat{\eta}(x) \geq 1/2\}}$  
\State $E_{(a,\mathbf{b},\mathbf{g})}\leftarrow E_{(a,\mathbf{b},\mathbf{g})}+\mathbbm{1}_{\{\hat{Y}\neq y\}}$
\EndFor
\EndFor
\EndFor

\Output~$(\hat{\alpha},\hat{\boldsymbol{\beta}},\hat{\boldsymbol{\gamma}}) := \argmin_{(a,\mathbf{b},\mathbf{g})\in A\times B\times G}\bigl\{E_{(a,\mathbf{b},\mathbf{g})}\bigr\}$.
\EndOutput
\end{algorithmic}
\end{algorithm}

\subsection{Additional numerical experiments\label{sec:simulationsappendix}}
Here we further investigate the performance of the HAM classifier in our framework.  We consider the setting where there is in fact no missing data.  We carried out similar experiments to those in Section~\ref{sec:numericalresults}
, where in each case, in contrast to the earlier experiments, we take our training data set to be $n = 1000$ independent copies of the pair $(X, Y) \sim P \equiv P_Q$.  Since there is no missing data, the Complete Case approach and all imputation methods are equivalent to directly applying the $k$-nearest neighbour ($k$nn) algorithm to the training data. The results are presented in Figure~\ref{fig:results_no_missingness} -- we see that the HAM classifier outperforms a direct application of $k$nn in each setting.

\begin{figure}[htp]
\centering
\includegraphics[width=0.95\linewidth]{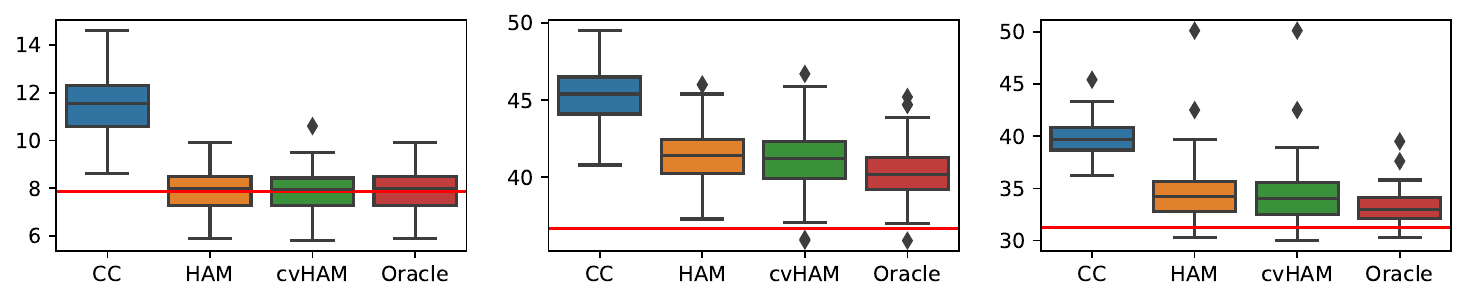}
\caption{Box plots of the empirical percentage test errors for the Complete Case (CC) approach, our HAM classifier with known parameters (HAM), and our classifier with parameters estimated using cross-validation (cvHAM).  For reference, we also include our Oracle HAM (Oracle) classifier, and the Bayes risk is shown as the red horizontal line.  Here there is no missing data, so the imputation methods would be the same as the complete case classifier. We present the results for Setting~1 (left), Setting~2 (middle) and Setting~3 (right) when $n=1000$. 
\label{fig:results_no_missingness}}
\end{figure}

\section{Auxiliary results\label{sec:appendixaux}} 
\begin{lemma}\label{lem:log_n}
For all $m\geq1$ and any $\epsilon>0$, $\log m\leq (m^\epsilon)/(e\cdot\epsilon)$.
\end{lemma}
\begin{proof}
Fix $\epsilon>0$. Let $A_{\epsilon} := \sup_{m\geq1}((\log m)/m^\epsilon)$, then our claim is that $A_{\epsilon} =1/(e\cdot\epsilon)$. To see this, let $g(m)=\log((\log m)/m^\epsilon)=\log\log(m)-\epsilon\cdot\log m$, which is maximised at $m=e^{1/\epsilon}$, and indeed \[\frac{\log(e^{1/\epsilon})}{e^{(1/\epsilon)\cdot\epsilon}}=\frac{1}{e\cdot\epsilon}.\]
\end{proof}

\bibliography{References.bib}

\begin{thebibliography}{}

\bibitem[Ahfock and McLachlan, 2023]{ahfock2022semi}
Ahfock, D. and McLachlan, G.~J. (2023).
\newblock Semi-supervised learning of classifiers from a statistical
  perspective: A brief review.
\newblock {\em Econometrics and Statistics}, 26:124--138.

\bibitem[Audibert and Tsybakov, 2007]{audibert2007fast}
Audibert, J.-Y. and Tsybakov, A.~B. (2007).
\newblock Fast learning rates for plug-in classifiers.
\newblock {\em Ann. Statist.}, 35(2):608--633.

\bibitem[Ayme et~al., 2022]{ayme2022near}
Ayme, A., Boyer, C., Dieuleveut, A., and Scornet, E. (2022).
\newblock Near-optimal rate of consistency for linear models with missing
  values.
\newblock In {\em International Conference on Machine Learning}, pages
  1211--1243. PMLR.

\bibitem[Berrett and Samworth, 2023]{berrett2022optimal}
Berrett, T.~B. and Samworth, R.~J. (2023).
\newblock Optimal nonparametric testing of {M}issing {C}ompletely {A}t
  {R}andom, and its connections to compatibility.
\newblock {\em Ann. Statist.}, 51:2170–2193.

\bibitem[Billingsley, 1995]{billingsley2008probability}
Billingsley, P. (1995).
\newblock {\em Probability and Measure}.
\newblock John Wiley \& Sons.

\bibitem[Bordino and Berrett, 2024]{bordino2024tests}
Bordino, A. and Berrett, T.~B. (2024).
\newblock Tests of {M}issing {C}ompletely {A}t {R}andom based on sample
  covariance matrices.
\newblock {\em arXiv preprint arXiv:2401.05256}.

\bibitem[Boucheron et~al., 2005]{boucheron2005theory}
Boucheron, S., Bousquet, O., and Lugosi, G. (2005).
\newblock Theory of classification: A survey of some recent advances.
\newblock {\em ESAIM: Probability and Statistics}, 9:323--375.

\bibitem[Cai and Wei, 2021]{cai2021transfer}
Cai, T.~T. and Wei, H. (2021).
\newblock Transfer learning for nonparametric classification: Minimax rate and
  adaptive classifier.
\newblock {\em Ann. Statist.}, 49(1):100--128.

\bibitem[Cai and Zhang, 2016]{cai2016minimax}
Cai, T.~T. and Zhang, A. (2016).
\newblock Minimax rate-optimal estimation of high-dimensional covariance
  matrices with incomplete data.
\newblock {\em Journal of Multivariate Analysis}, 150:55--74.

\bibitem[Cai and Zhang, 2019]{cai2019high}
Cai, T.~T. and Zhang, L. (2019).
\newblock High-dimensional linear discriminant analysis: optimality, adaptive
  algorithm and missing data.
\newblock {\em Journal of the Royal Statistical Society: Series B},
  81(4):675--705.

\bibitem[Cannings and Fan, 2022]{cannings2022correlation}
Cannings, T.~I. and Fan, Y. (2022).
\newblock The correlation-assisted missing data estimator.
\newblock {\em Journal of Machine Learning Research}, 23:1--49.

\bibitem[Cannings et~al., 2020]{cannings2020classification}
Cannings, T.~I., Fan, Y., and Samworth, R.~J. (2020).
\newblock Classification with imperfect training labels.
\newblock {\em Biometrika}, 107(2):311--330.

\bibitem[Chandrasekher et~al., 2020]{chandrasekher2020imputation}
Chandrasekher, K.~A., Alaoui, A.~E., and Montanari, A. (2020).
\newblock Imputation for high-dimensional linear regression.
\newblock {\em arXiv preprint arXiv:2001.09180}.

\bibitem[Chapelle et~al., 2006]{chapelle2006semi}
Chapelle, O., Sch{\"o}lkopf, B., and Zien, A. (2006).
\newblock {\em Semi-Supervised Learning}.
\newblock The MIT Press, Cambridge, MA.

\bibitem[Devroye et~al., 1996]{devroye2013probabilistic}
Devroye, L., Gy{\"o}rfi, L., and Lugosi, G. (1996).
\newblock {\em A Probabilistic Theory of Pattern Recognition}.
\newblock Springer, New York.

\bibitem[Efron and Stein, 1981]{efron1981jackknife}
Efron, B. and Stein, C. (1981).
\newblock The jackknife estimate of variance.
\newblock {\em Ann. Statist.}, 9(3):586--596.

\bibitem[Elsener and van~de Geer, 2019]{elsener2019sparse}
Elsener, A. and van~de Geer, S. (2019).
\newblock Sparse spectral estimation with missing and corrupted measurements.
\newblock {\em Stat}, 8(1):e229.

\bibitem[Elter, 2007]{misc_mammographic_mass_161}
Elter, M. (2007).
\newblock {Mammographic Mass}.
\newblock UCI Machine Learning Repository.
\newblock {DOI}: https://doi.org/10.24432/C53K6Z.

\bibitem[Fix and Hodges, 1952]{fix1952discriminatory}
Fix, E. and Hodges, J.~L. (1952).
\newblock Discriminatory analysis-nonparametric discrimination: Small sample
  performance.
\newblock {\em Technical Report number 4, USAF School of Aviation Medicine,
  Randolph Field, Texas}.

\bibitem[Fix and Hodges, 1989]{fix1989discriminatory}
Fix, E. and Hodges, J.~L. (1989).
\newblock Discriminatory analysis-nonparametric discrimination: Small sample
  performance.
\newblock {\em Internat. Statist. Rev.}, 57:238--–247.

\bibitem[Follain et~al., 2022]{follain2022high}
Follain, B., Wang, T., and Samworth, R.~J. (2022).
\newblock High-dimensional changepoint estimation with heterogeneous
  missingness.
\newblock {\em Journal of the Royal Statistical Society Series B: Statistical
  Methodology}, 84(3):1023--1055.

\bibitem[Fr{\'e}nay et~al., 2014]{frenay2014comprehensive}
Fr{\'e}nay, B., Kab{\'a}n, A., et~al. (2014).
\newblock A comprehensive introduction to label noise.
\newblock In {\em ESANN}. Citeseer.

\bibitem[Fr{\'e}nay and Verleysen, 2013]{frenay2013classification}
Fr{\'e}nay, B. and Verleysen, M. (2013).
\newblock Classification in the presence of label noise: a survey.
\newblock {\em IEEE Transactions on Neural Networks and Learning Systems},
  25(5):845--869.

\bibitem[Hastie and Tibshirani, 1986]{hastie1986generalized}
Hastie, T. and Tibshirani, R. (1986).
\newblock Generalized additive models.
\newblock {\em Statistical Science}, 1(3):297--318.

\bibitem[Josse et~al., 2019]{josse2019consistency}
Josse, J., Prost, N., Scornet, E., and Varoquaux, G. (2019).
\newblock On the consistency of supervised learning with missing values.
\newblock {\em arXiv preprint arXiv:1902.06931}.

\bibitem[Josse and Reiter, 2018]{josse2018introductionm}
Josse, J. and Reiter, J.~P. (2018).
\newblock {Introduction to the Special Section on Missing Data}.
\newblock {\em Statistical Science}, 33(2):139--141.

\bibitem[Le~Morvan et~al., 2020]{le2020neumiss}
Le~Morvan, M., Josse, J., Moreau, T., Scornet, E., and Varoquaux, G. (2020).
\newblock Neumiss networks: differentiable programming for supervised learning
  with missing values.
\newblock {\em Advances in Neural Information Processing Systems},
  33:5980--5990.

\bibitem[Le~Morvan et~al., 2021]{le2021sa}
Le~Morvan, M., Josse, J., Scornet, E., and Varoquaux, G. (2021).
\newblock What’s a good imputation to predict with missing values?
\newblock {\em Advances in Neural Information Processing Systems},
  34:11530--11540.

\bibitem[Lee and Barber, 2022]{lee2022binary}
Lee, Y. and Barber, R.~F. (2022).
\newblock Binary classification with corrupted labels.
\newblock {\em Electronic Journal of Statistics}, 16(1):1367--1392.

\bibitem[Little and Rubin, 2019]{little2019statistical}
Little, R.~J. and Rubin, D.~B. (2019).
\newblock {\em Statistical Analysis with Missing Data}, volume 793.
\newblock John Wiley \& Sons.

\bibitem[Loh and Tan, 2018]{loh2018high}
Loh, P.-L. and Tan, X.~L. (2018).
\newblock High-dimensional robust precision matrix estimation: Cellwise
  corruption under $\epsilon$-contamination.
\newblock {\em Electronic Journal of Statistics}, 12:1429--1467.

\bibitem[Loh and Wainwright, 2012]{loh2011high}
Loh, P.-L. and Wainwright, M.~J. (2012).
\newblock High-dimensional regression with noisy and missing data: Provable
  guarantees with nonconvexity.
\newblock {\em Ann. Statist.}, 40(3):1637.

\bibitem[Mammen and Tsybakov, 1999]{mammen1999smooth}
Mammen, E. and Tsybakov, A.~B. (1999).
\newblock Smooth discrimination analysis.
\newblock {\em Ann. Statist.}, 27(6):1808--1829.

\bibitem[McShane, 1934]{mcshane1934extension}
McShane, E.~J. (1934).
\newblock Extension of range of functions.
\newblock {\em Bulletin of the American Mathematical Society}, 40(12):837--842.

\bibitem[Polonik, 1995]{polonik1995measuring}
Polonik, W. (1995).
\newblock Measuring mass concentrations and estimating density contour
  clusters-an excess mass approach.
\newblock {\em Ann. Statist.}, 23(3):855--881.

\bibitem[Reeve et~al., 2021]{reeve2021adaptive}
Reeve, H.~W., Cannings, T.~I., and Samworth, R.~J. (2021).
\newblock Adaptive transfer learning.
\newblock {\em Ann. Statist.}, 49(6):3618--3649.

\bibitem[Sportisse et~al., 2023]{sportisse2023labels}
Sportisse, A., Schmutz, H., Humbert, O., Bouveyron, C., and Mattei, P.-A.
  (2023).
\newblock Are labels informative in semi-supervised learning?--estimating and
  leveraging the missing-data mechanism.
\newblock {\em arXiv preprint arXiv:2302.07540}.

\bibitem[Stekhoven and B{\"u}hlmann, 2012]{stekhoven2012missforest}
Stekhoven, D.~J. and B{\"u}hlmann, P. (2012).
\newblock Missforest—non-parametric missing value imputation for mixed-type
  data.
\newblock {\em Bioinformatics}, 28(1):112--118.

\bibitem[Weiss et~al., 2016]{weiss2016survey}
Weiss, K., Khoshgoftaar, T.~M., and Wang, D. (2016).
\newblock A survey of transfer learning.
\newblock {\em Journal of Big data}, 3(1):1--40.

\bibitem[Zhang et~al., 2018]{zhang2018missing}
Zhang, Q., Yuan, Q., Zeng, C., Li, X., and Wei, Y. (2018).
\newblock Missing data reconstruction in remote sensing image with a unified
  spatial--temporal--spectral deep convolutional neural network.
\newblock {\em IEEE Transactions on Geoscience and Remote Sensing},
  56(8):4274--4288.

\bibitem[Zhu et~al., 2022]{zhu2019high}
Zhu, Z., Wang, T., and Samworth, R.~J. (2022).
\newblock High-dimensional principal component analysis with heterogeneous
  missingness.
\newblock {\em Journal of the Royal Statistical Society, Series B},
  84(5):2000–2031.

\bibitem[Zou, 2018]{zou2018log}
Zou, Q. (2018).
\newblock The log-convexity of the {F}ubini numbers.
\newblock {\em Transactions on Combinatorics}, 7(2):17--23.

\end{thebibliography}
\bibliographystyle{apalike}

\end{document}